\newcommand{\textcite}{\citet}
\newcommand{\R}{\mathbb{R}}
\newcommand{\N}{\mathbb{N}}
\newcommand{\Z}{\mathbb{Z}}
\newcommand{\argmin}{\mathop{\mathrm{argmin}}}
\newcommand{\argmax}{\mathop{\mathrm{argmax}}}
\newcommand{\pen}{\mathop{\mathrm{pen}}}
\renewcommand{\d}{d}
\newcommand{\E}{\mathbb{E}}
\newcommand{\Prob}{\mathbb{P}}
\newcommand{\widebar}{\overline}
\newcommand{\ud}{\mathrm{d}}
\newcommand{\Log}{\ln}
\newcommand{\Span}{\mathop{\mathrm{span}}}
\newcommand{\Characonly}{\mathbf{1}}
\newcommand{\Charac}[1]{\Characonly_{\left\{ #1 \right\}}}
\renewcommand{\L}{L}
\newcommand{\SpaceX}{\mathcal{X}}
\newcommand{\SpaceY}{\mathcal{Y}}
\newcommand{\Gauss}{\Phi}
\newcommand{\dimSp}{p}
\newcommand{\Part}{\mathcal{P}}
\newcommand{\PartB}{\mathcal{Q}}
\newcommand{\PartX}{\Part}
\newcommand{\PartY}{\PartB}
\newcommand{\PartXY}{\PartB^\Part}
\newcommand{\Leaf}{\mathcal{R}}
\newcommand{\LeafX}{\Leaf}
\newcommand{\LeafY}{\Leaf^{\scriptscriptstyle\SpaceY}}
\newcommand{\LeafXY}{\Leaf^{\times}}
\newcommand{\LeafXl}{\LeafX_l}
\newcommand{\LeafYlk}{\LeafY_{l,k}}
\newcommand{\LeafXYlkp}{\LeafXY_{l',k'}}
\newcommand{\NbPartX}{\|\PartX\|}
\newcommand{\NbPartY}{\|\PartY\|}
\newcommand{\PartYLeafXl}{\PartY_{l}}
\newcommand{\NbPartYLeafXl}{\|\PartYLeafXl\|}
\newcommand{\NbPartXY}{\|\PartXY\|}
\newcommand{\LeafXYlk}{\LeafXY_{l,k}}
\newcommand{\dimX}{d_X}
\newcommand{\dimY}{d_Y}
\newcommand{\dimP}{D}
\newcommand{\BdimP}{\mathbf{D}}
\newcommand{\dimPY}{\dimP}
\newcommand{\dimPYi}[1]{\dimP_{#1}}
\newcommand{\dimPmax}{\BdimP}
\newcommand{\dimPYmax}{\BdimP}
\newcommand{\dimPYmaxi}[1]{\BdimP_{#1}}
\newcommand{\dimPYfamily}{\mathcal{D}^{M}}
\newcommand{\dimPYmaxPartXY}{\dimPYmax}
\newcommand{\UDP}{\ensuremath{\mathrm{UDP}}}
\newcommand{\RDP}{\ensuremath{\mathrm{RDP}}}
\newcommand{\RDSP}{\ensuremath{\mathrm{RDSP}}}
\newcommand{\RSP}{\ensuremath{\mathrm{RSP}}}
\newcommand{\HRP}{\ensuremath{\mathrm{HRP}}}
\newcommand{\UDPX}{\UDP({\SpaceX})}
\newcommand{\RDPX}{\RDP({\SpaceX})}
\newcommand{\RDSPX}{\RDSP({\SpaceX})}
\newcommand{\RSPX}{\RSP({\SpaceX})}
\newcommand{\HRPX}{\HRP({\SpaceX})}
\newcommand{\starX}{\star({\SpaceX})}
\newcommand{\UDPY}{\UDP({\SpaceY})}
\newcommand{\RDPY}{\RDP({\SpaceY})}
\newcommand{\RDSPY}{\RDSP({\SpaceY})}
\newcommand{\RSPY}{\RSP({\SpaceY})}
\newcommand{\HRPY}{\HRP({\SpaceY})}
\newcommand{\starY}{\star({\SpaceY})}
\newcommand{\CollPart}{\Models_{\Part}}
\newcommand{\meas}{\lambda}
\newcommand{\tens}{\otimes_n}
\newcommand{\Bat}{\mathcal{D}}
\newcommand{\dtens}{\d^{\otimes_n}}
\newcommand{\dtwotens}{\d^{2\otimes_n}}
\newcommand{\dtwosup}{\d^{2\sup}}
\newcommand{\Battwotens}{\Bat^{2\otimes_n}}
\newcommand{\dsup}{\d^{\sup}}
\newcommand{\dmax}{\d^{\max}}
\newcommand{\dtwomax}{\d^{2\max}}
\newcommand{\KL}{\ensuremath{K\!L}}
\newcommand{\KLtens}{\KL^{\otimes_n}}
\newcommand{\kl}{\ensuremath{k\mspace{-1mu}l}}
\newcommand{\JKL}{\ensuremath{J\!K\!L}}
\newcommand{\JKLtens}{\JKL^{\otimes_n}}
\newcommand{\jkl}{\ensuremath{j\mspace{-1mu}k\mspace{-1mu}l}}
\newcommand{\propJKL}{\rho}
\newcommand{\ConstHellKL}{C}
\newcommand{\indmset}{\mathcal{M}}
\newcommand{\indm}{m}
\newcommand{\setmodel}{S}
\newcommand{\model}{S}
\newcommand{\modeldisc}{\mathfrak{S}}
\newcommand{\sdisc}{\mathfrak{s}}
\newcommand{\Models}{\mathcal{S}}
\newcommand{\Pemp}{P^{\otimes_n}_{n}}
\newcommand{\Ptens}{P^{\otimes_n}}
\newcommand{\Prec}{\nu^{\otimes_n}_{n}}
\newcommand{\paramphi}{\beta_{\phi}}
\newcommand{\constcore}{\theta}
\newcommand{\constcoreepsipen}{\theta_{\pen}}
\newcommand{\constpenmin}{\kappa_0}
\newcommand{\constpen}{\kappa}
\newcommand{\constpensimple}{\widetilde{\kappa}}
\newcommand{\constoraone}{C_1}
\newcommand{\constoratwo}{C_2}
\newcommand{\constlemmaone}{\kappa'_1}
\newcommand{\constlemmatwo}{\kappa'_2}
\newcommand{\constlemmaonebis}{\kappa''_1}
\newcommand{\constlemmatwobis}{\kappa''_2}
\newcommand{\paramrhoKL}{\delta_{\KL}}
\newcommand{\rhoapp}{\eta}
\newcommand{\rhomin}{\eta'}
\newcommand{\paramepsid}{\epsilon_{d}}
\newcommand{\paramB}{\kappa'}
\newcommand{\paramBd}{{\kappa'_{d}}}
\newcommand{\constlemmathree}{\kappa'_0}
\newcommand{\paramepsipen}{\epsilon_{\pen}}
\newcommand{\paramepsiconstlemmathree}{\epsilon}
\newcommand{\kappalemma}{\kappa}
\newcommand{\klbarsindm}{\kl(\widebar{s}_{\indm})}
\newcommand{\klhatsindm}{\kl(\widehat{s}_{\indm})}
\newcommand{\klhatsindmp}{\kl(\widehat{s}_{\indm'})}
\newcommand{\jklhatsindm}{\jkl(\widehat{s}_{\indm})}
\newcommand{\jklsindm}{\jkl(s_{\indm})}
\newcommand{\jkls}{\jkl(s)}
\newcommand{\jklhatsindmp}{\jkl(\widehat{s}_{\indm'})}
\newcommand{\jkltildes}{\jkl(\widetilde{s})}
\newcommand{\jkltildesindm}{\jkl(\widetilde{s}_{\indm})}
\newcommand{\ConstH}{\mathcal{V}}
\newcommand{\DimH}{\mathcal{D}}
\newcommand{\DIMH}{\mathfrak{D}}
\newcommand{\ConstMultH}{\mathcal{C}}
\newcommand{\constUentropy}{C}
\newcommand{\Set}{\mathcal{G}}
\newcommand{\DSet}{\mathcal{F}}
\newcommand{\DDSet}{\mathcal{F}}
\newcommand{\Simplex}{\mathcal{S}}
\newcommand{\Space}{E}
\newcommand{\dimSpace}{\dimSp_{\Space}}
\newcommand{\dimSpaceperp}{\dimSp_{\Space^\perp}}
\newcommand{\constspatgausslog}{C_\star}
\newcommand{\constspatgausslogindm}{C_{\star,\indm}}
\newcommand{\constspatgausstwo}{c_\star}
\newcommand{\CstSpace}{\mathcal{D}_\Space}
\newcommand{\DimHSimpKm}{(K-1)}
\newcommand{\Diag}{\mathcal{A}}
\newcommand{\Grid}{\mathcal{G}}
\newcommand{\ConstSzarek}{c_S}
\newcommand{\mumod}{\mathrm{\mu}}
\newcommand{\Lmod}{\mathrm{L}}
\newcommand{\Dmod}{\mathrm{D}}
\newcommand{\Amod}{\mathrm{A}}
\newcommand{\modany}{\star}
\newcommand{\modknown}{0}
\newcommand{\modfree}{K}
\newcommand{\mumodany}{\mumod_{\modany}}
\newcommand{\Lmodany}{\Lmod_{\modany}}
\newcommand{\Dmodany}{\Dmod_{\modany}}
\newcommand{\Amodany}{\Amod_{\modany}}
\newcommand{\mumodknown}{\mumod_{\modknown}}
\newcommand{\Lmodknown}{\Lmod_{\modknown}}
\newcommand{\Dmodknown}{\Dmod_{\modknown}}
\newcommand{\Amodknown}{\Amod_{\modknown}}
\newcommand{\mumodfree}{\mumod_{\modfree}}
\newcommand{\Lmodfree}{\Lmod_{\modfree}}
\newcommand{\Dmodfree}{\Dmod_{\modfree}}
\newcommand{\Amodfree}{\Amod_{\modfree}}
\newcommand{\mumodsame}{\mumod}
\newcommand{\Lmodsame}{\Lmod}
\newcommand{\Dmodsame}{\Dmod}
\newcommand{\Amodsame}{\Amod}
\newcommand{\starK}{\star}
\newcommand{\starmuLDA}{\star}
\newcommand{\deltaVar}{\delta_{\Sigma}}
\newcommand{\lambdam}{\lambda_{-}}
\newcommand{\lambdaM}{\lambda_{+}}
\newcommand{\Lm}{L_{-}}
\newcommand{\LM}{L_{+}}
\newcommand{\kappag}{\kappa}
\newcommand{\gammakappa}{\gamma_\kappa}
\newcommand{\constcosh}{\beta_{\kappa}}
\newcommand{\kappag}{}
\newcommand{\gammakappa}{\gamma}
\newcommand{\constcosh}{\beta}
\newcommand{\constpolyparttwo}{C_\star}
\newcommand{\constpolyparttwobis}{D_\star}
\newcommand{\constpolypartthree}{c_\star}
\newcommand{\dimPart}{d}
\newcommand{\PartCstA}{A_0}
\newcommand{\PartCstB}{B_0}
\newcommand{\PartCstC}{C_0}
\newcommand{\PartCstc}{c_0}
\newcommand{\PartCstSigma}{\Sigma_0}
\newcommand{\Hindm}{(\ensuremath{\text{H}_{\indm}})}
\newcommand{\Sepindm}{(\ensuremath{\text{Sep}_{\indm}})}
\declaretheorem{theorem}
\declaretheorem{lemma}
\declaretheorem{proposition}
\declaretheorem[numbered=no,title=Assumption \Hindm]{assumptionHm}
\declaretheorem[numbered=no,title=Assumption (K)]{assumptionK}
\declaretheorem[numbered=no,title=Assumption \Sepindm]{assumptionSm}
\newenvironment{alignI}{\begin{math}\displaystyle}{\end{math}}
\begin{document}

\title{Conditional Density Estimation by Penalized Likelihood Model
  Selection and Applications}
\author{S. X. Cohen (IPANEMA / Synchrotron Soleil)\\ and\\ E. Le Pennec
  (SELECT / Inria
  Saclay - Île de France and Université Paris Sud)}

\maketitle

\begin{abstract}
In this technical report, we consider conditional density estimation
with a maximum likelihood approach. Under weak assumptions, we
obtain a theoretical bound for a Kullback-Leibler type loss for a
single model maximum likelihood estimate. We use a penalized model
selection technique to select a best model within a collection. We
give a general condition on penalty choice that leads to oracle type inequality 
for the resulting estimate. This construction is applied to two
examples of partition-based conditional density models, models
in which the conditional density depends only in a piecewise manner
from the covariate. The first example relies on
classical piecewise polynomial densities while the second uses
Gaussian mixtures with varying mixing proportion but same mixture
components. We show how this last case is related to an
unsupervised segmentation application that has been the source of our
motivation to this study.
\end{abstract}


\section{Introduction}

Assume we observe $n$ pairs $\left((X_i,Y_i)\right)_{1 \leq i \leq n}$ of random variables, we are
interested in estimating the law of the second variable $Y_i \in \SpaceY$
conditionally to the first one $X_i \in \SpaceX$.
In this paper, we assume that
the pairs $(X_i,Y_i)$ are independent while $Y_i$ depends on $X_i$
through its law. More precisely, we assume that the covariates $X_i$
are independent but not necessarily identically distributed. 
Assumptions on the $Y_i$s are stronger: we assume that, conditionally to
the $X_i$s, they are
independents and  each variable $Y_i$ follows a law with density
$s_0(\cdot|X_i)$ with respect to a common known measure $\ud\meas$.
Our goal is to estimate this two-variables conditional density
function $s_0(\cdot|\cdot)$ from the observations.

This problem has been introduced by \textcite{rosenblatt69:_condit} in
the late 60's. He considered a stationary framework in which
$s_0(y|x)$ is linked to the supposed existing 
densities $s_{0'}(x)$ and $s_{0''}(x,y)$  of respectively $X_i$
and $(X_i,Y_i)$ by
\begin{align*}
  s_0(y|x)=\frac{s_{0''}(x,y)}{s_{0'}(x)},
\end{align*}
and proposed a plugin estimate based on kernel estimation of both
$s_{0'}(x)$ and $s_{0''}(x,y)$. Few other references on this subject seem to
exist before the mid 90's with a study of a spline tensor based
maximum likelihood estimator
proposed by \textcite{stone94:_use_polyn_splin_their_tensor}
and a bias correction of
\citeauthor{rosenblatt69:_condit}'s estimator due to
\textcite{hyndman96:_estim}. 

Kernel based method have been much studied since. For instance,
\textcite{fan96:_estim} and \textcite{gooijer03} 
consider local polynomial estimator, 
\textcite{hall99:_method} study a locally logistic estimator
that is later extended by \textcite{hyndman02:_nonpar}. In this
setting, pointwise convergence properties are considered, and extensions to
dependent data are often obtained.
The results depend however
on a critical bandwidth that should be chosen according to the
regularity of the unknown conditional density. Its
practical choice is rarely discussed with the notable exceptions of
\textcite{bashtannyk01:_bandw}, \textcite{fan04:_cross_method_estim_condit_densit} and \textcite{hall04:_cross_valid_estim_condit_probab_densit}. Extensions to censored cases have also
been discussed for instance by \textcite{keilegom02:_densit}. See for
instance \textcite{li07:_nonpar_econom} for a comprehensive review of
this topic.

In the approach of \textcite{stone94:_use_polyn_splin_their_tensor},
the conditional density is estimated through a parametrized
modelization. This idea has been reused since by \textcite{gyorfi07:_nonpar}
with a histogram based approach, by
\textcite{efromovich07:_condit,efromovich10:_oracl}  with a Fourier
basis, and by
\textcite{brunel07:_adapt_estim_condit_densit_presen_censor} and \textcite{akakpo11:_inhom}
with piecewise polynomial representation. Those
authors are able to control an integrated estimation error: with
an integrated total variation loss for the first one and a quadratic distance loss for the others.
Furthermore, in the quadratic framework, they manage to construct
adaptive estimators, estimators that do not require the knowledge of
the regularity to be minimax optimal (up to a logarithmic factor),
using respectively a blockwise attenuation principle and a model
selection by penalization approach. Note that
\textcite{brunel07:_adapt_estim_condit_densit_presen_censor} extend their
result to censored cases while \textcite{akakpo11:_inhom} are able to
consider weakly dependent data. 

In this paper, we consider a direct estimation of the conditional
density function through a maximum likelihood approach. Although
natural, this approach has been considered so far only by \textcite{stone94:_use_polyn_splin_their_tensor}
as mentioned before and by \textcite{blanchard97:_optim} in a classification
setting with histogram type estimators. Assume we have a set $\model_{\indm}$ of
candidate conditional densities, our estimate $\widehat{s}_{\indm}$ will be simply the
maximum likelihood estimate
 \begin{align*}
   \widehat{s}_{\indm} = \argmin_{s_{\indm}\in\model_{\indm}} \left( - \sum_{i=1}^n
   \Log s_{\indm}(Y_i|X_i) \right).
 \end{align*}
Although this estimator may look like a maximum likelihood estimator of the
joint density of $(X_i,Y_i)$, it does not generally coincide, even when the $X_i$s are assumed to be
i.i.d., with such
an estimator as every function of $\model_{\indm}$
is assumed to be a conditional density and not a density. 
 The only exceptions are 
when the $X_i$s are assumed in the model to be i.i.d. uniform or non
random and equal.
Our aim is then to analyze the finite sample performance of such an
estimator in term of Kullback-Leibler type loss. As often, a trade-off
between a bias term measuring the closeness of $s_0$ to the set
$\model_{\indm}$ and a variance term depending on the complexity of the set
$\model_{\indm}$ and on the sample size appears. A good set $\model_{\indm}$ will be
thus one for which this trade-off leads to a small risk bound. Using a
penalized model
selection approach, we propose then a way to select the best model
$\model_{\widehat{\indm}}$ among a collection
$\Models=(\model_\indm)_{\indm\in\indmset}$. For
a given family of penalties
$\pen(\indm)$, we define
the \emph{best} model $\model_{\widehat{\indm}}$ as the one
that minimized
\begin{align*}
  \widehat{\indm} = \argmin_{\indm\in\indmset} \left(- \sum_{i=1}^n
   \Log \widehat{s}_\indm(Y_i|X_i)\right) + \pen(\indm).
\end{align*}
The main result of this paper is a sufficient condition on the penalty
$\pen(\indm)$ such that for any density function $s_0$ and any sample
size $n$ the adaptive estimate $\widehat{s}_{\widehat{\indm}}$
performs almost as well as the best one in the family $\{
\widehat{s}_{\indm}\}_{\indm \in \indmset}$.

The very frequent use of conditional density estimation in
econometrics, see \textcite{li07:_nonpar_econom} for instance,
could have provided a sufficient motivation for this study. However it turns
out that this work stems from a completely different subject: unsupervised
hyperspectral image segmentation. Using the synchrotron beam of
Soleil, the IPANEMA platform\cite{Bertrand:kv5096}, for which one of
the author works,  is able to acquire high quality
hyperspectral images, high resolution images for which a spectrum is
measured at each pixel location. This provides rapidly a huge amount of data
for which an automatic processing is almost necessary. One of this
processing is the segmentation of these images into homogeneous zones,
so that the spectral analysis can be performed on fewer places and the
geometrical structures can be exhibited. The most classical unsupervised
classification method relies on the density estimation of Gaussian
mixture by a maximum likelihood principle. The component of the
estimated mixtures will correspond to classes. In the spirit of
\textcite{kolaczyk05:_multisimage} and \textcite{antoniadis08}, we have extended this
method by taking into account the localization of the pixel in the
mixture weight, going thus from density estimation to conditional
density estimation. As stressed by
\textcite{maugis09:_gauss}, understanding finely the density
estimator is crucial to be able to select the right
number of classes. This theoretical work has been motivated by a similar
issue for the conditional density estimation case.

\Cref{sec:single-model-maximum-1} is devoted to the analysis of the
maximum likelihood estimation in a single model. It starts by
\cref{sec:sett-maxim-likel} in which the setting and some notations are
given. The risk of the maximum likelihood in the classical case of \emph{misspecified} parametric model is
recalled in
\cref{sec:asympt-analys-param}. \Cref{sec:jens-kullb-leibl} provides
some tools required for the extension of this analysis to more general
setting presented in \cref{sec:single-model-maximum}.
We focus then in \ref{sec:model-select-penal} to the multiple model
case. The penalty used is described in \cref{sec:framework} while the
main theorem is given in \cref{sec:gener-theor-penal}.
\Cref{sec:emphp-const-cond} introduces partition-based
conditional density estimator: we use model in which the conditional
density depends from the covariate only in a piecewise constant
manner. We study in details two instances of such model: one in which,
conditionally to the covariate, the densities are piecewise polynomial for the $Y$ variable and the other,
which corresponds to our hyperspectral image segmentation motivation,
in which, again conditionally to the covariate, the densities are
Gaussian mixtures with the same mixture components but different
mixture weights.


\ifthenelse{\boolean{extended}}{}{
\section{Introduction}

Assume we observe $n$ pairs $\left((X_i,Y_i)\right)_{1 \leq i \leq n}$ of random variables, we are
interested in estimating the law of the second one $Y_i \in \SpaceY$,
called variable,
conditionally to the first one $X_i \in \SpaceX$, called covariate.
In this paper, we assume that
the pairs $(X_i,Y_i)$ are independent while $Y_i$ depends on $X_i$
through its law. More precisely, we assume that the covariates $X_i$'s
are independent but not necessarily identically distributed. The
assumptions on the $Y_i$s are stronger: we assume that, conditionally to
the $X_i$'s, they are
independents and  each variable $Y_i$ follows a law with density
$s_0(\cdot|X_i)$ with respect to a common known measure $\ud\meas$.
Our goal is to estimate this two-variable conditional density
function $s_0(\cdot|\cdot)$ from the observations.

This problem has been introduced by \textcite{rosenblatt69:_condit} in
the late 60's. He considered a stationary framework in which
$s_0(y|x)$ is linked to the supposed existing 
densities $s_{0'}(x)$ and $s_{0''}(x,y)$  of respectively $X_i$
and $(X_i,Y_i)$ by
\begin{align*}
  s_0(y|x)=\frac{s_{0''}(x,y)}{s_{0'}(x)},
\end{align*}
and proposed a plugin estimate based on kernel estimation of both
$s_{0'}(x)$ and $s_{0''}(x,y)$. Few other references on this subject seem to
exist before the mid 90's with a study of a spline tensor based
maximum likelihood estimator
proposed by \textcite{stone94:_use_polyn_splin_their_tensor}
and a bias correction of
\citeauthor{rosenblatt69:_condit}'s estimator due to
\textcite{hyndman96:_estim}. 

Kernel based method have been much studied since. For instance,
\textcite{fan96:_estim} and \textcite{gooijer03} 
consider local polynomial estimator, 
\textcite{hall99:_method} study a locally logistic estimator
that is later extended by \textcite{hyndman02:_nonpar}. In this
setting, pointwise convergence properties are considered, and extensions to
dependent data are often obtained.
The results depend however
on a critical bandwidth that should be chosen according to the
regularity of the unknown conditional density. Its
practical choice is rarely discussed with the notable exceptions of
\textcite{bashtannyk01:_bandw}, \textcite{fan04:_cross_method_estim_condit_densit} and \textcite{hall04:_cross_valid_estim_condit_probab_densit}. Extensions to censored cases have also
been discussed for instance by \textcite{keilegom02:_densit}. See for
instance \textcite{li07:_nonpar_econom} for a comprehensive review of
this topic.

In the approach of \textcite{stone94:_use_polyn_splin_their_tensor},
the conditional density is estimated through a parametrized
modelization. This idea has been reused since by \textcite{gyorfi07:_nonpar}
with a histogram based approach, by
\textcite{efromovich07:_condit,efromovich10:_oracl}  with a Fourier
basis, and by
\textcite{brunel07:_adapt_estim_condit_densit_presen_censor} and \textcite{akakpo11:_inhom}
with piecewise polynomial representation. Those
authors are able to control an integrated estimation error: with 
an integrated total variation loss for the first one and a quadratic distance loss for the others.
Furthermore, in the quadratic framework, they manage to construct
adaptive estimators, estimators that do not require the knowledge of
the regularity to be minimax optimal (up to a logarithmic factor),
using respectively a blockwise attenuation principle and a model
selection by penalization approach. Note that
\textcite{brunel07:_adapt_estim_condit_densit_presen_censor} extend their
result to censored cases while \textcite{akakpo11:_inhom} are able to
consider weakly dependent data. 

In this paper, we consider a direct estimation of the conditional
density function through a maximum likelihood approach. 
Although natural, this approach has been considered so far only by \textcite{stone94:_use_polyn_splin_their_tensor}
as mentioned before and by \textcite{blanchard97:_optim} in a classification
setting with histogram type estimators. Assume we have a set $\model_{\indm}$ of
candidate conditional densities, our estimate $\widehat{s}_{\indm}$ is simply the
maximum likelihood estimate
 \begin{align*}
   \widehat{s}_{\indm} = \argmin_{s_{\indm}\in\model_{\indm}} \left( - \sum_{i=1}^n
   \Log s_{\indm}(Y_i|X_i) \right).
 \end{align*}
Although this estimator may look like a maximum likelihood estimator of the
joint density of $(X_i,Y_i)$, it does not generally coincide, even when the $X_i$'s are assumed to be
i.i.d., with such
an estimator as every function of $\model_{\indm}$
is assumed to be a conditional density and not a density.
 The only exceptions are 
when the $X_i$'s are assumed to be i.i.d. uniform or non
random and equal.
Our aim is then to analyze the finite sample performance of such an
estimator in term of Kullback-Leibler type loss. As often, a trade-off
between a bias term measuring the closeness of $s_0$ to the set
$\model_{\indm}$ and a variance term depending on the complexity of the set
$\model_{\indm}$ and on the sample size appears. A good set $\model_{\indm}$ is
thus one for which this trade-off leads to a small risk bound. Using a
penalized model
selection approach, we propose a way to select the best model
$\model_{\widehat{\indm}}$ among a collection
$\Models=(\model_\indm)_{\indm\in\indmset}$. For
a given family of penalties
$\pen(\indm)$, we define
the \emph{best} model $\model_{\widehat{\indm}}$ as the one
that minimizes
\begin{align*}
  \widehat{\indm} = \argmin_{\indm\in\indmset} \left(- \sum_{i=1}^n
   \Log \widehat{s}_\indm(Y_i|X_i)\right) + \pen(\indm).
\end{align*}
The main result of this paper is a sufficient condition on the penalty
$\pen(\indm)$ such that for any density function $s_0$ and any sample
size $n$ the adaptive estimate $\widehat{s}_{\widehat{\indm}}$
performs almost as well as the best one in the family $\{
\widehat{s}_{\indm}\}_{\indm \in \indmset}$.

The very frequent use of conditional density estimation in
econometrics, see \textcite{li07:_nonpar_econom} for instance,
could have provided a sufficient motivation for this study. However it turns
out that this work stems from a completely different subject: unsupervised
hyperspectral image segmentation. Using the synchrotron beam of
Soleil, the IPANEMA platform~\cite{Bertrand:kv5096}, in which one of
the author works,  is able to acquire high quality
hyperspectral images, high resolution images for which a spectrum is
measured at each pixel location. This provides rapidly a huge amount of data
for which an automatic processing is almost necessary. One of these
processings is the segmentation of these images into homogeneous zones,
so that the spectral analysis can be performed on fewer places and the
geometrical structures can be exhibited. The most classical unsupervised
classification method relies on the density estimation of Gaussian
mixture by a maximum likelihood principle. Components of the
estimated mixtures correspond to classes. In the spirit of
\textcite{kolaczyk05:_multisimage} and \textcite{antoniadis08}, we
have extended this
method by taking into account the localization of the pixel in the
mixing proportions, going thus from density estimation to conditional
density estimation. As stressed by
\textcite{maugis09:_gauss,maugis11:_data}, understanding finely the density
estimator is crucial to be able to select the right
number of classes. This theoretical work has been motivated by a similar
issue for the conditional density estimation case.

\Cref{sec:single-model-maximum-1} is devoted to the analysis of the
maximum likelihood estimation in a single model. It starts by
\cref{sec:sett-maxim-likel} in which the setting and some notations are
given. The risk of the maximum likelihood in the classical case of \emph{misspecified} parametric model is
recalled in
\cref{sec:asympt-analys-param}. \Cref{sec:jens-kullb-leibl} provides
some tools required for the extension of this analysis to more general
setting presented in \cref{sec:single-model-maximum}.
We then focus in \ref{sec:model-select-penal} to the multiple model
case. The penalty used is described in \cref{sec:framework} while the
main theorem is given in \cref{sec:gener-theor-penal}.
We conclude by  \cref{sec:an-appl-spat} in which we
describe the application to unsupervised
segmentation that motivated this work and to what extent our
theoretical approach applies. 
The two main theorems of the
first sections are proved
in Appendix while, for sake of space, the proofs of technical lemmas
 are relegated to an extended
technical report~\cite{cohen11:_condit_densit_estim_penal_app}. For
the same reason, a companion
paper\cite{cohen11:_partit_based_condit_densit_estim} has been
dedicated to the study of the spatialized Gaussian mixture example, as
well as another example of \emph{partion}-based strategies.
}

\section{Single model maximum likelihood estimate}
\label{sec:single-model-maximum-1}

\subsection{Framework and notation}
\label{sec:sett-maxim-likel}

Our statistical framework is the following: we
observe $n$ independent pairs
$\left( (X_i,Y_i)\right) _{1 \leq i \leq n} \in \left(\SpaceX,\SpaceY\right)^n$ where the $X_i$'s are
independent, but not necessarily of the same law, and, conditionally to
$X_i$,
 each $Y_i$ is a random variable of unknown conditional
density $s_0(\cdot|X_i)$ with respect to a known reference measure $\ud \meas$.
For any model $\model_{\indm}$, a set comprising some candidate conditional densities,  
we estimate $s_0$
by the conditional
density $\widehat{s}_\indm$ that maximizes the likelihood
(conditionally to $\left( X_i\right) _{1 \leq i \leq n}$) or
equivalently that minimizes the opposite of the log-likelihood,
denoted -log-likelihood from now on:
\begin{align*}
  \widehat{s}_\indm = \argmin_{s_{\indm} \in \model_{\indm}} \left(  \sum_{i=1}^n - \Log(s_\indm(Y_i|X_i) ) \right). 
\end{align*}
To avoid existence issue, we should work with almost minimizer of this
quantity and define a $\rhoapp$ -log-likelihood minimizer as any
$\widehat{s}_\indm$ that satisfies
\begin{align*}
\sum_{i=1}^n - \Log(\widehat{s}_\indm(Y_i|X_i) ) \leq
\inf_{s_\indm \in
  \model_\indm} \left(  \sum_{i=1}^n - \Log(s_\indm(Y_i|X_i) ) \right)
+ \rhoapp.
\end{align*}

We should now specify our \emph{goodness} criterion. We are working
with a maximum likelihood approach, the most natural quality measure is thus
the Kullback-Leibler divergence $\KL$. As we consider law with densities with
respect to the known measure $\ud\meas$, we use the following notation
\begin{align*}
  \KL_{\meas}(s,t) = \KL(s \ud\meas,t \ud\meas) & = \begin{cases} -\int_{\Omega}  \Log \left(\frac{t}{s}\right) s\,\ud\meas &
    \text{if $s \ud\meas \ll t \ud\meas$}
\\
+ \infty & \text{otherwise}
\end{cases}
\end{align*}
where $s \ud\meas \ll t \ud\meas$ means $\Leftrightarrow\forall\Omega'
      \subset \Omega, \int_{\Omega'} t \ud\meas=0 \implies
      \int_{\Omega'} s \ud\meas =0$.
Remark that, contrary to the quadratic loss, this divergence is an
intrinsic quality measure between probability laws: it does not
depend on the reference measure $\ud\meas$. However, The densities
depend on this reference measure, this is stressed by the
index $\meas$ when we work with the non intrinsic densities instead of
the probability measures.
As we deal with conditional densities and not  classical
densities, the previous divergence should be adapted.
To take into account the structure of conditional densities and the
design of $(X_i)_{1\leq i \leq n}$, we 
use the following \emph{tensorized} divergence:
\begin{align*}
\KLtens_{\meas}(s,t) & 
= \E
\left[ \frac{1}{n}\sum_{i=1}^n
  \KL_{\meas}(s(\cdot|X_i),t(\cdot|X_i))\right].
\end{align*}
This divergence appears as the natural one in this setting and reduces
to classical ones in specific settings:
\begin{itemize}
\item If the law of $Y_i$ is independent of $X_i$, that is $s(\cdot|X_i)=s(\cdot)$ and $t(\cdot|X_i)=t(\cdot)$ do not depend
  on $X_i$, these divergences reduce to the classical
  $\KL_{\meas}(s,t)$.
\item If the $X_i$'s are not random but fixed, that is
  we consider a fixed design case, this divergence is the
  classical fixed design type divergence in which there is no expectation. 
\item If the $X_i$'s are i.i.d., this divergence is nothing but
\begin{alignI}
 \KLtens_{\meas}(s,t)  =
\E
\left[  \KL_{\meas}(s(\cdot|X_1),t(\cdot|X_1))\right].
\end{alignI}
\end{itemize}
Note that this divergence is an \emph{integrated} divergence as it is the
average over the index $i$ of the mean with respect to the law of $X_i$
of the divergence between the conditional densities for a given
covariate value. Remark in particular that more weight is given to
regions of high density of the covariates than to regions of low density
and, in particular, the values of the
divergence outside the supports of the $X_i$'s are not used.
In particular, if we assume that
each $X_i$ has a law with density with respect to a common finite positive
measure $\mu$ and that all those densities are lower and upper bounded
then all our results hold, up to modification in constants, by replacing the definition of $
\KLtens_{\meas}(s,t)$ (and their likes) by the more classical
\begin{align*}
   \KLtens_{\meas}(s,t) = \int_{\SpaceX} \KL(s(\cdot|x),t(\cdot|x)) \ud\mu.
\end{align*}
We stress that these types of loss is similar to the one used in the
machine-learning community (see for instance
\textcite{catoni07:_pac_bayes_super_class} that has inspired our
notations). Such kind of losses appears also, but less often, in regression with random
design (see for instance \textcite{birge04:_model_gauss}) or in
other conditional density estimation studies (see for instance
\textcite{brunel07:_adapt_estim_condit_densit_presen_censor} and \textcite{akakpo11:_inhom}).
When $\hat{s}$ is an
estimator, or any function that depends on the observation,
$\KLtens_{\meas}(s,\hat{s})$ measures this (random) integrated divergence between $s$ and
$\hat{s}$ conditionally to the observation while $\E\left[\KLtens_{\meas}(s,\hat{s})\right]$ is the
average of this random quantity with respect to the observations.

\subsection{Asymptotic analysis of a parametric model}
\label{sec:asympt-analys-param}

Assume that $\model_\indm$ is a parametric model of conditional densities, 
\begin{align*}
\model_{\indm} = \left\{
s_{\theta_\indm}(y|x) \middle| 
\theta_\indm \in \Theta_\indm \subset
\mathbb{R}^{\DimH_{\indm}} \right\},
\end{align*}
to which the true conditional density $s_0$ does not necessarily
belongs. 
In this case, if we let
\begin{align*}
  \widehat{\theta}_{\indm} = \argmin_{\theta_{\indm}\in\Theta_{\indm}} \left(  \sum_{i=1}^n - \Log(s_{\theta_{\indm}}(Y_i|X_i) ) \right)
\end{align*}
then $\widehat{s}_{\indm}=s_{\widehat{\theta}_{\indm }}$.
\textcite{white92:_maxim_likel_estim_missp_model} has studied
this \emph{misspecified model} setting for density estimation but its results can
easily been extended to the conditional density case.

If the model is identifiable and under some (strong) regularity assumptions on $\theta_\indm \mapsto
s_{\theta_\indm}$, provided the $\DimH_{\indm}\times\DimH_{\indm}$ matrices $A(\theta_m)$ and $B(\theta_m)$ defined by
\begin{align*}
  A(\theta_m)_{k,l} &= \E
\left[ \frac{1}{n}\sum_{i=1}^n
\int \frac{-\partial^2 \log
s_{\theta_{\indm}}}{\partial \theta_{\indm,k}\partial \theta_{\indm,l}} (y|X_i)\, s_0(y|X_i) \ud\meas
\right]\\
  B(\theta_m)_{k,l} &= \E
\left[ \frac{1}{n}\sum_{i=1}^n
\int \frac{\partial \log
s_{\theta_{\indm}}}{\partial \theta_{\indm,k}}(y|X_i)\,\frac{\partial \log
s_{\theta_{\indm}}}{\partial \theta_{\indm,l}}(y|X_i)\,  s_0(y|X_i) \ud\meas
\right]
\end{align*}
exists, the analysis of
\textcite{white92:_maxim_likel_estim_missp_model} implies that,
if we let
\begin{align*}
  \theta_{\indm}^{\star} = \argmin_{\theta_{\indm} \in \Theta_{\indm}}  \KLtens_{\lambda}(s_0,s_{\theta_{\indm}}),
\end{align*}
$\E\left[
\KLtens_{\lambda}(s_0,\widehat{s}_{\indm})
\right]
$ is asymptotically equivalent to
\begin{align*}
\KLtens_{\lambda}(s_0,s_{\theta_{\indm}^{\star}})
+ \frac{1}{2n}\mathop{\textrm{Tr}}(B(\theta_{\indm}^{\star})A(\theta_{\indm}^{\star})^{-1}).
\end{align*}
When $s_0$ belongs to the model, i.e. $s_0=s_{\theta_{\indm}^{\star}}$,
$B(\theta_{\indm}^{\star})=A(\theta_{\indm}^{\star})$ and 
thus the previous asymptotic equivalent of $\E\left[
\KLtens_{\lambda}(s_0,\widehat{s}_{\indm})
\right]
$ is 
 the classical parametric one
\begin{align*}
\min_{\theta_{\indm}} \KLtens_{\lambda}(s_0,s_{\theta_{\indm}})
+ \frac{1}{2n} \DimH_{\indm}.
\end{align*}
This simple expression does not hold when $s_0$ does not belong to
the parametric model as
$\mathop{\textrm{Tr}}(B(\theta_{\indm}^{\star})A(\theta_{\indm}^{\star})^{-1})$
cannot generally be simplified.  

A short glimpse on
the proof of the previous result shows that it depends heavily on the
asymptotic normality of
$\sqrt{n}(\widehat{\theta}_{\indm}-\theta_{\indm}^{\star})$. One may
wonder if extension of this result, often called the Wilk's
phenomenon~\cite{wilks38}, exists when this normality does not hold,
for instance in non parametric case or when the model is not identifiable. 
Along
these lines, \textcite{fan01:_gener_wilks} propose a
generalization of the corresponding Chi-Square goodness-of-fit test in
several settings and \textcite{boucheron10:_wilks} study the finite
sample deviation
of the corresponding empirical quantity in a bounded loss setting.

Our aim is to derive a non asymptotic
upper bound of type
\begin{align*}
  \E\left[
\KLtens_{\lambda}(s_0,\widehat{s}_{\indm})
\right] \leq  \left( \min_{s_{\indm} \in \model_{\indm}}
  \KLtens_{\lambda}(s_0,s_{\indm}) + \frac{1}{2n}
  \DimH_{\indm} \right) + \constoratwo \frac{1}{n}
\end{align*}
with as few assumptions on the conditional density set $\model_{\indm}$ as possible.
Note that we only aim at having an upper bound and do not focus on
the (important) question of the existence of a corresponding lower bound.

Our answer is far from definitive, the upper bound we obtained is the
following weaker one
\begin{align*}
\E\left[\JKLtens_{\propJKL,\meas}(s_0,\widehat{s}_{\indm})\right]
\leq (1+\epsilon) \left( \inf_{s_\indm
    \in \model_\indm} \KLtens_{\meas}(s_0,s_\indm) + \frac{\constpenmin}{n}
  \DIMH_{\indm} \right) + \constoratwo \frac{1}{n}
\end{align*}
in which the left-hand $\KLtens_{\meas}(s_0,\widehat{s}_{\indm})$
has been replaced by a smaller divergence
$\JKLtens_{\propJKL,\lambda}(s_0,\widehat{s}_{\indm})$ described
below, $\epsilon$ can be chosen arbitrary small, $\DIMH_{\indm}$ is a
model complexity term playing the role of the dimension $\DimH_{\indm}$ and
$\constpenmin$ is a constant that depends on $\epsilon$. This result
has nevertheless the right bias/variance trade-off flavor and can be
used to recover usual minimax properties of specific estimators.

\subsection{Jensen-Kullback-Leibler divergence and bracketing entropy}
\label{sec:jens-kullb-leibl}

The main visible loss is the use of a divergence smaller than the
Kullback-Leibler one (but larger than the squared Hellinger
distance and the squared $L_1$ loss whose definitions are recalled later).
Namely, we use the Jensen-Kullback-Leibler divergence
$\JKL_{\propJKL}$ with $\propJKL\in(0,1)$ defined by
\begin{align*}
  \JKL_{\propJKL}(s\ud\meas,t\ud\meas) = \JKL_{\propJKL,\meas}(s,t)= \frac{1}{\propJKL}
  \KL_{\meas}\left( s, (1-\propJKL) s+ \propJKL t \right).
\end{align*}
Note that this divergence appears explicitly with
$\propJKL=\frac{1}{2}$ in~\textcite{massart07:_concen}, but can also be found
implicitly in  \textcite{birge98:_minim} and \textcite{geer95}. We use
the name Jensen-Kullback-Leibler divergence in the same way
\textcite{lin91:_diver_shann} uses the name Jensen-Shannon divergence
for a sibling in his information theory work. The main tools in the
proof of the previous inequality are deviation inequalities for sums of
random variables and their suprema. Those tools require a boundness
assumption on the controlled functions that is not satisfied by the
-log-likelihood differences $-\Log \frac{s_{\indm}}{s_0}$. When
considering the Jensen-Kullback-Leibler divergence, those ratios are
implicitly replaced by ratios $-\frac{1}{\propJKL}\Log \frac{(1-\propJKL) s_0 + \propJKL
  s_{\indm}}{s_0}$ that are close to the -log-likelihood
differences when the $s_{\indm}$ are close to $s_0$ and always upper
bounded by
$-\frac{\Log(1-\propJKL)}{\propJKL}$.
This divergence is
smaller than the Kullback-Leibler one but larger, up to a constant
factor,
 than the squared Hellinger one,  $\d^{2}_{\meas}(s,t)=\int_{\Omega}
|\sqrt{s}-\sqrt{t}|^2 \ud\meas$, and the squared $L_1$ distance,
$\|s-t\|_{\meas,1}^2= \left(\int_{\Omega}
|s-t| \ud\meas\right)^2$, as proved
\ifthenelse{\boolean{extended}}{in Appendix}{
in our
technical report~\cite{cohen11:_condit_densit_estim_penal_app}}
\begin{proposition}
\label{prop:hellingerkull2} 
  For any probability measures $s \ud\meas$ and $t \ud\meas$ and any $\propJKL\in(0,1)$
  \begin{align*}
\ConstHellKL_{\propJKL}\,
  \d_{\meas}^2(s,t)  \leq \JKL_{\propJKL,\meas}(s,t) \leq \KL_{\meas}(s,t).
  \end{align*}
with 
\begin{alignI}
\ConstHellKL_{\propJKL}
=  \frac{1}{\propJKL} \min\left(\frac{1-\propJKL}{\propJKL},1\right)
\left( \Log \left(1 + \frac{\propJKL}{1-\propJKL}\right) -
  \propJKL\right)
\end{alignI}
while
  \begin{align*}
  \max(\ConstHellKL_{\propJKL}/4,\propJKL/2) \|s-t\|_{\meas,1}^2 \leq  \JKL_{\propJKL,\meas}(s,t) \leq \KL_{\meas}(s,t).
  \end{align*}

Furthermore, if $s \ud\meas \ll t \ud\meas$ then
\begin{align*}
  \d_{\meas}^2(s,t) &\leq \KL_{\meas}(s,t) \leq \left( 2 + \Log \left\| \frac{s}{t}
  \right\|_{\infty} \right) \d_{\meas}^2(s,t)
\end{align*}
while
\begin{align*}
  \frac{1}{2}\|s-t\|_{\meas,1}^2 &\leq \KL_{\meas}(s,t) \leq \left\|\frac{1}{t}\right\|_{\infty} \|s-t\|_{\meas,2}^2.
\end{align*}

\end{proposition}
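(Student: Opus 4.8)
The plan is to prove the six displayed inequalities by reducing each to an elementary one-variable estimate; the only delicate one is the lower bound $\ConstHellKL_{\propJKL}\,\d_{\meas}^2\le\JKL_{\propJKL,\meas}$, where the exact value of the constant has to be extracted.

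For the block that only involves $\KL_\meas$, $\d_\meas^2$, $\|\cdot\|_{\meas,1}$ and $\|\cdot\|_{\meas,2}$, I would start from the representation $\KL_\meas(s,t)=\int s\,\varphi(t/s)\,\ud\meas$, with $\varphi(u)=u-1-\Log u\ge 0$, valid whenever $s\,\ud\meas\ll t\,\ud\meas$ (using $\int(s-t)\,\ud\meas=0$, both being probability densities). Then $\d_\meas^2\le\KL_\meas$ follows from $-\Log v\ge 2(1-\sqrt v)$ (equivalently $\Log\sqrt v\le\sqrt v-1$) integrated against $s\,\ud\meas$, since $\int s\,\ud\meas=\int t\,\ud\meas=1$ turns $\int 2s(1-\sqrt{t/s})\,\ud\meas$ into $2-2\int\sqrt{st}\,\ud\meas=\d_\meas^2(s,t)$; $\KL_\meas(s,t)\le\|1/t\|_\infty\|s-t\|_{\meas,2}^2$ follows from $\Log u\le u-1$, which gives $\KL_\meas(s,t)\le\int\frac{(s-t)^2}{t}\,\ud\meas$; $\tfrac12\|s-t\|_{\meas,1}^2\le\KL_\meas$ is Pinsker's inequality; and $\KL_\meas(s,t)\le(2+\Log\|s/t\|_\infty)\,\d_\meas^2(s,t)$ is the classical Birg\'e--Massart estimate, obtained by checking the pointwise bound $\varphi(u)\le(2+\Log\tfrac1m)(1-\sqrt u)^2$ for $u\ge m:=\|s/t\|_\infty^{-1}$ and integrating. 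All of this is routine.

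For $\JKL_{\propJKL,\meas}$, write $m=(1-\propJKL)s+\propJKL t$. The inequality $\JKL_{\propJKL,\meas}\le\KL_\meas$ is convexity of $-\Log$: pointwise on $\{s>0\}$, $-\Log\bigl((1-\propJKL)+\propJKL\tfrac ts\bigr)\le\propJKL(-\Log\tfrac ts)$, hence $\KL_\meas(s,m)\le\propJKL\KL_\meas(s,t)$ and $\JKL_{\propJKL,\meas}(s,t)=\tfrac1\propJKL\KL_\meas(s,m)\le\KL_\meas(s,t)$. For the lower bound I would use that $m\ge(1-\propJKL)s$ forces $s\,\ud\meas\ll m\,\ud\meas$, that $\{s=0\}$ contributes nothing to $\KL_\meas(s,m)$, and the two identities (the second again using $\int s\,\ud\meas=\int t\,\ud\meas=1$)
\[
\JKL_{\propJKL,\meas}(s,t)=\frac1\propJKL\int_{\{s>0\}}s\,\bigl(-\Log(1-\propJKL+\propJKL\tfrac ts)\bigr)\,\ud\meas,\qquad
\d_\meas^2(s,t)=2\int_{\{s>0\}}s\,\bigl(1-\sqrt{\tfrac ts}\,\bigr)\,\ud\meas .
\]
Writing $\tau_0:=\int_{\{s=0\}}t\,\ud\meas\ge 0$, so that $\int_{\{s>0\}}s(\tfrac ts-1)\,\ud\meas=-\tau_0$, one obtains
\[
\JKL_{\propJKL,\meas}(s,t)-\ConstHellKL_\propJKL\,\d_\meas^2(s,t)=\frac1\propJKL\int_{\{s>0\}}s\,F\!\Bigl(\tfrac ts\Bigr)\,\ud\meas+(1-\ConstHellKL_\propJKL)\,\tau_0 ,
\]
where $F(x)=-\Log(1-\propJKL+\propJKL x)-2\propJKL\ConstHellKL_\propJKL(1-\sqrt x)+\propJKL(1-\ConstHellKL_\propJKL)(x-1)$; the coefficient $1-\ConstHellKL_\propJKL$ of $x-1$ is precisely the one making $F'(1)=0$, and $F(1)=0$. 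Since one checks $\ConstHellKL_\propJKL\le 1$, the term $(1-\ConstHellKL_\propJKL)\tau_0$ is nonnegative, so it suffices to prove the scalar inequality $F(x)\ge 0$ for all $x\ge 0$. This --- together with verifying that the largest admissible constant is exactly $\ConstHellKL_\propJKL=\tfrac1\propJKL\min\bigl(\tfrac{1-\propJKL}{\propJKL},1\bigr)\bigl(\Log(1+\tfrac{\propJKL}{1-\propJKL})-\propJKL\bigr)$ (recall $\Log(1+\tfrac{\propJKL}{1-\propJKL})=-\Log(1-\propJKL)$), the two branches of the $\min$ corresponding to whether the binding constraint sits at the endpoint $x=0$ (the case $\propJKL\le\tfrac12$) or at an interior critical point of $F$ ($\propJKL>\tfrac12$) --- is the main obstacle: it is elementary calculus, but pinning down the constant requires some care.

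Finally, the $L_1$ bounds for $\JKL$: Cauchy--Schwarz gives $\|s-t\|_{\meas,1}^2=\bigl(\int|\sqrt s-\sqrt t|(\sqrt s+\sqrt t)\,\ud\meas\bigr)^2\le\d_\meas^2(s,t)\int(\sqrt s+\sqrt t)^2\,\ud\meas\le 4\,\d_\meas^2(s,t)$ (since $\int(\sqrt s+\sqrt t)^2\,\ud\meas=2+2\int\sqrt{st}\,\ud\meas\le 4$), so the core bound yields $\tfrac{\ConstHellKL_\propJKL}{4}\|s-t\|_{\meas,1}^2\le\JKL_{\propJKL,\meas}(s,t)$; and Pinsker applied to $\KL_\meas(s,m)$ together with $\|s-m\|_{\meas,1}=\propJKL\|s-t\|_{\meas,1}$ gives $\JKL_{\propJKL,\meas}(s,t)=\tfrac1\propJKL\KL_\meas(s,m)\ge\tfrac{\propJKL}{2}\|s-t\|_{\meas,1}^2$. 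Taking the maximum of the two constants completes the proof.
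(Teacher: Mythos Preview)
Your treatment of the routine parts---convexity for $\JKL_{\rho,\meas}\le\KL_\meas$, Pinsker for the $\rho/2$ bound, $d_\meas^2\ge\tfrac14\|s-t\|_{\meas,1}^2$ for the $C_\rho/4$ bound, and the standard $\KL$--Hellinger and $\KL$--$L_2$ estimates---matches the paper.

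The substantive divergence is the core inequality $C_\rho\,d_\meas^2\le\JKL_{\rho,\meas}$. You parametrise by $x=t/s$ against $s\,\ud\meas$ and correctly reduce to a pointwise bound $F(x)\ge0$, but then stop, calling this ``the main obstacle.'' The paper instead changes to the mixture measure $\ud\meas'=m\,\ud\meas$, $m=(1-\rho)s+\rho t$, and parametrises by $u=(s-t)/m\in[-1/\rho,\,1/(1-\rho)]$. In those coordinates (using $\int u\,\ud\meas'=0$) one has $\JKL=\tfrac1\rho\int\Phi(\rho u)\,\ud\meas'$ with $\Phi(v)=(1+v)\Log(1+v)-v$, and $d^2=2\int g(u)\,\ud\meas'$ with $g(u)=1-\sqrt{1+(2\rho-1)u-\rho(1-\rho)u^2}+(\rho-\tfrac12)u\ge0$. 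The key idea you are missing is to factor the comparison through $u^2$: the monotonicity of $\Phi(v)/v^2$ on $[-1,\infty)$ gives $\Phi(\rho u)\ge(1-\rho)\bigl(\Log\tfrac{1}{1-\rho}-\rho\bigr)u^2$, and separately $g(u)\le\tfrac{\max(\rho,1-\rho)}{2}u^2$. The ratio of these two constants is exactly $C_\rho$. Each verification involves only one transcendental function, which is why the paper can dispatch them in a line, whereas your $F$ mixes $\Log$ and $\sqrt{\cdot}$ and resists a clean argument.

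Your reduction is equivalent to the paper's after the substitution $x\leftrightarrow u$, so $F\ge0$ is in fact true; but the $u^2$ bridge is the missing structural idea that makes it provable. One caution: your heuristic that for $\rho>\tfrac12$ the constant is pinned down by an interior critical point of $F$ is not how $C_\rho$ arises in the paper's derivation---there the $\min$ comes from the Hellinger-side bound being tight at one endpoint of $u$ or the other---and the paper makes no claim that $C_\rho$ is sharp in that regime.
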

More precisely, as we are in a conditional density setting, we use their \emph{tensorized} versions
\begin{align*}
\dtwotens_{\meas}(s,t) & =
\E \left[ \frac{1}{n}\sum_{i=1}^n
  \d^2_{\meas}(s(\cdot|X_i),t(\cdot|X_i))\right]
&
\text{and}&&
\JKLtens_{\propJKL,\meas}(s,t) & 
= \E \left[ \frac{1}{n}\sum_{i=1}^n
  \JKL_{\propJKL,\meas}(s(\cdot|X_i),t(\cdot|X_i))\right].
\end{align*}

We focus now on the definition of the model complexity $\DIMH_{\indm}$. 
It involves a  bracketing entropy condition on
the model $\model_\indm$ with respect to the Hellinger type  divergence
$\dtens_{\meas}(s,t)=\sqrt{\dtwotens_{\meas}(s,t)}$. A bracket $[t^-,t^+]$ 
is a pair of functions such that $\forall (x,y) \in
\SpaceX\times\SpaceY, t^-(y|x) \leq t^+(y|x)$. A conditional density function
$s$ is said to belong to the bracket $[t^-,t^+]$ if $\forall (x,y) \in
\SpaceX\times\SpaceY, t^-(y|x)\leq
s(y|x) \leq t^+(y|x)$. The bracketing entropy
$H_{[\cdot],\dtens_{\meas}}(\delta,\setmodel)$ of a set $\setmodel$ is defined as the logarithm of the minimum number
of brackets $[t^-,
t^+]$ of width $\dtens_{\meas}(t^-,t^+)$ smaller than $\delta$ 
such that every function of $\setmodel$ belongs to one of these brackets.
 $\DIMH_{\indm}$ depends on the bracketing entropies not of the global
 models $\model_{\indm}$ but of
the ones of smaller localized sets $\model_\indm(\widetilde{s},\sigma)=\left\{ s_\indm \in
\model_\indm \middle| \dtens_{\meas}(\widetilde{s},s_\indm) \leq \sigma
\right\}$. Indeed, we impose a structural assumption:
\begin{assumptionHm}
 There is
 a non-decreasing function
 $\phi_\indm(\delta)$ such
that $\delta\mapsto \frac{1}{\delta} \phi_\indm(\delta)$ is non-increasing on $(0,+\infty)$ and for
every $\sigma\in\R^+$ and every $s_\indm \in \model_\indm$
\begin{align*}
  \int_0^{\sigma} \sqrt{%
    H_{[\cdot],\dtens_{\meas}}\left(\delta,\model_\indm(s_\indm,\sigma)\right)}
  \, \ud\delta \leq \phi_\indm(\sigma).
\end{align*}
\end{assumptionHm}
Note that the function $\sigma\mapsto \int_0^{\sigma} \sqrt{%
    H_{[\cdot],\dtens_{\meas}}\left(\delta,\model_\indm\right)}
  \, \ud\delta$ does always satisfy this assumption.
$\DIMH_{\indm}$ is then defined as $n\sigma^2_{\indm}$ with $\sigma^2_{\indm}$ the unique root
of \begin{alignI}
\frac{1}{\sigma}\phi_\indm(\sigma)=\sqrt{n}\sigma.
\end{alignI}
A good choice of $\phi_{\indm}$ is one which leads to a small upper
bound of $\DIMH_{\indm}$.
This bracketing entropy integral, often call Dudley integral, plays an
important role in empirical processes theory, as stressed for instance
in \textcite{vaart96:_weak_conver} and in \textcite{kosorok08:_introd_empir_proces_semip_infer}. The equation defining
$\sigma_{\indm}$ corresponds to a crude optimization of a
supremum bound as shown explicitly in the proof.
This definition is obviously far from being very explicit but it turns
out that it can be related to an entropic dimension of the
model. Recall that the classical entropy dimension of a compact set $S$
with respect to a metric $d$
can be defined as the smallest non negative real $\DimH$ such that
there is a non negative $\ConstH$ such that
\begin{align*}
\forall \delta > 0, H_{d}(\delta,S) \leq  \ConstH+ \DimH \log \left(\frac{1}{\delta}\right)
\end{align*}
where $H_{d}$ is the classical entropy with respect to metric
$d$. The parameter $\ConstH$ can be interpreted as the logarithm of the volume of the set.
Replacing the classical entropy by a bracketing one, we define
the bracketing dimension $\DimH_{\indm}$ of a compact set as the
smallest real $\DimH$ such that there is a $\ConstH$ such 
\begin{align*}
\forall \delta > 0, H_{[\cdot],d}(\delta,S) \leq  \ConstH+ \DimH \log \left(\frac{1}{\delta}\right).
\end{align*}
As hinted by the notation, for parametric model, under mild assumption on the parametrization,
this bracketing dimension coincides with the usual one. Under such
assumption, one can prove that
$\DIMH_{\indm}$ is proportional to $\DimH_{\indm}$. More precisely, 
working
  with the localized set $\model_{\indm}(s,\sigma)$ instead of
  $\model_{\indm}$,
we obtain 
\ifthenelse{\boolean{extended}}{in Appendix,}{
in our technical report~\cite{cohen11:_condit_densit_estim_penal_app},}
 we obtain
\begin{proposition}
\label{prop:proporsimple}
\begin{itemize}
\item
if
\begin{alignI}
\exists \DimH_{\indm} \geq 0, \exists \ConstMultH_{\indm} \geq 0,
\forall \delta \in (0,\sqrt{2}],  H_{[\cdot],\dtens_{\meas}}(\delta,\model_{\indm}) \leq
\ConstH_{\indm} + \DimH_{\indm}  \Log \frac{1}{\delta}
\end{alignI}
then  
\begin{itemize}
\item if $\DimH_{\indm} > 0$,
\Hindm\ holds with
\begin{alignI}
  \DIMH_{\indm}  \leq \left( 2 \constspatgausslogindm
+ 1 + \left(\Log
    \frac{n}{e\constspatgausslogindm \DimH_{\indm}}\right)_+ \right)
\DimH_{\indm}
\end{alignI}
with $\constspatgausslogindm=\left(\sqrt{\frac{\ConstH_{\indm}}{\DimH_{\indm}}} +
    \sqrt{\pi}\right)^2$,
\item if $\DimH_{\indm}=0$, \Hindm\ holds with $\phi_\indm(\sigma)=\sigma\sqrt{\ConstH_{\indm}}$ such that \begin{alignI}
  \DIMH_{\indm}  = \ConstH_{\indm},
\end{alignI}
\end{itemize}
\item
if
\begin{alignI}
\exists \DimH_{\indm} \geq  0, \exists \ConstH_{\indm} \geq 0, \forall
\sigma \in (0,\sqrt{2}], \forall \delta \in (0,\sigma],   H_{[\cdot],\dtens_{\meas}}(\delta,\model_{\indm}(s_{\indm},\sigma)) \leq
 \ConstH_{\indm} + \DimH_{\indm} \Log
   \frac{\sigma}{\delta}
\end{alignI}
then 
\begin{itemize}
\item
if $\DimH_{\indm}>0$, \Hindm\  holds with $\phi_\indm$ such that \begin{alignI}
  \DIMH_{\indm}  = \constspatgausslogindm
    \DimH_{\indm}
\end{alignI}
with $\constspatgausslogindm=\left(\sqrt{\frac{\ConstH_{\indm}}{\DimH_{\indm}}} +
    \sqrt{\pi}\right)^2$,
\item if $\DimH_{\indm}=0$, \Hindm\ holds with
$\phi_\indm(\sigma)=\sigma\sqrt{\ConstH_{\indm}}$ such that \begin{alignI}
  \DIMH_{\indm}  = \ConstH_{\indm}.
\end{alignI}
\end{itemize}
\end{itemize}
\end{proposition}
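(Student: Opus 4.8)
The plan is to exhibit, in each of the four cases, an admissible companion function $\phi_\indm$ for \Hindm\ and then to read off $\DIMH_\indm=n\sigma_\indm^2$ by solving the defining equation $\phi_\indm(\sigma)/\sigma=\sqrt n\,\sigma$. The first move is to reduce the Dudley integral to one with a scale-free integrand. In the first case the hypothesis concerns $\model_\indm$ itself, but since $\model_\indm(s_\indm,\sigma)\subseteq\model_\indm$ and $\delta\mapsto H_{[\cdot],\dtens_\meas}(\delta,\cdot)$ is non-increasing, one has $H_{[\cdot],\dtens_\meas}(\delta,\model_\indm(s_\indm,\sigma))\le\widebar H(\delta):=\ConstH_\indm+\DimH_\indm(\Log(1/\delta))_+$ for every $\delta>0$; the clipping at $\delta=1$ is legitimate because Hellinger-type distances between probability densities never exceed $\sqrt2$, so the assumed bound on $(0,\sqrt2]$ already forces $H_{[\cdot],\dtens_\meas}(\delta,\model_\indm)\le\ConstH_\indm$ once $\delta\ge1$. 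In the second case the bound is already localized, and the change of variable $\delta=\sigma u$ makes the integrand scale-free up to the overall factor $\sigma$.

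Next I would choose $\phi_\indm$. In the first case set $\phi_\indm(\sigma)=\int_0^\sigma\sqrt{\widebar H(\delta)}\,\ud\delta$, and in the second $\phi_\indm(\sigma)=\sigma\sqrt{\DimH_\indm\constspatgausslogindm}$ (with $\phi_\indm(\sigma)=\sigma\sqrt{\ConstH_\indm}$ in the degenerate $\DimH_\indm=0$ subcases). Each of these is non-decreasing, and $\phi_\indm(\sigma)/\sigma$ is non-increasing --- in the first case because it is the running average over $(0,\sigma)$ of the non-increasing function $\delta\mapsto\sqrt{\widebar H(\delta)}$, and it is constant in the others --- so \Hindm\ holds with the defining integral bound satisfied by construction. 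What remains is an upper estimate of $\phi_\indm(\sigma)/\sigma$: using $\sqrt{a+b}\le\sqrt a+\sqrt b$ together with the elementary value $\int_0^\sigma\sqrt{\Log(\sigma/\delta)}\,\ud\delta=\Gamma(3/2)\,\sigma=\tfrac{\sqrt\pi}{2}\sigma$ and its incomplete-Gamma refinement $\Gamma(3/2,x)\le e^{-x}(\sqrt x+\tfrac{\sqrt\pi}{2})$ (needed to keep $\ConstH_\indm$ under the square root after the substitution $\delta=e^{-u}$), one gets $\phi_\indm(\sigma)/\sigma\le\sqrt{\DimH_\indm\constspatgausslogindm}+\sqrt{\DimH_\indm\Log(1/\sigma)}$ for $\sigma\le1$ in the first case, and the identity $\phi_\indm(\sigma)/\sigma=\sqrt{\DimH_\indm\constspatgausslogindm}$ in the second (the factor $e^{\ConstH_\indm/\DimH_\indm}\mathrm{erfc}(\sqrt{\ConstH_\indm/\DimH_\indm})\le1$ being absorbed by replacing $\tfrac{\sqrt\pi}{2}$ by $\sqrt\pi$).

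Finally I would solve $\phi_\indm(\sigma)/\sigma=\sqrt n\,\sigma$. In the second case and the $\DimH_\indm=0$ subcases $\phi_\indm(\sigma)/\sigma$ is constant, so immediately $\sigma_\indm^2=\DimH_\indm\constspatgausslogindm/n$ (resp.\ $\ConstH_\indm/n$) and $\DIMH_\indm=n\sigma_\indm^2=\constspatgausslogindm\DimH_\indm$ (resp.\ $\ConstH_\indm$). In the first case, evaluating the previous upper bound at $\sigma_\indm$, substituting $\sqrt n\,\sigma_\indm=\phi_\indm(\sigma_\indm)/\sigma_\indm$, and squaring with $(A+B)^2\le2A^2+2B^2$, gives the self-referential inequality $\DIMH_\indm\le 2\constspatgausslogindm\DimH_\indm+\DimH_\indm\Log(n/\DIMH_\indm)$; since its right-hand side is decreasing in $\DIMH_\indm$, this bounds $\DIMH_\indm$ by the fixed point of that map, and a short case split on the sign of $\Log(n/\DIMH_\indm)$ (using $\DIMH_\indm\ge 2\constspatgausslogindm\DimH_\indm$ when that logarithm is positive, together with $\Log\tfrac e2<1$) converts it into $\DIMH_\indm\le\bigl(2\constspatgausslogindm+1+(\Log\tfrac{n}{e\constspatgausslogindm\DimH_\indm})_+\bigr)\DimH_\indm$. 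The boundary regime where the root $\sigma_\indm$ exceeds $1$ (very small $n$) is handled separately: there $\phi_\indm$ is the affine continuation $\phi_\indm(1)+(\sigma-1)\sqrt{\ConstH_\indm}$, and dividing the fixed-point equation by $\sigma_\indm$ once more, using $\sigma_\indm>1$ and $\phi_\indm(1)\ge\sqrt{\ConstH_\indm}$, yields directly $\sqrt n\,\sigma_\indm\le\phi_\indm(1)$, hence $\DIMH_\indm\le\phi_\indm(1)^2\le(\sqrt{\ConstH_\indm}+\sqrt{\pi\DimH_\indm})^2=\constspatgausslogindm\DimH_\indm$, which is inside the claimed bound since the logarithmic term then vanishes.

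The genuinely delicate point is this last self-referential bound together with the requirement that a single $\phi_\indm$ be non-decreasing \emph{and} have $\phi_\indm(\sigma)/\sigma$ non-increasing on all of $\R^+$; the tension between these two monotonicities is exactly what forces the clipping of $\widebar H$ at $\delta=1$ and the affine continuation past it, and it is also where the factor $2$ in front of $\constspatgausslogindm$ and the additive $1$ enter. The one-dimensional integral estimates (the value of $\Gamma(3/2)$ and the tail bound on $\Gamma(3/2,x)$) are routine and are the only source of the constant $\constspatgausslogindm=(\sqrt{\ConstH_\indm/\DimH_\indm}+\sqrt\pi)^2$.
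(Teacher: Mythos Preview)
Your strategy matches the paper's: it too splits the proposition into three sub-results (one for $\DimH_\indm=0$, one for the global entropy bound, one for the localized bound), exhibits an admissible $\phi_\indm$, and reads off $\DIMH_\indm$ from the fixed-point equation using the elementary estimate $\int_0^\sigma\sqrt{\Log(1/\delta)}\,\ud\delta\le\sigma(\sqrt{\Log(1/\sigma)}+\sqrt\pi)$ (your incomplete-Gamma computation is a sharper version of the same thing). The localized and zero-dimensional cases are handled identically.

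The one genuine difference is in the global case with $\DimH_\indm>0$. You take $\phi_\indm$ to be the Dudley integral itself (with affine continuation past $\sigma=1$) and then argue via the self-referential inequality $\DIMH_\indm\le 2\constspatgausslogindm\DimH_\indm+\DimH_\indm\Log(n/\DIMH_\indm)$. The paper instead writes down an explicit closed form,
\[
\phi_\indm(\sigma)=\sigma\sqrt{\DimH_\indm}\Bigl(\sqrt{\ConstH_\indm/\DimH_\indm}+\sqrt\pi+\sqrt{\Log\tfrac{1}{\sigma\wedge e^{-1/2}}}\Bigr),
\]
which has the advantage that $\phi_\indm(\sigma)/\sigma\ge\sqrt{\constspatgausslogindm\DimH_\indm}$ is immediate; this gives the lower bound $\sigma_\indm\ge n^{-1/2}\sqrt{\constspatgausslogindm\DimH_\indm}$ directly, and plugging it back into the fixed-point equation produces the stated bound without any bootstrap. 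Your route works too, but the step ``using $\DIMH_\indm\ge 2\constspatgausslogindm\DimH_\indm$ when that logarithm is positive'' is not justified as written: nothing established so far gives that lower bound on $\DIMH_\indm$. The clean fix is to split instead on whether $\DIMH_\indm\le(2\constspatgausslogindm+1)\DimH_\indm$; if not, then \emph{a fortiori} $\DIMH_\indm>2\constspatgausslogindm\DimH_\indm$, and your $\Log(e/2)<1$ argument goes through. The paper's explicit $\phi_\indm$ simply sidesteps this bookkeeping.
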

Note that we assume bounds on the entropy only for $\delta$ and $\sigma$
smaller than $\sqrt{2}$, but, as for any conditional densities pair $(s,t)$
$\dtens_{\meas}(s,t)\leq \sqrt{2}$,
\begin{align*}
  H_{[\cdot],\dtens_{\meas}}(\delta,\model_{\indm}(s_{\indm},\sigma))
  = H_{[\cdot],\dtens_{\meas}}(\delta \wedge \sqrt{2}
  ,\model_{\indm}(s_{\indm},\sigma \wedge \sqrt{2}))
\end{align*}
which implies that those bounds are still useful when $\delta$ and
$\sigma$ are large.
Assume now that all models are such that
$\frac{\ConstH_{\indm}}{\DimH_{\indm}}\leq \ConstMultH$, i.e. their
log-volumes $\ConstH_{\indm}$ grow at most linearly with the dimension (as it is the
case for instance for hypercubes with the same width). One deduces
that Assumptions \Hindm\  hold simultaneously
for every model with a common constant 
 $\constspatgausslog=\left(\sqrt{\ConstMultH} +
    \sqrt{\pi}\right)^2$. The model complexity $\DIMH_{\indm}$ can
  thus be chosen roughly proportional to the dimension in this case, this justifies the
  notation as well as our claim at the end of the previous section. 

\ifthenelse{\boolean{extended}}{}
{\Cref{sec:an-appl-spat} provides an example for which the first
assumption holds while the technical
report\cite{cohen11:_condit_densit_estim_penal_app} and the companion
paper\cite{cohen11:_partit_based_condit_densit_estim} contain more examples.
An important aspect of the model complexity $\DIMH_{\indm}$ is that it is an
intrinsic measure of the complexity of $\model_{\indm}$: it is
obtained by looking only at the model $\model_{\indm}$ and does not
depend on external quantities (besides $n$).}

\subsection{Single model maximum likelihood estimation}
\label{sec:single-model-maximum}

For technical reason, we also need a separability assumption on our model:
\begin{assumptionSm}
There exist a countable subset
$\model'_\indm$ of $\model_\indm$ and a set $\SpaceY_\indm'$ with
$\meas(\SpaceY\setminus\SpaceY_\indm')=0$ such that for every $t\in\model_\indm$,
there exists a sequence $(t_k)_{k\geq 1}$ of elements of
$\model'_\indm$ such that for every $x$ and for every $y\in\SpaceY_\indm'$, $\Log\left(t_k(y|x)\right)$  goes
to $\Log\left(t(y|x)\right)$ as $k$ goes to infinity.
\end{assumptionSm}

We are now ready to state our risk bound theorem:
\begin{theorem}
\label{theo:single}
Assume we observe $(X_i,Y_i)$ with unknown conditional density $s_0$.
Assume $\model_\indm$ is a set of conditional densities for which
Assumptions \Hindm\  and \Sepindm\  hold and
let $\widehat{s}_\indm$ be a $\rhoapp$ -log-likelihood minimizer in
  $\model_\indm$
\[
\sum_{i=1}^n - \Log(\widehat{s}_\indm(Y_i|X_i) ) \leq
\inf_{s_\indm \in
  \model_\indm} \left(  \sum_{i=1}^n - \Log(s_\indm(Y_i|X_i) ) \right)
+ \rhoapp
\]

Then for any $\propJKL\in(0,1)$ and any $\constoraone>1$, there are two constants
$\constpenmin$ and $\constoratwo$ depending only on $\propJKL$ and
$\constoraone$ such that,
for $\DIMH_{\indm}=n\sigma^2_{\indm}$ with $\sigma_\indm$ the unique root of
\begin{alignI}
\frac{1}{\sigma}\phi_\indm(\sigma)=\sqrt{n}\sigma
\end{alignI},
the  likelihood estimate $\widehat{s}_{\indm}$
satisfies
\begin{align*}
\E\left[\JKLtens_{\propJKL,\meas}(s_0,\widehat{s}_{\indm})\right]
\leq \constoraone \left( \inf_{s_\indm \in \model_\indm}
  \KLtens_{\meas}(s_0,s_\indm) + \frac{\constpenmin}{n}
  \DIMH_{\indm} \right) + \constoratwo \frac{1}{n} + \frac{\rhoapp}{n}.
\end{align*}
\end{theorem}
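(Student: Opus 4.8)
The plan is to follow the classical empirical-process route for maximum likelihood estimation à la Massart–Birgé, adapted to the tensorized conditional-density setting. First I would introduce the centered empirical process: for $s_\indm \in \model_\indm$, write $\gamma_n(s_\indm) = \frac{1}{n}\sum_{i=1}^n -\Log s_\indm(Y_i|X_i)$ and its recentered version $\widebar\gamma_n(s_\indm) = \gamma_n(s_\indm) - \E[\gamma_n(s_\indm)]$, so that the defining inequality of a $\rhoapp$-minimizer reads $\gamma_n(\widehat s_\indm) \le \inf_{s_\indm} \gamma_n(s_\indm) + \rhoapp/n$. The key replacement, as flagged in \cref{sec:jens-kullb-leibl}, is that instead of working with the unbounded log-ratios $-\Log\frac{s_\indm}{s_0}$ one works with the shifted functions $-\frac{1}{\propJKL}\Log\frac{(1-\propJKL)s_0 + \propJKL s_\indm}{s_0}$, which are bounded above by $-\frac{\Log(1-\propJKL)}{\propJKL}$ and whose expectation is exactly $\JKLtens_{\propJKL,\meas}(s_0,s_\indm)$. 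So the first real step is an algebraic manipulation showing that controlling $\JKLtens_{\propJKL,\meas}(s_0,\widehat s_\indm)$ reduces to controlling the deviations of the empirical process indexed by these bounded shifted log-ratios.

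Next I would set up the peeling/slicing argument. Pick a reference point $s_\indm^\ast \in \model_\indm$ nearly achieving $\inf_{s_\indm}\KLtens_{\meas}(s_0,s_\indm)$, and decompose $\model_\indm$ into shells $\model_\indm$ on which $\dtens_{\meas}(s_\indm^\ast, s_\indm)$ lies in a dyadic range. On each shell one needs a maximal inequality for the supremum of $\widebar\gamma_n$ over the shifted log-ratios; this is where Assumption \Hindm{} enters, through a chaining bound whose Dudley integral is exactly $\phi_\indm(\sigma)$. The balance equation $\frac{1}{\sigma}\phi_\indm(\sigma) = \sqrt n\,\sigma$ defining $\sigma_\indm$ is precisely the crude optimization that makes the fluctuation term on the critical shell comparable to $\DIMH_\indm/n = \sigma_\indm^2$. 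Assumption \Sepindm{} is used here to reduce the supremum over the (possibly uncountable) model to a supremum over the countable dense subset $\model'_\indm$, so that Talagrand-type concentration and Bernstein-type bounds for suprema of empirical processes apply without measurability problems; the convergence of $\Log t_k \to \Log t$ pointwise on a $\meas$-full set, together with Fatou, transfers the bound from $\model'_\indm$ back to $\model_\indm$.

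The technical heart, and the step I expect to be the main obstacle, is the concentration inequality for the supremum of the empirical process over each shell together with the control of its \emph{variance}. The shifted log-ratios are bounded above but one still needs a bound on $\mathrm{Var}$ in terms of the Hellinger-type divergence $\dtwotens_{\meas}$, which requires the comparison inequalities of \cref{prop:hellingerkull2} (squared Hellinger $\lesssim$ Jensen–Kullback–Leibler) together with an inequality bounding the variance of $-\frac1\propJKL\Log\frac{(1-\propJKL)s_0+\propJKL s_\indm}{s_0}$ by (a constant times) its mean $\JKL_{\propJKL,\meas}(s_0,s_\indm)$ — this is the conditional-density analogue of the standard variance-mean bound for log-likelihood ratios and must be proved using the boundedness of the shifted ratio. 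Once the maximal inequality $\E\sup_{\text{shell}} |\widebar\gamma_n| \lesssim \phi_\indm(\sigma)/\sqrt n + (\text{bounded-ness const})/n$ and the corresponding Bernstein deviation bound are in hand, a standard argument (sum the deviation bounds over dyadic shells, absorb the linear-in-divergence term into the left side using $\constoraone > 1$, and integrate the resulting tail) yields
\begin{align*}
\E\left[\JKLtens_{\propJKL,\meas}(s_0,\widehat s_\indm)\right]
\le \constoraone\left(\inf_{s_\indm\in\model_\indm}\KLtens_{\meas}(s_0,s_\indm) + \frac{\constpenmin}{n}\DIMH_\indm\right) + \constoratwo\frac1n + \frac{\rhoapp}{n},
\end{align*}
with $\constpenmin$ and $\constoratwo$ depending only on $\propJKL$ and $\constoraone$ through the explicit constants produced by the chaining and concentration steps. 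The $\rhoapp/n$ term simply propagates from the definition of the approximate minimizer, and the $\inf$ rather than $\min$ on the right accommodates non-closed models.
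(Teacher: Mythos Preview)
Your plan is correct and follows essentially the same route as the paper's proof. Two small differences worth noting when you write it out: the paper uses \emph{two} reference points---a near-$\KLtens$-minimizer $\bar s_\indm$ for the bias term and a separate near-$\dtwotens$-minimizer $\tilde s_\indm\in\model_\indm$ as the peeling center (so that $\dtwotens(\tilde s_\indm,\widehat s_\indm)\lesssim \dtwotens(s_0,\widehat s_\indm)$ is automatic)---and the Bernstein moment control that feeds both the single-point deviation and the bracketing condition of the maximal inequality is stated directly in Hellinger form, namely $\Ptens\bigl(\bigl|\tfrac{1}{\propJKL}\Log\tfrac{(1-\propJKL)s_0+\propJKL s}{(1-\propJKL)s_0+\propJKL t}\bigr|^k\bigr)\le \tfrac{k!}{2}\cdot\tfrac{9\,\dtwotens(s,t)}{8\propJKL(1-\propJKL)}\cdot(2/\propJKL)^{k-2}$, rather than as a variance-by-mean inequality for $\JKLtens_\propJKL$; your formulation does follow from this via \cref{prop:hellingerkull2}, but the Hellinger version is what connects directly to Assumption~\Hindm.
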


This theorem holds without any assumption on the design $X_i$, in
particular we do not assume that the covariates admit upper or
lower bounded densities. The law of the design appears however in the
divergence $\JKLtens_{\meas}$ and $\KLtens_{\meas}$ used to assess the
quality of the estimate
as well as in the definition of the divergence $\dtens_{\meas}$ used
to measure the bracketing entropy. By construction, those quantities however
do not involve the values of the conditional densities outside the
support of the $X_i$s and put more focus on the regions of high
density of covariates than the other. 
Note that Assumption $\text{H}_{\indm}$
 could be further localized: it suffices to impose that the condition
 on the Dudley integral holds for a sequence of minimizer of $\dtwotens_{\meas}(s_0,s_{\indm})$.

We obtain thus a bound on the expected loss similar to the one
obtained in the parametric case that holds for finite sample
and that do not require the strong regularity assumptions of
\textcite{white92:_maxim_likel_estim_missp_model}. In particular, we
do not even require an identifiability condition in the parametric case.
As often in empirical processes theory, the constant $\constpenmin$
appearing in the bound is pessimistic. Even in a very simple
parametric model, the current best estimates are such that
$\constpenmin \DIMH_{\indm}$ is still much larger than the
variance of \cref{sec:asympt-analys-param}. Numerical experiments show 
there is a hope that this is only a technical issue.
 The obtained bound quantifies however  the
expected classical bias-variance trade-off: a good model should be large enough
so that the true conditional density is close from it but, at the same time, it
should also be small so that the $\DIMH_{\indm}$ term does not
dominate.

It should be stressed that a result of similar flavor could have been obtained by the
information theory technique
of~\textcite{barron08:_mdl_princ_penal_likel_statis_risk} and
\textcite{kolaczyk05:_multisimage}. Indeed, if we replace the set
$\model_{\indm}$ by a  \emph{discretized} version $\modeldisc_{\indm}$
so that
\begin{align*}
  \inf_{\sdisc_{\indm} \in \modeldisc_{\indm}} \KLtens_{\meas}(s_0,\sdisc_{\indm}) \leq
  \inf_{s_{\indm} \in \model_{\indm}} \KLtens_{\meas}(s_0,s_{\indm}) + \frac {1}{n},
\end{align*}
then, if we let $\widehat{\sdisc}_{\indm}$ be a -log-likelihood minimizer in
  $\modeldisc_\indm$,
\begin{align*}
   \E \left[ \Battwotens_{\meas}(s_0,\widehat{\sdisc}_{\indm}) \right] \leq
   \inf_{s_{\indm} \in \model_{\indm}}
   \KLtens_{\meas}(s_0,s_{\indm}) + \frac{1}{n} \Log |
   \modeldisc_{\indm}| + \frac{1}{n}
\end{align*}
where $\Battwotens_{\meas}$ is the tensorized Bhattacharyya-Renyi divergence,
another divergence smaller than $\KLtens$, $|
   \modeldisc_{\indm} |$ is the cardinality of
   $\modeldisc_{\indm}$ and expectation is taken conditionally
   to the covariates $(X_i)_{1\leq i \leq n}$. As verified by
   \textcite{barron08:_mdl_princ_penal_likel_statis_risk} and
   \textcite{kolaczyk05:_multisimage}, $\modeldisc_{\indm}$ can be
   chosen of cardinality of order $\Log n\, \DimH_{\indm}$ when the
   model is parametric. We obtain thus also a bound of type
   \begin{align*}
   \E \left[ \Battwotens_{\meas}(s_0,\widehat{\sdisc}_{\indm}) \right] \leq
   \inf_{s_{\indm} \in \model_{\indm}}
   \KLtens_{\meas}(s_0,s_{\indm}) + \frac{\constoraone}{n} \Log n \, \DimH_{\indm} + \frac{1}{n}.
   \end{align*}
with better constants but with a different divergence. The bound holds
 however only conditionally to the design, which can be an issue as 
soon as this design is random, and requires to compute an adapted
 discretization of the models.

\section{Model selection and penalized maximum likelihood}
\label{sec:model-select-penal}

\subsection{Framework}
\label{sec:framework}

A natural question is then the choice of the model.
In the model selection framework, instead of a single model
$\model_\indm$, we assume we have at hand a collection of models
$\Models=\left\{\model_\indm\right\}_{\indm\in\indmset}$. If we assume
that Assumptions \Hindm\  and \Sepindm\ 
hold for all models, then for every model $\model_{\indm}$
\begin{align*}
\E\left[\JKLtens_{\propJKL,\meas}(s_0,\widehat{s}_{\indm})\right]
\leq \constoraone \left( \inf_{s_\indm \in \model_\indm}
  \KLtens_{\meas}(s_0,s_\indm) + \frac{\constpenmin}{n}
  \DIMH_{\indm} \right) + \constoratwo \frac{1}{n} + \frac{\rhoapp}{n}.
\end{align*}
Obviously, one of the models minimizes the right hand side. Unfortunately, there
is no way to know which one without knowing $s_0$, i.e. without
an oracle. Hence, this \emph{oracle} model can not be used to estimate $s_0$.
 We nevertheless
propose a data-driven strategy to select an estimate
among the collection of estimates $\{
\widehat{s}_{\indm} \}_{\indm\in\indmset}$ according to a selection
rule that performs almost as well as if we had known this \emph{oracle}.

As always, using simply the -log-likelihood of the estimate in each model
\begin{align*}
\sum_{i=1}^n - \Log(\widehat{s}_{\indm}(Y_i|X_i) )
\end{align*}
as a criterion is not sufficient. It is an underestimation of the true
risk of the estimate and this leads to choose models that are too complex. By adding an adapted penalty $\pen(\indm)$, one
hopes to compensate for both the \emph{variance} term and the bias between $\frac{1}{n}\sum_{i=1}^n -
\Log\frac{\widehat{s}_{\widehat{\indm}}(Y_i|X_i)}{s_0(Y_i|X_i)} $ and
$\inf_{s_{\indm} \in \model_{\indm}} \KLtens_{\meas}(s_0,s_{\indm})$.
For a given choice of $\pen(m)$,
the \emph{best} model $\model_{\widehat{\indm}}$ is chosen
as the one whose index is an almost
 minimizer of the penalized $\rhoapp$ -log-likelihood :
\begin{align*}
\sum_{i=1}^n - \Log(\widehat{s}_{\widehat{\indm}}(Y_i|X_i) )
+ \pen(\widehat{\indm})
\leq \inf_{\indm\in\indmset} \left( \sum_{i=1}^n - \Log(\widehat{s}_\indm(Y_i|X_i) )
+ \pen(\indm)\right) + \rhomin.
\end{align*}

The analysis of the previous section turns out to be crucial as the intrinsic
complexity $\DIMH_{\indm}$ appears in the assumption on the penalty. It is no surprise
that the complexity of the model collection itself also appears.
We need an information theory type assumption on our collection;
we assume thus the existence of a Kraft type
inequality for the collection:
\begin{assumptionK}
There is a family $(x_\indm)_{\indm\in\indmset}$ of
non-negative number such that
\begin{align*}
\sum_{\indm\in\indmset} e^{-x_\indm} \leq \Sigma < +\infty
\end{align*}
\end{assumptionK}
It can be
interpreted as a coding condition as stressed by
\textcite{barron08:_mdl_princ_penal_likel_statis_risk} where a similar
assumption is used.
Remark that if this assumption holds, it also holds for any
permutation of the coding term $x_{\indm}$.
We should
try to mitigate this arbitrariness by favoring choice of $x_\indm$ for
which the ratio with the intrinsic entropy term $\DIMH_\indm$ is as
small as possible. Indeed, as the condition on the penalty is of
the form
\begin{align*}
  \pen(\indm) \geq \constpen \left( \DIMH_{\indm} + x_\indm \right),
\end{align*}
this ensures that this lower bound is dominated by the intrinsic
quantity $\DIMH_{\indm}$.

\subsection{A general theorem for penalized maximum likelihood
  conditional density estimation}
\label{sec:gener-theor-penal}

Our main theorem is then:
\begin{theorem}
\label{theo:select}
Assume we observe $(X_i,Y_i)$ with unknown conditional density $s_0$.
Let
$\Models=(\model_\indm)_{\indm\in\indmset}$ be at most countable
collection of conditional density sets.
Assume Assumption (K) holds while Assumptions \Hindm\ and \Sepindm\ 
hold for every model $\model_{\indm} \in \Models$.
Let $\widehat{s}_\indm$ be a $\rhoapp$ -log-likelihood minimizer in
  $\model_\indm$
\[
\sum_{i=1}^n - \Log(\widehat{s}_\indm(Y_i|X_i) ) \leq
\inf_{s_\indm \in
  \model_\indm} \left(  \sum_{i=1}^n - \Log(s_\indm(Y_i|X_i) ) \right)
+ \rhoapp
\]

Then for any $\propJKL\in(0,1)$ and any $\constoraone>1$, there are two constants
$\constpenmin$ and $\constoratwo$ depending only on $\propJKL$ and
$\constoraone$ such that,
as soon as  for every index $\indm\in\indmset$
\begin{align*}
\pen(\indm) \geq \constpen \left( \DIMH_{\indm} + x_\indm \right)
\quad\text{with $\constpen>\constpenmin$}
\end{align*}
where $\DIMH_{\indm}=n\sigma_{\indm}^2$ with $\sigma_\indm$ the unique root of
\begin{alignI}
\frac{1}{\sigma}\phi_\indm(\sigma)=\sqrt{n}\sigma,
\end{alignI}
the penalized likelihood estimate $\widehat{s}_{\widehat{\indm}}$
  with $\widehat{\indm}$ such that
  \begin{align*}
\sum_{i=1}^n - \Log(\widehat{s}_{\widehat{\indm}}(Y_i|X_i) )
+ \pen(\widehat{\indm})
\leq \inf_{\indm\in\indmset} \left( \sum_{i=1}^n - \Log(\widehat{s}_\indm(Y_i|X_i) )
+ \pen(\indm) \right) + \rhomin    
  \end{align*}
satisfies
\begin{align*}
\E\left[\JKLtens_{\propJKL,\meas}(s_0,\widehat{s}_{\widehat{\indm}})\right]
\leq \constoraone \inf_{\indm\in\indmset} \left( \inf_{s_\indm \in \model_\indm} \KLtens_{\meas}(s_0,s_\indm) +
  \frac{\pen(\indm)}{n} \right) + \constoratwo \frac{\Sigma}{n} + \frac{\rhoapp+\rhomin}{n}.
\end{align*}
\end{theorem}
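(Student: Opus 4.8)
The plan is to reduce the model-selection statement to the single-model analysis underlying \cref{theo:single}, using the Kraft inequality (Assumption (K)) to pay for the union over the collection $\indmset$. First I would set up the usual comparison: fix an arbitrary competitor model $\indm_0$ and a function $s_{\indm_0} \in \model_{\indm_0}$. By the definition of $\widehat{\indm}$ as an almost-minimizer of the penalized $-$log-likelihood, and of each $\widehat{s}_\indm$ as a $\rhoapp$-minimizer inside $\model_\indm$, one gets the basic inequality
\begin{align*}
\sum_{i=1}^n -\Log \widehat{s}_{\widehat{\indm}}(Y_i|X_i) + \pen(\widehat{\indm})
\leq \sum_{i=1}^n -\Log s_{\indm_0}(Y_i|X_i) + \pen(\indm_0) + \rhoapp + \rhomin.
\end{align*}
Rearranging, the excess empirical risk of $\widehat{s}_{\widehat{\indm}}$ relative to $s_{\indm_0}$ is bounded by $\pen(\indm_0) - \pen(\widehat{\indm})$ plus the approximation error terms. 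The goal is then to convert empirical quantities into their expectations (the tensorized divergences) with control that is uniform over $\indm \in \indmset$.

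The core of the argument is a deviation bound, for each model $\indm$, on the centered empirical process
\begin{align*}
\nu_n(s_\indm) = \frac{1}{n}\sum_{i=1}^n \left( -\tfrac{1}{\propJKL}\Log\frac{(1-\propJKL)s_0(Y_i|X_i)+\propJKL s_\indm(Y_i|X_i)}{s_0(Y_i|X_i)} \right) - \JKLtens_{\propJKL,\meas}(s_0,s_\indm),
\end{align*}
using the boundedness of these Jensen-type ratios by $-\Log(1-\propJKL)/\propJKL$ (noted in \cref{sec:jens-kullb-leibl}) together with a Bernstein/Talagrand concentration inequality for the supremum over the localized slices $\model_\indm(\widetilde{s},\sigma)$, whose bracketing entropy is controlled by $\phi_\indm$ via Assumption \Hindm. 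This is exactly the mechanism that produces $\DIMH_{\indm} = n\sigma_\indm^2$ in \cref{theo:single} — a peeling/slicing argument over the distance $\dtens_{\meas}(\widetilde{s},s_\indm)$, where on each slice the Dudley integral bound $\phi_\indm(\sigma)$ is balanced against $\sqrt{n}\sigma$, the root of $\frac{1}{\sigma}\phi_\indm(\sigma) = \sqrt{n}\sigma$ being the optimal cutoff. I would invoke this per-model control as essentially established (it is the content of the proof of \cref{theo:single}), but now apply it at a confidence level shifted by $x_\indm$: the event that $\nu_n$ deviates on $\model_\indm$ by more than roughly $\kappa_0(\DIMH_\indm + x_\indm)/n$ has probability at most $e^{-x_\indm}$ times a universal factor. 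Summing these failure probabilities over $\indm \in \indmset$ and using $\sum_\indm e^{-x_\indm} \leq \Sigma$ gives a global good event of overwhelming probability; integrating the residual over the complement contributes the $\constoratwo \Sigma/n$ term.

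On the global good event, I would combine the basic inequality with the uniform deviation bound: the term $\pen(\widehat{\indm})$ on the left absorbs the fluctuation $\kappa_0(\DIMH_{\widehat{\indm}} + x_{\widehat{\indm}})/n$ coming from the selected model precisely because $\pen(\indm) \geq \constpen(\DIMH_\indm + x_\indm)$ with $\constpen > \constpenmin$ (here $\constpenmin$ will be the $\kappa_0$ of the concentration step, inflated by the $\epsilon = \constoraone - 1$ slack); the term $\pen(\indm_0)$ on the right stays as a bias-type contribution; and the approximation error $\sum_i -\Log(s_{\indm_0}/s_0)$, after taking expectation, yields $\KLtens_{\meas}(s_0, s_{\indm_0})$. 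Optimizing over $s_{\indm_0} \in \model_{\indm_0}$ and then over $\indm_0 \in \indmset$ produces the claimed infimum, with the factor $\constoraone$ in front arising from the standard trick of keeping a fraction of the $\JKLtens$ term on the left to dominate a multiple of the $\dtwotens$ term on the right (using $\ConstHellKL_\propJKL \, \dtwotens_\meas \leq \JKLtens_{\propJKL,\meas}$ from \cref{prop:hellingerkull2}), then dividing through. The $\rhoapp+\rhomin$ slack passes through additively. The main obstacle is making the concentration step uniform over an infinite collection without the constants degrading: this is exactly where Assumption (K) is essential, and the delicate point is that the weights $x_\indm$ must enter the penalty \emph{additively} alongside $\DIMH_\indm$ (not multiplicatively), which forces the peeling argument to be run with a model-dependent deviation target of the form $\DIMH_\indm + x_\indm$ rather than $\DIMH_\indm$ alone — a bookkeeping subtlety but the crux of why the theorem holds with a single universal $\constpenmin$.
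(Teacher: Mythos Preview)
Your proposal is correct and follows essentially the same route as the paper's proof: the basic inequality from the definition of $\widehat{\indm}$, the concentration control on the recentered process $\Prec(-\jklhatsindmp)$ via the peeling argument under \Hindm\ (this is the paper's \cref{lemma:core}), the shift of the confidence level by $x_{\indm'}$ so that the union over $\indmset$ costs only $\Sigma e^{-x}$ via Assumption (K), the absorption of $\dtwotens(s_0,\widehat{s}_{\indm'})$ into $\JKLtens_{\propJKL}$ using \cref{prop:hellingerkull2}, and finally integration over $x$ to pass to expectations. The paper's specific choice $y_{\indm'}=\constcore\sqrt{\sigma_{\indm'}^2+(x_{\indm'}+x)/n}$ is exactly your ``model-dependent deviation target of the form $\DIMH_\indm+x_\indm$,'' and the mean-zero term $\Prec(\klbarsindm)$ for the fixed reference is what you dispatch by ``taking expectation.''
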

Note that, as in \ref{sec:single-model-maximum}, the approach of of~\textcite{barron08:_mdl_princ_penal_likel_statis_risk} and
\textcite{kolaczyk05:_multisimage} could have been used to obtain a
similar result with the help of discretization.

This theorem extends Theorem 7.11
\textcite{massart07:_concen} which handles only density estimation. As in
this theorem, the cost of model selection with respect to the choice
of the best single model is proved to be very mild. Indeed,
let $\pen(m)=\constpen (\DIMH_{\indm} + x_{\indm})$ then one
obtains \pagebreak[1]
\begin{align*}
&\E\left[\JKLtens_{\propJKL,\meas}(s_0,\widehat{s}_{\widehat{\indm}})\right]\\
&\qquad\leq \constoraone \inf_{\indm\in\indmset} \left( \inf_{s_\indm \in \model_\indm} \KLtens_{\meas}(s_0,s_\indm) +
  \frac{\constpen}{n} (\DIMH_{\indm} + x_{\indm}) \right) + \constoratwo
\frac{\Sigma}{n} + \frac{\rhoapp+\rhomin}{n}\\
& \qquad\leq \constoraone \frac{\constpen}{\constpenmin}
\left(\max_{\indm\in\indmset} \frac{\DIMH_{\indm} + x_{\indm}}{\DIMH_{\indm}}\right) \inf_{\indm\in\indmset} \left( \inf_{s_\indm \in \model_\indm} \KLtens_{\meas}(s_0,s_\indm) +
  \frac{\constpenmin}{n} \DIMH_{\indm} \right) + \constoratwo
\frac{\Sigma}{n} + \frac{\rhoapp+\rhomin}{n}.
\end{align*}
As soon as the term $x_{\indm}$ is always small relatively to
$\DIMH_{\indm}$, we obtain thus an oracle inequality that show that
the penalized estimate satisfies, up to a small factor, the bound of
\cref{theo:single} for the estimate in the best model.
The price to pay for the use of a collection of model is thus
small. The gain is on the contrary very important: we do not have to
know the best model within a collection to almost achieve its performance. 

So far we do not have discussed the choice of the model collection,
it is however critical to obtain a \emph{good} estimator. There is
unfortunately no universal choice and it should be
adapted to the specific setting considered. Typically, if we consider
conditional density of \emph{regularity} indexed by a parameter $\alpha$, a
good collection is one such that for every parameter $\alpha$ there is
a model which achieves a quasi optimal bias/variance trade-off. 
\textcite{efromovich07:_condit,efromovich10:_oracl} considers Sobolev
type regularity and use thus models generated by the first elements of
Fourier basis.
\textcite{brunel07:_adapt_estim_condit_densit_presen_censor} and
\textcite{akakpo11:_inhom} considers anisotropic regularity spaces
for which they show that a collection of piecewise polynomial models
is adapted. Although those choices are justified, in these papers, in
a quadratic loss approach, they remain good choices in our maximum
likelihood approach with a Kullback-Leibler type loss.
Estimator associated to those collections
are thus \emph{adaptive} to the regularity: without knowing the
\emph{regularity} of the true conditional density, they select a model
in which the estimate performs almost as well as in the \emph{oracle} model,
the best choice if the regularity was known. In both cases, one could
prove that those estimators achieve the minimax rate for the
considered classes, up to a logarithmic factor.

As in \cref{sec:single-model-maximum}, the known estimate of constant
$\constpenmin$ and even of $\DIMH_{\indm}$ can be pessimistic. This
leads to a theoretical penalty which can be too large in practice. 
A natural question is thus whether the constant appearing in the
penalty can be estimated from the data without loosing a theoretical
guaranty on the performance? No definitive answer exists so far, but 
numerical experiment in specific case shows that the \emph{slope
  heuristic} proposed by \textcite{birge07:_minim_gauss} may yield a
solution.

\ifthenelse{\boolean{extended}}{
The assumptions of the previous theorem are as general as possible. It
is thus natural to question the existence of interesting model
collections that satisfy its assumptions. We have mention so far the 
Fourier based collection proposed by
\textcite{efromovich10:_oracl,efromovich07:_condit} and the piecewise
polynomial collection of \textcite{brunel07:_adapt_estim_condit_densit_presen_censor} and
\textcite{akakpo11:_inhom} considers anisotropic regularity. We focus
on a variation of this last strategy. Motivated by an application to
unsupervised image segmentation, we consider model collection in
which, in each model, the conditional densities depend on the
covariate only in a piecewise constant manner. After a general
introduction to these partition-based strategies, we study
two cases: a classical one in which the conditional density depends
in a piecewise polynomial manner of the variables and a newer one,
which correspond to the unsupervised segmentation application, in
which the conditional densities are Gaussian mixture with common
Gaussian components but mixing
proportions depending on the covariate.
}{

In the next section, we focus on the
  example that motivated our analysis: an estimator based on
  collection of Gaussian mixture with mixing proportions that depend
  on the covariate. We explain how this model is related to an
  application to unsupervised hyperspectral segmentation. More details
  can be found in our technical report\cite{cohen11:_condit_densit_estim_penal_app} and our companion paper\cite{cohen11:_partit_based_condit_densit_estim}.

\section{An application to unsupervised segmentation using spatialized
  Gaussian mixture models}
\label{sec:an-appl-spat}

At IPANEMA, we can measure hyperspectral images, images for which at each pixel
$X_i$ located on a regular lattice of $\SpaceX=[0,1]^2$ a spectrum
$Y$ belonging to $\SpaceY=\R^\dimSp$ is measured. To
derive automatically a segmentation of the image into
\emph{homogeneous} region, we use an extension of a
classical technique of unsupervised classification that takes into
account pixel location. We assume that each spectrum belongs to one of
$K$ classes, each modeled by a Gaussian distribution, whose mixing proportions
depend on the position. Once the parameters of this model have been
estimated, a class can be assigned to each spectrum by a simple
maximum likelihood principle, yielding a segmentation of the image.

We assume thus that the spectrum measured at a pixel $x\in\SpaceX$ follows a
law of density $s_0(\cdot|x)$ with respect to the Lebesgue measure and
model the
conditional density $s(\cdot|x)$ by Gaussian mixtures with varying
mixing proportions
\begin{align*}
s(\cdot|x) = \sum_{k=1}^K \pi_k(x)  \Gauss_{\theta_k}\left(\cdot \right)
\end{align*}
where 
with $K$ the number of mixture components, $\mu_k$ the mean of the
$k$th component, $\Sigma_k$ its covariance matrix,
$\theta_k=(\mu_k,\Sigma_k)$,  $\pi_k(x)$ its
proportion for the value $x$ of the covariate and  $
\Gauss_{\theta_k}(y)$ the density of a Gaussian of mean $\mu_k$ and
covariance $\Sigma_k$.  Once these parameters
have been estimated by respectively $\widehat{K}$,
$\widehat{\theta_k}$ and $\widehat{\pi_k}$, the segmentation of the
image is obtained by a maximum likelihood principle on the different
classes:
\begin{align*}
  \widehat{k}(y|x) = \argmax \widehat{\pi}_k(x) \Gauss_{\widehat{\theta_k}}(y). 
\end{align*}
We propose to use our model selection setting to estimate all these parameters.
Each model $\model_\indm$ will be specified by its number of class
$K$, the covariance structure of the Gaussian $K$-tuples appearing in the mixture
as described by \textcite{biernacki06:_model_mixmod} and a function set 
to which belong the mixing proportions. Following
\textcite{kolaczyk05:_multisimage} and \textcite{antoniadis08}, we
consider mixing proportions that are piecewise constant on a hierarchical
partition $\PartX$ induced by a tree structure, one of the
Recursive Dyadic Partition of \textcite{donoho97:_cart}.

The conditional densities  we consider are thus of the form
\begin{align*}
s_{\PartX,K,\theta,\pi}(\cdot|x)=    \sum_{k=1}^K \left( \sum_{\LeafXl
  \in \PartX} \pi_k[\LeafXl]\,
 \Charac{x \in \LeafXl}\right) \Gauss_{\theta_k}\left(\cdot \right)
\end{align*}
where $K$ is the number of component, $\PartX$ is a partition of
$\SpaceX$, $\theta_k$ is the parameter of the $k$th Gaussian and $\pi
= \left( \pi[\LeafXl] \right)_{\LeafXl \in \PartX}$ is the set of
proportions on each hyperrectangle $\LeafXl$. 

We then consider model $S_{\PartX,K,\Set}$ with a fixed number of
class $K$, a fixed recursive dyadic partition $\PartX$  and a given set $\Set$ 
for the $K$-tuples
$(\Phi_{\theta_1},\ldots,\Phi_{\theta_K})$ (or equivalently by a set
$\Theta_{\Set}$ for $\theta=(\theta_1,\ldots,\theta_K)$). Within this model,
the free
parameters are the mixing proportions $\pi[\LeafXl]$ on each hyperrectangle of the
partition and the parameters $\theta$ within $\Theta_\Set$. Our
collection is thus
\begin{align*}
  \model_{\PartX,K,\Set} =\bigg\{ s_{\PartX,K,\theta,\pi}(\cdot|x), \bigg|
&(\Phi_{\Space,\theta_{\Space,1}},\ldots,\Phi_{\Space,\theta_{\Space,K}}) \in \Set,
\forall \LeafXl \in \PartX, \pi[\LeafXl] \in
\Simplex_{K-1}
\bigg\}
\end{align*}
where $\Simplex_{K-1}$ is the $K-1$ dimensional simplex.
The space $\Set$ is chosen
among the \emph{classical} Gaussian $K$-tuples described in
\textcite{biernacki06:_model_mixmod}. 
We use thus some sets
\begin{align*}
\Set_{[\cdot]^K}= \left\{ \left(
    \Gauss_{\Space,\theta_1},\ldots,\Gauss_{\Space,\theta_K} \right) \middle|
  \theta=(\theta_1,\ldots,\theta_K)\in \Theta_{[\cdot]^K} \right\}
\end{align*}
where $\Theta_{[\cdot]^K}$ is defined by  some (mild) constraints on the means $\mu_k$ and some
(strong) constraints on the covariance matrices $\Sigma_k$. We refer
to our technical report~\cite{cohen11:_condit_densit_estim_penal_app}
 for more details. Note that
as a parametric model
\begin{align*}
  \dim(\model_{\PartX,K,\Set}) =
  \NbPartX(K-1)+\dim\left(\Theta_{[\cdot]^K}\right)
\end{align*}
where $\NbPartX$ is the number of regions in the partition.
Our aim is thus to estimate simultaneously the number of classes $K$,
the recursive dyadic partition $\PartX$, the set $\Set$ defining 
the Gaussian $K$-tuples, as well as the parameters within the
corresponding model. 

In the technical report~\cite{cohen11:_condit_densit_estim_penal_app},
we show that~\cref{theo:select} implies in this context
\begin{theorem}
\label{theo:spatgauss}
There exist a $\constspatgausslog>\pi$ and
a $\constspatgausstwo>0$, such that,  for any $\propJKL\in(0,1)$ and
for any $\constoraone>1$,
the penalized estimator of \cref{theo:select} within the collection of
the previous paragraph satisfies
\begin{align*}
\E\left[\JKLtens_{\propJKL,\meas}(s_0,\widehat{s}_{\widehat{{\PartX,K,\Set}}})\right]
&\leq \constoraone \inf_{(\PartX,K,\Set) \in\indmset} \left( \inf_{s_{\PartX,K,\Set} \in \model_{\PartX,K,\Set}} \KLtens_{\meas}(s_0,s_{\PartX,K,\Set}) +
  \frac{\pen({\PartX,K,\Set})}{n} \right)\\
&\qquad\qquad + \constoratwo \frac{1}{n} + \frac{\rhoapp+\rhomin}{n}
\end{align*}
as soon as
\begin{align*}
  \pen({\PartX,K,\Set}) &\geq 
 \constpen \left( 2 \constspatgausslog + \constspatgausstwo +\left( \Log
     \frac{n}{\constspatgausslog}\right)_+ 
\right)
 \dim(\model_{\PartX,K,\Set})
\end{align*}
with $\constpen>\constpenmin$ where  $\constpenmin$ and $\constoratwo$
are the constants of \cref{theo:select} that
depend only on $\propJKL$ and $\constoraone$.
\end{theorem}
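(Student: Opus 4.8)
The plan is to obtain \cref{theo:spatgauss} as a direct application of \cref{theo:select}: all the real work lies in checking, for the collection $\Models=\{\model_{\PartX,K,\Set}\}_{(\PartX,K,\Set)\in\indmset}$, Assumptions \Hindm, \Sepindm\ and (K), and then in bounding the resulting complexities $\DIMH_{\indm}$ and coding weights $x_{\indm}$ by a multiple of the parametric dimension $\dim(\model_{\PartX,K,\Set})=\NbPartX(K-1)+\dim(\Theta_{[\cdot]^K})$, so that $\constpen(\DIMH_{\indm}+x_{\indm})$ is dominated by the announced penalty. Assumption \Sepindm\ is essentially immediate: each $\model_{\PartX,K,\Set}$ is a finite-dimensional parametric family and $(\theta,\pi)\mapsto\Log s_{\PartX,K,\theta,\pi}(y|x)$ is continuous on the (relatively compact) parameter set once the degenerate components are discarded, so restricting $(\theta,\pi)$ to rational values yields the required countable log-dense subfamily, with the null set $\SpaceY\setminus\SpaceY_{\indm}'$ taking care of the $y$ where a limiting component degenerates.

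The substantive step is the bracketing entropy estimate needed for \Hindm. I would establish a global bound $H_{[\cdot],\dtens_{\meas}}(\delta,\model_{\PartX,K,\Set})\le \ConstH_{\indm}+\DimH_{\indm}\Log\frac1\delta$ with $\DimH_{\indm}=\dim(\model_{\PartX,K,\Set})$ and then invoke the first branch of \cref{prop:proporsimple}. Brackets are built componentwise: because the mixing proportions are piecewise constant on $\PartX$, a conditional density in the model restricts on each region $\LeafXl$ to a genuine Gaussian mixture $\sum_{k=1}^K\pi_k[\LeafXl]\Gauss_{\theta_k}$, and $\dtwotens_{\meas}$ between two elements of the model is the sum over $\LeafXl\in\PartX$ of the $\LeafXl$-mass of the design times the squared Hellinger distance between the corresponding mixtures; hence a product of brackets — one $\delta/\sqrt{\DimH_{\indm}}$-bracket on the simplex factor $\Simplex_{K-1}$ for each of the $\NbPartX$ regions (polynomial covering numbers, contributing $\NbPartX(K-1)\Log(1/\delta)$ to the entropy) times one bracket for the Gaussian $K$-tuple over the compact constrained set $\Theta_{[\cdot]^K}$ (contributing $\lesssim\dim(\Theta_{[\cdot]^K})\Log(1/\delta)$, the lower and upper bounds on the covariance spectra being exactly what makes the Gaussian brackets honest uniformly as $\delta\to0$) — covers the model, and one checks that the residual term $\ConstH_{\indm}$ grows at most linearly in $\DimH_{\indm}$, uniformly over $\indmset$, i.e. $\ConstH_{\indm}/\DimH_{\indm}\le\ConstMultH$. \cref{prop:proporsimple} then yields $\DIMH_{\indm}\le\bigl(2\constspatgausslog+1+(\Log(n/(e\constspatgausslog\DimH_{\indm})))_+\bigr)\DimH_{\indm}$ with $\constspatgausslog=(\sqrt{\ConstMultH}+\sqrt\pi)^2>\pi$, and since $\DimH_{\indm}\ge1$ the logarithmic term is at most $(\Log(n/\constspatgausslog))_+$.

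For Assumption (K) I would fix $K$ and the Gaussian shape $\Set$ — there are only finitely many (say $N_0$) admissible shapes in the \textcite{biernacki06:_model_mixmod} list — and encode the recursive dyadic partition $\PartX$ through its binary tree: a full binary tree with $\NbPartX$ leaves on $[0,1]^2$ (two split directions per internal node) can be assigned weight $x_{\PartX}=c_0\NbPartX$ so that $\sum_{\PartX:\,\NbPartX=D}e^{-c_0 D}$ is summable in $D$ by the usual Catalan-number/Kraft argument; adding a term of order $K$ to sum over the number of classes and $\Log N_0$ for the shape, one gets non-negative weights $x_{\indm}\le c_1\,\dim(\model_{\PartX,K,\Set})$ with $\sum_{\indm\in\indmset}e^{-x_{\indm}}\le\Sigma<\infty$, so in particular $x_{\indm}$ is of the same order as $\DIMH_{\indm}$.

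Combining, $\DIMH_{\indm}+x_{\indm}\le\bigl(2\constspatgausslog+\constspatgausstwo+(\Log(n/\constspatgausslog))_+\bigr)\dim(\model_{\PartX,K,\Set})$ for a suitable $\constspatgausstwo>0$ absorbing the ``$+1$'', the constant coming from $\ConstMultH$, and $c_1$; thus any penalty satisfying the displayed inequality satisfies $\pen(\indm)\ge\constpen(\DIMH_{\indm}+x_{\indm})$ with $\constpen>\constpenmin$, and \cref{theo:select} applies verbatim, the term $\constoratwo\Sigma/n$ being folded into $\constoratwo/n$ by redefining $\constoratwo$. The main obstacle is the bracketing-entropy estimate of the second paragraph: one must justify the region-by-region decomposition of $\dtwotens_{\meas}$ under an \emph{arbitrary} (possibly non-i.i.d., non-dominated) design, and above all control brackets for Gaussian mixtures uniformly down to arbitrarily small scale, which is precisely what forces the compactness constraints on the means and on the covariance eigenvalues encoded in $\Theta_{[\cdot]^K}$; the partition counting of step three is routine, but the constant $c_0$ must be chosen so that $x_{\indm}$ stays proportional to $\dim(\model_{\PartX,K,\Set})$, otherwise the final penalty would pick up an extra unwanted factor.
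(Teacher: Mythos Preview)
Your approach is essentially the paper's: verify \Sepindm, bound the global bracketing entropy by $\dim(\model_{\PartX,K,\Set})(\constUentropy+\Log(1/\delta))$ via a product of simplex brackets (one per region) and a single Gaussian $K$-tuple bracket, feed this into \cref{prop:proporsimple}, and handle (K) by a tree-based Kraft coding of the recursive dyadic partitions plus an additive term for $K$ and the shape. Two small corrections: the paper works throughout with the design-free distance $\dsup\geq\dtens$ (this is precisely what dissolves the ``main obstacle'' you flag at the end), and the bracket scale on each factor should be a \emph{fixed} fraction of $\delta$ --- the paper uses $\delta/3$ on each simplex and $\delta/9$ on the Gaussians, cf.\ \cref{lemma:entropy} --- not $\delta/\sqrt{\DimH_{\indm}}$, since the latter would make $\ConstH_{\indm}/\DimH_{\indm}$ grow like $\Log\DimH_{\indm}$ and destroy the uniform constant $\constspatgausslog$ (your own entropy count $\NbPartX(K-1)\Log(1/\delta)$ is in fact the one corresponding to the constant-fraction scale).
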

A penalty that is proportional to the dimension of the
model is thus sufficient to ensure that the model selected 
performs almost as well as the best possible model in term of
conditional density estimation.
Combining the classical EM strategy for the
Gaussian parameter estimation (see for instance
\textcite{biernacki06:_model_mixmod}) and dynamic programming strategies
for the partition, as described for instance by
\textcite{kolaczyk05:_multisimage}, we have been able to implement this
penalized estimator and to test it on real
datasets~\cite{Bertrand:kv5096}. 
 As in the proof of
\textcite{antoniadis08}, we can also obtain that our proposed estimator yields
a minimax estimate for spatial Gaussian mixture with mixing proportions
having a geometrical regularity even without knowing the number of
classes. Numerical aspects of this method can be found in another study\cite{cohen12:_unsup_gauss_report}.

If the data were
generated according to a Gaussian mixture with varying mixing proportions, one
could obtain the asymptotic convergence of our class estimator
to the optimal Bayes one. However, our theoretical result on the
conditional density estimation does not implies immediately good
segmentation performance in general.
Figure~\ref{fig:glue} illustrates this methodology.
The studied sample is a thin cross-section of maple with a single
layer of hide glue on top of it, prepared recently using materials and
processes from the Cité de la Musique, using materials of the same
type and quality that is used for lutherie.  We show here the result for a low
signal to noise ratio acquisition requiring only two minutes of scan.
Using piecewise constant mixing proportions instead of constant
  mixing proportions leads to a better geometry of the segmentation, with
  less isolated points and more structured boundaries. As described in
  a more applied study~\cite{cohen12:_unsup_gauss_report}, this
  methodology permits to work with a much lower signal to noise ratio
  and thus allows to reduce significantly the acquisition time.

\begin{figure}
  \centering

\begin{tabular}{ccc}
\includegraphics[width=.3\linewidth]{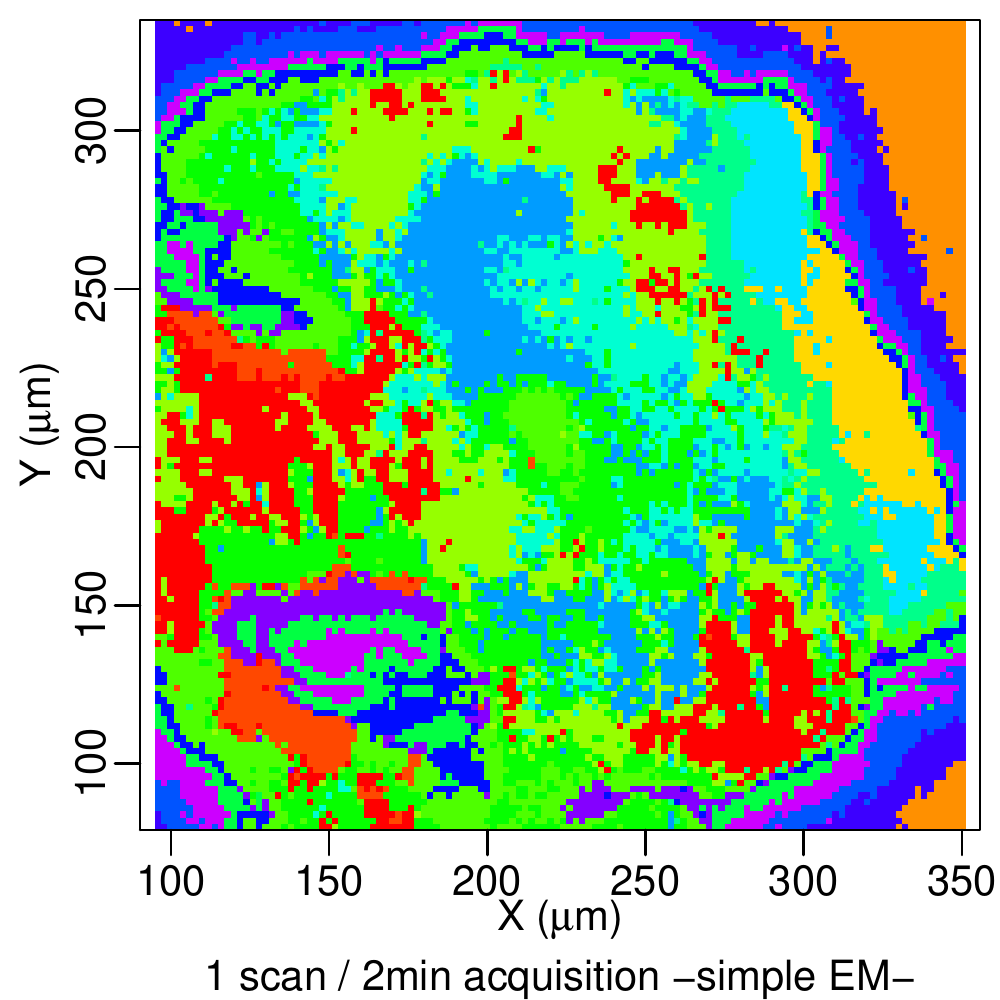}
&
&\includegraphics[width=.3\linewidth]{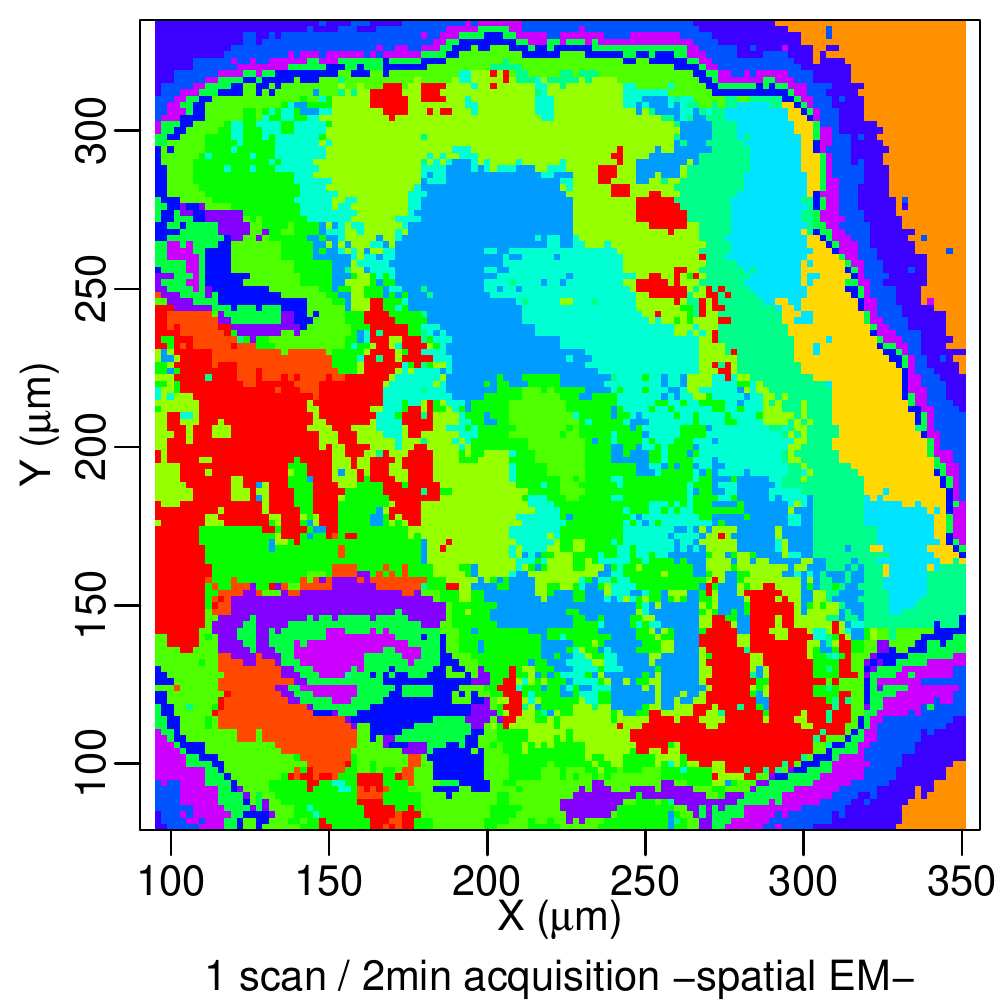}\\
(a) && (b)
\end{tabular}

  \caption{Unsupervised segmentation result: a) with constant mixing
    proportions b) with piecewise constant mixing proportions.}
  \label{fig:glue}
\end{figure}
}


\ifthenelse{\boolean{extended}}{}{
\section{Introduction}

Assume we observe $n$ pairs $\left((X_i,Y_i)\right)_{1 \leq i \leq n}$ of random variables, we are
interested in estimating the law of the second one $Y_i \in
\SpaceY$, called variable,
conditionally to the first one $X_i \in \SpaceX$, called covariate.
In this paper, we assume that
the pairs $(X_i,Y_i)$ are independent while $Y_i$ depends on $X_i$
through its law. More precisely, we assume that the covariates $X_i$'s
are independent but not necessarily identically
distributed. Assumption 
on the $Y_i$'s is stronger: we assume that, conditionally to
the $X_i$'s, they are
independent and  each variable $Y_i$ follows a law of density
$s_0(\cdot|X_i)$ with respect to a common known measure $\ud\meas$.
Our goal is to estimate this two-variable conditional density
function $s_0(\cdot|\cdot)$ from the observations. In this paper, we
apply a penalized maximum likelihood model selection result of~\cite{cohen11:_condit_densit_estim_penal_likel_model_selec} to
partition-based collection in which the conditional densities
depend on covariate in a piecewise constant manner.

The original conditional density estimation problem has been introduced by \textcite{rosenblatt69:_condit} in
the late 60's. In a stationary framework, he used a link
between $s_0(y|x)$ and the supposed existing 
densities $s_{0'}(x)$ and $s_{0''}(x,y)$  of respectively $X_i$
and $(X_i,Y_i)$,
\begin{align*}
  s_0(y|x)=\frac{s_{0''}(x,y)}{s_{0'}(x)},
\end{align*}
and proposed a plugin estimate based on kernel estimation of both
$s_{0''}(x,y)$ and $s_{0'}(x)$. Few other references on this subject seem to
exist before the mid 90's with a study of a spline tensor based
maximum likelihood estimator
proposed by \textcite{stone94:_use_polyn_splin_their_tensor}
and a bias correction of
\citeauthor{rosenblatt69:_condit}'s estimator due to
\textcite{hyndman96:_estim}. 
Kernel based method have been much studied since as stressed by \textcite{li07:_nonpar_econom}. To name a few,
\textcite{fan96:_estim} and \textcite{gooijer03} 
consider local polynomial estimator, 
\textcite{hall99:_method} study a locally logistic estimator
later extended by \textcite{hyndman02:_nonpar}. Pointwise convergence properties are considered, and extensions to
dependent data are often obtained.
Those results are however non  adaptive: their performances depend on
a critical bandwidth choice that should be chosen according to the
regularity of the unknown conditional density. Its
practical choice is rarely discussed with the notable exception of
\textcite{bashtannyk01:_bandw}. Extensions to censored cases have also
been discussed for instance by \textcite{keilegom02:_densit}.
In the approach of \textcite{stone94:_use_polyn_splin_their_tensor},
the conditional density is estimated using a representation,
a parametrized
modelization. This idea has been reused by \textcite{gyorfi07:_nonpar}
with a histogram based approach, by
\textcite{efromovich07:_condit,efromovich10:_oracl}  with a Fourier
basis, and by
\textcite{brunel07:_adapt_estim_condit_densit_presen_censor} and \textcite{akakpo11:_inhom}
with piecewise polynomial representation. Risks of those estimators
are controlled  with a total variation loss for the first one and a quadratic
distance for the others. Furthermore within the quadratic framework,
almost minimax adaptive estimators are constructed using respectively
a blockwise attenuation principle and a penalized model selection
approach. Kullback-Leibler type loss, and thus
maximum likelihood approach, has only been considered by \textcite{stone94:_use_polyn_splin_their_tensor}
as mentioned before and by \textcite{blanchard97:_optim} in a classification
setting with histogram type estimators.

In~\cite{cohen11:_condit_densit_estim_penal_likel_model_selec}, we propose
a penalized maximum likelihood model selection approach to
 estimate  $s_0$. Given a
collection of models $\Models=(\model_\indm)_{\indm\in\indmset}$
comprising conditional densities and their maximum
likelihood estimates
 \begin{align*}
   \widehat{s}_{\indm} = \argmin_{s_{\indm}\in\model_\indm} - \sum_{i=1}^n
   \Log s_\indm(Y_i|X_i),
 \end{align*} we define, for a given penalty $\pen(\indm)$, 
the \emph{best} model $\model_{\widehat{\indm}}$ as the one
that minimizes a penalized likelihood:
\begin{align*}
  \widehat{\indm} = \argmin_{\indm\in\indmset} - \sum_{i=1}^n
   \Log \widehat{s}_\indm(Y_i|X_i) + \pen(\indm).
\end{align*}
The main result of~\cite{cohen11:_condit_densit_estim_penal_likel_model_selec} is a sufficient condition on the penalty
$\pen(\indm)$ such that an oracle type inequality holds for the
conditional density estimation error. In this paper, we show how this
theorem can be used to derive results for two interesting partition-based
conditional density models, inspired by~\textcite{kolaczyk05:_multis},
\textcite{kolaczyk05:_multisimage} and \textcite{antoniadis08}.

Both are based on a recursive partitioning of space $\SpaceX$,
assumed for sake of simplicity
to be equal to $[0,1]^{\dimX}$, they differ by the choice of the
density used, once conditioned by covariates: in the first case, we
consider traditional piecewise polynomial models, while, in the
second case, we use Gaussian mixture models with common mixture
components. The first case is motivated by the work of
\textcite{willet07:_multis_poiss_inten_densit_estim} where they propose a
similar model for Poissonian intensities. The second one is drived
 by an application to unsupervised segmentation, which was our
 original motivation for this work.
For both examples, we prove that the penalty can be chosen roughly proportional
to the dimension of the model.
 
In \cref{sec:gener-penal-maxim}, we summarize the setting and the
results of
\cite{cohen11:_condit_densit_estim_penal_likel_model_selec}. We
describe the loss considered, explain the penalty structure and
present a general penalized maximum likelihood theorem we have
proved. This will be a key tool for the study of the partition-based strategy conducted in \cref{sec:emphp-const-cond}. We
describe first our general partition based approach in
\cref{sec:covar-part-cond} and exemplify it with piecewise polynomial
density with respect to the variable in \cref{sec:piec-polyn-cond} and
with Gaussian mixture with varying proportion in
\cref{sec:spat-mixt-models}. Main proofs are given in Appendix
while proofs of the most technical lemmas are relegated to our technical
report~\cite{cohen11:_condit_densit_estim_penal_app}.

\section{A general penalized maximum likelihood theorem}
\label{sec:gener-penal-maxim}

\subsection{Framework and notation}
\label{sec:sett-maxim-likel}

As in~\cite{cohen11:_condit_densit_estim_penal_likel_model_selec}, we
observe $n$ independent pairs
$\left( (X_i,Y_i)\right) _{1 \leq i \leq n} \in \left(\SpaceX,\SpaceY\right)^n$ where the $X_i$'s are
independent, but not necessarily of same law, and, conditionally to
$X_i$,
 each $Y_i$ is a random variable of unknown conditional
density $s_0(\cdot|X_i)$ with respect to a known reference measure $\ud \meas$.
For any model $\model_{\indm}$, a set of candidate conditional densities,  
we estimate $s_0$
by the conditional
density $\widehat{s}_\indm$ that maximizes the likelihood
(conditionally to $\left( X_i\right) _{1 \leq i \leq n}$) or
equivalently that minimizes the opposite of the log-likelihood,
denoted -log-likelihood from now on:
\begin{align*}
  \widehat{s}_\indm = \argmin_{s_{\indm} \in \model_{\indm}} \left(  \sum_{i=1}^n - \Log(s_\indm(Y_i|X_i) ) \right). 
\end{align*}
To avoid existence issue, we should work with almost minimizer of this
quantity and define a $\rhoapp$ -log-likelihood minimizer as any
$\widehat{s}_\indm$ that satisfies
\begin{align*}
\sum_{i=1}^n - \Log(\widehat{s}_\indm(Y_i|X_i) ) \leq
\inf_{s_\indm \in
  \model_\indm} \left(  \sum_{i=1}^n - \Log(s_\indm(Y_i|X_i) ) \right)
+ \rhoapp.
\end{align*}
Given a collection $\Models=(\model_\indm)_{\indm\in\indmset}$ of
models, we construct a penalty $\pen(\indm)$ and select the best
model $\widehat{\indm}$ as the one that minimizes
\begin{align*}
  \sum_{i=1}^n - \Log(\widehat{s}_\indm(Y_i|X_i) ) + \pen(\indm).
\end{align*}
In~\cite{cohen11:_condit_densit_estim_penal_likel_model_selec}, we give
conditions on penalties ensuring that the resulting estimate $\widehat{s}_{\widehat{\indm}}$
is a \emph{good} estimate of the true conditional density.

We should now specify our \emph{goodness} criterion. As we are working
in a maximum likelihood approach, the most natural quality measure is
the Kullback-Leibler divergence $\KL$. As we consider law with densities with
respect to a known measure $\ud\meas$, we use the following notation
\begin{align*}
  \KL_{\meas}(s,t) = \KL(s \ud\meas,t \ud\meas) & = \begin{cases} \int_{\Omega} \frac{s}{t} \Log \frac{s}{t} t \ud\meas &
    \text{if $s \ud\meas \ll t \ud\meas $}
\\
+ \infty & \text{otherwise}.
\end{cases}
\end{align*}
where $s \ud\meas \ll t \ud\meas $ means $\forall\Omega'
      \subset \Omega, \int_{\Omega'} t \ud\meas =0 \implies
      \int_{\Omega'} s \ud \meas=0$.
Remark that, contrary to the quadratic loss, this divergence is an
intrinsic quality measure between probability laws: it does not
depend on the reference measure $\ud\meas$. However, the densities
depend on this reference measure, this is stressed by the
index $\meas$ when we work with the non intrinsic densities instead of
the probability measures.
As we study conditional densities and not classical
densities, the previous divergence should be further adapted.
To take into account the structure of conditional densities and the
design of $(X_i)_{1\leq i \leq n}$, we 
use the following \emph{tensorized} divergence:
\begin{align*}
\KLtens_{\meas}(s,t) & 
= \E
\left[ \frac{1}{n}\sum_{i=1}^n
  \KL_{\meas}(s(\cdot|X_i),t(\cdot|X_i))\right].
\end{align*}
This divergence appears as the natural one in this setting and reduces
to classical ones in specific settings:
\begin{itemize}
\item If the law of $Y_i$ is independent of $X_i$, that is $s(\cdot|X_i)=s(\cdot)$ and $t(\cdot|X_i)=t(\cdot)$ do not depend
  on $X_i$, this divergence reduces to the classical
  $\KL_{\meas}(s,t)$.
\item If the $X_i$'s are not random but fixed, that is
  we consider a fixed design case, this divergence is the
  classical fixed design type divergence in which there is no expectation. 
\item If the $X_i$'s are i.i.d., this divergence can be rewritten
  as
\begin{alignI}
 \KLtens_{\meas}(s,t)  =
\E
\left[  \KL_{\meas}(s(\cdot|X_1),t(\cdot|X_1))\right].
\end{alignI}
\end{itemize}
Note that this divergence is an \emph{integrated} divergence as it is the
average over the index $i$ of the mean with respect to the law of $X_i$
of the divergence between the conditional densities for a given
covariate value. Remark that more weight is given to
regions of high density of the covariates than to regions of low density
and, in particular, divergence values outside the supports of the $X_i$'s are not used.
When $\hat{s}$ is an
estimator, or any function that depends on the observations,
$\KLtens_{\meas}(s,\hat{s})$ measures this (random) integrated divergence between $s$ and
$\hat{s}$ conditionally to the observations while $\E\left[\KLtens_{\meas}(s,\hat{s})\right]$ is the
average of this random quantity with respect to the observations.

As often in density estimation, we are not able to control this
loss but only a smaller one.
Namely, we use the Jensen-Kullback-Leibler divergence
$\JKL_{\propJKL}$ with $\propJKL\in(0,1)$ defined by
\begin{align*}
  \JKL_{\propJKL}(s\ud\meas,t\ud\meas) = \JKL_{\propJKL,\meas}(s,t)= \frac{1}{\propJKL}
  \KL_{\meas}\left( s, (1-\propJKL) s+ \propJKL t \right).
\end{align*}
Note that this divergence appears explicitly with
$\propJKL=\frac{1}{2}$ in~\textcite{massart07:_concen}, but can also be found
implicitly in  \textcite{birge98:_minim} and \textcite{geer95}. We use
the name Jensen-Kullback-Leibler divergence in the same way
\textcite{lin91:_diver_shann} use the name Jensen-Shannon divergence
for a sibling in an information theory work. This divergence is
smaller than the Kullback-Leibler one but larger, up to a constant
factor,
 than the squared Hellinger one,  $\d^{2}_{\meas}(s,t)=\int_{\Omega}
|\sqrt{s}-\sqrt{t}|^2 \ud\meas$, and the squared $L_1$ distance, $\|s-t\|^2_{\meas,1}=\left(\int_{\Omega}
|s-t| \ud\meas\right)^2$, as proved
\ifthenelse{\boolean{extended}}{in Appendix}{in our
  technical report~\cite{cohen11:_condit_densit_estim_penal_app}}. More
  precisely, we use their tensorized counterparts:
\begin{align*}
\dtwotens_{\meas}(s,t) & =
\E \left[ \frac{1}{n}\sum_{i=1}^n
  \d^2_{\meas}(s(\cdot|X'_i),t(\cdot|X'_i))\right]
 &
 \text{and}&&
 \JKLtens_{\propJKL,\meas}(s,t) & 
 = \E \left[ \frac{1}{n}\sum_{i=1}^n
   \JKL_{\propJKL,\meas}(s(\cdot|X'_i),t(\cdot|X'_i))\right].
\end{align*}

\subsection{Penalty, bracketing entropy and Kraft inequality}

Our condition on the penalty is given as a lower bound on its value:
\begin{align*}
  \pen(\indm) \geq \kappa_0 \left( \DIMH_{\indm} + x_{\indm} \right)
\end{align*}
where $\kappa_0$ is an absolute constant, $\DIMH_{\indm}$ is a
quantity, depending only on the model $\model_\indm$, that measures its
complexity (and is often almost proportional to its dimension) while
$x_{\indm}$  is a non intrinsic coding term that depends on the
structure of the whole model collection.

The complexity term $\DIMH_{\indm}$ is related to the bracketing
entropy of the model $\model_\indm$ with respect to the Hellinger type  divergence
$\dtens_{\meas}(s,t)=\sqrt{\dtwotens_{\meas}(s,t)}$, or more precisely to the
bracketing entropies of its subsets $\model_\indm(\widetilde{s},\sigma)=\left\{ s_\indm \in
\model_\indm \middle| \dtens_{\meas}(\widetilde{s},s_\indm) \leq \sigma
\right\}$. We recall that a bracket $[t^-,t^+]$ 
is a pair of functions such that $\forall (x,y) \in
\SpaceX\times\SpaceY, t^-(y|x) \leq t^+(y|x)$ and that a conditional density function
$s$ is said to belong to the bracket $[t^-,t^+]$ if $\forall (x,y) \in
\SpaceX\times\SpaceY, t^-(y|x)\leq
s(y|x) \leq t^+(y|x)$. The bracketing entropy
$H_{[\cdot],\dtens_{\meas}}(\delta,\setmodel)$ of a set $\setmodel$ is defined as the logarithm of the minimum number
$N_{[\cdot],\dtens_{\meas}}(\delta,\setmodel)$ of brackets $[t^-,
t^+]$ of width $\dtens_{\meas}(t^-,t^+)$ smaller than $\delta$ 
such that every function of $\setmodel$ belongs to one of these brackets.
To define $\DIMH_{\indm}$, we first impose a structural assumption:
\begin{assumptionHm}
 There is
 a non-decreasing function
 $\phi_\indm(\delta)$ such
that $\delta\mapsto \frac{1}{\delta} \phi_\indm(\delta)$ is non-increasing on $(0,+\infty)$ and for
every $\sigma\in\R^+$ and every $s_\indm \in \model_\indm$
\begin{align*}
  \int_0^{\sigma} \sqrt{%
    H_{[\cdot],\dtens_{\meas}}\left(\delta,\model_\indm(s_\indm,\sigma)\right)}
  \, \ud\delta \leq \phi_\indm(\sigma).
\end{align*}
\end{assumptionHm}
Note that the function $\sigma\mapsto \int_0^{\sigma} \sqrt{%
    H_{[\cdot],\dtens_{\meas}}\left(\delta,\model_\indm\right)}
  \, \ud\delta$ does always satisfy this assumption.
$\DIMH_{\indm}$ is then defined as $n\sigma^2_{\indm}$ with $\sigma^2_{\indm}$ the unique root
of \begin{alignI}
\frac{1}{\sigma}\phi_\indm(\sigma)=\sqrt{n}\sigma.
\end{alignI}
A good choice of $\phi_{\indm}$ is one which leads to a small upper
bound of $\DIMH_{\indm}$.
The bracketing entropy integral appearing in the assumption, often call Dudley integral, plays an
important role in empirical processes theory, as stressed for instance
in \textcite{vaart96:_weak_conver}. The equation defining
$\sigma_{\indm}$ corresponds to an approximate optimization of a
supremum bound as shown explicitly in the proof.
This definition is obviously far from being very explicit but it turns
out that it can be related to an entropic dimension of the
model. Recall that the classical entropic dimension of a compact set $S$
with respect to a metric $d$
can be defined as the smallest real $\DimH$ such that there is a $\ConstMultH$ such 
\begin{align*}
\forall \delta > 0, H_{d}(\delta,S) \leq  \DimH (\log \left(\frac{1}{\delta}\right) + \ConstMultH)
\end{align*}
where $H_{d}$ is the classical entropy with respect to metric
$d$. Replacing the classical entropy by a bracketing one, we define
the bracketing dimension $\DimH_{\indm}$ of a compact set as the
smallest real $\DimH$ such that there is a $\ConstMultH$ such 
\begin{align*}
\forall \delta > 0, H_{[\cdot],d}(\delta,S) \leq  \DimH (\log \left(\frac{1}{\delta}\right) + \ConstMultH).
\end{align*}
As hinted by the notation, for parametric model, under mild assumption on the parametrization,
this bracketing dimension coincides with the usual one.
It turns out that if this bracketing dimension exists then
$\DIMH_{\indm}$ can be thought as
roughly proportional to $\DimH_{\indm}$.
More precisely, \ifthenelse{\boolean{extended}}{in Appendix,}{
in our technical report~\cite{cohen11:_condit_densit_estim_penal_app},}
 we obtain
\begin{proposition}
\label{prop:proporsimple}
\begin{itemize}
\item
if
\begin{alignI}
\exists \DimH_{\indm} \geq 0, \exists \ConstMultH_{\indm} \geq 0,
\forall \delta \in (0,\sqrt{2}],  H_{[\cdot],\dtens_{\meas}}(\delta,\model_{\indm}) \leq
\ConstH_{\indm} + \DimH_{\indm}  \Log \frac{1}{\delta}
\end{alignI}
then  
\begin{itemize}
\item if $\DimH_{\indm} > 0$,
\Hindm\ holds with a function $\phi_\indm$ such that
\begin{alignI}
  \DIMH_{\indm}  \leq \left( 2 \constspatgausslogindm
+ 1 + \left(\Log
    \frac{n}{e\constspatgausslogindm \DimH_{\indm}}\right)_+ \right)
\DimH_{\indm}
\end{alignI}
with $\constspatgausslogindm=\left(\sqrt{\frac{\ConstH_{\indm}}{\DimH_{\indm}}} +
    \sqrt{\pi}\right)^2$,
\item if $\DimH_{\indm}=0$, \Hindm\ holds with the function
$\phi_\indm(\sigma)=\sigma\sqrt{\ConstH_{\indm}}$ which is such \begin{alignI}
  \DIMH_{\indm}  = \ConstH_{\indm},
\end{alignI}
\end{itemize}
\item
if
\begin{alignI}
\exists \DimH_{\indm} \geq  0, \exists \ConstH_{\indm} \geq 0, \forall
\sigma \in (0,\sqrt{2}], \forall \delta \in (0,\sigma],   H_{[\cdot],\dtens_{\meas}}(\delta,\model_{\indm}(s_{\indm},\sigma)) \leq
 \ConstH_{\indm} + \DimH_{\indm} \Log
   \frac{\sigma}{\delta}
\end{alignI}
then 
\begin{itemize}
\item
if $\DimH_{\indm}>0$, \Hindm\  holds with a function $\phi_\indm$ such that \begin{alignI}
  \DIMH_{\indm}  = \constspatgausslogindm
    \DimH_{\indm}
\end{alignI}
with $\constspatgausslogindm=\left(\sqrt{\frac{\ConstH_{\indm}}{\DimH_{\indm}}} +
    \sqrt{\pi}\right)^2$,
\item if $\DimH_{\indm}=0$, \Hindm\ holds with the function
$\phi_\indm(\sigma)=\sigma\sqrt{\ConstH_{\indm}}$ which is such \begin{alignI}
  \DIMH_{\indm}  = \ConstH_{\indm}.
\end{alignI}
\end{itemize}
\end{itemize}
\end{proposition}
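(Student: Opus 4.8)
The plan is to verify Assumption \Hindm\ in each of the four (sub-)cases by exhibiting an explicit admissible function $\phi_\indm$ dominating the Dudley integral, and then to read off $\DIMH_\indm=n\sigma_\indm^2$ from the defining equation $\phi_\indm(\sigma_\indm)=\sqrt n\,\sigma_\indm^2$. The structural fact to use repeatedly is that $\phi_\indm^\circ(\sigma):=\int_0^\sigma\sqrt{H_{[\cdot],\dtens_{\meas}}(\delta,\model_\indm)}\,\ud\delta$ is itself admissible: it is non-decreasing since the integrand is non-negative, and $\phi_\indm^\circ(\sigma)/\sigma$ is non-increasing since it is the average of the non-increasing map $\delta\mapsto\sqrt{H_{[\cdot],\dtens_{\meas}}(\delta,\model_\indm)}$. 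As $\model_\indm(s_\indm,\sigma)\subseteq\model_\indm$, monotonicity of the bracketing entropy in its set argument already makes $\phi_\indm^\circ$ dominate the Dudley integral of the localized sets, so in the global (first) case one may take $\phi_\indm=\phi_\indm^\circ$ and never check monotonicity of an explicit majorant. The only analytic ingredients needed are $\sqrt{a+b}\le\sqrt a+\sqrt b$ and the value $\int_0^1\sqrt{\Log(1/u)}\,\ud u=\Gamma(3/2)\le\sqrt\pi$.

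I would dispatch the localized bound (second item) first. With $\DimH_\indm>0$, for $\sigma\le\sqrt2$ the substitution $\delta=\sigma u$ gives
\begin{align*}
\int_0^\sigma\sqrt{H_{[\cdot],\dtens_{\meas}}(\delta,\model_\indm(s_\indm,\sigma))}\,\ud\delta
&\le\sigma\int_0^1\sqrt{\ConstH_\indm+\DimH_\indm\Log\tfrac1u}\,\ud u\\
&\le\sigma\sqrt{\DimH_\indm}\Big(\sqrt{\tfrac{\ConstH_\indm}{\DimH_\indm}}+\sqrt\pi\Big)=\sigma\sqrt{\DimH_\indm\,\constspatgausslogindm},
\end{align*}
while for $\sigma>\sqrt2$ the identity $H_{[\cdot],\dtens_{\meas}}(\delta,\model_\indm(s_\indm,\sigma))=H_{[\cdot],\dtens_{\meas}}(\delta\wedge\sqrt2,\model_\indm(s_\indm,\sqrt2))$ and the vanishing of the bracketing entropy beyond the diameter $\sqrt2$ show that $\phi_\indm(\sigma)=\sigma\sqrt{\DimH_\indm\constspatgausslogindm}$ still dominates the integral. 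This $\phi_\indm$ is linear, hence trivially admissible, and $\phi_\indm(\sigma_\indm)=\sqrt n\sigma_\indm^2$ gives at once $\DIMH_\indm=\constspatgausslogindm\DimH_\indm$. If $\DimH_\indm=0$ the entropy bound is the constant $\ConstH_\indm$, so $\phi_\indm(\sigma)=\sigma\sqrt{\ConstH_\indm}$ and $\DIMH_\indm=\ConstH_\indm$.

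For the global bound (first item) with $\DimH_\indm>0$ I take $\phi_\indm=\phi_\indm^\circ$ and proceed in two steps. First, on $\sigma\le1$, writing $\Log\frac1\delta=\Log\frac1\sigma+\Log\frac\sigma\delta$ and applying $\sqrt{a+b}\le\sqrt a+\sqrt b$ twice (the tail integral being handled by $\delta=\sigma u$),
\begin{align*}
\phi_\indm^\circ(\sigma)\le\int_0^\sigma\sqrt{\ConstH_\indm+\DimH_\indm\Log\tfrac1\delta}\,\ud\delta\le\sigma\sqrt{\DimH_\indm}\Big(\sqrt{\constspatgausslogindm}+\sqrt{\Log\tfrac1\sigma}\Big)=:\psi(\sigma).
\end{align*}
Second, since $\sigma\mapsto\phi_\indm^\circ(\sigma)/\sigma-\sqrt n\sigma$ is non-increasing and vanishes at $\sigma_\indm$, one has $\phi_\indm^\circ(\sigma^*)\le\sqrt n(\sigma^*)^2\Rightarrow\sigma_\indm\le\sigma^*$; taking $\sigma^*=\tilde\sigma$, the root of $\psi(\sigma)=\sqrt n\sigma^2$, and assuming $\tilde\sigma\le1$, gives $\sigma_\indm\le\tilde\sigma$ and $n\tilde\sigma^2=\DimH_\indm\big(\sqrt{\constspatgausslogindm}+\sqrt{\Log(1/\tilde\sigma)}\big)^2\ge\DimH_\indm\constspatgausslogindm$, hence $\Log(1/\tilde\sigma)\le\tfrac12\Log\frac{n}{\DimH_\indm\constspatgausslogindm}$. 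Then $(\sqrt a+\sqrt b)^2\le2(a+b)$ yields
\begin{align*}
\DIMH_\indm=n\sigma_\indm^2\le n\tilde\sigma^2\le\Big(2\constspatgausslogindm+\Log\tfrac{n}{\DimH_\indm\constspatgausslogindm}\Big)\DimH_\indm,
\end{align*}
and the announced bound follows from the elementary inequality $t\le1+(t-1)_+$ with $t=\Log\frac{n}{\DimH_\indm\constspatgausslogindm}$. The leftover regimes $\tilde\sigma>1$ (that is $n<\DimH_\indm\constspatgausslogindm$) and $\sigma_\indm>\sqrt2$ force $n$ small enough that the crude estimates $\DIMH_\indm=n\sigma_\indm^2\le2n$ and $\DIMH_\indm=\sqrt n\,\phi_\indm^\circ(\sqrt2)\le\sqrt n\big(\sqrt{\DimH_\indm\constspatgausslogindm}+(\sqrt2-1)\sqrt{\ConstH_\indm}\big)$ are already below $(2\constspatgausslogindm+1)\DimH_\indm$; the case $\DimH_\indm=0$ is handled exactly as in the localized bound.

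The step I expect to be the main obstacle is reconciling admissibility of $\phi_\indm$ with wanting to solve the fixed-point equation explicitly: the natural majorant $\psi$ is \emph{not} admissible, because $\sigma\mapsto\sigma\sqrt{\Log(1/\sigma)}$ has right-derivative tending to $-\infty$ as $\sigma\uparrow1$ and hence fails to be non-decreasing there. The resolution is to keep $\phi_\indm$ implicit---the genuine Dudley-integral function $\phi_\indm^\circ$, which is admissible by construction---for the statement of \Hindm, and to bring in $\psi$ only through the comparison inequality bounding $\sigma_\indm$. Everything else is routine calculus together with the bookkeeping of the boundary regimes indicated above.
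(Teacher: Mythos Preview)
The localized (second-bullet) case and the zero-dimension cases are handled correctly and match the paper's argument up to phrasing. One imprecision: the bracketing entropy does not ``vanish beyond the diameter $\sqrt2$'' --- it is merely constant there, bounded by $\ConstH_\indm$ --- but since $\sqrt{\ConstH_\indm}\le\sqrt{\DimH_\indm\constspatgausslogindm}$ your linear $\phi_\indm$ still dominates the integral, so the conclusion is unaffected.

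In the global (first-bullet) case with $\DimH_\indm>0$ your route genuinely differs from the paper's. The paper builds an \emph{explicit} admissible majorant,
\[
\phi_\indm(\sigma)=\sigma\sqrt{\DimH_\indm}\Big(\sqrt{\ConstH_\indm/\DimH_\indm}+\sqrt\pi+\sqrt{\Log\tfrac{1}{\sigma\wedge e^{-1/2}}}\Big),
\]
the truncation at $e^{-1/2}$ being precisely the device that restores non-decreasingness of $\phi_\indm$ while keeping $\phi_\indm/\sigma$ non-increasing. It then bootstraps: the defining equation gives the lower bound $\sigma_\indm\ge\sqrt{\DimH_\indm\constspatgausslogindm/n}$, which is re-substituted into the same equation to produce the upper bound. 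This works uniformly in $n$ with no case split. Your idea of taking $\phi_\indm=\phi_\indm^\circ$ (automatically admissible) and invoking $\psi$ only for comparison is a nice alternative in the main regime, and your diagnosis that $\psi$ itself fails admissibility near $\sigma=1$ is exactly the issue the paper's truncation is designed to circumvent.

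However, your boundary-case handling has a real error. The equality ``$\DIMH_\indm=\sqrt n\,\phi_\indm^\circ(\sqrt2)$'' you invoke when $\sigma_\indm>\sqrt2$ is false: one only has $\DIMH_\indm=\sqrt n\,\phi_\indm^\circ(\sigma_\indm)$, and $\phi_\indm^\circ$ is non-decreasing, so this gives the inequality the wrong way. Likewise ``$\DIMH_\indm\le 2n$'' requires $\sigma_\indm\le\sqrt2$, which can fail in the regime $n<\DimH_\indm\constspatgausslogindm$ (take $n=1$, $\ConstH_\indm=0$, $\DimH_\indm$ large: then $\sigma_\indm>\sqrt2$ and $\DIMH_\indm>2n$). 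The fix is short: from the non-increase of $\phi_\indm^\circ(\sigma)/\sigma$ one gets $\sqrt n\,\sigma_\indm=\phi_\indm^\circ(\sigma_\indm)/\sigma_\indm\le\phi_\indm^\circ(\sqrt2)/\sqrt2$, hence $\DIMH_\indm\le\tfrac12\phi_\indm^\circ(\sqrt2)^2\le\DimH_\indm\constspatgausslogindm$, already below the target. Alternatively, extend $\psi$ by $\psi(\sigma)=\sigma\sqrt{\DimH_\indm\constspatgausslogindm}$ for $\sigma\ge1$; one still has $\phi_\indm^\circ\le\psi$ (since $\sqrt{\ConstH_\indm}\le\sqrt{\DimH_\indm\constspatgausslogindm}$), so $\tilde\sigma$ exists for every $n$ and the comparison argument runs without any case distinction.
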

We assume bounds on the entropy only for $\delta$ and $\sigma$
smaller than $\sqrt{2}$, but, as for any conditional density pair $(s,t)$ $\dtens_{\meas}(s,t)\leq \sqrt{2}$,
\begin{align*}
  H_{[\cdot],\dtens_{\meas}}(\delta,\model_{\indm}(s_{\indm},\sigma))
  = H_{[\cdot],\dtens_{\meas}}(\delta \wedge \sqrt{2}
  ,\model_{\indm}(s_{\indm},\sigma \wedge \sqrt{2}))
\end{align*}
which implies that those bounds are still useful when $\delta$ and
$\sigma$ are large.

The coding term $x_{\indm}$ is constrained by a Kraft type assumption:
\begin{assumptionK}
There is a family $(x_\indm)_{\indm\in\indmset}$ of
non-negative number such that
\begin{align*}
\sum_{\indm\in\indmset} e^{-x_\indm} \leq \Sigma < +\infty
\end{align*}
\end{assumptionK}
This condition is an information theory type condition and thus can be
interpreted as a coding condition as stressed by
\textcite{barron08:_mdl_princ_penal_likel_statis_risk}.

\subsection{A penalized maximum likelihood theorem}

For technical reason, we also have to assume a separability condition
on our models:
\begin{assumptionSm}
There exist a countable subset
$\model'_\indm$ of $\model_\indm$ and a set $\SpaceY_\indm'$ with
$\meas(\SpaceY\setminus\SpaceY_\indm')=0$ such that for every $t\in\model_\indm$,
there exists a sequence $(t_k)_{k\geq 1}$ of elements of
$\model'_\indm$ such that for every $x$ and for every $y\in\SpaceY_\indm'$, $\Log\left(t_k(y|x)\right)$  goes
to $\Log\left(t(y|x)\right)$ as $k$ goes to infinity.
\end{assumptionSm}

The main result of
\cite{cohen11:_condit_densit_estim_penal_likel_model_selec} is
\begin{theorem}
\label{theo:select}
Assume we observe $(X_i,Y_i)$ with unknown conditional density $s_0$.
Let
$\Models=(\model_\indm)_{\indm\in\indmset}$ an at most countable model
collection.
Assume Assumption (K) holds while Assumptions ($\text{H}_{\indm}$) and ($\text{Sep}_{\indm}$)
hold for every model $\model_{\indm} \in \Models$.
Let $\widehat{s}_\indm$ be a $\rhoapp$ -log-likelihood minimizer in
  $\model_\indm$
\[
\sum_{i=1}^n - \Log(\widehat{s}_\indm(Y_i|X_i) ) \leq
\inf_{s_\indm \in
  \model_\indm} \left(  \sum_{i=1}^n - \Log(s_\indm(Y_i|X_i) ) \right)
+ \rhoapp
\]

Then for any $\propJKL\in(0,1)$ and any $\constoraone>1$, there are two constants
$\constpenmin$ and $\constoratwo$ depending only on $\propJKL$ and
$\constoraone$ such that,
as soon as  for every index $\indm\in\indmset$
\begin{align*}
\pen(\indm) \geq \constpen \left( \DIMH_{\indm} + x_\indm \right)
\quad\text{with $\constpen>\constpenmin$}
\end{align*}
where $\DIMH_{\indm}=n\sigma_{\indm}^2$ with $\sigma_\indm$ the unique root of
\begin{alignI}
\frac{1}{\sigma}\phi_\indm(\sigma)=\sqrt{n}\sigma
\end{alignI},
the penalized likelihood estimate $\widehat{s}_{\widehat{\indm}}$
  with $\widehat{\indm}$ such that
  \begin{align*}
\sum_{i=1}^n - \Log(\widehat{s}_{\widehat{\indm}}(Y_i|X_i) )
+ \pen(\widehat{\indm})
\leq \inf_{\indm\in\indmset} \left( \sum_{i=1}^n - \Log(\widehat{s}_\indm(Y_i|X_i) )
+ \pen(\indm) \right) + \rhomin    
  \end{align*}
satisfies
\begin{align*}
\E\left[\JKLtens_{\propJKL,\meas}(s_0,\widehat{s}_{\widehat{\indm}})\right]
\leq \constoraone \inf_{\indm\in\indmset} \left( \inf_{s_\indm \in \model_\indm} \KLtens_{\meas}(s_0,s_\indm) +
  \frac{\pen(\indm)}{n} \right) + \constoratwo \frac{\Sigma}{n} + \frac{\rhoapp+\rhomin}{n}.
\end{align*}
\end{theorem}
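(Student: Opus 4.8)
The plan is to transpose the penalized model-selection argument of \textcite{massart07:_concen} (the one behind his Theorem~7.11) to the present conditional, non-i.i.d.-design framework; taking $\indmset=\{\indm\}$ one recovers \cref{theo:single}, so I only describe the general case. We work conditionally on the design $(X_i)_{1\le i\le n}$ throughout, writing $\KLtens_\meas$, $\JKLtens_{\propJKL,\meas}$, $\dtwotens_\meas$ for the corresponding conditional divergences $\frac1n\sum_i\KL_\meas(\,\cdot\,|X_i)$ and so on, and taking the expectation over the design only at the very end. Fix $\propJKL\in(0,1)$ and, for a conditional density $t$, set $f_t=-\frac1\propJKL\Log\frac{(1-\propJKL)s_0+\propJKL t}{s_0}$; this function is bounded above by $b_\propJKL:=-\Log(1-\propJKL)/\propJKL$, satisfies $\Ptens(f_t)=\JKLtens_{\propJKL,\meas}(s_0,t)\le b_\propJKL$, and, by concavity of $\Log$, obeys the pointwise bound $f_t(Y_i|X_i)\le\Log\bigl(s_0(Y_i|X_i)/t(Y_i|X_i)\bigr)$. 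Writing $\Prec=\Pemp-\Ptens$ for the recentred empirical process, the definition of $\widehat{s}_\indm$ as a $\rhoapp$-minimizer in $\model_\indm$, the definition of $\widehat{\indm}$ as a $\rhomin$-minimizer of the penalized criterion, and this pointwise bound combine into: for every index $\indm$ and every $\widebar{s}_{\indm}\in\model_\indm$,
\[
  \JKLtens_{\propJKL,\meas}(s_0,\widehat{s}_{\widehat{\indm}})
  \le \KLtens_\meas(s_0,\widebar{s}_{\indm})+\frac{\pen(\indm)}n
  +\Prec\!\left(\Log\tfrac{s_0}{\widebar{s}_{\indm}}\right)
  -\Prec\!\left(f_{\widehat{s}_{\widehat{\indm}}}\right)-\frac{\pen(\widehat{\indm})}n
  +\frac{\rhoapp+\rhomin}n .
\]
(If every $\KLtens_\meas(s_0,s_\indm)$ is infinite there is nothing to prove.) The term $\Prec(\Log(s_0/\widebar{s}_{\indm}))$ has mean zero because $\widebar{s}_{\indm}$ is deterministic, so the whole difficulty is a uniform upper bound for $-\Prec(f_{\widehat{s}_{\widehat{\indm}}})-\pen(\widehat{\indm})/n$.

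For a fixed model $\model_{\indm'}$, I would bound $\sup_{t\in\model_{\indm'}}\bigl(-\Prec(f_t)\bigr)$ by a \emph{peeling} of $\model_{\indm'}$ along a geometric grid of the values of $\dtwotens_\meas(s_0,t)$: on a shell $\{t:\dtwotens_\meas(s_0,t)\le\sigma^2\}$ the $f_t$ are uniformly bounded by $b_\propJKL$ and, $f_u$ being a smooth bounded function of $u/s_0$, satisfy $\Ptens(f_t^2)\le\kappa_\propJKL\,\sigma^2$ (via \cref{prop:hellingerkull2}); moreover any $\dtens_\meas$-bracket $[t^-,t^+]$ of width $\delta$ for $t$ yields, by monotonicity of $u\mapsto f_u$, the bracket $[f_{t^+},f_{t^-}]$ for $f_t$, whose $L^2(\Ptens)$-width is controlled by $\delta$ up to a factor depending only on $\propJKL$, so the bracketing entropy of $\{f_t:t\in\model_{\indm'}(\,\cdot\,,\sigma)\}$ is controlled by that of $\model_{\indm'}(\,\cdot\,,\sigma)$ and Assumption \Hindm\ applies. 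Plugging this into a bracketing maximal inequality (Dudley's entropy integral) for the expected supremum, then into a Talagrand-type concentration inequality (Bousquet's bound), and using that $\sigma_{\indm'}$ solves $\phi_{\indm'}(\sigma)/\sigma=\sqrt n\,\sigma$, i.e. $n\sigma_{\indm'}^2=\DIMH_{\indm'}$, one obtains constants $\theta\in(0,1)$ and $\kappa_1$, depending only on $\propJKL$, such that with probability at least $1-e^{-x_{\indm'}-\xi}$,
\[
  -\Prec(f_t)\le\theta\,\dtwotens_\meas(s_0,t)+\frac{\kappa_1}n\bigl(\DIMH_{\indm'}+x_{\indm'}+\xi+1\bigr)
  \qquad\text{for every }t\in\model_{\indm'}.
\]
Assumption \Sepindm\ is used exactly here, to replace $\model_{\indm'}$ by a countable subset so that these suprema are measurable and $\widehat{s}_{\indm'}$ is attained as a limit.

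Summing the exceptional probabilities over $\indm'\in\indmset$ by Assumption (K), the last display holds for all models and all $t\in\model_{\indm'}$ simultaneously on an event $\Omega_\xi$ of probability at least $1-\Sigma e^{-\xi}$. On $\Omega_\xi$, specialising to $t=\widehat{s}_{\widehat{\indm}}$ and $\indm'=\widehat{\indm}$ and inserting into the first display, the term $\tfrac{\kappa_1}n(\DIMH_{\widehat{\indm}}+x_{\widehat{\indm}})$ is absorbed by $\pen(\widehat{\indm})/n\ge\tfrac{\constpen}n(\DIMH_{\widehat{\indm}}+x_{\widehat{\indm}})$ as soon as $\constpen>\constpenmin:=\kappa_1$, while $\theta\,\dtwotens_\meas(s_0,\widehat{s}_{\widehat{\indm}})\le(\theta/\ConstHellKL_\propJKL)\,\JKLtens_{\propJKL,\meas}(s_0,\widehat{s}_{\widehat{\indm}})$ by \cref{prop:hellingerkull2} is moved to the left-hand side, so that for $\theta<\ConstHellKL_\propJKL$
\[
  \Bigl(1-\tfrac{\theta}{\ConstHellKL_\propJKL}\Bigr)\JKLtens_{\propJKL,\meas}(s_0,\widehat{s}_{\widehat{\indm}})
  \le\KLtens_\meas(s_0,\widebar{s}_{\indm})+\frac{\pen(\indm)}n
  +\Prec\!\left(\Log\tfrac{s_0}{\widebar{s}_{\indm}}\right)+\frac{\kappa_1(\xi+1)}n+\frac{\rhoapp+\rhomin}n .
\]
On $\Omega_\xi^c$ I use $\JKLtens_{\propJKL,\meas}(s_0,\widehat{s}_{\widehat{\indm}})\le b_\propJKL$; bounding the relevant nonnegative quantity by $\int_0^\infty\Prob(\cdot>u)\,\ud u$ and integrating the tail in $\xi$ contributes a term of order $\Sigma/n$. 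Taking expectations, using $\E[\Prec(\Log(s_0/\widebar{s}_{\indm}))]=0$, and then taking the infimum over $\indm$ and over $\widebar{s}_{\indm}\in\model_\indm$ gives
\[
  \E\bigl[\JKLtens_{\propJKL,\meas}(s_0,\widehat{s}_{\widehat{\indm}})\bigr]
  \le\frac1{1-\theta/\ConstHellKL_\propJKL}\inf_{\indm\in\indmset}\Bigl(\inf_{s_\indm\in\model_\indm}\KLtens_\meas(s_0,s_\indm)+\frac{\pen(\indm)}n\Bigr)+\constoratwo\frac\Sigma n+\frac{\rhoapp+\rhomin}n ,
\]
and choosing $\theta$ small enough that $(1-\theta/\ConstHellKL_\propJKL)^{-1}\le\constoraone$ — which is what determines $\constpenmin$ and $\constoratwo$ as functions of $\propJKL$ and $\constoraone$ — yields the statement.

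The genuine obstacle is the per-model step of the second paragraph: with constants depending on $\propJKL$ but never on the unknown $s_0$, one must (i) pass from the $\dtens_\meas$-bracketing entropy supplied by Assumption \Hindm\ to a usable bracketing control of the log-ratio class $\{f_t\}$, (ii) show $\Ptens(f_t^2)\lesssim\dtwotens_\meas(s_0,t)$ so that the peeling on $\dtwotens_\meas$ is effective, and (iii) calibrate the maximal and concentration inequalities so that the multiplier of $\DIMH_{\indm'}+x_{\indm'}$ is an absolute constant and the residual fluctuation is of the exact order $\phi_{\indm'}(\sigma_{\indm'})^2/n=\sigma_{\indm'}^2$. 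Everything downstream — absorbing the variance and penalty terms, the Kraft summation, the passage to expectation — is routine bookkeeping; the bracketing and variance lemmas underlying (i)--(iii) are the technical facts whose proofs are postponed.
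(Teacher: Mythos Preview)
Your overall architecture is the same as the paper's --- both adapt the proof of Massart's Theorem~7.11 --- but there is one genuine gap and two places where your chosen tools diverge from the paper's in ways that matter.

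\textbf{The gap: centring of the peeling.} You peel $\model_{\indm'}$ on shells $\{t:\dtwotens_\meas(s_0,t)\le\sigma^2\}$ and then write ``Assumption \Hindm\ applies'' to the sets $\model_{\indm'}(\,\cdot\,,\sigma)$. But \Hindm\ only controls the bracketing entropy of $\model_{\indm'}(s_{\indm'},\sigma)$ for $s_{\indm'}\in\model_{\indm'}$; since $s_0$ is typically \emph{not} in the model, your shells are not covered by the assumption. The paper fixes this by introducing an auxiliary $\widetilde{s}_{\indm'}\in\model_{\indm'}$ close to the Hellinger projection of $s_0$, splitting $-\Prec(f_{\widehat{s}_{\indm'}})=-\Prec(f_{\widetilde{s}_{\indm'}})+\Prec(f_{\widetilde{s}_{\indm'}}-f_{\widehat{s}_{\indm'}})$, handling the first (deterministic) piece by a plain Bernstein inequality, and peeling the second on $\dtwotens(\widetilde{s}_{\indm'},t)$ so that \Hindm\ applies directly. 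The conversion $\dtwotens(\widetilde{s}_{\indm'},\widehat{s}_{\indm'})\le 2(2+\epsilon)\,\dtwotens(s_0,\widehat{s}_{\indm'})$ is what brings the radius back to one measured from $s_0$ at the end. Without this recentring step your invocation of \Hindm\ is not justified.

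\textbf{Tool choices.} Two further differences are worth flagging. First, you propose to work conditionally on the design and invoke Talagrand/Bousquet; the paper works unconditionally and uses instead a conditional-expectation device (its Lemma~\ref{lemma:controlcond}) together with Massart's bracketing Theorem~6.8, which is tailored to Bernstein-type moment bounds $\Ptens(|g|^k)\le\tfrac{k!}{2}Vb^{k-2}$. This matters because $-f_t$ is unbounded above, so the standard bounded-class form of Bousquet does not apply without truncation or its Bernstein variant; and because \Hindm\ measures bracket widths in the \emph{design-averaged} distance $\dtens_\meas$, which is what the unconditional argument needs but not what a conditional one would. Second, the variance inequality $\Ptens(f_t^2)\lesssim\dtwotens_\meas(s_0,t)$ is not a consequence of \cref{prop:hellingerkull2}; it (and the higher moments needed for Bernstein) come from the pointwise bound $P\bigl(|\log\tfrac{s_0+s}{s_0+t}|^k\bigr)\le\tfrac{k!}{2}\cdot\tfrac{9}{8}\|\sqrt{s}-\sqrt{t}\|_2^2\cdot 2^{k-2}$ (Massart's Lemma~7.26), which the paper invokes explicitly. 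Once you fix the centring and use these two ingredients, the rest of your outline --- Kraft summation, absorbing $\dtwotens$ into $\JKLtens_\propJKL$ via $\ConstHellKL_\propJKL$, integrating the tail --- matches the paper exactly.
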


This theorem extends Theorem 7.11
of \textcite{massart07:_concen}, which handles only density
estimation, and reduces to it if all conditional densites considered
do not depend on the covariate. The cost of model selection with respect to the choice
of the best single model is proved to be very mild. Indeed,
let $\pen(m)=\constpen (\DIMH_{\indm} + x_{\indm})$ then one
obtains
\begin{align*}
\E\left[\JKLtens_{\propJKL,\meas}(s_0,\widehat{s}_{\widehat{\indm}})\right]
&\leq \constoraone \inf_{\indm\in\indmset} \left( \inf_{s_\indm \in \model_\indm} \KLtens_{\meas}(s_0,s_\indm) +
  \frac{\constpen}{n} (\DIMH_{\indm} + x_{\indm}) \right) + \constoratwo
\frac{\Sigma}{n} + \frac{\rhoapp+\rhomin}{n}\\
& \leq \constoraone \frac{\constpen}{\constpenmin}
\left(\max_{\indm\in\indmset} \frac{\DIMH_{\indm} + x_{\indm}}{\DIMH_{\indm}}\right) \inf_{\indm\in\indmset} \left( \inf_{s_\indm \in \model_\indm} \KLtens_{\meas}(s_0,s_\indm) +
  \frac{\constpenmin}{n} \DIMH_{\indm} \right) + \constoratwo
\frac{\Sigma}{n} + \frac{\rhoapp+\rhomin}{n}.
\end{align*}
where 
\begin{align*}
  \inf_{\indm\in\indmset} \left( \inf_{s_\indm \in \model_\indm} \KLtens_{\meas}(s_0,s_\indm) +
  \frac{\constpenmin}{n} \DIMH_{\indm} \right) + \constoratwo
\frac{\Sigma}{n} + \frac{\rhoapp}{n}
\end{align*}
is the best known bound for a generic single model, as explained 
in \cite{cohen11:_condit_densit_estim_penal_likel_model_selec}:
As soon as the term $x_{\indm}$ remains small relatively to
$\DIMH_{\indm}$, we have thus an oracle inequality:
the penalized estimate satisfies up to a small factor the same bound 
as the estimate in the best model.
The price to pay for the use of a collection of model is thus
small. The gain is on the contrary huge: we do not have to
know the best model within a collection to almost achieve its performance. 
Note that as there exists a constant $c_\propJKL>0$ such that
$c_{\propJKL}\|s-t\|_{\meas,1}^{\tens,2} \leq
\JKLtens_{\propJKL,\meas}(s,t)$, as proved in our technical report~\cite{cohen11:_condit_densit_estim_penal_app}, this theorem implies a bound for the
squared $L_1$ loss of the estimator. 

For sake of generality, this theorem is relatively abstract.
A natural question is the existence of
interesting model collections that satisfy these assumptions. Motivated
by an application to unsupervised hyperspectral image segmentation,
already mentioned in
\cite{cohen11:_condit_densit_estim_penal_likel_model_selec}, we consider the case where the covariate $X$ belongs to $[0,1]^{\dimX}$
and use collections for which the conditional densities depend
on the covariate only in a piecewise constant manner.
}

\section{Partition-based conditional density models}
\label{sec:emphp-const-cond}

\subsection{Covariate partitioning and conditional density estimation}
\label{sec:covar-part-cond}

Following an idea developed by \textcite{kolaczyk05:_multisimage}, we
partition the covariate domain and consider candidate
conditional density estimates that depend on the covariate only
through the region it belongs. We are thus
interested in conditional densities that can be written as
\begin{align*}
  s(y|x) = \sum_{\LeafXl\in\PartX} s(y|\LeafXl) \Charac{x\in\LeafXl}
\end{align*}
where $\PartX$ is partition of $\SpaceX$, $\LeafXl$ denotes a generic region in this partition,
 $\Characonly$ denotes the characteristic function
of a set and $s(y|\LeafXl)$ is a density for any
$\LeafXl\in\PartX$. Note that this strategy, called as in
\textcite{willet07:_multis_poiss_inten_densit_estim} partition-based, shares a lot with the CART-type 
strategy proposed by \textcite{donoho97:_cart} in an image processing setting. 

Denoting $\NbPartX$ the number of regions in
this partition, the model we consider are thus specified by a partition $\PartX$ and a
set $\DSet$ of $\NbPartX$-tuples of densities into which
$(s(\cdot|\LeafXl))_{\LeafXl \in \PartX}$ is chosen. This set
$\DSet$ can be a product of density sets,
yielding an independent choice on each region of the partition, or have a
more complex structure. We study two examples: in the first one, 
$\DSet$ is indeed a product of piecewise
polynomial density sets, while in the second one $\DSet$ is a set of $\NbPartX$-tuples of Gaussian mixtures sharing the same
mixture components. Nevertheless, denoting with a slight abuse of notation
$\model_{\PartX,\DSet}$ such a model, our
$\rhoapp$-log-likelihood estimate in this model is  any conditional density
$\widehat{s}_{\PartX,\DSet}$ such that
 \begin{align*} 
\left(  \sum_{i=1}^n -
   \Log(\widehat{s}_{\PartX,\DSet}(Y_i|X_i) ) \right) & \leq \min_{s_{\PartX,\DSet} \in \model_{\PartX,\DSet}} \left(  \sum_{i=1}^n -
   \Log(s_{\PartX,\DSet}(Y_i|X_i) ) \right) + \rhoapp.
 \end{align*}

We first specify the partition collection we consider.
For the sake of simplicity we restrict our description to the case where the covariate space
$\SpaceX$ is simply $[0,1]^{\dimX}$. We stress that 
the proposed strategy can easily
be adapted to more general settings including discrete variable
ordered or not. We impose a strong structural assumption on the
partition collection considered that allows to control their
\emph{complexity}. We only consider 
five specific hyperrectangle based collections of partitions of $[0,1]^{\dimX}$:
\begin{itemize}
\item Two are recursive dyadic partition collections.
  \begin{itemize}
  \item The uniform dyadic partition collection ($\UDPX$) in which all
    hypercubes are subdivided in $2^{\dimX}$ hypercubes of equal
    size at each step. In this collection, in the partition obtained
    after $J$ step, all
    the $2^{\dimX J}$ hyperrectangles $\{ \LeafXl \}_{1\leq l \leq
      \NbPartX}$ are thus hypercubes whose measure $|\LeafXl|$
    satisfies $|\LeafXl|=2^{- \dimX J}$. We stop the
    recursion as soon as the number of steps $J$ satisfies
 $\frac{2^{\dimX}}{n} \geq |\LeafXl| \geq\frac{1}{n}$.
\item The recursive dyadic partition collection ($\RDPX$) in which at each
  step a hypercube of measure $|\LeafXl| \geq
  \frac{2^{\dimX}}{n}$ is subdivided in $2^{\dimX}$
  hypercubes of equal size.
  \end{itemize}
\item Two are recursive split partition collections.
  \begin{itemize}
  \item The recursive dyadic split partition ($\RDSPX$) in which at each
    step a hyperrectangle of measure $|\LeafXl|\geq
  \frac{2}{n}$ can be subdivided in $2$
  hyperrectangles of equal size by an even split along one of the
  $\dimX$ possible directions.
\item The recursive split partition ($\RSPX$) in which at each step a
  hyperrectangle of measure $|\LeafXl|\geq\frac{2}{n}$ can be
  subdivided in $2$ hyperrectangles of measure larger than
  $\frac{1}{n}$ by a split along one a point of the grid
  $\frac{1}{n}\Z$ in one the $\dimX$ possible
  directions.
  \end{itemize}
\item The last one does not possess a hierarchical structure. The
  hyperrectangle partition collection ($\HRPX$) is the full collection of all
  partitions into hyperrectangles whose corners are located on the grid
  $\frac{1}{n}\Z^{\dimX}$ and whose volume is larger than $\frac{1}{n}$.
\end{itemize}
We denote by $\CollPart^{\starX}$ the corresponding partition
collection where  $\starX$ is either
$\UDPX$, $\RDPX$, $\RDSPX$, $\RSPX$ or $\HRPX$. 

As noticed by \textcite{kolaczyk05:_multis}, \textcite{huang06:_fast} or
\textcite{willet07:_multis_poiss_inten_densit_estim}, the first four
partition collections,  ($\CollPart^{\UDPX}$, $\CollPart^{\RDPX}$,
$\CollPart^{\RDSPX}$, $\CollPart^{\RSPX}$), have a tree
structure. Figure~\ref{fig:quadtree} illustrates this structure for a
$\RDPX$ partition. This specific structure is mainly
used to obtain an efficient numerical algorithm performing the model
selection. For sake of completeness, we have also added the much more
complex to deal with collection $\CollPart^{\HRPX}$, for which only
exhaustive search algorithms exist.

\begin{figure}
  \centering
  \includegraphics[width=.2\textwidth]{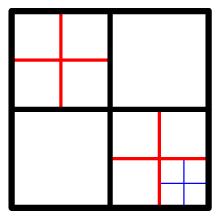}
\hspace*{1cm}
  \includegraphics[width=.2\textwidth]{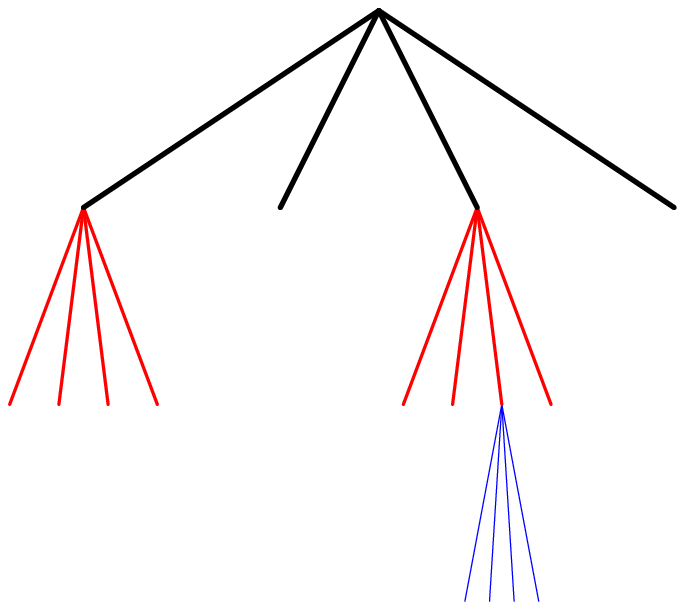}
\caption{Example of a recursive dyadic partition with its associated
  dyadic tree.}
\label{fig:quadtree}
\end{figure}

As proved \ifthenelse{\boolean{extended}}{in Appendix}{in our technical
report~\cite{cohen11:_condit_densit_estim_penal_app}}, those partition
collections satisfy Kraft type inequalities with weights constant
for the $\UDPX$ partition collection and proportional to the number $\NbPartX$ of
hyperrectangles for the other collections. Indeed,
\begin{proposition}
\label{prop:codingpolpart}
For any of the five described partition collections
$\CollPart^{\starX}$,
$\exists \PartCstA^{\star}, \PartCstB^{\star}, \PartCstc^{\star}
\text{ and } \PartCstSigma$ such that
for all $c \geq \PartCstc^{\starX}$:
\begin{align*}
  \sum_{\PartX \in \CollPart^{\starX}} e^{-c \left( \PartCstA^{\starX}
      + \PartCstB^{\starX} \NbPartX \right)} \leq \PartCstSigma^{\starX}
  e^{-c\max\left(\PartCstA^{\starX},\PartCstB^{\starX}\right)}.
\end{align*}
\end{proposition}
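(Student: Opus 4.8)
The plan is to reduce the claimed Kraft inequality, for each of the five collections $\CollPart^{\starX}$, to a purely combinatorial bound on the number $N^{\starX}(P)$ of partitions of the collection having exactly $P$ hyperrectangles, and then to sum a geometric series. Grouping the sum according to the number of regions $\NbPartX$ gives
\[
\sum_{\PartX\in\CollPart^{\starX}} e^{-c(\PartCstA^{\starX}+\PartCstB^{\starX}\NbPartX)}
= e^{-c\PartCstA^{\starX}}\sum_{P\geq 1} N^{\starX}(P)\,e^{-c\PartCstB^{\starX}P}.
\]
Suppose we have established $N^{\starX}(P)\leq (C^{\starX})^{P}$ for every $P\geq 1$, for some constant $C^{\starX}\geq 1$. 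Then, choosing $\PartCstc^{\starX}=1$ and $\PartCstA^{\starX}=\PartCstB^{\starX}=:\beta^{\starX}:=\Log(2C^{\starX})$, one has $C^{\starX}e^{-c\beta^{\starX}}\leq C^{\starX}e^{-\beta^{\starX}}=\tfrac12$ for all $c\geq\PartCstc^{\starX}$, so the series is bounded by $\sum_{P\geq1}2^{-P}=1$ and the full sum is at most $e^{-c\beta^{\starX}}=e^{-c\max(\PartCstA^{\starX},\PartCstB^{\starX})}$; hence $\PartCstSigma^{\starX}=1$ works uniformly. Everything therefore comes down to the estimate $N^{\starX}(P)\leq (C^{\starX})^{P}$, with $C^{\starX}$ depending only on $\dimX$ for the first four ``dyadic'' collections and on $\dimX$ and $n$ (so that $\beta^{\starX}$ carries a factor $\Log n$) for $\RSPX$ and $\HRPX$.

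For $\UDPX$ there is at most one partition per subdivision depth $J$, with $\NbPartX=2^{\dimX J}$, so $N^{\UDPX}(P)\leq 1$ and the bound is immediate with $C^{\UDPX}=1$ (equivalently one may take $\PartCstB^{\UDPX}=0$ and bound the at most $\lceil\log_2 n\rceil/\dimX+1$ terms, at the price of a $\PartCstSigma^{\UDPX}$ of order $\Log n$). For $\RDPX$ and $\RDSPX$ I would encode a partition of the collection by the corresponding finite subtree of the complete $2^{\dimX}$-ary tree (resp.\ of the complete binary tree, with each internal node additionally labelled by one of the $\dimX$ split directions): reading the nodes in breadth-first order and emitting one bit per node (leaf versus internal), together with $\lceil\log_2\dimX\rceil$ direction bits at each internal node in the $\RDSPX$ case, is an injective, indeed prefix-free, coding; since such a tree with $P$ leaves has a number of nodes that is a fixed affine function of $P$, bounded by $2P$, the codewords associated with $P$-leaf partitions all have length at most $b\,P$ for some $b=b(\dimX)$, whence $N^{\starX}(P)\leq 2^{bP}=(C^{\starX})^{P}$ (in fact $C^{\RDPX}=4$ works, independently of $\dimX$). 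For $\RSPX$ the same coding applies, but each internal node must also record the location of its split among the at most $n-1$ interior points of the grid $\tfrac{1}{n}\Z$ inside the current hyperrectangle, costing $\lceil\log_2 n\rceil$ extra bits per internal node, so that $N^{\RSPX}(P)\leq (b(\dimX)\,n)^{P}$ and $\beta^{\RSPX}$ is of order $\Log n$.

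The collection $\HRPX$ has no hierarchical structure, so the tree coding is unavailable, and I would argue crudely: every admissible hyperrectangle has its $2\dimX$ defining coordinates in $\{0,\tfrac{1}{n},\dots,1\}$, hence there are at most $\binom{n+1}{2}^{\dimX}\leq n^{2\dimX}$ of them, and a partition with $P$ regions is a $P$-element subset of this finite family, so $N^{\HRPX}(P)\leq\binom{n^{2\dimX}}{P}\leq n^{2\dimX P}=(C^{\HRPX})^{P}$ with $C^{\HRPX}=n^{2\dimX}$, giving $\beta^{\HRPX}$ of order $\dimX\Log n$. Inserting these five values of $C^{\starX}$ into the reduction of the first paragraph concludes the proof. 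I expect the $\HRPX$ case to be the main obstacle: the absence of a tree structure forces the wasteful ``all admissible boxes'' count, which is precisely why this collection --- and, to a milder extent, $\RSPX$ --- pays a coding cost of order $\Log n$ per region rather than an absolute constant, whereas for the three genuinely hierarchical collections the self-delimiting tree encoding keeps $\PartCstA^{\starX}$ and $\PartCstB^{\starX}$ bounded independently of $n$. (If explicit constants are desired for $\RDPX$ and $\RDSPX$, the Fuss--Catalan count of the $m$-ary trees with $P$ leaves, which is at most $(e\,m)^{P}$, can be used instead of the coding argument.)
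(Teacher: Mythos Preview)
Your argument is correct and establishes the existence claim. The overall strategy---encode each partition by a word whose length is affine in $\NbPartX$ and exploit the resulting exponential control---is the same as the paper's, and for the four hierarchical collections your tree encoding is essentially the paper's; the only difference there is that the paper invokes the Kraft inequality directly on the prefix code (using the exact relation $N(\PartX)=\PartCstc^{\star}\NbPartX+(1-\PartCstc^{\star})$ between nodes and leaves) to read off the specific constants in the table, whereas you pass through the counting bound $N^{\star}(P)\leq (C^{\star})^{P}$ and a geometric sum, which is slightly more elementary but loses the explicit values of $\PartCstB^{\star}$ and $\PartCstc^{\star}$. For $\HRPX$ the two arguments genuinely diverge: you use the crude ``choose $P$ boxes out of at most $n^{2\dimX}$'' count, while the paper constructs a uniquely decodable \emph{sequential} code that lists only the uppermost corner of each hyperrectangle (the lowermost corner being recoverable from the previously placed rectangles), so only $\dimX$ rather than $2\dimX$ coordinates are spent per region; this halves the $\Log n$ coefficient in $\PartCstB^{\HRPX}$. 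A minor slip in your exposition: you first say the constant depends only on $\dimX$ for the ``first four'' collections but then (correctly) put $\RSPX$ in the $n$-dependent group; only $\UDPX$, $\RDPX$, $\RDSPX$ are $n$-free.
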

Those constants can be chosen as follow:\\
{\small
\newcolumntype{C}{>{$\displaystyle}c<{$}}
 \begin{tabular}{C|C|C|C|C|C}
& \star=\UDPX & \star=\RDPX & \star=\RDSPX &
\star=\RSPX & \star=\HRPX\\
\hline
\PartCstA^{\star} & \Log \left( \max\left(2,1 + \frac{\Log n}{\dimX \Log 2}\right) \right)  & 0 & 0 & 0 & 0 \\
\PartCstB^{\star} & 0 & \Log 2 & \lceil \Log (1 + \dimX)\rceil_{\Log
  2} & \lceil \Log (1+\dimX)\rceil_{\Log
  2} &  \dimX \lceil \Log n \rceil_{\Log
  2}\\ 
& &  & & + \lceil \Log
n \rceil_{\Log
  2}& \\
\PartCstc^{\star} & 0 & \frac{2^\dimX}{2^\dimX-1} & 2 & 2 & 1\\
\PartCstSigma^{\star} &
1 + \frac{\Log n}{\dimX \Log 2} & 2 & 2(1+ \dimX)
& 4(1+\dimX) n & (2n)^{\dimX}
\end{tabular}
}\\
where $\lceil x \rceil_{\Log
  2}$  is the smallest multiple of $\Log 2$ larger than $x$. 
Furthermore, as soon as $c\geq 2 \Log 2$ the right hand term of the
bound is smaller than $1$.
This will
prove 
useful to verify Assumption $(K)$ for the model collections of the next
sections.

In those sections, we study the two different choices proposed
above for the set $\DSet$. We first consider a piecewise polynomial
strategy similar to the one proposed by
\textcite{willet07:_multis_poiss_inten_densit_estim} defined for 
$\SpaceY=[0,1]^{\dimY}$ in which the set $\DSet$ is a product of
sets. We then consider a Gaussian mixture strategy with varying
mixing proportion but common mixture components that extends the work
of \textcite{maugis09:_gauss} and has been the
original motivation of this work. In both cases, we prove that the
penalty can be chosen roughly proportional to the dimension.

\subsection{Piecewise polynomial conditional density estimation}
\label{sec:piec-polyn-cond}

In this section, we let $\SpaceX=[0,1]^{\dimX}$, $\SpaceY=[0,1]^{\dimY}$ and
$\meas$ be the Lebesgue measure $\ud y$. Note that, in this case,
$\meas$  is a probability measure on
$\SpaceY$. Our candidate density $s(y|x\in\LeafXl)$ is then  chosen
among piecewise polynomial densities. More precisely,
we reuse a hyperrectangle partitioning strategy this time for $\SpaceY=[0,1]^{\dimY}$ and impose
that our candidate conditional density $s(y|x\in\LeafXl)$ is a square of polynomial on each
hyperrectangle $\LeafYlk$ of the partition $\PartYLeafXl$. This differs from
the choice of \textcite{willet07:_multis_poiss_inten_densit_estim} in
which the candidate density is simply a polynomial. The two choices
coincide however when the polynomial is chosen among the constant
ones.
Although our choice of using squares of polynomial is less natural, it
already ensures the positiveness of the candidates so that we only
have to impose that the integrals of the piecewise polynomials are equal
to $1$ to obtain conditional densities. It turns out to be also crucial
to obtain a control of the local bracketing entropy of our models. 
Note that this setting differs from the one of
\textcite{blanchard97:_optim} in which $\SpaceY$ is a finite discrete
set.

We should now define the sets $\DSet$ we consider for a given
partition $\PartX=\{ \LeafXl \}_{1 \leq l \leq \NbPartX}$ of
$\SpaceX=[0,1]^{\dimX}$. Let
$\dimPYmax=(\dimPYmaxi{1},\ldots,\dimPYmaxi{\dimY})$, we first define  for any partition
$\PartY= \{ \LeafY_{k}\}_{1 \leq k \leq \NbPartY}$ of
$\SpaceY=[0,1]^{\dimY}$ the set $\DDSet_{\PartY,\dimPYmax}$ of
squares of piecewise polynomial densities of maximum degree $\dimPYmax$
defined in the partition $\PartY$:
\begin{align*}
  \DDSet_{\PartY,\dimPYmax} = \left\{ s(y) = \sum_{\LeafY_{k}
    \in \PartY} P^2_{\LeafY_k}(y) \Charac{y \in \LeafY_{k}}
  \middle| 
\begin{array}{l}
\forall \LeafY_{k} \in \PartY, P_{\LeafY_k} \text{ polynomial of degree at most $\dimPYmax$},
\\
 \sum_{\LeafY_{k} \in \PartY} \int_{\LeafY_k} P^2_{\LeafY_k}(y) = 1 
\end{array}
\right\}
\end{align*}
For any partition collection $\PartXY = \left( \PartYLeafXl \right)_{1 \leq l
  \leq \NbPartX} = \left( \{ \LeafY_{l,k}\}_{1 \leq k \leq \NbPartYLeafXl} \right)_{1 \leq l
  \leq \NbPartX}$ of $\SpaceY=[0,1]^{\dimY}$, we can thus defined the
set
$\DSet_{\PartXY,\dimPYmax}$ of $\NbPartX$-tuples of piecewise polynomial
densities as
\begin{align*}
 \DSet_{\PartXY,\dimPYmax} = \left\{ \left(s(\cdot|\LeafXl)\right)_{\LeafXl\in\PartX} \middle| \forall
       \LeafXl \in \PartX, s(\cdot|\LeafXl) \in \DDSet_{\PartYLeafXl,\dimPYmax}\right\}.
\end{align*}
The model $\model_{\PartX,\DSet_{\PartXY,\dimPYmax}}$, that is
denoted $\model_{\PartXY,\dimPYmax}$ with a slight abuse of notation,
is thus the set
\begin{align*}
  \model_{\PartXY,\dimPYmax} & = \left\{ 
s(y|x)  = \sum_{\LeafXl\in\PartX} s(y|\LeafXl) \Charac{x\in\LeafXl}
\middle| \left( s(y|\LeafXl\right)_{\LeafXl\in\PartX} \in
\DSet_{\PartXY,\dimPYmax}
\right\}\\
& = \left\{
s(y|x)  = \sum_{\LeafXl\in\PartX} \sum_{\LeafYlk \in \PartYLeafXl}
P^2_{\LeafXl\times\LeafYlk}(y)
\Charac{y\in\LeafYlk} \Charac{x\in\LeafXl}
\middle|
\begin{array}{l}
\forall \LeafXl \in \PartX, \forall \LeafYlk
\in \PartYLeafXl,\\ 
\quad P_{\LeafXl\times\LeafYlk} \text{ polynomial of degree at most $\dimPYmax$},
\\
\forall \LeafXl \in \PartX, \sum_{\LeafYlk \in \PartYLeafXl}
\int_{\LeafYlk} P^2_{\LeafXl\times\LeafYlk}(y) = 1 
\end{array}
\right\}
\end{align*}
Denoting $\LeafXYlk$ the product $\LeafXl\times\LeafYlk$, the
conditional densities of the previous set can be advantageously rewritten as
\begin{align*}
  s(y|x)  = \sum_{\LeafXl\in\PartX} \sum_{\LeafYlk \in \PartYLeafXl} P^2_{\LeafXYlk}(y)
\Charac{(x,y)\in\LeafXYlk}
\end{align*}
As shown by \textcite{willet07:_multis_poiss_inten_densit_estim},
the maximum likelihood estimate in this model can be obtained by an
independent computation on each subset $\LeafXYlk$:
\begin{align*}
  \widehat{P}_{\LeafXYlk} =  \frac{\sum_{i=1}^{n} \Charac{(X_i,Y_i)
      \in \LeafXYlk}}{\sum_{i=1}^{n} \Charac{X_i
      \in \LeafXl}}
\argmin_{P, \deg(P)\leq \dimPYmax, \int_{\LeafYlk} P^2(y) \ud y =1}
\sum_{i=1}^n \Charac{(X_i,Y_i)
      \in \LeafXYlk} \Log\left( P^2(Y_i) \right).
\end{align*}
This property is important to be able to  use  the efficient
optimization algorithms of \textcite{willet07:_multis_poiss_inten_densit_estim} and \textcite{huang06:_fast}.

Our model collection is obtained by considering all partitions $\PartX$
within one of the $\UDPX$, $\RDPX$, $\RDSPX$, $\RSPX$ or
$\HRPX$ partition collections  with respect to $[0,1]^{\dimX}$ and, for a fixed $\PartX$, all partitions
$\PartYLeafXl$ within one of the
$\UDPY$, $\RDPY$, $\RDSPY$, $\RSPY$ or $\HRPY$ partition 
collections with respect to $[0,1]^{\dimY}$.
By construction, in any cases,
\begin{align*}
\dim(\model_{{\PartXY,\dimPYmax}})
& = \sum_{\LeafXl\in\PartX} \left(
\NbPartYLeafXl \prod_{d=1}^{\dimY} (\dimPYmaxi{d}+1)
 - 1\right).
\end{align*}
To define the penalty, we use a slight upper bound of this dimension
\begin{align*}
\DimH_{{\PartXY,\dimPYmax}}  &= \sum_{\LeafXl\in\PartX} \NbPartYLeafXl
\prod_{d=1}^{\dimY} (\dimPYmaxi{d}+1)
= \NbPartXY \prod_{d=1}^{\dimY} (\dimPYmaxi{d}+1)
\end{align*}
where  \begin{alignI}
  \NbPartXY = \sum_{\LeafXl \in \PartX} \NbPartYLeafXl.
\end{alignI} is the total number of hyperrectangles in all the
partitions:
\begin{theorem}
\label{theo:polypart}
Fix a collection $\starX$ among
$\UDPX$, $\RDPX$, $\RDSPX$, $\RSPX$ or
$\HRPX$ for $\SpaceX=[0,1]^{\dimX}$, a collection $\starY$ among
$\UDPY$, $\RDPY$, $\RDSPY$, $\RSPY$ or
$\HRPY$ and a maximal degree for the polynomials $\dimPYmax \in \N^{\dimY}$.

Let
\begin{align*}
  \Models = \left\{ \model_{\PartXY,\dimPYmax} \middle| \PartX=\{\LeafXl\}  \in
  \CollPart^{\starX} \text{ and } \forall \LeafXl
  \in \PartX, \PartYLeafXl \in \CollPart^{\starY}
 \right\}.
\end{align*}

 Then there exist  a
$\constpolyparttwo>0$ and a
$\constpolypartthree>0$ independent of $n$,
 such that for any $\propJKL$ and
for any $\constoraone>1$, the 
penalized estimator of \cref{theo:select} satisfies
\begin{align*}
\E\left[\JKLtens_{\propJKL,\meas}(s_0,\widehat{s}_{\widehat{{\PartXY,\dimPYmax}}})\right]
&\leq \constoraone \inf_{\model_{\PartXY,\dimPYmax}
  \in \Models} \left( \inf_{s_{\PartXY,\dimPYmax} \in \model_{\PartXY,\dimPYmax}} \KLtens_{\meas}(s_0,s_{\PartXY,\dimPYmax}) +
  \frac{\pen({\PartXY,\dimPYmax})}{n} \right) \\
& \qquad\qquad+ \constoratwo
\frac{1}{n} + \frac{\rhoapp+\rhomin}{n}
\end{align*}
as soon as 
\begin{align*}
\pen({\PartXY,\dimPYmax}) \geq \constpensimple
\,\DimH_{\PartXY,\dimPYmax}
\end{align*}
for
\begin{align*}
\constpensimple
> \constpenmin  
\left(\constpolyparttwo 
+  \constpolypartthree \left( \PartCstA^{\starX}
  +\PartCstB^{\starX}  + \PartCstA^{\starY}
    + \PartCstB^{\starY}
\right) 
 + 2 \Log n
\right).
\end{align*}
where $\constpenmin$ and $\constoratwo$ are the constants of \cref{theo:select} that
depend only on $\propJKL$ and $\constoraone$.
Furthermore $\constpolyparttwo \leq \frac{1}{2}\Log(8\pi e) +
\sum_{d=1}^{\dimY} \Log\left(\sqrt2 (\dimPYmaxi{d}+1)\right)$ and
$\constpolypartthree\leq 2 \Log 2$.
\end{theorem}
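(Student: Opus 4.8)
The plan is to obtain \cref{theo:polypart} as a direct consequence of \cref{theo:select} applied to the collection $\Models$, so that the real work is to verify its three hypotheses for this particular collection and then to track the resulting constants into the announced penalty. Concretely I would check Assumption \Hindm\ together with an explicit bound on $\DIMH_{\PartXY,\dimPYmax}$ (the only genuinely delicate point), Assumption \Sepindm, and Assumption (K) together with a bound on the Kraft weights $x_{\PartXY,\dimPYmax}$; everything but the entropy step is essentially routine given the tools already in the excerpt.

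For Assumption \Hindm\ I would exploit the piecewise structure. Since every $s\in\model_{\PartXY,\dimPYmax}$ depends on $x$ only through the cell $\LeafXl\ni x$, the tensorized Hellinger distance factorizes,
\begin{align*}
  \dtwotens_{\meas}(s,t)=\sum_{\LeafXl\in\PartX}w_{\LeafXl}\,\d^2_{\meas}\!\bigl(s(\cdot|\LeafXl),t(\cdot|\LeafXl)\bigr),\qquad w_{\LeafXl}=\frac1n\sum_{i=1}^n\Prob(X_i\in\LeafXl),\quad \sum_{\LeafXl}w_{\LeafXl}=1,
\end{align*}
so a bracket for $s$ is built cellwise from brackets for the one-variable densities $s(\cdot|\LeafXl)\in\DDSet_{\PartYLeafXl,\dimPYmax}$, the squared widths combining with the weights $w_{\LeafXl}$. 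It then suffices to bound the bracketing entropy of $\DDSet_{\PartY,\dimPYmax}$ for the Hellinger distance: writing $\sqrt{s}=|P|$ with $P$ a polynomial of degree $\leq\dimPYmax$ on each cell, the Hellinger distance between two such densities is the $L^2(\ud y)$ distance of the corresponding $|P|$'s, and the admissible $P$ describe the unit sphere of a space of dimension $\dimP_{\PartY}=\NbPartY\prod_d(\dimPYmaxi{d}+1)$. Expanding on a rescaled Legendre basis cell by cell identifies this with the unit sphere of $\R^{\dimP_{\PartY}}$, and the equivalence of the $L^2$ and $L^\infty$ norms on this finite-dimensional space — with a constant depending only on the Legendre normalization, hence independent of the cell sizes and of $n$ — turns an $\epsilon$-net into genuine brackets of $\dtens_{\meas}$-width $O(\epsilon)$ (the sign ambiguity $\pm P$ only doubles the count). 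A volumetric bound for the sphere yields
\begin{align*}
  H_{[\cdot],\dtens_{\meas}}\!\bigl(\delta,\model_{\PartXY,\dimPYmax}\bigr)\leq \DimH_{\PartXY,\dimPYmax}\Bigl(\ConstMultH_{\PartXY,\dimPYmax}+\Log\tfrac1\delta\Bigr),\quad \ConstMultH_{\PartXY,\dimPYmax}\leq \tfrac12\Log(8\pi e)+\sum_{d=1}^{\dimY}\Log\!\bigl(\sqrt2(\dimPYmaxi{d}+1)\bigr),
\end{align*}
with $\DimH_{\PartXY,\dimPYmax}=\NbPartXY\prod_d(\dimPYmaxi{d}+1)$ (summing the per-cell estimates against the weights $w_{\LeafXl}$ is harmless, the logarithmic entropy being concave in the relevant parameter). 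Feeding this into the first assertion of \cref{prop:proporsimple} gives \Hindm\ and the bound $\DIMH_{\PartXY,\dimPYmax}\leq\bigl(2\constspatgausslogindm+1+(\Log(n/(e\constspatgausslogindm\DimH_{\PartXY,\dimPYmax})))_+\bigr)\DimH_{\PartXY,\dimPYmax}$ with $\constspatgausslogindm=(\sqrt{\ConstMultH_{\PartXY,\dimPYmax}}+\sqrt\pi)^2$; this is $\DIMH_{\PartXY,\dimPYmax}\lesssim(\constpolyparttwo+\Log n)\DimH_{\PartXY,\dimPYmax}$, and the stated bound on $\constpolyparttwo$ is exactly the bound on $\ConstMultH_{\PartXY,\dimPYmax}$.

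Assumption \Sepindm\ I would check by taking on each product cell the countable family of squares of polynomials with rational coefficients, suitably renormalized; the associated log-densities converge pointwise to any target outside the Lebesgue-null set on which a limiting polynomial vanishes. For Assumption (K) I would apply \cref{prop:codingpolpart} to the $\SpaceX$-partition collection and, cell by cell, to the $\SpaceY$-partition collections, and set $x_{\PartXY,\dimPYmax}=c\bigl(\PartCstA^{\starX}+\PartCstB^{\starX}\NbPartX\bigr)+\sum_{\LeafXl\in\PartX}c\bigl(\PartCstA^{\starY}+\PartCstB^{\starY}\NbPartYLeafXl\bigr)$ for an absolute constant $c$ large enough that each geometric factor produced by \cref{prop:codingpolpart} is $\leq1$ (so the double sum $\sum_{\indm}e^{-x_{\indm}}$ converges and (K) holds with some finite $\Sigma$ depending only on the chosen collections); this $c$ is absorbed into $\constpolypartthree$, giving $\constpolypartthree\leq2\Log 2$. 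Using $\NbPartX\leq\NbPartXY$, $\sum_{\LeafXl}\NbPartYLeafXl=\NbPartXY$ and $\prod_d(\dimPYmaxi{d}+1)\geq1$ one then gets $x_{\PartXY,\dimPYmax}\leq\constpolypartthree\bigl(\PartCstA^{\starX}+\PartCstB^{\starX}+\PartCstA^{\starY}+\PartCstB^{\starY}\bigr)\DimH_{\PartXY,\dimPYmax}$.

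It remains to combine: from the above, $\DIMH_{\PartXY,\dimPYmax}+x_{\PartXY,\dimPYmax}\leq\bigl(\constpolyparttwo+\constpolypartthree(\PartCstA^{\starX}+\PartCstB^{\starX}+\PartCstA^{\starY}+\PartCstB^{\starY})+2\Log n\bigr)\DimH_{\PartXY,\dimPYmax}$, the $2\Log n$ absorbing the $\Log n$-factor and the $O(1)$ terms coming from \cref{prop:proporsimple}. Hence any penalty with $\pen(\indm)\geq\constpensimple\,\DimH_{\PartXY,\dimPYmax}$ and $\constpensimple$ as in the statement satisfies $\pen(\indm)\geq\constpen(\DIMH_{\indm}+x_{\indm})$ for some $\constpen>\constpenmin$, and \cref{theo:select} yields the stated oracle inequality, with the factor $\Sigma$ from (K) absorbed into $\constoratwo$. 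The main obstacle is the entropy step of the second paragraph: producing a \emph{bracketing} (not merely metric) entropy bound for the $\NbPartX$-tuples of squares-of-polynomial densities whose multiplicative constant $\ConstMultH_{\PartXY,\dimPYmax}$ depends only on $\dimY$ and $\dimPYmax$ and not on $n$ or on the partitions — this is precisely where using squares of polynomials (rather than polynomials) and the uniform $L^2$–$L^\infty$ equivalence on the polynomial space enter.
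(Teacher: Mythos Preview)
Your overall architecture is the paper's: verify \Sepindm, \Hindm\ and (K) for the collection, then invoke \cref{theo:select}. Your treatment of \Sepindm\ and of (K) via \cref{prop:codingpolpart} is correct and matches the paper. The gap is in the entropy step.

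You assert that the $L^2$--$L^\infty$ equivalence on the piecewise polynomial space holds ``with a constant depending only on the Legendre normalization, hence independent of the cell sizes and of $n$'', and that this yields a global bracketing bound with $\ConstMultH_{\PartXY,\dimPYmax}$ equal to the stated bound on $\constpolyparttwo$. This is false. Already for constants on a single cell $\Leaf$ of volume $|\Leaf|$ one has $\|P\|_{\infty}/\|P\|_{L^2(\Leaf)}=|\Leaf|^{-1/2}$, and the rescaled Legendre functions satisfy $\|G_{\dimPY}^{\LeafXYlk}\|_{\infty}=|\LeafXYlk|^{-1/2}\prod_d\sqrt{2\dimPYi{d}+1}$. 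In the paper's notation this is exactly the factor $\sup_{\LeafXYlk}\bigl(\NbPartXY\,|\LeafXYlk|\bigr)^{-1/2}$ appearing in the bound for $\widebar r_{\PartXY,\dimPYmax}$ (\cref{prop:entropypoly}); it is controlled only through the constraint $|\LeafXYlk|\ge 1/n^2$ built into the partition collections, which produces the $\tfrac12\Log(n^2/\NbPartXY)$ inside the entropy constant of \cref{prop:dimpol}. In other words, the $2\Log n$ in the penalty does \emph{not} come from the first branch of \cref{prop:proporsimple} as you suggest; it comes from the cell-size dependence of the bracketing constant itself.

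Relatedly, the paper does not use the global entropy bound you invoke. It proves a \emph{local} bound of the form $H_{[\cdot],\dtens}(\delta,\model_{\PartXY,\dimPYmax}(s,\sigma))\le \DimH_{\PartXY,\dimPYmax}\bigl(\ConstMultH+\Log(\sigma/\delta)\bigr)$ (\cref{prop:dimpol}) and then applies the second branch of \cref{prop:proporsimple}, obtaining $\DIMH_{\PartXY,\dimPYmax}=(\sqrt{\ConstMultH}+\sqrt\pi)^2\DimH_{\PartXY,\dimPYmax}$ with no additional $\Log n$. If you instead used the first branch with the correct (cell-size–dependent) $\ConstMultH$, you would pick up a further additive $\Log n$, overshooting the announced $2\Log n$. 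So to recover the statement with its constants you need both the local bracketing bound and an honest accounting of the $|\LeafXYlk|^{-1/2}$ factor; your sketch currently has neither.
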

A penalty chosen proportional to the dimension of
the model, the multiplicative factor $\constpensimple$
being constant over $n$ up to a logarithmic factor, is thus sufficient
to guaranty the estimator performance.
Furthermore, one can use a penalty which
is a sum of penalties for each hyperrectangle of the partition:
\begin{align*}
\pen({\PartXY,\dimPYmax})
= \sum_{\LeafXYlk \in \PartXY} \constpensimple
\left( \prod_{d=1}^{\dimY} (\dimPYmaxi{d}+1) \right).
\end{align*}
This additive structure of the penalty allows to use the fast
partition optimization algorithm of \textcite{donoho97:_cart} and
\textcite{huang06:_fast} as soon as the partition collection is tree structured.

In Appendix, we obtain a weaker requirement on the penalty
\begin{align*}
  \pen({\PartXY,\dimPYmax}) \geq 
\constpen \Bigg( &\left(\constpolyparttwo + 2 \Log
  \frac{n}{\sqrt{\NbPartXY}}\right)
\DimH_{\PartXY,\dimPYmax}
  \\
  &
+ \constpolypartthree \left(
    \PartCstA^{\starX}
      +  \left( \PartCstB^{\starX}  + \PartCstA^{\starY}
      \right)\NbPartX 
      + \PartCstB^{\starY} \sum_{\LeafXl\in\Part}
      \NbPartYLeafXl 
\right) \Bigg)
\end{align*}
in which the complexity part and the coding part appear more
explicitly. This smaller penalty is no longer proportional to the dimension
but still sufficient to guaranty the estimator performance.
Using the crude bound $\NbPartXY\geq 1$, one sees that such a penalty
penalty can still be upper bounded by a sum of penalties over each hyperrectangle.
The loss with respect to the original penalty is of order
$\constpen\log\NbPartXY\DimH_{\PartXY,\dimPYmax}$, which is negligible
as long as the number of hyperrectangle remains small with respect to
$n^2$.

Some variations around this Theorem can  be obtained
through simple modifications of its proof as explained in Appendix.
 For example, the term $2\Log
(n/\sqrt{\NbPartXY})$ disappears if
$\PartX$ belongs to $\CollPart^{\UDPX}$ while $\PartYLeafXl$ is independent of $\LeafXl$ and belongs to
$\CollPart^{\UDPX}$. 
Choosing the degrees $\dimPYmax$ of the polynomial among a family $\dimPYfamily$ either globally or
locally as proposed by
\textcite{willet07:_multis_poiss_inten_densit_estim} is also
possible. The constant $\constpolyparttwo$ is replaced by its maximum
over the family considered, while
the coding part is modified
  by replacing respectively 
$\PartCstA^{\starX}$ by  $\PartCstA^{\starX} + \Log
        |\dimPYfamily|$ for a global optimization and
      $\PartCstB^{\starY}$ by  $\PartCstB^{\starY} +
        \Log |\dimPYfamily| $ a the local optimization. 
Such a penalty can be further modified into an additive one with only
minor loss. Note that even if the family and its maximal
degree grows with $n$, the \emph{constant} $\constpolyparttwo$ grows
at a logarithic rate in $n$ as long as the maximal degree grows at most
polynomially with $n$.

Finally, if we assume that the true conditional density is lower
bounded, then
\begin{align*}
  \KLtens_{\meas}(s,t) \leq \left\|\frac{1}{t}\right\|_{\infty} \|s-t\|_{\meas,2}^{\tens,2}
\end{align*}
as shown by \textcite{kolaczyk05:_multis}.
We can thus reuse ideas from
\textcite{willet07:_multis_poiss_inten_densit_estim},
\textcite{akakpo10:_adapt} or \textcite{akakpo11:_inhom} to infer
the quasi optimal minimaxity of this estimator
for anisotropic Besov spaces (see for instance in
\textcite{karaivanov03:_nonlin_besov} for a definition) whose regularity indices are smaller than
$1$ along the axes of $\SpaceX$ and smaller than $\dimPYmax+1$ along
the axes of $\SpaceY$.

\subsection{Spatial Gaussian mixtures, models, bracketing entropy and penalties}
\label{sec:spat-mixt-models}

In this section, we consider an extension of Gaussian mixture that
takes account into the covariate into the mixing proportion. This model
has been motivated by the unsupervised hyperspectral image
segmentation problem mentioned in the introduction. We recall first
some basic facts about Gaussian mixtures and their uses in
unsupervised classification.

In a classical Gaussian mixture model, the observations are assuming
to be drawn from several different classes, each class having a
Gaussian law.
 Let $K$ be the number of different
Gaussians, often call the number of clusters, the density $s_0$ of $Y_i$
with respect to the Lebesgue measure is thus modeled as
\begin{align*}
s_{K,\theta,\pi}(\cdot) = \sum_{k=1}^K \pi_k  \Gauss_{\theta_k}\left(\cdot \right)
\end{align*}
where
\begin{align*}
\Gauss_{\theta_k}(y) = \frac{1}{\left(2\pi\det \Sigma_k\right)^{p/2}}\,
  e^{-\frac{1}{2} (y-\mu_k)' \Sigma_k^{-1} (y-\mu_k)}
\end{align*}
with $\mu_k$ the mean of the
$k$th component, $\Sigma_k$ its covariance matrix,
$\theta_k=(\mu_k,\Sigma_k)$ and $\pi_k$ its mixing proportion. A model
$\model_{K,\Set}$ is obtained by specifying the number of component
$K$ as well as a set $\Set$ to which should belong the $K$-tuple of
Gaussian $(\Gauss_{\theta_1},\ldots,\Gauss_{\theta_K})$.
Those Gaussians can share for instance the same shape, the same volume
or the same diagonalization basis. The classical choices are described
for instance in \textcite{biernacki06:_model_mixmod}. Using the EM
algorithm, or one of its extension, one can efficiently
 obtain the proportions $\widehat{\pi}_k$ and the Gaussian
 parameters $\widehat{\theta}_k$ of the 
maximum likelihood estimate within such a model. Using tools also
derived from \textcite{massart07:_concen},
\textcite{maugis09:_gauss} show how to choose
the number of classes by a penalized maximum likelihood
principle.
These Gaussian mixture models are often used in unsupervised
classification application: one
observes a collection of $Y_i$ and tries to split them into
homogeneous classes. Those classes are chosen as the Gaussian
components of an estimated Gaussian mixture close to the density of the
observations. Each observation can then be assigned to a class by a
simple maximum likelihood principle:
\begin{align*}
  \widehat{k}(y) = \argmax_{1 \leq k \leq \widehat{K}}
  \widehat{\pi}_k \Gauss_{\widehat{\theta}_k}(y).
\end{align*}
This methodology can be applied directly to an hyperspectral image and
yields a segmentation method, often called spectral method in the
image processing communit. This method however fails to exploit the spatial organization
of the pixels.

To overcome this issue, \textcite{kolaczyk05:_multisimage} and
\textcite{antoniadis08} propose to use mixture model in which the
mixing proportions depend on the covariate $X_i$ while
the mixture components remain constant. We propose to estimate
simultaneously those mixing proportions and the mixture components
with our partition-based strategy. In a semantic analysis context, in which documents replace
pixels, a similar
Gaussian mixture with varying weight, but without the partition structure, has been proposed by
\textcite{si05:_adjus_mixtur_weigh_gauss_mixtur} as an extension of a
general mixture based semantic analysis model introduced by
\textcite{hofmann99:_probab}
under the name \emph{Probabilistic Latent Semantic Analysis}. A
similar model has also been considered in the work of \textcite{youg10:_mixtur}.
In our approach, for a given partition $\PartX$, the
conditional density $s(\cdot|x)$ are modeled as
\begin{align*}
s_{\Part,K,\theta,\pi}(\cdot|x) & = \sum_{\LeafXl\in\PartX} \left( \sum_{k=1}^K
  \pi_k[\LeafXl]  \Gauss_{\theta_k}\left(\cdot \right) \right)
\Charac{x\in\LeafXl} \\
\intertext{which, denoting \begin{alignI}
  \pi[\LeafX(x)] = \sum_{\LeafXl \in \PartX}  \pi[\LeafXl]\,\Charac{x\in\LeafXl}
\end{alignI}, can advantageously be rewritten }
& = \sum_{k=1}^K \pi_k[\LeafX(x)]  \Gauss_{\theta_k}\left(\cdot \right).
\end{align*}
The $K$-tuples of Gaussian can be chosen is the same way as in the
classical Gaussian mixture case. Using a penalized maximum likelihood
strategy, a partition $\widehat{\Part}$, a number of Gaussian components
$\widehat{K}$, their parameters $\widehat{\theta}_k$ and all the mixing
proportions $\widehat{\pi}[\widehat{\LeafXl}]$ can be estimated. Each
pair of pixel position and spectrum $(x,y)$ can then be
assigned to one of the estimated mixture components by a maximum likelihood principle:
\begin{align*}
  \widehat{k}(x,y) = \argmax_{1 \leq k \leq \widehat{K}}
  \widehat{\pi}_k[\widehat{\LeafXl}(x)] \Gauss_{\widehat{\theta}_k}(y).
\end{align*}
This is the strategy we have used at IPANEMA~\cite{Bertrand:kv5096} to
segment, in an unsupervised manner, hyperspectral images. In these
images, a spectrum $Y_i$, with around 1000 frequency bands, is
measured at each pixel location $X_i$ and our aim was to derive a
partition in \emph{homogeneous} regions without any human
intervention. This is a precious help for users of this imaging
technique as this allows to focus the study on a few representative
spectrums. Combining the classical EM strategy for the
Gaussian parameter estimation (see for instance
\textcite{biernacki06:_model_mixmod}) and dynamic programming strategies
for the partition, as described for instance by
\textcite{kolaczyk05:_multisimage}, we have been able to implement this
penalized estimator and to test it on real
datasets.
\ifthenelse{\boolean{extended}}{%
Figure~\ref{fig:glue} illustrates this methodology.
The studied sample is a thin cross-section of maple with a single
layer of hide glue on top of it, prepared recently using materials and
processes from the Cité de la Musique, using materials of the same
type and quality that is used for lutherie. This sample is to serve as
reference material to study the spectral variation of the hide glue at
the various steps of the process. We present here the result for a low
signal to noise ratio acquisition requiring only two minutes of scan.
Using piecewise constant mixing proportions instead of constant
  mixing proportions leads to a better geometry of the segmentation, with
  less isolated points and more structured boundaries. As described in
  a more applied study~\cite{cohen12:_unsup_gauss_report}, this
  methodology permits to work with a much lower signal to noise ratio
  and thus allows to reduce significantly the acquisition time.

\begin{figure}
  \centering

\begin{tabular}{ccc}
\includegraphics[width=.4\linewidth]{MAPClassImages_1scan_EM.pdf}
&
&\includegraphics[width=.4\linewidth]{MAPClassImages_1scan_EMES.pdf}\\
(a) && (b)
\end{tabular}

  \caption{Unsupervised segmentation result: a) with constant mixing
    proportions b) with piecewise constant mixing proportions.}
  \label{fig:glue}
\end{figure}

}{
Figure~\ref{fig:glue} illustrates this methodology.
The studied sample is a thin cross-section of maple with a single
layer of hide glue on top of it, prepared recently using materials and
processes from the Cité de la Musique, using materials of the same
type and quality that is used for lutherie.  We present 
here the result for a low
signal to noise ratio acquisition requiring only two minutes of scan.
Using piecewise constant mixing proportions instead of constant
  mixing proportions leads to a better geometry of the segmentation, with
  less isolated points and more structured boundaries. As described in
  a more applied study~\cite{cohen12:_unsup_gauss_report}, this
  methodology permits to work with a much lower signal to noise ratio
  and thus allows to reduce significantly the acquisition time.

\begin{figure}
  \centering

\begin{tabular}{ccc}
\includegraphics[width=.4\linewidth]{MAPClassImages_1scan_EM.pdf}
&
&\includegraphics[width=.4\linewidth]{MAPClassImages_1scan_EMES.pdf}\\
(a) && (b)
\end{tabular}

  \caption{Unsupervised segmentation result: a) with constant mixing
    proportions b) with piecewise constant mixing proportions.}
  \label{fig:glue}
\end{figure}
}

We should now specify the
models we consider. As we follow the construction of
\cref{sec:covar-part-cond}, for a given segmentation $\PartX$, this
amounts to specify the set $\DSet$ to which belong the $\NbPartX$-tuples
of densities $\left( s(y|\LeafXl) \right)_{\LeafXl\in\PartX}$. As
described above, we assume that $s(y|\LeafXl)= \sum_{k=1}^K
  \pi_k[\LeafXl]  \Gauss_{\theta_k}(y)$. The mixing proportions within
  the region $\LeafXl$, $\pi[\LeafXl]$, are chosen freely among
  all vectors of the $K-1$ dimensional simplex $\Simplex_{K-1}$:
\begin{align*}
  \Simplex_{K-1} = \left\{ \pi = (\pi_1, \ldots ,\pi_k) \middle| \forall k, 1
  \leq k \leq K, \pi_k \geq 0, \sum_{k=1}^K \pi_k = 1\right\}.
\end{align*}
 As we assume the
  mixture components are the same in each region, for a given
  number of components $K$, the set $\DSet$ is
 entirely specified by the set $\Set$ of $K$-tuples of Gaussian
  $(\Gauss_{\theta_1},\ldots,\Gauss_{\theta_K})$ (or equivalently by a set
$\Theta$ for $\theta=(\theta_1,\ldots,\theta_K)$).

To allow variable selection, we follow \textcite{maugis09:_gauss}
and let $\Space$ be an arbitrary subspace of $\SpaceY=\R^\dimSp$,
that is expressed differently for the different classes, and let
$\Space^\perp$ be its orthogonal, in which all classes behave
similarly. We assume thus that
\[
\Gauss_{\theta_k}(y)=\Gauss_{\theta_{\Space,k}}(y_{\Space})\Gauss_{\theta_{\Space^\perp}}(y_{\Space^\perp})
\]
where $y_\Space$ and $y_{\Space^\perp}$ denote, respectively, the
projection of $y$ on $\Space$ and $\Space^{\perp}$,
$\Gauss_{\theta_{\Space,k}}$ is a Gaussian whose parameters depend on
$k$ while $\Gauss_{\theta_{\Space^\perp}}$ is independent of $k$.
A model is then specified by the choice of a set $\Set_{\Space}^{K}$
for the $K$-tuples
$(\Gauss_{\theta_{\Space,1}},\ldots,\Gauss_{\theta_{\Space,K}})$ (or
equivalently a set $\Theta_{\Space}^{K}$ for the $K$-tuples of parameters
$(\theta_{\Space,1},\ldots,\theta_{\Space,K})$) and a set
$\Set_{\Space^\perp}$ for the Gaussian
$\Gauss_{\theta_{\Space^\perp}}$ (or equivalently a set
$\Theta_{\Space^\perp}$ for its parameter $\theta_{\Space^\perp}$).
 The resulting model is denoted
$\model_{\PartX,K,\Set}$
\begin{align*}
  \model_{\PartX,K,\Set} =\left\{ s_{\PartX,K,\theta,\pi}(y|x) =
  \sum_{k=1}^K \pi_k[\LeafX(x) ]\,
  \Gauss_{\theta_{\Space,k}}\left(y_{\Space}\right)\,\Gauss_{\theta_{\Space^\perp}}\left(y_{\Space^\perp}\right)
  \middle|
\begin{array}{l}
(\Gauss_{\theta_{\Space,1}},\ldots,\Gauss_{\theta_{\Space,K}}) \in \Set_{\Space}^{K},
\\ 
\Gauss_{\theta_{\Space^\perp}} \in \Set_{\Space^\perp},
\\
\forall \LeafXl \in \PartX, \pi[\LeafXl] \in
\Simplex_{K-1}
\end{array}
\right\}.
\end{align*}

The sets $\Set_{\Space}^{K}$ and $\Set_{\Space^\perp}$ are chosen
among the \emph{classical} Gaussian $K$-tuples, as described for
instance in
\textcite{biernacki06:_model_mixmod}. For a space $\Space$ of
dimension $\dimSpace$  and a fixed number $K$ of classes, we specify
the set
\begin{align*}
\Set = \left\{ \left(
    \Gauss_{\Space,\theta_1},\ldots,\Gauss_{\Space,\theta_K} \right) \middle|
  \theta=(\theta_1,\ldots,\theta_K)\in \Theta_{[\cdot]^K_{\dimSpace}} \right\}
\end{align*}
through a parameter set $\Theta_{[\cdot]^K_{\dimSpace}}$ defined by  some (mild) constraints on the means $\mu_k$ and some
(strong) constraints on the covariance matrices $\Sigma_k$. 

The $K$-tuple  of means $\mu=(\mu_1,\ldots,\mu_K)$ is either known or unknown without
any restriction. 
A stronger structure is imposed on the $K$-tuple of covariance
matrices $(\Sigma_1,\ldots,\Sigma_K)$. To define it, we need to
introduce a decomposition of any covariance matrix $\Sigma$
into $L D A D'$ where, denoting $|\Sigma|$ the determinant of
$\Sigma$,
 $L=|\Sigma|^{1/\dimSpace}$ is a positive scalar corresponding to the
volume, $D$ is the matrix of eigenvectors of $\Sigma$ and $A$ the
diagonal matrix of renormalized eigenvalues of $\Sigma$ (the
eigenvalues of $|\Sigma|^{-1/\dimSpace}\Sigma$). Note that this
decomposition is not unique as, for example, $D$ and $A$ are defined
up to a permutation. We impose nevertheless  a structure on the
$K$-tuple
$(\Sigma_1,\ldots,\Sigma_K)$
through structures on the corresponding $K$-tuples of $(L_1,\ldots,L_K)$,
$(D_1,\ldots,D_K)$ and $(A_1,\ldots,A_K)$. They are either known,
unknown but with a common value or unknown without any restriction.
The corresponding set is indexed by
$[\mumodany \, \Lmodany\,  \Dmodany\,
\Amodany]^K_{\dimSpace}$ where $\modany=\modknown$ means that the quantity is known,
$\modany=\modfree$ that the quantity is unknown without any
restriction
and possibly different for every class and its lack means
that there is a common unknown value over all classes.

To have a set with finite bracketing entropy, we
further restrict the values of the means $\mu_k$, the volumes $L_k$
and the renormalized eigenvalue matrix $A_k$.
The means are  assumed to satisfy
\begin{alignI}
    \forall 1\leq k \leq K,  |\mu_k| \leq a
\end{alignI}
for a known $a$ while the volumes satisfy
\begin{alignI}
  \forall 1\leq k \leq K, \Lm \leq L_k \leq \LM 
\end{alignI}
for some known positive values $\Lm$ and $\LM$. To describe the
constraints on the renormalized eigenvalue matrix $A_k$, we define the set
$\Diag(\lambdam,\lambdaM,\dimSpace)$ of diagonal matrices $A$ such that
$|A|=1$ and $\forall 1 \leq i \leq
  \dimSpace, \lambdam \leq A_{i,i} \leq \lambdaM$. Our assumption
  is that all the $A_k$ belong to
  $\Diag(\lambdam,\lambdaM,\dimSpace)$ for some known values
  $\lambdam$ and $\lambdaM$.

Among the $3^4=81$ such possible sets, six of them have been already studied by
\textcite{maugis09:_gauss,maugis11:_data} in their classical Gaussian mixture model
analysis:
\ifthenelse{\boolean{extended}}{ 
\begin{itemize}
\item $[\mumodknown\,\Lmodfree\,\Dmodknown\,\Amodknown]^K_{\dimSpace}$ in which only the volume of the
  variance of a class
  is unknown. They use this model with a single class to model the non
  discriminant variables in $\Space^\perp$.
\item $[\mumodfree\,\Lmodfree\,\Dmodknown\, \Amodfree]^K_{\dimSpace}$ in which one assumes that the unknown
  variances $\Sigma_k$ can be diagonalized in the same known basis
  $\Dmodknown$.
\item $[\mumodfree\,\Lmodfree\,
\Dmodfree\, \Amodfree]^K_{\dimSpace}$ in which everything is free,
\item $[\mumodfree\,\Lmodsame\,
\Dmodknown\, \Amodsame]^K_{\dimSpace}$ in which the variances $\Sigma_k$ are assumed to be equal
and diagonalized in the known basis $\Dmodknown$.
\item $[\mumodfree\,\Lmodsame\,
\Dmodknown\, \Amodfree]^K_{\dimSpace}$ in which the volumes $\L_k$ are assumed to be equal
and the variance can be diagonalized in the known basis $\Dmodknown$
\item $[\mumodfree\,\Lmodsame\,
\Dmodsame\, \Amodsame]^K_{\dimSpace}$ in which the variances
$\Sigma_k$ are only assumed to be equal
\end{itemize}}{$[\mumodknown\,\Lmodfree\,\Dmodknown\,\Amodknown]^K_{\dimSpace}$,
$[\mumodfree\,\Lmodfree\,\Dmodknown\, \Amodfree]^K_{\dimSpace}$, $[\mumodfree\,\Lmodfree\,
\Dmodfree\, \Amodfree]^K_{\dimSpace}$, $[\mumodfree\,\Lmodsame\,
\Dmodknown\, \Amodfree]^K_{\dimSpace}$, $[\mumodfree\,\Lmodsame\,
\Dmodknown\, \Amodsame]^K_{\dimSpace}$ and $[\mumodfree\,\Lmodsame\,
\Dmodsame\, \Amodsame]^K_{\dimSpace}$.}
All these cases, as well as the others, are covered by our analysis
with a single proof.

To summarize, our models $\model_{\PartX,K,\Set}$ are parametrized by
a partition $\PartX$, a number of components $K$, a set $\Set$ of
$K$-tuples of Gaussian specified by a space $\Space$ and two parameter
sets, a set $\Theta_{[\mumodany\,\Lmodany\,\Dmodany\,\Amodany
  ]^K_{\dimSpace}}$ of $K$-tuples of Gaussian parameters for the differentiated
  space $\Space$ and a set
$\Theta_{[\mumodany\,\Lmodany\,\Dmodany\,\Amodany
  ]_{\dimSpaceperp}}$ of Gaussian parameters for its orthogonal
  $\Space^\perp$. Those two sets are chosen among the ones described
  above with the same constants $a$, $\Lm$, $\LM$,
$\lambdam$ and $\lambdaM$. One verifies that
\begin{align*}
  \dim(\model_{\PartX,K,\Set}) =
  \NbPartX(K-1)+\dim\left(\Theta_{[\mumodany\,\Lmodany\,\Dmodany\,\Amodany
    ]^K_{\dimSpace}}\right) + \dim\left(\Theta_{[\mumodany\,\Lmodany\,\Dmodany\,\Amodany
    ]_{\dimSpaceperp}}\right).
\end{align*}

Before stating a model selection theorem, we should specify the
collections $\Models$ considered. We consider sets of model
$\model_{\PartX,K,\Set}$ with $\PartX$ chosen among one of the
partition collections $\CollPart^\star$, $K$ smaller than $K_M$, which
can be theoretically chosen equal to $+\infty$, a space $\Space$ chosen as
$\Span \{ e_i\}_{i\in I}$ where $e_i$ is the canonical
basis of $\R^\dimSp$
and $I$ a subset of $\{1,\ldots,\dimSp\}$ is either known, equal to
$\{1,\ldots,\dimSpace\}$ or free and the indices 
$[\mumodany\,\Lmodany\,\Dmodany\,\Amodany]$ of $\Theta_\Space$ and
$\Theta_{\Space^\perp}$ are chosen freely among a subset of the
possible combinations.

Without any assumptions on the design, we obtain
\begin{theorem}
\label{theo:spatgauss}
Assume the collection $\Models$ is one of the collections of the
previous paragraph.

Then, there exist a $\constspatgausslog>\pi$ and
a $\constspatgausstwo>0$, such that,  for any $\propJKL$ and
for any $\constoraone>1$,
the penalized estimator of \cref{theo:select} satisfies
\begin{align*}
\E\left[\JKLtens_{\propJKL,\meas}(s_0,\widehat{s}_{\widehat{{\PartX,K,\Set}}})\right]
\leq \constoraone \inf_{\model_{\PartX,K,\Set} \in\Models} \left( \inf_{s_{\PartX,K,\Set} \in \model_{\PartX,K,\Set}} \KLtens_{\meas}(s_0,s_{\PartX,K,\Set}) +
  \frac{\pen({\PartX,K,\Set})}{n} \right) +  \frac{\constoratwo}{n} + \frac{\rhoapp+\rhomin}{n}
\end{align*}
as soon as 
\begin{align*}
\pen({\PartX,K,\Set}) \geq \constpensimple_1
\dim(\model_{\PartX,K,\Set}) + \constpensimple_2 \CstSpace
\end{align*}
for
\begin{align*}
\constpensimple_1
 &\geq 
 \constpen  
\left(\left( 2\constspatgausslog+ 1 + \left( \Log
     \frac{n}{e\constspatgausslog}\right)_+ 
+
\constspatgausstwo \left(
     \PartCstA^{\starX}
       +  \PartCstB^{\starX}
  + 1
 \right)
\right) \right)
&\text{and}&&\constpensimple_2 \geq \constpen \constspatgausstwo
\end{align*}
with $\constpen>\constpenmin$ where  $\constpenmin$ and $\constoratwo$
are the constants of \cref{theo:select} that
depend only on $\propJKL$ and $\constoraone$ and
\begin{align*}
\CstSpace = 
\begin{cases}
0  & \text{if $\Space$ is known,}\\
 \dimSpace & \text{\parbox{6cm}{if
  $\Space$ is chosen among spaces spanned by the first
  coordinates,}}\\
(1+ \Log 2 + \Log \frac{\dimSp}{\dimSpace}) \dimSpace & \text{if
  $\Space$ is free.}
\end{cases}  
\end{align*}
\end{theorem}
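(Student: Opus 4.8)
The plan is to derive \cref{theo:spatgauss} directly from \cref{theo:select} applied to the collection $\Models$. Concretely, I would verify, for every model $\model_{\PartX,K,\Set}$ of the collection, the structural assumption (H$_\indm$) together with a bound on $\DIMH_{\PartX,K,\Set}$, the separability assumption (Sep$_\indm$), and, for the whole index set, the Kraft-type inequality (K) with an explicit choice of weights $x_{\PartX,K,\Set}$. The aim of these verifications is to show that both $\DIMH_\indm$ and $x_\indm$ are, up to the $\Log n$ factor produced by \cref{prop:proporsimple} and a factor depending only on the chosen partition collections, proportional to $\dim(\model_{\PartX,K,\Set})$, plus a separate contribution $\CstSpace$ coming from the coding of the subspace $\Space$; then the condition $\pen(\indm)\geq\constpen(\DIMH_\indm+x_\indm)$ of \cref{theo:select} reduces to the announced lower bound on $\pen(\PartX,K,\Set)$, and the oracle inequality follows by substitution.

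The heart of the argument, and the step I expect to be the main obstacle, is the bracketing-entropy bound. Since every conditional density of $\model_{\PartX,K,\Set}$ is constant in $x$ on each cell $\LeafXl$ of $\PartX$, where it equals the Gaussian mixture $\sum_{k=1}^K\pi_k[\LeafXl]\,\Gauss_{\theta_{\Space,k}}(\cdot_\Space)\,\Gauss_{\theta_{\Space^\perp}}(\cdot_{\Space^\perp})$, the tensorized Hellinger distance $\dtens_{\meas}$ is a design-weighted average of squared Hellinger distances between such mixtures. Using the standard convexity bound for Hellinger-type distances between mixtures, together with the Lipschitz dependence of $\theta\mapsto\Gauss_\theta$ on the parameter set --- which is compact precisely because $|\mu_k|\leq a$, $L_k\in[\Lm,\LM]$ and $A_k\in\Diag(\lambdam,\lambdaM,\dimSpace)$ --- I would reduce a $\delta$-bracket for an element of $\model_{\PartX,K,\Set}$ to $K$ brackets of size proportional to $\delta$ for the Gaussians on $\Space$, one such bracket for the Gaussian on $\Space^\perp$, and a net of mesh proportional to $\delta$ on each of the $\NbPartX$ simplices $\Simplex_{K-1}$. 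Constructing the Gaussian brackets --- with the usual care about the Gaussian tails --- following \textcite{maugis09:_gauss,maugis11:_data} and counting, I expect
\begin{align*}
  H_{[\cdot],\dtens_{\meas}}(\delta,\model_{\PartX,K,\Set})\leq\ConstH_{\PartX,K,\Set}+\DimH_{\PartX,K,\Set}\Log\tfrac1\delta,\qquad\delta\in(0,\sqrt2],
\end{align*}
with $\DimH_{\PartX,K,\Set}\leq\dim(\model_{\PartX,K,\Set})$ and, crucially, $\ConstH_{\PartX,K,\Set}\leq\ConstMultH\,\DimH_{\PartX,K,\Set}$ for an \emph{absolute} constant $\ConstMultH$: each of the finitely many parameter blocks ranges over a fixed bounded set, so its log-volume should grow only linearly in its own dimension, uniformly over the collection. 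The delicate point is precisely to make this last bound uniform in $(\PartX,K,\Set)$ and independent of $n$ --- controlling the contributions of the Gaussian tails and of the renormalized-eigenvalue constraints. Granting it, \cref{prop:proporsimple} yields (H$_\indm$) with $\DIMH_{\PartX,K,\Set}\leq\left(2\constspatgausslog+1+\left(\Log\tfrac{n}{e\,\constspatgausslog}\right)_+\right)\dim(\model_{\PartX,K,\Set})$ and $\constspatgausslog=(\sqrt{\ConstMultH}+\sqrt\pi)^2>\pi$.

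Assumption (Sep$_\indm$) is then routine: rational values for the means, volumes, eigen-data and mixing proportions give a countable subset whose log-densities converge pointwise, by continuity of $\theta\mapsto\Log\Gauss_\theta$. For (K) I would take $x_{\PartX,K,\Set}=x_\PartX+x_K+x_\Space+x_{[\cdot]}$, with $x_\PartX=c(\PartCstA^{\starX}+\PartCstB^{\starX}\NbPartX)$ for $c=\max(\PartCstc^{\starX},2\Log2)$, so that $\sum_{\PartX\in\CollPart^{\starX}}e^{-x_\PartX}\leq1$ by \cref{prop:codingpolpart}; $x_K=K$, so that $\sum_{K\geq1}e^{-x_K}<1$; $x_\Space=\CstSpace$, so that $\sum_\Space e^{-x_\Space}\leq1$ (trivial when $\Space$ is known or taken among coordinate subspaces, and using $\binom{\dimSp}{\dimSpace}\leq(e\dimSp/\dimSpace)^{\dimSpace}$ when $\Space$ is free); and $x_{[\cdot]}$ a bounded term coding the at most $81^2$ covariance-structure choices for $\Space$ and $\Space^\perp$. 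Multiplying these sums gives (K) with $\Sigma\leq1$.

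Finally, I would match $\constpen(\DIMH_\indm+x_\indm)$ with the penalty of the statement. The bound on $\DIMH_\indm$ and the inequality $\DimH_\indm\leq\dim(\model_\indm)$ provide the $\constpen\left(2\constspatgausslog+1+\left(\Log\tfrac{n}{e\constspatgausslog}\right)_+\right)\dim(\model_\indm)$ part; then $\NbPartX\leq\dim(\model_\indm)$, $K\leq\dim(\model_\indm)+1$ and $x_{[\cdot]}=O(1)$ yield $x_\PartX+x_K+x_{[\cdot]}\leq\constspatgausstwo(\PartCstA^{\starX}+\PartCstB^{\starX}+1)\dim(\model_\indm)$ for a suitable absolute $\constspatgausstwo>0$, while $x_\Space=\CstSpace$; adding up gives $\constpen(\DIMH_\indm+x_\indm)\leq\constpensimple_1\dim(\model_\indm)+\constpensimple_2\CstSpace$ with $\constpensimple_1,\constpensimple_2$ as announced. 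Inserting this penalty and $\Sigma\leq1$ into \cref{theo:select} (whose constants $\constpenmin$ and $\constoratwo$ depend only on $\propJKL$ and $\constoraone$) produces the claimed oracle inequality, the additive term being $\constoratwo\Sigma/n\leq\constoratwo/n$.
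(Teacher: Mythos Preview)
Your proposal is correct and follows essentially the same route as the paper: verify \Hindm, \Sepindm\ and (K) for the collection, then invoke \cref{theo:select}. The paper's entropy decomposition (its \cref{lemma:entropy} together with \cref{prop:entcoll}) is exactly the one you describe --- $\NbPartX$ simplex brackets, a $\dmax$-bracket for the $K$-tuple of Gaussians on $\Space$, one bracket on $\Space^\perp$ --- and its Kraft weights (\cref{prop:spatgaussK}) coincide with your $x_{\PartX}+x_K+x_\Space+x_{[\cdot]}$ up to the cosmetic $K$ versus $K-1$; the final matching with $\constpensimple_1\dim(\model_{\PartX,K,\Set})+\constpensimple_2\CstSpace$ via $\NbPartX,(K-1)\leq\dim(\model_{\PartX,K,\Set})$ is also the paper's.
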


As in the previous section, the penalty term can thus be chosen, up to the variable selection term $\CstSpace$, proportional to
the dimension of the model, with a proportionality factor constant up to a
logarithmic term with $n$. 
A penalty proportional to the dimension of the
model is thus sufficient to ensure that the model selected
performs almost as well as the best possible model in term of
conditional density estimation.
 As in the proof of
\textcite{antoniadis08}, we can also obtain that our proposed estimator yields
a minimax estimate for spatial Gaussian mixture with mixture proportions
having a geometrical regularity even without knowing the number of
classes.

Moreover, again as in the previous section,
the penalty can have an additive structure, it can be chosen as a sum
of penalties over each hyperrectangle plus one corresponding
to $K$ and the set $\Set$. Indeed
\begin{align*}
\pen({\PartX,K,\Set}) = \sum_{\LeafXl \in \PartX} \constpensimple_1
(K-1) + \constpensimple_1 \left( \dim\left(\Theta_{[\mumodany\,\Lmodany\,\Dmodany\,\Amodany
    ]^K_{\dimSpace}}\right) + \dim\left(\Theta_{[\mumodany\,\Lmodany\,\Dmodany\,\Amodany
    ]_{\dimSpaceperp}}\right) \right)
+\constpensimple_2 \CstSpace
\end{align*}
satisfies the requirement of~\cref{theo:spatgauss}. This structure is
the key for our numerical minimization algorithm in which one optimizes
alternately the Gaussian parameters with an EM algorithm and the
partition with the same fast optimization strategy as in the previous section.

In Appendix, we obtain a weaker requirement
\begin{align*}
  \pen({\PartX,K,\Set}) &\geq 
 \constpen  \Bigg(\left( 2 \constspatgausslog+ 1 +\left( \Log
     \frac{n}{e\constspatgausslog\dim(\model_{\PartX,K,\Set})}\right)_+ \right)
 \dim(\model_{\PartX,K,\Set})
 \\
  &\qquad \qquad + \constspatgausstwo \left(
     \PartCstA^{\starX}
       +  \PartCstB^{\starX} \NbPartX 
  + (K-1) + \CstSpace
 \right) \Bigg)
\end{align*}
in which the complexity and the coding terms are more explicit. Again
up to a logarithmic term in $\dim(\model_{\PartX,K,\Set})$, this
requirement can be satisfied by a penalty having the same additive
structure as in the previous paragraph.

Our theoretical result on the conditional density estimation does not
guaranty good segmentation performance. If data are
generated according to a Gaussian mixture with varying mixing proportions, one
could nevertheless obtain the asymptotic convergence of our class estimator
to the optimal Bayes one. We have nevertheless observed in our
numerical experiments at IPANEMA that the proposed methodology allow to reduce
the signal to noise ratio while keeping meaningful segmentations. 

Two major questions remain nevertheless open. Can we calibrate
the penalty (choosing the constants) in a datadriven way while
guaranteeing the theoretical performance in this specific setting? 
Can we derive a non asymptotic
classification result from this conditional density result? 
The \emph{slope heuristic}, proposed by \textcite{birge07:_minim_gauss},
we have used in our numerical experiments, seems a promising
direction. Deriving a theoretical justification in this conditional estimation
setting would be much better. Linking the non asymptotic estimation
behavior to a non asymptotic classification behavior appears even more challenging.

\ifthenelse{\boolean{extended}}{
\subsection{Bracketing entropy of Gaussian families}

A key ingredient in the proof of \ref{theo:spatgauss} is a generalization of a result of \textcite{maugis09:_gauss,maugis10:_errat_ae_gauss} controlling the bracketing
entropy the Gaussian families $\Set_{[\cdot]^K_\Space}$ with respect
to the $\dmax_{\meas}$ distance defined by
\begin{align*}
  \dtwomax_{\meas}\left(\left(s_1,\ldots,s_K\right),\left(t_1,\ldots,t_K\right)\right)
= \sup_{1 \leq k \leq K} d^2(s_k,t_k).
\end{align*}
Here, $[(t_1^-,\ldots,t_K^-),(t_1^+,\ldots,t_K^+)]$ is a bracket
containing
 $(s_1,\ldots,s_K)$ if 
 \begin{align*}
      \forall 1 \leq k \leq K, \forall y \in \Space,\quad t_k^-(y)
      \leq  s_k(y) \leq t_k^+(y).
 \end{align*}
 As it can be
of interest on its own, we state it here: 
\begin{proposition}
\label{prop:entcoll}
For any $\delta \in (0, \sqrt{2}]$, 
  \begin{align*}
    H_{[\cdot],\dmax_{\meas}}(\delta/9, \Set_{[\mumodany,\Lmodany,\Dmodany,\Amodany]^K_{\Space}})
 \leq  \ConstH_{[\mumodany,\Lmodany,\Dmodany,\Amodany]^K_{\dimSpace}}  + \DimH_{[\mumodany,\Lmodany,\Dmodany,\Amodany]^K_{\dimSpace}}
 \Log \frac{1}{\delta}
\end{align*}
where 
\begin{alignI}
  \DimH_{[\mumodany,\Lmodany,\Dmodany,\Amodany]^K_{\dimSpace}}
= \dim\left(\Theta_{[\mumodany,\Lmodany,\Dmodany,\Amodany]^K_{\dimSpace}}\right) = c_{\mumodany} \DimH_{\mu,\dimSpace} + c_{\Lmodany} \DimH_{L}+
 c_{\Dmodany} \DimH_{D,\dimSpace} + c_{\Amodany} \DimH_{A,\dimSpace}
\end{alignI}
and
\begin{alignI}
\ConstH_{[\mumodany,\Lmodany,\Dmodany,\Amodany]^K_{\dimSpace}} 
=
c_{\mumodany} \ConstH_{\mumod,\dimSpace}  + c_{\Lmodany} \ConstH_{L,\dimSpace} +
 c_{\Dmodany} \ConstH_{D,\dimSpace}  + c_{\Amodany} \ConstH_{A,\dimSpace} 
\end{alignI}
with
\begin{alignI}
\begin{cases}
  c_{\mumodknown}=c_{\Lmodknown}=c_{\Dmodknown}=c_{\Amodknown}=0\\
  c_{\mumodfree}=c_{\Lmodfree}=c_{\Dmodfree}=c_{\Amodfree}=K\\
  c_{\mumodsame}=c_{\Lmodsame}=c_{\Dmodsame}=c_{\Amodsame}=1
\end{cases},
\end{alignI}
\begin{align*}
\begin{cases}
  \DimH_{\mumod,\dimSpace} = \dimSpace \\
\DimH_{\Lmod}=1\\
\DimH_{\Dmod,\dimSpace}=\frac{\dimSpace(\dimSpace-1)}{2}\\
\DimH_{\Amod,\dimSpace}  = \dimSpace - 1
\end{cases}
\quad\text{and}\quad
\begin{cases}
\ConstH_{\mumod,\dimSpace}=   \dimSpace \left( \Log\left( 1 + 
108\frac{a}{\sqrt{\Lm \lambdam
   \frac{\lambdam}{\lambdaM}}}\dimSpace
\right)  \right) \\
\ConstH_{\Lmod,\dimSpace}= \Log\left( 1 + 39 \Log\left(\frac{\LM}{\Lm}\right)\dimSpace\right) \\
 \ConstH_{\Dmod,\dimSpace}= \frac{\dimSpace(\dimSpace-1)}{2}
\left( \frac{2 \Log \ConstSzarek}{\dimSpace(\dimSpace-1)} +
   \left( 
\Log\left( 
252 \frac{\lambdaM}{\lambdam}\dimSpace\right)
\right) \right)\\
 \ConstH_{\Amod,\dimSpace} =(\dimSpace-1) \left( \Log \left(
     2 + 255 \frac{\lambda_M}{\lambdam}\Log\left(\frac{\lambdaM}{\lambdam}\right)\dimSpace\right) \right)
\end{cases}
\end{align*}
where $\ConstSzarek$ is an universal constant.
\end{proposition}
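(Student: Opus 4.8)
The plan is to lift the problem from $K$-tuples to a single Gaussian and then build brackets by discretizing each parameter block separately. Since $\dmax_{\meas}$ is a supremum over the $K$ coordinates, a bracket for a $K$-tuple is simply the coordinatewise collection of brackets for each $\Gauss_{\theta_{\Space,k}}$, and its $\dmax_{\meas}$-width equals the largest of the individual Hellinger widths $\d_{\meas}$. Hence it suffices to bracket the family of single $\dimSpace$-dimensional Gaussians whose parameters $(\mu,L,D,A)$ run over the constraint sets $|\mu|\le a$, $\Lm\le L\le\LM$, $D$ orthogonal, $A\in\Diag(\lambdam,\lambdaM,\dimSpace)$, and then take products over classes: a \emph{known} block contributes nothing ($c_{\modany}=0$), a block \emph{common} to all classes contributes a single factor ($c_{\modany}=1$), and a \emph{free} block contributes $K$ independent factors ($c_{\modany}=K$). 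This is exactly the origin of the claimed decompositions $\DimH_{[\mumodany,\Lmodany,\Dmodany,\Amodany]^K_{\dimSpace}}=c_{\mumodany}\DimH_{\mumod,\dimSpace}+c_{\Lmodany}\DimH_{\Lmod}+c_{\Dmodany}\DimH_{\Dmod,\dimSpace}+c_{\Amodany}\DimH_{\Amod,\dimSpace}$ and its analogue for $\ConstH$.

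For the single Gaussian I would follow the bracketing device of \textcite{maugis09:_gauss,maugis10:_errat_ae_gauss}: for a target covariance $\Sigma=L\,DAD'$ and a tolerance $\epsilon$, one sandwiches $\Gauss_{\mu,\Sigma}$ between explicit multiples of slightly contracted and slightly dilated Gaussians, of the shape $(1\mp c\epsilon)\Gauss_{\widetilde\mu,(1\mp c\epsilon)\widetilde\Sigma}$, for any $\widetilde\theta$ within parameter-distance $\epsilon$ of $\theta$, the resulting bracket having $\d_{\meas}$-width $O(\epsilon\sqrt{\dimSpace})$. It then remains to net each constraint set so that the induced Hellinger perturbation is itself $O(\epsilon)$. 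A net of $\{|\mu|\le a\}$ at mesh proportional to $\epsilon\sqrt{\Lm\lambdam\,\lambdam/\lambdaM}$ has cardinality at most $(1+c\,a\,\dimSpace/(\epsilon\sqrt{\Lm\lambdam\lambdam/\lambdaM}))^{\dimSpace}$, giving $\DimH_{\mumod,\dimSpace}=\dimSpace$ and the stated $\ConstH_{\mumod,\dimSpace}$; a geometric (logarithmic-scale) net of $[\Lm,\LM]$ — logarithmic because the Hellinger distance sees volumes through $\Log(\LM/\Lm)$ — gives $\DimH_{\Lmod}=1$ and $\ConstH_{\Lmod,\dimSpace}$; a net of $\Diag(\lambdam,\lambdaM,\dimSpace)$ inside the hyperplane $|A|=1$ gives $\DimH_{\Amod,\dimSpace}=\dimSpace-1$ and $\ConstH_{\Amod,\dimSpace}$; and a net of the group of eigenvector matrices, whose metric entropy is governed by Szarek's estimate on the orthogonal group with the universal constant $\ConstSzarek$, gives $\DimH_{\Dmod,\dimSpace}=\dimSpace(\dimSpace-1)/2$ and $\ConstH_{\Dmod,\dimSpace}$ — the factors $\lambdaM/\lambdam$ entering the last two bounds because rotating the eigenbasis or rescaling the renormalized eigenvalues moves the Gaussian by an amount controlled by the condition number of $\Sigma$.

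To finish, I would combine the four nets by the triangle inequality for $\d_{\meas}$: replacing $\mu$, then $L$, then $D$, then $A$ one block at a time, each substitution costs $O(\epsilon\sqrt{\dimSpace})$, so the product net is a $\d_{\meas}$-net of the Gaussian family of radius $O(\epsilon\sqrt{\dimSpace})$, and feeding it into the sandwich above turns it into genuine brackets. Choosing $\epsilon$ proportional to $\delta/\sqrt{\dimSpace}$ and absorbing all absolute multiplicative constants into the $9$ of $\delta/9$ yields brackets of $\dmax_{\meas}$-width $\le\delta/9$ whose log-number is the sum of the log-cardinalities of the blocks, namely $\le\ConstH_{[\mumodany,\Lmodany,\Dmodany,\Amodany]^K_{\dimSpace}}+\DimH_{[\mumodany,\Lmodany,\Dmodany,\Amodany]^K_{\dimSpace}}\Log(1/\delta)$, the restriction $\delta\le\sqrt{2}$ being used only to keep the contraction/dilation factors $1\mp c\epsilon$ in a safe range. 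The main obstacle is the quantitative perturbation analysis of the middle step — obtaining the sharp polynomial-in-$\dimSpace$ and rational-in-$\lambdaM/\lambdam,\LM/\Lm,a$ dependence that appears in the $\ConstH$ terms, and invoking Szarek's entropy bound with the correct normalization; once those constants are pinned down, the products over classes and the net-to-bracket conversion are pure bookkeeping.
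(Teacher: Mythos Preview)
Your outline is correct and follows the paper's proof almost step for step: the same Maugis-type Gaussian sandwich, separate grids for $\mu$, $L$, $D$, $A$ (geometric for $L$ and for the diagonal entries of $A$), Szarek's entropy bound for $SO(\dimSpace)$, and the coordinatewise product over the $K$ classes governed by the $c_{\modany}$ coefficients. One correction to the quantitative step you already flagged as delicate: the multiplicative prefactor in the sandwich must be $(1+\kappa\deltaVar)^{\pm\dimSpace}$ rather than $(1\pm c\epsilon)$, since a single-power factor cannot compensate the determinant term $(1+\deltaVar)^{\dimSpace/2}$ and therefore fails to give a genuine upper bracket; with the correct prefactor the bracket Hellinger width is $O(\dimSpace\,\deltaVar)$ rather than $O(\sqrt{\dimSpace}\,\deltaVar)$, which forces $\deltaVar\sim\delta/\dimSpace$ and is exactly what produces the linear factor $\dimSpace$ (not $\sqrt{\dimSpace}$) inside the logarithms of the stated $\ConstH$ constants.
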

}{}


\appendix

\ifthenelse{\boolean{extended}}{
\section{Proofs for \cref{sec:single-model-maximum-1}
  (\nameref{sec:single-model-maximum-1})}}{}

\ifthenelse{\boolean{extended}}{
\subsection{Proof of \cref{prop:hellingerkull2}}

\begin{proof}[Proof of \cref{prop:hellingerkull2}]
We first notice that, by convexity of the Kullback-Leibler divergence,
\begin{align*}
  \JKL_{\propJKL,\meas}(s,t) =  \frac{1}{\propJKL}\KL_{\meas}\left( s,
    (1-\propJKL) s+ \propJKL t \right) \leq \frac{1}{\propJKL} \left(
    (1-\propJKL) \KL_{\meas}(s,s) + \propJKL \KL(s,t) \right) = \KL_{\meas}(s,t).
\end{align*}
Then let $\ud\meas'= ( (1-\propJKL) s+ \propJKL t ) \ud\meas$, the function 
\begin{alignI}
  u = \frac{s-t}{(1-\propJKL) s + \propJKL t}
\end{alignI}
remains in $[-1/\propJKL,1/(1-\propJKL)]$, and is 
such that
  \begin{alignI}
    \frac{s\ud\meas}{\ud\meas'} = 1 + \propJKL u \quad\text{and}\quad \frac{t\ud\meas}{\ud\meas'} = 1 -
    (1-\propJKL) u.
  \end{alignI}

\begin{align*}
\text{Now,}  &&\JKL_{\propJKL}(s\ud\meas,t\ud\meas)&=\frac{1}{\propJKL} \KL(s\ud\meas, (1-\propJKL) s +
  \propJKL t \ud\meas)
= \frac{1}{\propJKL}\KL((1+ \propJKL u)\ud\meas',\ud\meas')\\ 
&&& = \frac{1}{\propJKL} \KL_{\meas'}(1+ \propJKL u,1)
= \frac{1}{\propJKL} \int (1+ \propJKL u) \Log (1+ \propJKL u) \ud\meas'\\
&\text{and as $\int u \ud\meas'=0$}
&& = \frac{1}{\propJKL} \int \left( (1+\propJKL u) \Log (1+\propJKL u) - \propJKL u \right) \ud\meas.
\end{align*}
\begin{align*}
\text{Similarly,}&&
  \d^2(s\ud\meas,t\ud\meas) &= \d^2((1+\propJKL u)\ud\meas',(1-
  (1-\propJKL) u)\ud\meas ')
= \d^2_{\meas'}(1+\propJKL u,1- (1-\propJKL) u)\\
&&
& = 2 - 2 \int \sqrt{1+ \propJKL u}\sqrt{1- (1-\propJKL) u} \ud\meas'
 = 2 \int \left(1 - \sqrt{1+(2\propJKL-1)u-\propJKL(1-\propJKL)u^2} \right) \ud\meas'\\
&&
& = 2 \int \left(1 - \sqrt{1+(2\propJKL-1)u-\propJKL(1-\propJKL)u^2} + (\propJKL -
  \frac{1}{2}) u \right) \ud\meas'
\end{align*}
Now let $\Phi(x)=(1+ x)\Log(1+ x)- x$, one can verify that
$\Phi(x)/x^2$ is non increasing on $[-1,+\infty]$, so that
$\forall u \in [-1/\propJKL,1/(1-\propJKL)]$, $\Phi(\propJKL u)=
\frac{\Phi(\propJKL u)}{\propJKL^2 u^2} \propJKL^2 u^2 \geq
\frac{\Phi(\frac{\propJKL}{1-\propJKL})}{\propJKL^2/(1-\propJKL)^2}
\propJKL^2 u^2$
so that
\begin{align*}
 (1+\propJKL u) \Log (1+\propJKL u) -
\propJKL u &\geq
\left( (1+ \frac{\propJKL}{1-\propJKL}) \Log \left(1 + \frac{\propJKL}{1-\propJKL}\right) -
\frac{\propJKL}{1-\propJKL} \right)  (1-\propJKL)^2 u^2\\
& \geq (1-\propJKL) \left( \Log \left(1 + \frac{\propJKL}{1-\propJKL}\right) -
  \propJKL\right) u^2
\end{align*}
Along the same lines, one can verify that $\forall u \in
[-1/\propJKL,1/(1-\propJKL)]$
\begin{align*}
  1 - \sqrt{1+(2\propJKL-1)u-\propJKL(1-\propJKL)u^2} + (\propJKL -
  \frac{1}{2}) u  \leq \frac{\max(\propJKL,1-\propJKL)}{2}  u^2.
\end{align*}
This implies
thus 
\begin{align*}
 \frac{1}{\propJKL} &\left( (1+\propJKL u) \Log (1+\propJKL u) -
\propJKL u \right) \\
&\geq \frac{1}{\propJKL}\frac{1}{\max(\propJKL,1-\propJKL)}
(1-p) \left( \Log \left(1 + \frac{\propJKL}{1-\propJKL}\right) -
  \propJKL\right)
2\left( 1 - \sqrt{1+(2\propJKL-1)u-\propJKL(1-\propJKL)u^2} + (\propJKL -
  \frac{1}{2}) u \right)\\
& \geq  \frac{1}{\propJKL} \min(\frac{1-\propJKL}{\propJKL},1)
\left( \Log \left(1 + \frac{\propJKL}{1-\propJKL}\right) -
  \propJKL\right)
2\left( 1 - \sqrt{1+(2\propJKL-1)u-\propJKL(1-\propJKL)u^2} + (\propJKL -
  \frac{1}{2}) u \right)
\end{align*}
which
yields the first inequality.

Recall now that $\KL(s\ud\meas,t\ud\meas) \geq \frac{1}{2}
\|s-t\|_{\meas,1}^2$ so that
\begin{align*}
JKL_{\propJKL}(s\ud\meas,t\ud\meas)&=\frac{1}{\propJKL} \KL(s\ud\meas, (1-\propJKL) s +
  \propJKL t \ud\meas)\\
&\geq \frac{1}{2\propJKL} \|s-\left( (1-\propJKL)s + \propJKL
  t\right)\|_{\meas,1}^2\\
& \geq \frac{1}{2\propJKL} \|\propJKL (s-t)\|_{\meas,1}^2\\
& \geq \frac{\propJKL}{2} \|s-t\|_{\meas,1}^2.  
\end{align*}
Combining this result with $\d^2_{\meas}(s,t)\geq \frac{1}{4}
\|s-t\|_{\meas,1}^2$ allows to conclude.

For the third series of inequalities, 
\begin{align*}
  \d^2(s\ud\meas,t\ud\meas) &= d^2_{t\ud\meas}(\frac{s}{t},1)
 = \int \left( \sqrt{\frac{s}{t}} - 1 \right)^2 t\ud\meas,
\end{align*}
while
\begin{align*}
  \KL(s\ud\meas,t\ud\meas) 
= \KL_{t\ud\meas}(\frac{s}{t},1)
= \int \frac{s}{t} \Log \frac{s}{t}  t\ud\meas
 = \int \left( \frac{s}{t} \Log \frac{s}{t} - \frac{s}{t} +1
\right) t\ud\meas.
\end{align*}
It turns out that $\forall x \in [0,M], $
\begin{align*}
(\sqrt{x}-1)^2 \leq x \Log x - x +1 \leq (2 + (\Log M )_+ )  (\sqrt{x}-1)^2 
\end{align*}
which yields the result.

For the last bound, we use an idea of \textcite{kolaczyk05:_multis}:
\begin{align*}
  \KL(s\ud\meas,t\ud\meas) 
& 
= \int s \Log\left( \frac{s}{t} \right) d\meas\\
& = \KL(s\ud\meas,t\ud\meas) 
& 
= \int \left(t -s +s \Log\left( \frac{s}{t} \right)\right) d\meas\\
\intertext{and as $\log x \leq x -1$}\\
& 
\leq \int \left(t -s +s \left( \frac{s}{t} - 1\right)\right) d\meas\\
\intertext{and assuming that $t$ does not vanish}
& \leq \int \frac{1}{t}\left(t^2 - 2st +s^2)\right) d\meas\\
& \leq \left\|\frac{1}{t}\right\|_{\infty} \|t-s\|_{\meas,2}^2
\end{align*}

\end{proof}

\subsection{Proof of \cref{prop:proporsimple}}

For sake of simplicity, we remove from now on the subscript
reference to the common measure $\meas$ from all notations.

\Cref{prop:proporsimple} is split into three propositions:
\cref{prop:complexdimzero} handles the cases of bracketing dimension $0$, \cref{prop:complexnonlocal} applies when
one control the bracketing entropy of the models $\model_{\indm}$ while \ref{prop:complexlocal}
applies using bounds on
the bracketing entropy of 
the local models $\model_{\indm}(s_{\indm},\sigma)$.
Recall that Assumption \Hindm is the existence of a non-decreasing
function $\phi_{\indm}$ such that $\delta \mapsto \frac{1}{\delta}
\phi_{\indm}(\delta)$ is non-increasing and
\begin{alignI}
 \int_0^{\sigma} \sqrt{
    H_{[\cdot],\dtens}(\delta,\model_{\indm}(s_{\indm},\sigma))}
  \, \ud\delta \leq \phi_{\indm}(\sigma).
\end{alignI}
The complexity term $\DIMH_{\indm}$ is then defined by
$n\sigma_{\indm}^2$ where $\sigma_{\indm}$ is the unique root of 
\begin{alignI}
    \frac{1}{\sigma}   \phi_{\indm}(\sigma) =
    \sqrt{n}  \sigma. 
  \end{alignI}

For the case of bracketing dimension 0, it suffices to show that the
property holds for the local models as $
H_{[\cdot],\dtens}(\delta,\model_{\indm}(s_{\indm},\sigma)) \leq   H_{[\cdot],\dtens}(\delta,\model_{\indm})$.
\begin{proposition}
\label{prop:complexdimzero}
Assume for any $\sigma \in (0,\sqrt{2}]$ and any $\delta\in(0,\sigma]$
\begin{align*}
  H_{[\cdot],\dtens}(\delta,\model_{\indm}(s_{\indm},\sigma)) \leq
\ConstH_{\indm}
\end{align*}
then the function
\begin{align*}
  \phi_{\indm}(\sigma) = \sigma \sqrt{\ConstH_{\indm}}
\end{align*}
 satisfies the properties required in Assumption \Hindm.

Furthermore, $\DIMH_{\indm}= \ConstH_{\indm}$.
\end{proposition}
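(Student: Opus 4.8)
The plan is to take $\phi_\indm(\sigma)=\sigma\sqrt{\ConstH_\indm}$ as the candidate, check that it meets the two monotonicity requirements and the Dudley integral bound of Assumption \Hindm, and then solve the defining equation for $\sigma_\indm$ explicitly. The monotonicity part is immediate: since $\ConstH_\indm\ge 0$, the map $\sigma\mapsto\sigma\sqrt{\ConstH_\indm}$ is non-decreasing on $(0,+\infty)$, and $\sigma\mapsto\frac1\sigma\phi_\indm(\sigma)=\sqrt{\ConstH_\indm}$ is constant, hence in particular non-increasing. So the only statement needing an argument is the integral inequality.

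For the integral bound, I would fix $s_\indm\in\model_\indm$ and an arbitrary $\sigma\in\R^+$, and bound the integrand for every $\delta\in(0,\sigma]$. Here the one point that is easy to overlook is that the hypothesis $H_{[\cdot],\dtens_\meas}(\delta,\model_\indm(s_\indm,\sigma))\le\ConstH_\indm$ is only assumed for $\delta\le\sigma\le\sqrt2$, whereas the integral runs over all $\delta\in(0,\sigma)$ with $\sigma$ possibly larger than $\sqrt2$. To cover this, I would invoke the truncation identity recalled right after \cref{prop:proporsimple}, namely $H_{[\cdot],\dtens_\meas}(\delta,\model_\indm(s_\indm,\sigma))=H_{[\cdot],\dtens_\meas}(\delta\wedge\sqrt2,\model_\indm(s_\indm,\sigma\wedge\sqrt2))$, valid because any conditional density pair is at $\dtens_\meas$-distance at most $\sqrt2$. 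Since $0<\delta\wedge\sqrt2\le\sigma\wedge\sqrt2\le\sqrt2$, the hypothesis applies to the right-hand side and yields $H_{[\cdot],\dtens_\meas}(\delta,\model_\indm(s_\indm,\sigma))\le\ConstH_\indm$ for every $\delta\in(0,\sigma]$. Integrating, $\int_0^\sigma\sqrt{H_{[\cdot],\dtens_\meas}(\delta,\model_\indm(s_\indm,\sigma))}\,\ud\delta\le\sigma\sqrt{\ConstH_\indm}=\phi_\indm(\sigma)$, which is exactly the inequality of Assumption \Hindm.

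Finally, to identify $\DIMH_\indm$ I would substitute into the defining equation: $\DIMH_\indm=n\sigma_\indm^2$ where $\sigma_\indm$ is the unique root of $\frac1\sigma\phi_\indm(\sigma)=\sqrt n\,\sigma$, which with our choice reads $\sqrt{\ConstH_\indm}=\sqrt n\,\sigma$. The left-hand side is constant and the right-hand side strictly increasing in $\sigma$, so there is exactly one solution, $\sigma_\indm=\sqrt{\ConstH_\indm/n}$ (equal to $0$ in the degenerate case $\ConstH_\indm=0$). Hence $\DIMH_\indm=n\sigma_\indm^2=\ConstH_\indm$. There is no real obstacle in this proof; the only subtlety, as noted, is the need for the truncation identity so that the entropy bound—assumed only in the range $\delta\le\sigma\le\sqrt2$—can be used throughout the range of integration.
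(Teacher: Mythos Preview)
Your proof is correct and follows essentially the same route as the paper's: verify the two monotonicity conditions (both trivial for $\phi_\indm(\sigma)=\sigma\sqrt{\ConstH_\indm}$), use the truncation identity $H_{[\cdot],\dtens}(\delta,\model_\indm(s_\indm,\sigma))=H_{[\cdot],\dtens}(\delta\wedge\sqrt2,\model_\indm(s_\indm,\sigma\wedge\sqrt2))$ to apply the hypothesis over the full range of integration, and then solve $\sqrt{\ConstH_\indm}=\sqrt n\,\sigma$ explicitly. Your explicit flagging of the truncation step is a nice touch, as the paper's proof invokes it without comment.
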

\begin{proof}
One check easily that $\phi_{\indm}$ is non-decreasing while
$\delta\mapsto\frac{1}{\delta}
\phi_{\indm}(\delta)=\sqrt{\ConstH_{\indm}}$ is constant and thus
non-increasing.

Using the assumption on the entropy, 
\begin{align*}
 \int_0^{\sigma}
 \sqrt{H_{[\cdot],\dtens}(\delta,\model_{\indm}(s_{\indm},\sigma))} \, \ud \delta
& =  \int_0^{\sigma}
 \sqrt{H_{[\cdot],\dtens}(\delta\wedge
   \sqrt{2},\model_{\indm}(s_{\indm},\sigma\wedge \sqrt{2}))} \, \ud \delta
\\
& \leq \int_0^{\sigma}
 \sqrt{\ConstH_{\indm}} \, \ud \delta \\
& \leq \sigma \sqrt{\ConstH_{\indm}} = \phi_{\indm}(\sigma).
\end{align*}

Finally, the unique root of \begin{alignI}
    \frac{1}{\sigma}   \phi_{\indm}(\sigma) =
    \sqrt{n}  \sigma
  \end{alignI} is $\sigma_{\indm} = \sqrt{\frac{\ConstH_{\indm}}{n}}$
  which implies $\DIMH_{\indm}=n\sigma^2_{\indm}=\ConstH_{\indm}$.
\end{proof}

If one is only able to bound  the bracketing entropy of the
global model, one has:
\begin{proposition}
\label{prop:complexnonlocal}
Assume for any $\delta \in (0, \sqrt{2}]$,
  \begin{align*}
    H_{[\cdot],\dtens}(\delta,\model_{\indm}) \leq 
\DimH_{\indm} \left( \ConstMultH_{\indm} + \Log \frac{1}{\delta}\right).
\end{align*} 

Then the function
\begin{align*}
  \phi_{\indm}(\sigma) =
\sigma \sqrt{\DimH_{\indm}}  \left( \sqrt{\ConstMultH_{\indm}} +
\sqrt{\pi} + \sqrt{\Log
    \frac{1}{\sigma\wedge e^{-1/2}}}  \right).
\end{align*}
 satisfies the properties required in Assumption \Hindm.

Furthermore, $\DIMH$
satisfies
\begin{align*}
\DIMH_{\indm} & \leq \left( 2 \left(\sqrt{\ConstMultH_{\indm}} + \sqrt{\pi} \right)^2 
+ 1 + \left(\Log
    \frac{n}{e \left( \sqrt{\ConstMultH_{\indm}} +
 \sqrt{\pi} \right)^2 \DimH_{\indm}}\right)_+ \right)
\DimH_{\indm}
\end{align*}
where $(x)_+=x$ if $x\geq 0$ and $(x)_+=0$ otherwise.
\end{proposition}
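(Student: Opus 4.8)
The plan is to verify the three requirements of Assumption \Hindm\ for the proposed $\phi_\indm$ and then to read off the bound on $\DIMH_\indm=n\sigma_\indm^2$ from the defining equation $\phi_\indm(\sigma_\indm)=\sqrt n\,\sigma_\indm^2$. I would first record the two-regime shape of $\phi_\indm$: it equals $\sigma\sqrt{\DimH_\indm}\,(\sqrt{\ConstMultH_\indm}+\sqrt\pi+\sqrt{1/2})$ for $\sigma\ge e^{-1/2}$ and $\sigma\sqrt{\DimH_\indm}\,(\sqrt{\ConstMultH_\indm}+\sqrt\pi)+\sqrt{\DimH_\indm}\,\sigma\sqrt{\Log(1/\sigma)}$ for $\sigma\le e^{-1/2}$, the two expressions agreeing at $e^{-1/2}$. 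That $\phi_\indm$ is non-decreasing then reduces to checking that $\sigma\mapsto\sigma\sqrt{\Log(1/\sigma)}$ is non-decreasing on $(0,e^{-1/2}]$, which holds because its derivative is $\sqrt L-\tfrac1{2\sqrt L}$ with $L=\Log(1/\sigma)\ge 1/2$, hence $\ge 0$. That $\delta\mapsto\delta^{-1}\phi_\indm(\delta)=\sqrt{\DimH_\indm}\,\bigl(\sqrt{\ConstMultH_\indm}+\sqrt\pi+\sqrt{\Log(1/(\delta\wedge e^{-1/2}))}\bigr)$ is non-increasing is immediate, since $\delta\wedge e^{-1/2}$ is non-decreasing and the argument of the last square root stays $\ge 1/2>0$.

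The Dudley-integral requirement is the substantive point. Using $\model_\indm(s_\indm,\sigma)\subseteq\model_\indm$, the saturation identity $H_{[\cdot],\dtens}(\delta,\model_\indm(s_\indm,\sigma))=H_{[\cdot],\dtens}(\delta\wedge\sqrt2,\model_\indm(s_\indm,\sigma\wedge\sqrt2))$ recalled in the main text, and monotonicity of $\delta\mapsto H_{[\cdot],\dtens}(\delta,\model_\indm)$, I would bound the integrand by $\sqrt{\DimH_\indm}\,\bigl(\sqrt{\ConstMultH_\indm}+\sqrt{\Log(1/\delta)}\bigr)$ for $\delta\le e^{-1/2}$ (where $\ConstMultH_\indm+\Log(1/\delta)$ is genuinely positive) and by $\sqrt{\DimH_\indm}\,\bigl(\sqrt{\ConstMultH_\indm}+\sqrt{1/2}\bigr)$ for $\delta> e^{-1/2}$, the latter via $H_{[\cdot],\dtens}(\delta,\model_\indm(s_\indm,\sigma))\le H_{[\cdot],\dtens}(e^{-1/2},\model_\indm)$. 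The only nontrivial integral that appears, $\int_0^{\sigma}\sqrt{\Log(1/\delta)}\,\ud\delta$, I would bound by $\sigma\sqrt{\Log(1/\sigma)}+\sqrt\pi\,\sigma$ for $\sigma\le 1$, using $\sqrt{\Log(1/\delta)}\le\sqrt{\Log(\sigma/\delta)}+\sqrt{\Log(1/\sigma)}$ together with the substitution $\delta=\sigma t$ and $\int_0^1\sqrt{\Log(1/t)}\,\ud t=\Gamma(3/2)=\sqrt\pi/2$. For $\sigma\le e^{-1/2}$ this directly gives $\int_0^\sigma\sqrt{H_{[\cdot],\dtens}(\delta,\model_\indm(s_\indm,\sigma))}\,\ud\delta\le\phi_\indm(\sigma)$; for $\sigma> e^{-1/2}$ I would split the integral at $e^{-1/2}$, the leftover constant $\sqrt\pi/2$ from the first piece being absorbed since $\sigma\sqrt\pi\ge e^{-1/2}\sqrt\pi\ge\sqrt\pi/2$.

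For the complexity bound, dividing $\phi_\indm(\sigma_\indm)=\sqrt n\,\sigma_\indm^2$ by $\sigma_\indm$ gives $\sqrt n\,\sigma_\indm=\sqrt{\DimH_\indm}\,\bigl(\sqrt{\ConstMultH_\indm}+\sqrt\pi+\sqrt{\Log(1/(\sigma_\indm\wedge e^{-1/2}))}\bigr)$; writing $c_\indm=(\sqrt{\ConstMultH_\indm}+\sqrt\pi)^2$ and squaring with $(a+b)^2\le 2a^2+2b^2$ yields $\DIMH_\indm=n\sigma_\indm^2\le \DimH_\indm\bigl(2c_\indm+2\Log(1/(\sigma_\indm\wedge e^{-1/2}))\bigr)$, while the same equality also gives $n\sigma_\indm^2\ge c_\indm\DimH_\indm$, hence $\sigma_\indm\ge\sqrt{c_\indm\DimH_\indm/n}$ and $\Log(1/\sigma_\indm)\le\tfrac12\Log(n/(c_\indm\DimH_\indm))$. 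A case split on $\sigma_\indm\gtrless e^{-1/2}$ — in the first case $\Log(1/(\sigma_\indm\wedge e^{-1/2}))=\tfrac12$, in the second it is $\le\tfrac12\Log(n/(c_\indm\DimH_\indm))$ — together with $\Log(n/(c_\indm\DimH_\indm))=1+\Log(n/(ec_\indm\DimH_\indm))$ then gives in both cases $\DIMH_\indm\le\bigl(2c_\indm+1+(\Log(n/(ec_\indm\DimH_\indm)))_+\bigr)\DimH_\indm$, which is the assertion. I expect the main obstacle to be the bookkeeping of the $\sigma\wedge e^{-1/2}$ and $\delta\wedge\sqrt2$ truncations in the Dudley step — ensuring one never takes a square root of a negative quantity and the $\Gamma(3/2)$ constant is absorbed in the large-$\sigma$ regime — while the monotonicity check and the final algebra are routine.
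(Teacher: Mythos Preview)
Your proposal is correct and follows essentially the same route as the paper: verify monotonicity of $\phi_\indm$ and of $\delta\mapsto\delta^{-1}\phi_\indm(\delta)$ by direct differentiation in the two regimes, bound the Dudley integral via the lemma $\int_0^\sigma\sqrt{\Log(1/\delta)}\,\ud\delta\le\sigma(\sqrt{\Log(1/\sigma)}+\sqrt\pi)$ (which the paper quotes from \textcite{maugis09:_gauss} and you re-derive via $\Gamma(3/2)$) after truncating at $e^{-1/2}$, and then bootstrap the defining equation $\sqrt n\,\sigma_\indm=\sqrt{\DimH_\indm}\bigl(\sqrt{\ConstMultH_\indm}+\sqrt\pi+\sqrt{\Log(1/(\sigma_\indm\wedge e^{-1/2}))}\bigr)$ using the lower bound $\sigma_\indm\ge\sqrt{c_\indm\DimH_\indm/n}$. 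The only cosmetic differences are that you make the $\sigma_\indm\gtrless e^{-1/2}$ case split explicit where the paper handles both at once, and you supply the proof of the integral lemma rather than citing it.
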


\begin{proof}
When $\sigma\geq e^{-1/2}$,
\begin{align*}
  \phi_{\indm}(\sigma) =
\sigma \sqrt{\DimH_{\indm}}  \left( \sqrt{\ConstMultH_{\indm}} +
\sqrt{\pi} \right)
\end{align*}
which is non-decreasing and such that $\delta\mapsto\frac{1}{\delta}\phi_{\indm}(\delta)=\sqrt{\DimH_{\indm}}  \left( \sqrt{\ConstMultH_{\indm}} +
\sqrt{\pi} \right)$ is constant and thus non-increasing.

When $\sigma \leq e^{-1/2}$,
\begin{align*}
  \phi_{\indm}(\sigma) &=
\sigma \sqrt{\DimH_{\indm}}  \left( \sqrt{\ConstMultH_{\indm}} +
\sqrt{\pi} + \sqrt{\Log
    \frac{1}{\sigma}}  \right)\\
\intertext{and thus}
  \phi_{\indm}'(\sigma) &=
\sqrt{\DimH_{\indm}}  \left( \left( \sqrt{\ConstMultH_{\indm}} +
\sqrt{\pi} + \sqrt{\Log
    \frac{1}{\sigma}} \right)
-\frac{1}{2\sqrt{\Log
    \frac{1}{\sigma}}}
\right)
\\
& = \sqrt{\DimH_{\indm}}  \left( \sqrt{\ConstMultH_{\indm}} +
\sqrt{\pi} + \frac{1}{\sqrt{\Log
    \frac{1}{\sigma}}} \left(\Log
    \frac{1}{\sigma} -1/2 \right)
\right) \geq 0
\end{align*}
as $\Log
    \frac{1}{\sigma} -1/2 \geq 0$ when $\sigma \leq e^{-1/2}$.
$\phi_{\indm}$ is thus non-decreasing. The function
\begin{align*}
\delta \mapsto \frac{1}{\delta} \phi_{\indm}(\delta) =
 \sqrt{\DimH_{\indm}} \left( \sqrt{\ConstMultH_{\indm}} + \sqrt{\pi}
+ \sqrt{\Log
    \frac{1}{\delta}}  \right).
\end{align*}
is  strictly decreasing and thus non-increasing.

Now
  \begin{align*}
 \int_0^{\sigma}
 \sqrt{H_{[\cdot],\dtens}(\delta,\model_{\indm})} \, \ud \delta
& = \int_0^{\sigma}
 \sqrt{H_{[\cdot],\dtens}(\delta \wedge \sqrt{2},\model_{\indm})} \, \ud
 \delta 
\\
& \leq \int_0^{\sigma}
 \sqrt{H_{[\cdot],\dtens}(\delta \wedge e^{-1/2},\model_{\indm})} \, \ud \delta
\\
& \leq \int_0^{\sigma \wedge e^{-1/2}}
 \sqrt{\DimH_{\indm}} \sqrt{\ConstMultH_{\indm} +  \Log
   \frac{1}{\delta}} \, \ud \delta
+
\int_{\sigma \wedge e^{-1/2}}^{\sigma}
  \sqrt{\DimH_{\indm}} \sqrt{\ConstMultH_{\indm}+ \Log \frac{1}{e^{-1/2}}}\, \ud \delta\\
& \leq \left(\sigma \sqrt{\ConstMultH_{\indm} } +
 \int_0^{\sigma\wedge e^{-1/2}} \sqrt{\Log
  \frac{1}{\delta}} \, \ud \delta  
+
 \int_{\sigma\wedge e^{-1/2}}^{\sigma} \sqrt{\Log
  \frac{1}{e^{-1/2}}} \, \ud \delta  
 \right) \sqrt{\DimH_{\indm}}.
\end{align*}
We now rely on
\begin{lemma}\label{lem:boundinteg} For any $\sigma\in[0,1]$,
  \begin{alignI}
    \int_{0}^\sigma \sqrt{\Log \frac{1}{\delta}} \, \ud \delta \leq
    \sigma \left( \sqrt{\Log \frac{1}{\sigma} }+\sqrt{\pi} \right).
  \end{alignI}
  \end{lemma}
proved in \textcite{maugis09:_gauss} to deduce
\begin{align*}
 \int_0^{\sigma}
 \sqrt{H_{[\cdot],\dtens}(\delta,\model_{\indm})} \, \ud \delta
& \leq 
\bigg(  \sigma \sqrt{\ConstMultH_{\indm}} +
 ( \sigma\wedge e^{-1/2}) \left( \sqrt{\Log
    \frac{1}{\sigma\wedge e^{-1/2}}} + \sqrt{\pi} \right)\\
&\qquad\qquad\qquad\qquad
+ (\sigma-\sigma\wedge e^{-1/2})_+ \sqrt{\Log
    \frac{1}{e^{-1/2}}}
\bigg)  \sqrt{\DimH_{\indm}}
\\
& \leq  \sigma \left( \sqrt{\ConstMultH_{\indm}} + \sqrt{\pi} +
 \sqrt{\Log
    \frac{1}{\sigma\wedge e^{-1/2}}} \right) \sqrt{\DimH_{\indm}}
\end{align*}
\begin{align*}
&\frac{1}{\sigma} \phi_{\indm}(\sigma)=
 \sqrt{n}  \sigma
 \quad\Leftrightarrow \quad
 \left( \sqrt{\ConstMultH_{\indm}} + \sqrt{\pi} +
 \sqrt{\Log
    \frac{1}{\sigma\wedge e^{-1/2}}} \right) \sqrt{\DimH_{\indm}}
=     \sqrt{n}  \sigma
\\
\quad\Leftrightarrow\quad
&
\sigma = \frac{1}{\sqrt{n}} 
 \left( \sqrt{\ConstMultH_{\indm}} + \sqrt{\pi} +
 \sqrt{\Log
    \frac{1}{\sigma\wedge e^{-1/2}}} \right) \sqrt{\DimH_{\indm}}
 \end{align*}

This implies
\begin{align*}
\sigma_{\indm} \geq \frac{1}{\sqrt{n}} 
\left( \sqrt{\ConstMultH_{\indm}} + \sqrt{\pi} \right) \sqrt{\DimH_{\indm}}
\end{align*}
which implies by inserting this bound in the initial equality
\begin{align*}
\sigma_{\indm} & \leq  \frac{1}{\sqrt{n}} \left(
  \sqrt{\ConstMultH_{\indm}} + \sqrt{\pi} + \sqrt{\Log \frac{1}{ \frac{1}{\sqrt{n}} \left(
        \sqrt{\ConstMultH_{\indm}} + \sqrt{\pi} \right)
\sqrt{\DimH_{\indm}} \wedge e^{-1/2}}} \right) \sqrt{\DimH_{\indm}}\\
& \leq  \frac{1}{\sqrt{n}} \left(
  \sqrt{\ConstMultH_{\indm}} + \sqrt{\pi} + \sqrt{\Log \frac{e^{1/2}}{ \frac{e^{1/2}}{\sqrt{n}} \left(
        \sqrt{\ConstMultH_{\indm}} + \sqrt{\pi} \right)
\sqrt{\DimH_{\indm}} \wedge 1}} \right) \sqrt{\DimH_{\indm}}\\
& \leq  \frac{1}{\sqrt{n}} \left(
  \sqrt{\ConstMultH_{\indm}} + \sqrt{\pi} + \sqrt{\frac{1}{2} \left( 1
      +\left(
     \Log
    \frac{n}{  e \left(
        \sqrt{\ConstMultH_{\indm}} + \sqrt{\pi}
      \right)^2\DimH_{\indm}} \right)_+\right)} \right) \sqrt{\DimH_{\indm}}
  \end{align*}
Proposition's bound is obtained by squaring this inequality,
 using the inequality $(\sqrt{a}+\sqrt{b})^2
\leq 2 (a+b)$ and multiplying by $n$.
\end{proof}

If one is able to bound the bracketing entropy of the local models,
one can use:
\begin{proposition}
\label{prop:complexlocal}
Assume for any $\sigma \in (0,\sqrt{2}]$ and any $\delta\in(0,\sigma]$
\begin{align*}
  H_{[\cdot],\dtens}(\delta,\model_{\indm}(s_{\indm},\sigma)) \leq
\DimH_{\indm} \left( \ConstMultH_{\indm} +  \Log \frac{\sigma}{\delta}\right).
\end{align*}

Then the function
\begin{align*}
  \phi_{\indm}(\sigma) = \sigma \sqrt{\DimH_{\indm}} \left( \sqrt{\ConstMultH_{\indm}} +
     \sqrt{\pi} \right) 
\end{align*}
 satisfies the properties required in Assumption \Hindm.

Furthermore, $\DIMH_{\indm}$
  satisfies
\begin{align*}
\DIMH_{\indm}
& = \left( \sqrt{\ConstMultH_{\indm}} +
    \sqrt{\pi} \right)^2
    \DimH_{\indm}.
\end{align*}
\end{proposition}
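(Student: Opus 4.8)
The plan is to check that the proposed $\phi_\indm(\sigma)=\sigma\sqrt{\DimH_\indm}\bigl(\sqrt{\ConstMultH_\indm}+\sqrt{\pi}\bigr)$ satisfies the three requirements of Assumption \Hindm, and then to read off $\DIMH_\indm$ by solving the defining equation for $\sigma_\indm$. The monotonicity requirements are immediate: writing $\phi_\indm(\sigma)=c\,\sigma$ with the nonnegative constant $c=\sqrt{\DimH_\indm}\bigl(\sqrt{\ConstMultH_\indm}+\sqrt{\pi}\bigr)$, the function $\phi_\indm$ is affine and non-decreasing, while $\sigma\mapsto\phi_\indm(\sigma)/\sigma=c$ is constant, hence non-increasing on $(0,+\infty)$.

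The substantive point is the Dudley-integral inequality. First I would reduce to the range $\sigma\le\sqrt{2}$ by the truncation identity $H_{[\cdot],\dtens}(\delta,\model_\indm(s_\indm,\sigma))=H_{[\cdot],\dtens}(\delta\wedge\sqrt{2},\model_\indm(s_\indm,\sigma\wedge\sqrt{2}))$ recalled just before the statement: for $\sigma>\sqrt{2}$ one splits $\int_0^{\sigma}=\int_0^{\sqrt{2}}+\int_{\sqrt{2}}^{\sigma}$, the first piece being bounded by $\phi_\indm(\sqrt{2})$ via the case $\sigma=\sqrt{2}$ and the second by $(\sigma-\sqrt{2})\sqrt{\DimH_\indm\ConstMultH_\indm}\le(\sigma-\sqrt{2})c$ (using the hypothesis at $\delta=\sigma=\sqrt{2}$), so that the two contributions add up to $\sigma c=\phi_\indm(\sigma)$. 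For $\sigma\le\sqrt{2}$, I would insert the hypothesis and change variables $\delta=\sigma u$:
\begin{align*}
  \int_0^{\sigma}\sqrt{H_{[\cdot],\dtens}(\delta,\model_\indm(s_\indm,\sigma))}\,\ud\delta
  \le \sqrt{\DimH_\indm}\,\sigma\int_0^{1}\sqrt{\ConstMultH_\indm+\Log\frac{1}{u}}\,\ud u .
\end{align*}
By subadditivity of the square root this is at most $\sqrt{\DimH_\indm}\,\sigma\bigl(\sqrt{\ConstMultH_\indm}+\int_0^{1}\sqrt{\Log(1/u)}\,\ud u\bigr)$, and Lemma~\ref{lem:boundinteg} evaluated at $\sigma=1$ gives $\int_0^{1}\sqrt{\Log(1/u)}\,\ud u\le\sqrt{\pi}$, which is exactly $\phi_\indm(\sigma)$.

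Finally, the equation $\phi_\indm(\sigma)/\sigma=\sqrt{n}\,\sigma$ defining $\sigma_\indm$ reads $c=\sqrt{n}\,\sigma$, so it has the unique root $\sigma_\indm=c/\sqrt{n}=\sqrt{\DimH_\indm}\bigl(\sqrt{\ConstMultH_\indm}+\sqrt{\pi}\bigr)/\sqrt{n}$, and therefore $\DIMH_\indm=n\sigma_\indm^2=\bigl(\sqrt{\ConstMultH_\indm}+\sqrt{\pi}\bigr)^2\DimH_\indm$, as claimed. I do not expect a genuine obstacle: the argument is a direct computation, the only mildly delicate points being the $\sqrt{2}$-truncation bookkeeping and the appeal to the already-established bound on $\int_0^{1}\sqrt{\Log(1/u)}\,\ud u$ from Lemma~\ref{lem:boundinteg}.
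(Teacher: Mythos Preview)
Your proposal is correct and follows essentially the same route as the paper: the monotonicity checks are identical, the Dudley-integral bound uses the same ingredients (truncation at $\sqrt{2}$, subadditivity of the square root, the change of variables $\delta=\sigma u$, and Lemma~\ref{lem:boundinteg} at $\sigma=1$), and the computation of $\DIMH_\indm$ is the same. The only cosmetic difference is that the paper handles the case $\sigma>\sqrt{2}$ inline via the inequality $\Log\frac{\sigma\wedge\sqrt{2}}{\delta\wedge\sqrt{2}}\le\Log\frac{\sigma}{\delta}$ inside the integrand, whereas you split the integral at $\sqrt{2}$ explicitly; both reductions are equivalent.
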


\begin{proof}[Proof of \cref{prop:complexlocal}]
By construction, the function $\phi_{\indm}$ is non decreasing while the function
\begin{align*}
  \delta \mapsto \frac{1}{\delta} \phi_{\indm}(\delta) =
  \left( \sqrt{\ConstMultH_{\indm}} +  \sqrt{\pi} \right) \sqrt{\DimH_{\indm}}
\end{align*}
is non increasing.

Now,
\begin{align*}
 \int_0^{\sigma}
 \sqrt{H_{[\cdot],\dtens}(\delta,\model_{\indm}(s_{\indm},\sigma))} \, \ud \delta
& \leq \int_0^{\sigma}
 \sqrt{H_{[\cdot],\dtens}(\delta \wedge
   \sqrt{2},\model_{\indm}(s_{\indm},\sigma \wedge \sqrt{2}))} \\
& \leq  \int_0^{\sigma}
 \sqrt{\DimH_{\indm} \left( \ConstMultH_{\indm} +  \Log
     \frac{\sigma \wedge \sqrt{2}}{\delta \wedge \sqrt{2}} \right)}
 \, \ud \delta
\\&
 \leq \int_0^{\sigma}
 \left(\sqrt{\ConstMultH_{\indm}} + \sqrt{\Log \frac{\sigma}{\delta}}\right)
 \, \ud \delta \sqrt{\DimH_{\indm}} \\
&
 \leq \sigma \int_0^{1}
 \left(\sqrt{\ConstMultH_{\indm}} + \sqrt{\Log \frac{1}{\delta}}\right)
 \, \ud \delta \sqrt{\DimH_{\indm}}
\\
\intertext{
We now use \cref{lem:boundinteg}
to obtain}
& \leq \sigma \left( 
 \sqrt{\ConstMultH_{\indm}} + \sqrt{\pi}
\right) \sqrt{\DimH_{\indm}} 
\end{align*}

By definition of $\phi_{\indm}(\sigma)$:
\begin{align*}
&\frac{1}{\sigma} \phi_{\indm}(\sigma)=
 \sqrt{n}  \sigma
\quad \Leftrightarrow \quad
 \left( \sqrt{\ConstMultH_{\indm}} +
\sqrt{\pi}\right) \sqrt{\DimH_{\indm}} 
=    \sqrt{n}  \sigma
\quad\Leftrightarrow\quad
\sigma = \frac{1}{\sqrt{n}}   \left( \sqrt{\ConstMultH_{\indm}} +
\sqrt{\pi} \right) \sqrt{\DimH_{\indm}} 
\end{align*}
Squaring this equality and multiplying by $n$ yields the equality of the Proposition.
\end{proof}

}{}

\subsection{Proof of \cref{theo:single}}

\ifthenelse{\boolean{extended}}{}{For sake of simplicity, we remove from now on the subscript
reference to the common measure $\meas$ from all notations.}

\begin{proof}[Proof of \cref{theo:single}]
For any function $g$, which may depend on the observed $(X_i,Y_i)$, 
we define its empirical process $\Pemp(g)$ by
\begin{align*}
\Pemp(g) = \frac{1}{n} \sum_{i=1}^n g(X_i,Y_i)
\end{align*}
and its mean $\Ptens(g)$ 
by
\begin{align*}
\Ptens(g)=\E\left[\Pemp(g) 
\right]= \E\left[\frac{1}{n} \sum_{i=1}^n g(X'_i,Y'_i) 
\right] = \frac{1}{n} \sum_{i=1}^n \E\left[g(X'_i,Y'_i) 
\right] 
\end{align*}
where $(X'_i,Y'_i)$ is an independent copy of $(X_i,Y_i)$. Note that
when $g$ depends on the $(X_i,Y_i)$, $\Ptens(g)$ is a random variable.
Let $\Prec(g)$ denote the recentred process $\Pemp(g)-\Ptens(g)$.

Using this definition, 
\begin{align*}
\KLtens(s_0,t) & =\Ptens\left( - \Log\left(
    \frac{t}{s_0} \right) \right)&\text{and}&&
\JKLtens_{\propJKL}(s_0,t) & =
\Ptens\left(  -  \frac{1}{\propJKL} \Log\left(
    \frac{(1-\propJKL)s_0+ \propJKL t}{s_0} \right) \right).
\end{align*}

By construction, $\widehat{s}_\indm$ satisfies
\begin{align*}
 \Pemp(-\Log \widehat{s}_\indm) &\leq
  \inf_{s_\indm\in\model_\indm} \Pemp(-\Log s_\indm) +
  \frac{\rhoapp}{n}\\
\intertext{We let $\widebar{s}_\indm$ be a function 
  such that}
\KLtens(s_0,\widebar{s}_{\indm}) &\leq 
  \inf_{s_\indm\in\model_\indm} \KLtens(s_0,s_\indm) +\frac{\paramrhoKL}{n}.
\end{align*}
We then define the functions $\klbarsindm$, 
$\klhatsindm$, and $\jklhatsindm$ by
\begin{align*}
  \klbarsindm &= - \Log\left(
    \frac{\widebar{s}_\indm}{s_0} \right)&
  \klhatsindm &= - \Log\left(
    \frac{\widehat{s}_\indm}{s_0} \right)&
 \jklhatsindm &= -  \frac{1}{\propJKL} \Log\left(
    \frac{(1-\propJKL)s_0+ \propJKL \widehat{s}_\indm}{s_0} \right)
\end{align*}

By construction
\begin{align*}
  \Pemp(\klhatsindm) 
\leq \Pemp(\klbarsindm) + \frac{\rhoapp}{n}
\end{align*}
Since, by concavity of the logarithm, 
\begin {align*}
\jklhatsindm = -  \frac{1}{\propJKL} \Log\left(
    \frac{(1-\propJKL)s_0 + \propJKL \widehat{s}_{\indm'}}{s_0} \right) \leq 
- \frac{1}{\propJKL} \left(
    (1-\propJKL)  \Log \frac{s_0}{s_0} + \propJKL \Log
      \frac{\widehat{s}_{\indm'}}{s_0} \right) = - \Log \frac{\widehat{s}_{\indm'}}{s_0} =
\klhatsindm,
\end{align*}
\begin{align*}
  \Pemp(\jklhatsindm) \leq \Pemp(\klbarsindm)   +\frac{\rhoapp}{n}
\end{align*}
and thus
\begin{align*}
  \Ptens(\jklhatsindm) - \Prec(\klbarsindm)
& \leq \Ptens(\klbarsindm) -
 \Prec(\jklhatsindm) + \frac{\rhoapp}{n}\\
\intertext{using the definition of $\jklhatsindm$ and of
 $\klbarsindm$, we deduce}
\JKLtens_{\propJKL}(s_0,\widehat{s}_{\indm}) - \Prec(\klbarsindm)
& \leq  \inf_{s_{\indm} \in \model_\indm} \KLtens(s_0,s_\indm) 
 -\Prec(\jklhatsindm)
 +\frac{\rhoapp}{n} + \frac{\paramrhoKL}{n}
\end{align*}
where $\JKLtens_{\propJKL}(s_0,\widehat{s}_{\indm})$ is still a random variable.

We now rely on a control on the deviation of
$\Prec(\jklhatsindm)$ through its conditional expectation. For
any random variable $Z$ and any event $A$ such that $\Prob\{A\}>0$, we
let $\E^A\left[ Z \right] =
\frac{E\left[Z\Charac{A}\right]}{\Prob\{A\}}$. It is sufficient to
control those quantities for all $A$ to obtain a control of the
deviation. More precisely,
\begin{lemma}\label{lemma:controlcond}
Let $Z$ be a random variable, assume there exists a non decreasing
$\Psi$ such that for all $A$ such that $\Prob\{ A \}>0$,
\begin{alignI}
  \E^A[ Z ] \leq \Psi\left(\Log\left(\frac{1}{\Prob\{A\}} \right)\right).
\end{alignI}
  then for all $x$
  \begin{alignI}
    \Prob\{ Z > \Psi(x) \} \leq e^{-x}.
  \end{alignI}
\end{lemma}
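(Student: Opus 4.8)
To prove \cref{lemma:controlcond}, the plan is to instantiate the hypothesis at the single event that matters. Fix $x$. If $\Prob\{Z > \Psi(x)\} = 0$ the conclusion is trivial, so I would assume the event $A = \{Z > \Psi(x)\}$ has positive probability; then $\E^A[Z]$ is well defined and the hypothesis applies to $A$.

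The first step is a \emph{strict} lower bound on $\E^A[Z]$: since $Z > \Psi(x)$ pointwise on $A$, the random variable $(Z - \Psi(x))\Charac{A}$ is nonnegative and strictly positive on $A$, a set of positive probability, so $\E\left[(Z-\Psi(x))\Charac{A}\right] > 0$, i.e.\ $\E^A[Z] > \Psi(x)$. The second step feeds this same $A$ into the assumed bound, giving $\E^A[Z] \leq \Psi\!\left(\Log\left(\frac{1}{\Prob\{A\}}\right)\right)$; chaining the two inequalities yields $\Psi(x) < \Psi\!\left(\Log\left(\frac{1}{\Prob\{A\}}\right)\right)$. The final step invokes monotonicity of $\Psi$: if one had $x \geq \Log\left(\frac{1}{\Prob\{A\}}\right)$, then $\Psi$ non-decreasing would force $\Psi(x) \geq \Psi\!\left(\Log\left(\frac{1}{\Prob\{A\}}\right)\right)$, a contradiction; hence $x < \Log\left(\frac{1}{\Prob\{A\}}\right)$, that is $\Prob\{A\} < e^{-x}$, which in particular gives $\Prob\{Z > \Psi(x)\} \leq e^{-x}$.

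I do not anticipate any real obstacle; the only subtlety is that $\Psi$ is assumed merely non-decreasing, not strictly increasing, so the monotonicity step works only because the comparison $\Psi(x) < \Psi(\cdots)$ is strict — and this strictness is exactly what the indicator argument in the first step (together with $\Prob\{A\} > 0$) provides. Measurability and integrability need no comment beyond what the statement already presupposes: the upper bound on $\E^A[Z]$ forces $\E[Z\Charac{A}]$ to be a well-defined finite quantity, the negative part of $Z\Charac{A}$ being bounded below by $\min(0,\Psi(x))$ on $A$.
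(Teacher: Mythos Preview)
Your proof is correct and follows essentially the same route as the paper's: set $A=\{Z>\Psi(x)\}$, apply the hypothesis to this $A$, and compare with the trivial lower bound on $\E^A[Z]$ coming from $Z>\Psi(x)$ on $A$. Your version is in fact slightly more careful than the paper's, which writes only $\E^A[Z]\geq\Psi(x)$ and then deduces $x\leq\Log(1/\Prob\{A\})$ from $\Psi(x)\leq\Psi(\Log(1/\Prob\{A\}))$ --- a step that, as you rightly note, is not quite justified for a merely non-decreasing $\Psi$ unless the inequality is strict; your explicit handling of that point closes this small gap.
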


Here, we can prove
\begin{lemma}
\label{lemma:core} There exist three absolute constants $\constlemmathree>4$,
  $\constlemmaone$ and $\constlemmatwo$ such that, under Assumption (H), for all
  $\indm\in\indmset$, for every  $y_\indm>\sigma_\indm$ and every
  event $A$ such that $\Prob\{A\}>0$,
\begin{align*}
  \E^A &\left[
\Prec\left(
  \frac{-\jklhatsindm}{y_\indm^2 +
    \constlemmathree\dtwotens(s_0,\widehat{s}_\indm)} \right)
\right]
& \leq 
 \frac{\constlemmaone \sigma_\indm}{y_\indm}
+ \constlemmatwo
\frac{1}{\sqrt{ny_m^2}}
\sqrt{\Log\left(\frac{1}{\Prob\{A\}}\right)} + \frac{18}{ny_m^2\propJKL}
\Log\left(\frac{1}{\Prob\{A\}}\right).
\end{align*}
\end{lemma}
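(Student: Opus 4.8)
The statement I need to establish (Lemma~\ref{lemma:core}) is a maximal-inequality-type bound on the recentred empirical process applied to the weighted Jensen--Kullback--Leibler log-ratio. The natural route is a peeling (slicing) argument combined with a Bernstein-type concentration inequality for suprema of empirical processes, of the kind found in \textcite{massart07:_concen} (Theorem~6.8 and Corollary~6.9 there), specialised to the bounded and weakly-variance-controlled functions arising here. Throughout, write $\jkl(\widehat s_\indm)$ for $-\frac1\propJKL\Log\frac{(1-\propJKL)s_0+\propJKL\widehat s_\indm}{s_0}$ and recall two facts established earlier: (i) $0\le \jkl(t)\le -\frac{\Log(1-\propJKL)}{\propJKL}$ for every conditional density $t$, i.e.\ these functions are uniformly bounded by a constant $b_\propJKL$ depending only on $\propJKL$; and (ii) by \cref{prop:hellingerkull2} in its tensorized form, $\Ptens(\jkl(t)) = \JKLtens_{\propJKL}(s_0,t)\ge \ConstHellKL_\propJKL\,\dtwotens(s_0,t)$, and also the variance of $\jkl(t)$ under $\Ptens$ is controlled by $\dtwotens(s_0,t)$ up to a constant (this is the standard variance--Hellinger comparison for bounded log-ratios; one gets $\Ptens(\jkl(t)^2)\lesssim_\propJKL \dtwotens(s_0,t)$). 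These two ingredients are exactly what a Bernstein bound needs: a sup-norm bound and a variance bounded by the "size" $\dtwotens$.

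\textbf{Key steps.} First I would fix $\indm$ and $y_\indm>\sigma_\indm$ and perform the peeling over the value of $\dtens(s_0,\widehat s_\indm)$: partition $\model_\indm$ into shells $\model_\indm^{(j)}=\{s_\indm : \sigma_j<\dtens(s_0,s_\indm)\le \sigma_{j+1}\}$ with, say, $\sigma_j$ a geometric sequence (and a first shell $\dtens\le\sigma_0$ for a suitable $\sigma_0$ of order $\max(y_\indm,\sigma_\indm)$ or so). On each shell the denominator $y_\indm^2+\constlemmathree\dtwotens(s_0,\widehat s_\indm)$ is pinched between constant multiples of $y_\indm^2+\sigma_j^2$, so controlling the weighted process on a shell reduces to controlling $\sup_{s_\indm\in\model_\indm^{(j)}} \Prec(-\jkl(s_\indm))$ against $y_\indm^2+\sigma_j^2$. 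Second, on each shell apply the Bernstein-type supremum inequality: its expectation term is governed by a Dudley-type entropy integral, and here is where Assumption~\Hindm enters — the integral $\int_0^{\sigma_{j+1}}\sqrt{H_{[\cdot],\dtens}(\delta,\model_\indm(\cdot,\sigma_{j+1}))}\,d\delta\le\phi_\indm(\sigma_{j+1})$, and then the monotonicity of $\delta\mapsto\phi_\indm(\delta)/\delta$ together with the definition of $\sigma_\indm$ via $\phi_\indm(\sigma)/\sigma=\sqrt n\sigma$ gives $\phi_\indm(\sigma_{j+1})/(\sqrt n(y_\indm^2+\sigma_j^2))\lesssim \sigma_\indm/y_\indm$ after summing the geometric series over $j$. (A subtlety: the entropy assumption is for the localized sets $\model_\indm(s_\indm,\sigma)$; one must either note $\jkl(\widehat s_\indm)$ lives in a ball around $s_0$ — or use the remark in the paper that $\text{H}_\indm$ can be localized at a minimizer of $\dtwotens(s_0,s_\indm)$ — to justify using the localized entropy on each shell.) The conditional-expectation form, with the $\E^A[\cdot]$ replacing a tail probability, comes from \cref{lemma:controlcond}: one proves the tail bound $\Prob\{\sup\cdots>t\}\le e^{-x}$ from the Bernstein inequality and then reads off $\E^A$ via that lemma; the two extra terms $\constlemmatwo (n y_\indm^2)^{-1/2}\sqrt{\Log(1/\Prob\{A\})}$ and $\tfrac{18}{n y_\indm^2\propJKL}\Log(1/\Prob\{A\})$ are precisely the "variance" and "sup-norm" deviation terms of Bernstein, with the $18/\propJKL$ tracing back to the bound $b_\propJKL\le \tfrac{\text{const}}{\propJKL}$ on $|\jkl|$ and $\Var\lesssim y_\indm^2$ on the relevant shells. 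Third, sum the shell contributions; the geometric decay makes the series converge and yields absolute constants $\constlemmaone,\constlemmatwo$ and the requirement $\constlemmathree>4$ (the "$4$" being what is needed so that $\constlemmathree\dtwotens$ dominates $\sigma_j^2$ with room to spare in the pinching, making the geometric series summable with a clean constant).

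\textbf{Main obstacle.} The delicate part is not the peeling bookkeeping but getting the expectation term of the supremum inequality to come out proportional to $\sigma_\indm/y_\indm$ with an \emph{absolute} constant, independent of $\indm$ and of the shell index. This forces one to be careful about: (a) the exact form of the chaining bound (one wants Massart's version where the expected supremum over a $\dtens$-ball of radius $\sigma$ is $\lesssim \phi_\indm(\sigma)/\sqrt n$ after the self-bounding/fixed-point step), (b) the interplay between the weight $y_\indm^2+\constlemmathree\dtwotens$ and the shell radii so that $\sum_j \phi_\indm(\sigma_{j+1})/(\sqrt n(y_\indm^2+\sigma_j^2))$ telescopes/bounds by a constant times $\phi_\indm(\sigma_\indm)/(\sqrt n\, y_\indm\,\sigma_\indm)=\sigma_\indm/y_\indm$ using $\phi_\indm(\sigma)/\sigma$ non-increasing, and (c) handling the first shell ($\dtens$ small) separately since there the denominator is essentially $y_\indm^2$ alone. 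The boundedness $|\jkl|\le b_\propJKL$ and the variance bound $\Ptens(\jkl(s_\indm)^2)\le C_\propJKL\,\dtwotens(s_0,s_\indm)$ are the only two model-independent analytic inputs, so once those are in hand the rest is a (careful) standard empirical-process computation; I would organize it exactly as in the proof of Theorem~7.11 of \textcite{massart07:_concen}, adapted to conditional densities by replacing $P$, $\d^2$, $\KL$ by their tensorized counterparts $\Ptens$, $\dtwotens$, $\KLtens$.
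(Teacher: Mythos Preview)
Your overall architecture---peeling plus a Bernstein-type supremum inequality governed by the bracketing-entropy integral from Assumption~\Hindm---is the same as the paper's, and the way you intend to extract the $\sigma_\indm/y_\indm$ term from $\phi_\indm(\sigma)/(\sqrt n\,\sigma)$ via the fixed-point definition of $\sigma_\indm$ is exactly right. However, two concrete points in your sketch do not hold as stated and are precisely where the paper's argument differs.

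\textbf{Boundedness of $\jkl$.} Your claim ``(i) $0\le \jkl(t)\le -\tfrac{\Log(1-\propJKL)}{\propJKL}$'' is false: only the upper bound holds. Pointwise, $\jkl(t)(x,y)=-\tfrac{1}{\propJKL}\Log\tfrac{(1-\propJKL)s_0+\propJKL t}{s_0}$ is negative wherever $t>s_0$ and is unbounded below. So you cannot feed a sup-norm bound into Bernstein. The paper instead invokes Massart's moment lemma (Lemma~7.26 in \textcite{massart07:_concen}), which gives directly
\[
\Ptens\!\left(\Big|\tfrac{1}{\propJKL}\Log\tfrac{s_0+\frac{\propJKL}{1-\propJKL}s}{s_0+\frac{\propJKL}{1-\propJKL}t}\Big|^k\right)\le \frac{k!}{2}\,\frac{9\,\dtwotens(s,t)}{8\propJKL(1-\propJKL)}\left(\frac{2}{\propJKL}\right)^{k-2},
\]
i.e.\ Bernstein parameters $V\asymp \dtwotens(s,t)/(\propJKL(1-\propJKL))$ and $b=2/\propJKL$. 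It is this $b=2/\propJKL$ (not a sup-norm bound $b_\propJKL$) that, after Theorem~6.8 and the peeling factor $4$, produces the explicit $18/(n y_\indm^2\propJKL)$ coefficient. Your variance claim $\Ptens(\jkl(s_\indm)^2)\lesssim_\propJKL \dtwotens(s_0,s_\indm)$ is the $k=2$ case of this lemma, but you also need the higher moments, and you need them for \emph{differences} of $\jkl$'s to run the bracketing chaining.

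\textbf{Centering at a point of the model.} You peel on $\dtens(s_0,\widehat s_\indm)$, but \Hindm\ localizes the entropy at $s_\indm\in\model_\indm$, not at $s_0$. You flag this, but your fix (``use the remark about localizing at a minimizer'') is not enough: that remark weakens the hypothesis, it does not give you entropy balls around $s_0$. The paper's device is to fix an auxiliary $\widetilde s_\indm\in\model_\indm$ close to the Hellinger projection of $s_0$, split
\[
-\jklhatsindm = \big(-\jklhatsindm+\jkl(\widetilde s_\indm)\big)\;-\;\jkl(\widetilde s_\indm),
\]
apply Theorem~6.8 (supremum Bernstein with bracketing) to the first piece over the class $\{-\jkl(s_\indm)+\jkl(\widetilde s_\indm): s_\indm\in\model_\indm(\widetilde s_\indm,\sigma)\}$---whose bracketing entropy is bounded by $H_{[\cdot],\dtens}(\cdot,\model_\indm(\widetilde s_\indm,\sigma))$ because brackets $[t^-,t^+]$ of densities induce brackets of the log-ratios---then apply a plain Bernstein bound to the fixed second piece, and finally pass to the weighted form via Massart's abstract peeling lemma (Lemma~4.23) with $a(s)=\dtens(\widetilde s_\indm,s)$. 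The relation $\dtwotens(\widetilde s_\indm,s_\indm)\le 2(2+\varepsilon)\dtwotens(s_0,s_\indm)$, coming from the choice of $\widetilde s_\indm$, converts the denominator back to $y_\indm^2+\constlemmathree\dtwotens(s_0,\widehat s_\indm)$ and is where $\constlemmathree>4$ originates. Your explanation of the ``$4$'' as a pinching constant for geometric shells is not the actual source.

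In short: replace the (incorrect) boundedness input by the moment lemma, and replace the direct peeling around $s_0$ by the $\widetilde s_\indm$-centered decomposition; with those two changes your outline becomes the paper's proof.
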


Combining \cref{lemma:controlcond} and \cref{lemma:core} implies that,
except on a set of probability   less than
$e^{-x}$, for any $y_{\indm}> \sigma_{\indm}$,
\begin{align*}
\frac{-\Prec(\jklhatsindm)}
{y_{\indm}^2 + \constlemmathree \dtwotens(s_0,\widehat{s}_{\indm})}
\leq 
\frac{\constlemmaone \sigma_{\indm}}{y_{\indm}} +
  \constlemmatwo \sqrt{\frac{x}{n y_{\indm}^2}} + \frac{18}{\propJKL} \frac{x}{n y_{\indm}^2}.
\end{align*}
Choosing $y_{\indm} = \constcore \sqrt{\sigma_{\indm}^2 +
  \frac{x}{n}}$ with $\constcore>1$ to be fixed later, we
deduce that, except on a set of probability less than
$e^{-x}$, \begin{align*}
\frac{-\Prec(\jklhatsindm)}
{y_{\indm}^2 + \constlemmathree \dtwotens(s_0,\widehat{s}_{\indm})}
\leq \frac{\constlemmaone+ \constlemmatwo}{\constcore} + \frac{18}{\constcore^2\propJKL}
\end{align*}

Thus, except on the same set,
\begin{align*}
\JKLtens_{\propJKL}(s_0,\widehat{s}_{\indm}) - \Prec(\klbarsindm)
& \leq \inf_{s_{\indm} \in \model_\indm}
\KLtens(s_0,s_\indm) 
+\left( 
\frac{\constlemmaone+ \constlemmatwo}{\constcore} +
\frac{18}{\constcore^2\propJKL}
\right)
\left( y_{\indm}^2 + \constlemmathree
  \dtwotens(s_0,\widehat{s}_{\indm})\right)\\
& \qquad
 +\frac{\rhoapp}{n} + \frac{\paramrhoKL}{n}.   
\end{align*}
Let $\paramepsipen>0$,
we define $\constcoreepsipen$ by $
\left( \frac{\constlemmaone+\constlemmatwo}{\constcoreepsipen}  + \frac{18}{\constcoreepsipen^2\propJKL}
\right)\constlemmathree
=\ConstHellKL_{\propJKL}\, \paramepsipen$ 
with 
$\ConstHellKL_{\propJKL}$ defined in \cref{prop:hellingerkull2}
and as $\widehat{s}_{\indm}$ is a conditional density
$\ConstHellKL_{\propJKL}\,
\dtwotens(s_0,\widehat{s}_{\indm})\leq\JKLtens_{\propJKL}(s_0,\widehat{s}_{\indm})$. Thus,
 we obtain
\begin{align*}
(1-\paramepsipen) \JKLtens_{\propJKL}(s_0,\widehat{s}_{\indm}) - \Prec(\klbarsindm)
& \leq \inf_{s_{\indm} \in \model_\indm}
\KLtens(s_0,s_\indm) 
+\frac{\ConstHellKL_{\propJKL}\paramepsipen y_{\indm}^2}{\constlemmathree}
 + \frac{\rhoapp}{n} + \frac{\paramrhoKL}{n}\\
& \leq  \inf_{s_{\indm} \in \model_\indm}
\KLtens(s_0,s_\indm) 
+ \frac{\ConstHellKL_{\propJKL}\paramepsipen \constcoreepsipen^2}{\constlemmathree}
  \left( \sigma_m^2 + \frac{x}{n}\right)\\
&\qquad\qquad
 + \frac{\rhoapp}{n} + \frac{\paramrhoKL}{n}
\end{align*}

Let
$\constpenmin=\frac{\ConstHellKL_{\propJKL}\paramepsipen\constcoreepsipen^2}{\constlemmathree}$,
we obtain that, with probability smaller than  $e^{-x}$,
 \begin{align*}
 \JKLtens_{\propJKL}(s_0,\widehat{s}_{\widehat{\indm}})
& >
\frac{1}{1-\paramepsipen} \left( \inf_{s_{\indm} \in \model_\indm}
\KLtens(s_0,s_\indm) +
\constpenmin
\sigma_m^2
\right)
+ \frac{\rhoapp}{n} + \frac{\paramrhoKL}{n}
\\
&\qquad
+\frac{1}{1-\paramepsipen}
 \Prec(\klbarsindm)
+
\frac{\constpenmin}{1-\paramepsipen}
\frac{x}{n}
\end{align*}
which can be rewritten as, with probability smaller than $e^{-x}$,
\begin{align*}
\JKLtens_{\propJKL}(s_0,\widehat{s}_{\widehat{\indm}})- \left(
\frac{1}{1-\paramepsipen} \left( \inf_{s_{\indm} \in \model_\indm}
\KLtens(s_0,s_\indm) +
\constpenmin
\sigma_m^2
\right)
+ \frac{\rhoapp}{n} + \frac{\paramrhoKL}{n}
\right)&\\
+\frac{1}{1-\paramepsipen}
 \Prec(\klbarsindm) &> \frac{\constpenmin}{1-\paramepsipen}
\frac{x}{n}
\end{align*}
For any non negative random variable $Z$ and any $a>0$, $\E\left[Z\right]= a \int_{z\geq 0}
\Prob\{Z> a z\} dz$ so
\begin{align*}
 \E\left[  \JKLtens_{\propJKL}(s_0,\widehat{s}_{\widehat{\indm}})
- \left(
\frac{1}{1-\paramepsipen} \left( \inf_{s_{\indm} \in \model_\indm}
\KLtens(s_0,s_\indm) +
\constpenmin
\sigma_m^2
\right)
+ \frac{\rhoapp}{n} + \frac{\paramrhoKL}{n} \right)\right]\\
+ \E\left[\frac{1}{1-\paramepsipen}
 \Prec(\klbarsindm)\right] &\leq \frac{1}{1-\paramepsipen}\constpenmin
\frac{1}{n}
\end{align*}
As by construction 
$\Prec(\klbarsindm)$ is integrable and
$\E\left[\Prec(\klbarsindm)\right]=0$), we derive
\begin{align*}
\E\left[\JKLtens_{\propJKL}(s_0,\widehat{s}_{\widehat{\indm}})\right]
& \leq \frac{1}{1-\paramepsipen} \left( 
\inf_{s_{\indm} \in \model_\indm} \KLtens(s_0,s_\indm) 
+ \constpenmin \sigma_\indm^2
\right) +
\frac{\constpenmin}{1-\paramepsipen}
\frac{1}{n} + \frac{\rhoapp}{n} + \frac{\paramrhoKL}{n}.
\end{align*}
As $\paramrhoKL$ can be chosen arbitrary small this implies
\begin{align*}
\E\left[\JKLtens_{\propJKL}(s_0,\widehat{s}_{\widehat{\indm}})\right]
& \leq \frac{1}{1-\paramepsipen} \left( 
\inf_{s_{\indm} \in \model_\indm} \KLtens(s_0,s_\indm) + \constpenmin \sigma_\indm^2
\right) +
\frac{\constpenmin}{1-\paramepsipen}
\frac{1}{n} + \frac{\rhoapp+\rhomin}{n}
\end{align*}
and thus 
$\constoraone= \frac{1}{1-\paramepsipen}$
and $\constoratwo= \frac{\constpenmin}{1-\paramepsipen}$.
\end{proof}

\ifthenelse{\boolean{extended}}{\section{Proofs for \cref{sec:model-select-penal} (\nameref{sec:model-select-penal})}
}{}

\subsection{Proof of \cref{theo:select}}


\begin{proof}[Proof of \cref{theo:select}]
For any model $\model_\indm$, 
we let
$\widebar{s}_\indm$ be a function such that 
\begin{align*}
\KLtens(s_0,\widebar{s}_{\indm}) &\leq 
  \inf_{s_\indm\in\model_\indm} \KLtens(s_0,s_\indm) +\frac{\paramrhoKL}{n}.
\end{align*}

Let $\indm\in\indmset$ such that $\KLtens(s,\widebar{s}_m)<+\infty$ and
let 
\begin{align*}\indmset'= \left\{ \indm' \in \indmset \middle| \Pemp(-\Log \widehat{s}_{m'}) +
\frac{\pen(\indm')}{n} \leq \Pemp(-\Log \widehat{s}_{m}) +
\frac{\pen(\indm)}{n} + \frac{\rhomin}{n}\right\}.
\end{align*}

For every $\indm'\in\indmset'$,
\begin{align*}
  \Pemp(\klhatsindmp) + \frac{\pen(\indm')}{n} \leq
  \Pemp(\klhatsindm) + \frac{\pen(\indm)}{n} + \frac{\rhomin}{n}
\leq \Pemp(\klbarsindm) + \frac{\pen(\indm)}{n} + \frac{\rhoapp+\rhomin}{n}
\end{align*}
Since, by concavity of the logarithm, 
$\jklhatsindmp \le
\klhatsindmp$,
\begin{align*}
  \Pemp(\jklhatsindmp) + \frac{\pen(\indm')}{n} \leq \Pemp(\klbarsindm) + \frac{\pen(\indm)}{n}  +\frac{\rhoapp+\rhomin}{n}
\end{align*}
and thus
\begin{align*}
  \Ptens(\jklhatsindmp) - \Prec(\klbarsindm)
& \leq \Ptens(\klbarsindm) + \frac{\pen(\indm)}{n} -
 \Prec(\jklhatsindmp) - \frac{\pen(\indm')}{n} + \frac{\rhoapp+\rhomin}{n}\\
\intertext{using the definition of $\jklhatsindmp$ and of
 $\klbarsindm$, we deduce}
\JKLtens_{\propJKL}(s_0,\widehat{s}_{\indm'}) - \Prec(\klbarsindm)
& \leq  \inf_{s_{\indm} \in \model_\indm} \KLtens(s_0,s_\indm) + \frac{\pen(\indm)}{n} -
 \Prec(\jklhatsindmp) - \frac{\pen(\indm')}{n}\\
&\qquad\qquad
 +\frac{\rhoapp+\rhomin}{n} + \frac{\paramrhoKL}{n}
\end{align*}

Combining again \cref{lemma:controlcond} and \cref{lemma:core}, we deduce that,
except on a set of probability   less than
$e^{-x_{\indm'}-x}$, for any $y_{\indm'}> \sigma_{\indm'}$,
\begin{align*}
\frac{-\Prec(\jklhatsindmp)}
{y_{\indm'}^2 + \constlemmathree \dtwotens(s_0,\widehat{s}_{\indm'})}
\leq 
\frac{\constlemmaone \sigma_{\indm'}}{y_{\indm'}} +
  \constlemmatwo \sqrt{\frac{x_{\indm'}+x}{n y_{\indm'}^2}} + \frac{18}{\propJKL} \frac{x_{\indm'}+x}{n y_{\indm'}^2}.
\end{align*}
Choosing this time $y_{\indm'} = \constcore \sqrt{\sigma_{\indm'}^2 +
  \frac{x_{\indm'}+x}{n}}$ with $\constcore>1$ to be fixed later, we
deduce that, except on a set of probability less than
$e^{-x_{\indm'}-x}$, \begin{align*}
\frac{-\Prec(\jklhatsindmp)}
{y_{\indm'}^2 + \constlemmathree \dtwotens(s_0,\widehat{s}_{\indm'})}
\leq \frac{\constlemmaone+ \constlemmatwo}{\constcore} + \frac{18}{\constcore^2\propJKL}
\end{align*}
Using the Kraft condition of Assumption (K), we deduce that if we make this
choice of $y_{\indm'}$ for all models $\indm'$, this properties hold simultaneously for all
$\indm'\in\indmset$
except on a set of probability less than $\Sigma e^{-x}$.

Thus, except on the same set, simultaneously for all
$\indm\in\indmset'$,
\begin{align*}
\JKLtens_{\propJKL}(s_0,\widehat{s}_{\indm'}) - \Prec(\klbarsindm)
& \leq \inf_{s_{\indm} \in \model_\indm}
\KLtens(s_0,s_\indm) + \frac{\pen(\indm)}{n}\\
&\qquad\qquad
+\left( 
\frac{\constlemmaone+ \constlemmatwo}{\constcore} +
\frac{18}{\constcore^2\propJKL}
\right)
\left( y_{\indm'}^2 + \constlemmathree \dtwotens(s_0,\widehat{s}_{\indm'})\right)
- \frac{\pen(\indm')}{n}\\
&\qquad\qquad\qquad +\frac{\rhoapp+\rhomin}{n} + \frac{\paramrhoKL}{n}.   
\end{align*}
Let $\paramepsipen>0$,
we define $\constcoreepsipen$ by $
\left( \frac{\constlemmaone+\constlemmatwo}{\constcoreepsipen}  + \frac{18}{\constcoreepsipen^2\propJKL}
\right)\constlemmathree
=\ConstHellKL_{\propJKL}\, \paramepsipen$ 
with 
$\ConstHellKL_{\propJKL}$ defined in \cref{prop:hellingerkull2}
and, as $\widehat{s}_{\indm'}$ is a conditional density
$\ConstHellKL_{\propJKL}\, \dtwotens(s_0,\widehat{s}_{\indm'})\leq\JKLtens_{\propJKL}(s_0,\widehat{s}_{\indm'})$,
we obtain
\begin{align*}
(1-\paramepsipen) \JKLtens_{\propJKL}(s_0,\widehat{s}_{\indm'}) - \Prec(\klbarsindm)
& \leq \inf_{s_{\indm} \in \model_\indm}
\KLtens(s_0,s_\indm) + \frac{\pen(\indm)}{n}\\
&\qquad
+\frac{\ConstHellKL_{\propJKL}\paramepsipen y_{\indm'}^2}{\constlemmathree}
- \frac{\pen(\indm')}{n} + \frac{\rhoapp+\rhomin}{n} + \frac{\paramrhoKL}{n}. 
\end{align*}
We should now study $\frac{\ConstHellKL_{\propJKL}\paramepsipen y_{\indm'}^2}{\constlemmathree}-\frac{\pen(\indm')}{n}$:
\begin{align*}
  \frac{\ConstHellKL_{\propJKL}\paramepsipen y_{\indm'}^2}{\constlemmathree}-\frac{\pen(\indm')}{n} & = \frac{\ConstHellKL_{\propJKL}\paramepsipen \constcoreepsipen^2}{\constlemmathree}
  \left( \sigma_m^2 + \frac{x_m+x}{n}\right) - \frac{\pen(\indm')}{n}\\
\intertext{and by construction if we let $\constpenmin = \frac{\ConstHellKL_{\propJKL}\paramepsipen \constcoreepsipen^2}{\constlemmathree}$}
 \frac{\ConstHellKL_{\propJKL}\paramepsipen y_{\indm'}^2}{\constlemmathree}-\frac{\pen(\indm')}{n} & \leq \constpenmin
 \frac{x}{n} - (1-\frac{\constpenmin}{\kappa}) \frac{\pen(\indm')}{n}.
\end{align*}
We deduce thus, except on a set of probability smaller than $\Sigma
e^{-x}$, simultaneously for any $\indm'\in\indmset'$
\begin{align*}
(1-\paramepsipen) \JKLtens_{\propJKL}(s_0,\widehat{s}_{\indm'}) 
&+ (1-\frac{\constpenmin}{\kappa}) \frac{\pen(\indm')}{n}
- \Prec(\klbarsindm)\\
& \leq  \inf_{s_{\indm} \in \model_\indm}
\KLtens(s_0,s_\indm) + \frac{\pen(\indm)}{n} + \constpenmin \frac{x}{n}  + \frac{\rhoapp+\rhomin}{n} + \frac{\paramrhoKL}{n}
\end{align*}

As $\Prec(\klbarsindm)$ is integrable (and of mean $0$), we
derive that $M=\sup_{\indm'\in\indmset'} \frac{\pen(\indm')}{n}$ is almost surely
finite, so that as $\kappa \frac{x_{\indm'}}{n} \leq M$ for every
$\indm' \in \indmset'$, one has
\begin{align*}
  \Sigma \geq \sum_{\indm'\in\indmset'} e^{-x_{\indm'}} \geq |\indmset'|
  e^{-\frac{Mn}{\kappa}}
\end{align*}
and thus $\indmset'$ is almost surely finite. This implies that the minimizer $\hat{m}$ of $\Pemp\left(-\Log(\widehat{s}_\indm)\right)
+ \frac{\pen(\indm)}{n}$ exists.

For this minimizer, one has with probability greater than $1-\Sigma e^{-x}$,
\begin{align*}
(1-\paramepsipen)
\JKLtens_{\propJKL}(s_0,\widehat{s}_{\widehat{\indm}}) &
+ (1-\frac{\constpenmin}{\kappa}) \frac{\pen(\widehat{\indm})}{n} 
- \Prec(\klbarsindm)\\
& \leq  \inf_{s_{\indm} \in \model_\indm}
\KLtens(s_0,s_\indm) + \frac{\pen(\indm)}{n} + \constpenmin \frac{x}{n}  + \frac{\rhoapp+\rhomin}{n} + \frac{\paramrhoKL}{n}  
\end{align*}
which yields by the same integration technique that in the proof of
the previous theorem
\begin{align*}
\E\left[\JKLtens_{\propJKL}(s_0,\widehat{s}_{\widehat{\indm}})
+ \frac{1-\frac{\constpenmin}{\kappa}}{1-\paramepsipen} \frac{\pen(\widehat{\indm})}{n} 
\right]
& \leq \frac{1}{1-\paramepsipen} \left( 
\inf_{s_{\indm} \in \model_\indm} \KLtens(s_0,s_\indm) + \frac{\pen(\indm)}{n}
\right) 
\\
& \qquad +
\frac{\constpenmin}{1-\paramepsipen}
\frac{\Sigma}{n} 
+ \frac{\rhoapp+\rhomin}{n} + \frac{\paramrhoKL}{n}.
\end{align*}
As $\paramrhoKL$ can be chosen arbitrary small this implies
\begin{align*}
\E\left[\JKLtens_{\propJKL}(s_0,\widehat{s}_{\widehat{\indm}})+ \frac{1-\frac{\constpenmin}{\kappa}}{1-\paramepsipen} \frac{\pen(\widehat{\indm})}{n} \right]
& \leq \frac{1}{1-\paramepsipen} \left( 
\inf_{s_{\indm} \in \model_\indm} \KLtens(s_0,s_\indm) + \frac{\pen(\indm)}{n}
\right)\\
&\qquad +
\frac{\constpenmin}{1-\paramepsipen}
\frac{\Sigma}{n} + \frac{\rhoapp+\rhomin}{n}
\end{align*}
which is sligthly stronger than the result stated in the theorem with
$\constoraone= \frac{1}{1-\paramepsipen}$
and $\constoratwo= \frac{\constpenmin}{1-\paramepsipen}$ as the
penalty of the select model appears in the right-hand side with a
positive weight.
\end{proof}

\subsection{Proof of \cref{lemma:controlcond}}

\begin{proof}[Proof of  \cref{lemma:controlcond}]
Let $A=\{ Z > \Psi(x) \}$. Either $P\{ A \} = 0 \leq e^{-x}$
or
\begin{align*}
  \E^A [ Z ] \leq \Psi\left(\Log\left(\frac{1}{\Prob\{A\}} \right)\right).
\end{align*}
Now in the later case,
\begin{align*}
  \E^A [ Z ] = \frac{\E \left[ Z \Charac{Z > \Psi(x)}
    \right]}{\Prob\{ Z > \Psi(x)\}}
\geq \Psi(x).
\end{align*}
Hence $\Psi(x) \leq \Psi\left(\Log\left(\frac{1}{\Prob\{A\}}
  \right)\right)$ which implies $x \leq \Log\left(\frac{1}{\Prob\{A\}}
\right)$ as $\Psi$ is not decreasing. This last inequality yields
$\Prob\{A\} \leq e^{-x}$ which concludes the proof.
\end{proof}

\subsection{Proof of \cref{lemma:core}}
We should now prove \cref{lemma:core} which contains most of the
differences with \textcite{massart07:_concen}'s proof.


\begin{proof}[Proof of \cref{lemma:core}]
In this lemma, we want to control the deviation of
\begin{align*}
  \Prec( - \jklhatsindm) = \Prec \left( \frac{1}{\propJKL}
\Log \left(
\frac{(1-\propJKL)s_0 + \propJKL \widehat{s}_\indm}{s_0}
 \right)   
\right).
\end{align*}
Note that for any $\widetilde{s}_{\indm}$ to be fixed later, if we let
\begin{alignI}
  \jkltildes= -\frac{1}{\propJKL}
\Log \left(
\frac{(1-\propJKL)s_0 + \propJKL \widetilde{s}_{\indm}}{s_0}
 \right),
\end{alignI} then $-\jklhatsindm=-\jkltildes+(-\jklhatsindm+\jkltildes)$ with
\begin{align*}
 - \jklhatsindm + \jkltildes =  \frac{1}{\propJKL}
\Log \left(
\frac{(1-\propJKL)s_0 + \propJKL \widehat{s}_{\indm}}{(1-\propJKL)s_0 + \propJKL \widetilde{s}_{\indm}}
 \right)
\end{align*}

To control the behavior of these quantities, we use the following key
properties of Jensen-Kullback-Leibler related quantities (a rewriting of Lemma 7.26 of
\textcite{massart07:_concen})
\begin{lemma}
  Let $P$ be a probability measure with density $s_0$ with respect to
  a measure $\meas$ and $s,t$ be some non-negative and $\meas$
  integrable functions, then 
 one has for every integer $k\geq 2$
  \begin{align*}
    P\left( \left|\Log \left(
\frac{s_0+s}{s_0+t} \right)\right|^k \right) \leq \frac{k!}{2} \left(
\frac{9 \|\sqrt{s}-\sqrt{t}\|_{\meas,2}^2}{8}
\right) 2^{k-2} 
  \end{align*}
where $\|\cdot\|_{\meas,2}$ is the $\meas$-$L^2$ norm so that
$\|\sqrt{s}-\sqrt{t}\|_{\meas,2}^2$ is nothing but the extended
Hellinger distance.
\end{lemma}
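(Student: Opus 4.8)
The plan is to derive the bound from an exact algebraic identity for the symmetrised square root of the ratio, turn it into an exponential‑moment control by integrating against $s_0\,\ud\meas$, and finally read off the Bernstein‑type moments by a Taylor expansion; only the last step, where the precise constants $9/8$ and $2$ are produced, needs care.

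First I would restrict attention to $\{s_0>0\}$ (which carries all of $P$), write $g=\Log\frac{s_0+s}{s_0+t}$, $a=s_0+s$, $b=s_0+t$, so that $e^{g/2}=\sqrt{a/b}$ and $e^{-g/2}=\sqrt{b/a}$, and use the pointwise identity
\begin{align*}
  e^{g/2}+e^{-g/2}-2 \;=\; \frac{a+b-2\sqrt{ab}}{\sqrt{ab}} \;=\; \frac{\bigl(\sqrt{a}-\sqrt{b}\bigr)^2}{\sqrt{ab}}.
\end{align*}
Two further pointwise facts are needed: $\sqrt{ab}=\sqrt{(s_0+s)(s_0+t)}\ge s_0$ since $s,t\ge 0$, and $\bigl(\sqrt{s_0+s}-\sqrt{s_0+t}\bigr)^2\le(\sqrt{s}-\sqrt{t})^2$, which follows, assuming say $s\ge t$, from
\begin{align*}
  \sqrt{s_0+s}-\sqrt{s_0+t} \;=\; \frac{s-t}{\sqrt{s_0+s}+\sqrt{s_0+t}} \;\le\; \frac{s-t}{\sqrt{s}+\sqrt{t}} \;=\; \sqrt{s}-\sqrt{t}.
\end{align*}
Multiplying the identity by $s_0$ and integrating then gives
\begin{align*}
  P\bigl(e^{g/2}+e^{-g/2}-2\bigr) \;=\; \int \frac{\bigl(\sqrt{s_0+s}-\sqrt{s_0+t}\bigr)^2}{\sqrt{(s_0+s)(s_0+t)}}\,s_0\,\ud\meas \;\le\; \int (\sqrt{s}-\sqrt{t})^2\,\ud\meas \;=\; \|\sqrt{s}-\sqrt{t}\|_{\meas,2}^2,
\end{align*}
equivalently $P\bigl(\cosh(g/2)-1\bigr)\le \tfrac12\|\sqrt{s}-\sqrt{t}\|_{\meas,2}^2$.

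For the moments I would split $g=g_+-g_-$ with $g_\pm\ge 0$ of disjoint support, note the pointwise inequality $e^{|g|/2}-1-\tfrac{|g|}{2}\le e^{g/2}+e^{-g/2}-2$ (on $\{g\ge0\}$ the left side is $e^{g/2}-1-g/2$ and the extra term $e^{-g/2}-1+g/2$ on the right is nonnegative, and symmetrically on $\{g\le0\}$), and deduce $P\bigl(e^{|g|/2}-1-\tfrac{|g|}{2}\bigr)\le \tfrac12\|\sqrt{s}-\sqrt{t}\|_{\meas,2}^2$. Keeping in $e^{x}-1-x=\sum_{j\ge 2}x^j/j!$ only the term $j=k$ (legitimate for $k\ge2$), with $x=|g|/2$, already yields the crude bound $P(|g|^k)\le 2^{k-1}k!\,\|\sqrt{s}-\sqrt{t}\|_{\meas,2}^2$, a Bernstein moment bound of exactly the required shape $\tfrac{k!}{2}\,C\,\|\sqrt{s}-\sqrt{t}\|_{\meas,2}^2\,c^{k-2}$. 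To push the constants down to $C=9/8$, $c=2$, one sharpens the last step: on each of $\{g\ge0\}$ and $\{g\le0\}$ retain the quadratic term of the expansion — controlled by the variance bound $P(g^2)\le 4\|\sqrt{s}-\sqrt{t}\|_{\meas,2}^2$ obtained from $\cosh(g/2)-1\ge g^2/8$ — together with the $k$‑th term, split the expectation of $|g|^k$ according to whether $|g|\le 2$ or $|g|>2$, bound $|g|^k\le 2^{k-2}g^2$ on the first set and use the exponential control on the second, and optimise the resulting elementary one‑variable estimates; this is the bookkeeping behind Lemma~7.26 of \textcite{massart07:_concen}.

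The only genuinely delicate point is that last constant‑tracking: the algebraic identity and the two pointwise comparisons are immediate, and any Bernstein moment bound with fixed absolute constants $C,c$ follows at once from the displayed exponential‑moment inequality, so the substance of the argument is entirely in the reduction above. Since the constants produced here feed only into the absolute constants $\constlemmaone,\constlemmatwo,\constlemmathree$ of \cref{lemma:core}, any fixed admissible choice would suffice for the sequel, so if the sharp case analysis turns out to be cumbersome one may simply carry the looser constant through.
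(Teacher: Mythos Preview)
Your approach is correct and is exactly the route the paper takes: the lemma is presented as a rewriting of Lemma~7.26 of \textcite{massart07:_concen} and not proved independently, and Massart's argument rests on the same $\cosh$ identity together with the two pointwise comparisons $\sqrt{(s_0+s)(s_0+t)}\ge s_0$ and $(\sqrt{s_0+s}-\sqrt{s_0+t})^2\le(\sqrt s-\sqrt t)^2$ that you use. One small slip: from the pointwise inequality $e^{|g|/2}-1-\tfrac{|g|}{2}\le e^{g/2}+e^{-g/2}-2$ you obtain $P\bigl(e^{|g|/2}-1-\tfrac{|g|}{2}\bigr)\le\|\sqrt s-\sqrt t\|_{\meas,2}^2$, not $\tfrac12\|\sqrt s-\sqrt t\|_{\meas,2}^2$, since it is $P\bigl(e^{g/2}+e^{-g/2}-2\bigr)$, and not $P\bigl(\cosh(g/2)-1\bigr)$, that your integrated identity bounds by $\|\sqrt s-\sqrt t\|_{\meas,2}^2$. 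Your sketch for squeezing out the exact constant $9/8$ is a little loose as written (the split at $|g|=2$ together with $P(g^2)\le 4\|\sqrt s-\sqrt t\|_{\meas,2}^2$ does not by itself reach $9/8$), but you are right that this is precisely the bookkeeping handled in Massart's Lemma~7.26, and that any fixed absolute constants would propagate harmlessly through the sequel.
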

In this lemma, $P\left( g \right)$ stands for $\int g s_0 \ud\meas$
i.e. the expectation with respect to the probability $s_0\ud\meas$.
In our context this implies, conditioning first by $(X_i)_{1\leq  i\leq n}$, applying the previous inequality for each $(s_0(\cdot|X_i),s(\cdot|X_i),t(\cdot|X_i))$ and then taking the expectation, that
\begin{align*}
  \Ptens\left(\left| \frac{1}{\propJKL} \Log \left(
\frac{s_0+ \frac{\propJKL}{1-\propJKL} s}{s_0+ \frac{\propJKL}{1-\propJKL} t} \right)\right|^k \right)
\leq  \frac{k!}{2} \left(
\frac{9  \dtwotens(s,t)}{8 \propJKL(1-\propJKL)}
\right) \left( \frac{2}{\propJKL} \right) ^{k-2}.
\end{align*}
We now use
\begin{theorem}
Assume $f$ is a function such that
\begin{align*}
  &\Ptens\left(|f|^2\right) \leq V\\
\forall k \geq 3,\quad &\Ptens\left((f)_+^k\right) \leq \frac{k!}{2} V b^{k-2}.
\end{align*}

Then for all $A$ such that $\Prob\{A\}>0$
\begin{align*}
  \E^A(\Prec(f)) \leq \frac{\sqrt{2V}}{\sqrt{n}}
  \sqrt{\Log\left(\frac{1}{\Prob\{A\}}\right)}
 + \frac{b}{n}   \Log\left(\frac{1}{\Prob\{A\}}\right).
\end{align*}
\end{theorem}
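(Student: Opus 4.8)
The plan is to run the classical Bernstein/Bennett argument for the recentred empirical process $\Prec(f)=\Pemp(f)-\Ptens(f)=\frac1n\sum_{i=1}^n\bigl(f(X_i,Y_i)-\E[f(X_i',Y_i')]\bigr)$ and then convert the resulting sub-gamma Laplace bound into the announced control on conditional expectations. First I would bound the log-Laplace transform of $S=n\Prec(f)=\sum_i(f(Z_i)-\E f(Z_i))$, where $Z_i=(X_i,Y_i)$. Fix $0<u<1/b$; since the $Z_i$ are independent, $\Log\E[e^{uS}]=\sum_i\Log\E[e^{u(f(Z_i)-\E f(Z_i))}]$, and on each factor the elementary inequality $\Log x\le x-1$ gives $\Log\E[e^{u(f(Z_i)-\E f(Z_i))}]\le \E[e^{uf(Z_i)}-1-uf(Z_i)]$ (all expectations finite: the hypotheses on $\Ptens$ give per-$i$ bounds $\E[f(Z_i)^2]\le nV$ and $\E[(f(Z_i))_+^k]\le n\tfrac{k!}2Vb^{k-2}$).

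Next I would expand $e^{y}-1-y=\sum_{k\ge2}y^k/k!$ and split according to the sign of $f(Z_i)$: on $\{f(Z_i)\le0\}$ one uses $e^{y}-1-y\le y^2/2$, so in all cases $e^{uf(Z_i)}-1-uf(Z_i)\le \tfrac{u^2}{2}f(Z_i)^2+\sum_{k\ge3}\tfrac{u^k}{k!}(f(Z_i))_+^k$. Summing over $i$ and using $\tfrac1n\sum_i\E[f(Z_i)^2]=\Ptens(|f|^2)\le V$ and $\tfrac1n\sum_i\E[(f(Z_i))_+^k]=\Ptens((f)_+^k)\le\tfrac{k!}2Vb^{k-2}$ yields $\sum_i\E[e^{uf(Z_i)}-1-uf(Z_i)]\le \tfrac{nVu^2}{2}+\tfrac{nV}{2}\sum_{k\ge3}u^kb^{k-2}$, and the geometric series sums to $\tfrac{nVu^2}{2(1-ub)}$ for $ub<1$. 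Returning to $u=\lambda/n$ this gives $\Log\E[e^{\lambda\Prec(f)}]\le \dfrac{(V/n)\lambda^2}{2\,(1-(b/n)\lambda)}$ for every $0<\lambda<n/b$.

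The conversion to conditional expectations is then standard. For any event $A$ with $\Prob\{A\}>0$, Jensen's inequality gives $\lambda\,\E^A[\Prec(f)]\le \Log\E^A[e^{\lambda\Prec(f)}]$, while $\E^A[e^{\lambda\Prec(f)}]=\E[e^{\lambda\Prec(f)}\mathbf{1}_A]/\Prob\{A\}\le \E[e^{\lambda\Prec(f)}]/\Prob\{A\}$; combining with the Laplace bound, $\E^A[\Prec(f)]\le \dfrac{(V/n)\lambda}{2(1-(b/n)\lambda)}+\dfrac1\lambda\Log\dfrac1{\Prob\{A\}}$ for every admissible $\lambda$. Writing $L=\Log(1/\Prob\{A\})$ (the case $L=0$ being trivial, as then $\E^A[\Prec(f)]=\E[\Prec(f)]=0$), I would finish with the explicit choice $\lambda=\bigl(b/n+\sqrt{V/(2nL)}\bigr)^{-1}$, which is $<n/b$ hence admissible and makes both contributions equal to $\tfrac12\sqrt{2VL/n}$ apart from a residual $bL/n$, giving exactly $\E^A[\Prec(f)]\le \sqrt{2VL/n}+bL/n=\tfrac{\sqrt{2V}}{\sqrt n}\sqrt{\Log(1/\Prob\{A\})}+\tfrac bn\Log(1/\Prob\{A\})$.

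There is no serious obstacle here — it is the standard Bernstein machinery (cf.\ \textcite{massart07:_concen}); the points needing care are the non-i.i.d.\ design (all moment conditions must be read on the averaged measure $\Ptens$, and independence is invoked only for the product over $i$), the sign-splitting that lets the $(f)_+^k$ hypotheses rather than $|f|^k$ suffice, and verifying that the explicit $\lambda$ above saturates the Bernstein trade-off so that the stated constants come out with no slack.
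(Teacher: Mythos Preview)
Your argument is correct: this is the standard Bernstein/Bennett Laplace-transform computation followed by the Jensen-plus-optimization trick to pass to conditional expectations, and your explicit choice of $\lambda$ does yield the stated constants with no slack. Note, however, that the paper does not actually supply a proof of this statement --- it is quoted as a ready-made tool inside the proof of \cref{lemma:core} (it is the conditional-expectation form of Bernstein's inequality, in the spirit of \textcite{massart07:_concen}) --- so there is no ``paper's own proof'' to compare against; your derivation is exactly the argument one would expect behind such a citation.
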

These bounds are sufficient to obtain a Bernstein type control
for $\jkltildes$
\begin{align*}
\E^A \left[
-\Prec(\jkltildes)
\right]
\leq
\frac{3}{2 \sqrt{\propJKL(1-\propJKL)}}
\frac{\sqrt{\dtwotens(s_0,\widetilde{s}_{\indm})}}{\sqrt{n}}
\sqrt{\Log\left(\frac{1}{\Prob\{A\}}\right)} + \frac{2}{n\propJKL}
\Log\left(\frac{1}{\Prob\{A\}}\right).
\end{align*}

To cope with the randomness of $\widehat{s}_{\indm}$, we rely on the
following much
more involved theorem (a rewriting of Theorem 6.8 of \textcite{massart07:_concen})
\begin{theorem}\label{theo:deviation}
Let $\Set$ be a countable class of real valued and measurable
functions. Assume that there exist some positive numbers
$V$ and $b$ such that for all $f \in
\Set$ and all integers $k\geq 2$
\begin{align*}
\Ptens(|f|^k) \leq \frac{k!}{2} V b^{k-2}
\end{align*}
Assume furthermore that for any positive number $\delta$, there exists a finite
set $B(\delta)$ of brackets covering $\Set$ 
such that for any bracket $[g^- , g^+ ] \in B(\delta)$ and all integer
$k\geq2$
\begin{align*}
\Ptens(|g^+-g^-|^k) \leq \frac{k!}{2} \delta^2 b^{k-2}
\end{align*}
Let $e^{H(\delta)}$ denote
the minimal cardinality of such a covering.
There exists an absolute constant $\kappalemma$ such that, for any
$\epsilon \in (0, 1]$ and any
measurable set $A$ with $\Prob\{A\} > 0$,
\begin{align*}
&&E^A\left[
\sup_{f\in\Set} \Prec(f)
\right]&\leq 
E + \frac{(1+6\epsilon)\sqrt{2V}}{\sqrt{n}}
\sqrt{\Log\left(\frac{1}{\Prob\{A\}}\right)}
+ \frac{2b}{n} \Log\left(\frac{1}{\Prob\{A\}}\right)\\
\text{where}&&
&E = \frac{\kappalemma}{\epsilon} \frac{1}{\sqrt{n}} \int_0^{\epsilon\sqrt{V}}
\sqrt{H(\delta) \wedge n} \ud \delta + \frac{2(b+\sqrt{V})}{n} H(\sqrt{V}).
\end{align*}
 Furthermore $\kappalemma \leq 27$.
\end{theorem}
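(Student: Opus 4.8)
The plan is to deduce \cref{theo:deviation} from Theorem~6.8 of \textcite{massart07:_concen} by checking that its proof, originally written for an i.i.d.\ sample, carries over essentially verbatim to our independent but non-identically distributed pairs $(X_i,Y_i)$, with $\Ptens$ and the centred empirical process $\Prec$ in the roles of $P$ and $P_n-P$. First I would recall the two ingredients of Massart's argument. At the level of a single function, the classical Bernstein inequality for a sum of independent centred variables: if $\xi_1,\dots,\xi_n$ are independent, $\E[\xi_i]=0$, and $\sum_{i=1}^n \E\left[|\xi_i|^k\right]\leq \frac{k!}{2}v w^{k-2}$ for every integer $k\geq2$, then $\Prob\{\sum_i\xi_i\geq \sqrt{2vt}+wt\}\leq e^{-t}$, which, rephrased through the $\E^A$ device of \cref{lemma:controlcond} and rescaled by $1/n$, gives $\E^A\left[\frac1n\sum_i\xi_i\right]\leq \frac{\sqrt{2v}}{n}\sqrt{\Log(1/\Prob\{A\})}+\frac{w}{n}\Log(1/\Prob\{A\})$. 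Applying this with $\xi_i=f(X_i,Y_i)-\E[f(X_i,Y_i)]$ and $v=nV$, $w=b$ is precisely what the hypothesis $\Ptens(|f|^k)\leq \frac{k!}{2}Vb^{k-2}$ provides, since $\Ptens(|f|^k)=\frac1n\sum_i\E[|f(X_i,Y_i)|^k]$ by definition, up to the standard adjustment of the Bernstein constants under centring (this is where the factors $2b$ and $2(b+\sqrt V)$, rather than $b$ and $b+\sqrt V$, come from). At the level of the supremum, the chaining-plus-peeling scheme over the bracket coverings $B(\delta)$: one bounds $\sup_{f\in\Set}\Prec(f)$ by a telescoping sum of differences of bracket endpoints along a geometric sequence of scales, controls each difference with the bracket moment condition $\Ptens(|g^+-g^-|^k)\leq \frac{k!}{2}\delta^2 b^{k-2}$ and a union bound over the $e^{H(\delta)}$ brackets, and optimises the truncation level at $n$, which yields the Dudley-type integral $\frac{\kappalemma}{\epsilon}\frac{1}{\sqrt n}\int_0^{\epsilon\sqrt V}\sqrt{H(\delta)\wedge n}\,\ud\delta$ and the residual term $\frac{2(b+\sqrt V)}{n}H(\sqrt V)$.

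Then I would observe that neither ingredient uses identical distribution. The Bernstein inequality quoted above is stated for independent, not i.i.d., summands, and the only place the structure $\frac1n\sum_i$ enters is linearity of expectation, which is exactly how $\Ptens$ is built; the chaining, the union bounds, and the $\E^A$ manipulations are distribution-free. Consequently the explicit form of the bound and the value $\kappalemma\leq 27$ are inherited unchanged from \textcite{massart07:_concen}, with $P$ read everywhere as $\Ptens$, $P_n$ as $\Pemp$, and $P_n-P$ as $\Prec$. In particular no hypothesis on $n$, on the law of the $X_i$, or on uniform boundedness of the functions in $\Set$ is required.

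The main point requiring care --- more a matter of bookkeeping than a genuine obstacle --- is to verify that the averaged moment bound $\Ptens(|f|^k)\leq\frac{k!}{2}Vb^{k-2}$ on $f$ itself (not on $f-\Ptens f$) still produces, after passing to the centred variables $\xi_i$, the precise numerical factors $(1+6\epsilon)\sqrt{2V}/\sqrt n$ and $2b/n$ in the deviation part and the stated constants $\delta^2$, $b$ in the bracket part, so that the constants genuinely do not depend on $n$ or on the design. This is exactly the computation performed inside the proof of Theorem~6.8 of \textcite{massart07:_concen}, which we import; the only modification is, at each step, to interpret every integral $\int\cdot\,dP$ there as the average $\frac1n\sum_{i=1}^n\E[\cdot(X_i',Y_i')]$ of expectations under the laws of the independent copies $(X_i',Y_i')$. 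Once this translation is made, the conclusion of the theorem, including $\kappalemma\leq 27$, follows.
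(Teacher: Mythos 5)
Your proposal matches the paper's treatment: the paper states this result explicitly as ``a rewriting of Theorem~6.8 of \textcite{massart07:_concen}'' and offers no independent proof, just as you do, reading $\Ptens$, $\Pemp$, $\Prec$ in place of Massart's $P$, $P_n$, $P_n-P$. Your observation that the Bernstein inequality and the chaining/peeling machinery require only independence (not identical distribution), so that the constants and the bound carry over unchanged once the moment hypotheses are phrased in terms of the averaged operator $\Ptens$, is the correct --- though left implicit in the paper --- justification for this import.
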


If  we consider
\begin{align*}
\Set_{\indm}(\widetilde{s}_{\indm},\sigma)&=\left\{ 
-\jklsindm +\jkltildes=
\frac{1}{\propJKL}\Log \left(
\frac{s_0+\frac{\propJKL}{1-\propJKL}s_\indm}{s_0+\frac{\propJKL}{1-\propJKL}\widetilde{s}_{\indm}}
\right) \middle| s_{\indm} \in \model_\indm,
\dtwotens(\widetilde{s}_{\indm},s_{\indm})\leq\sigma\right\} \\
& = \left\{\frac{1}{\propJKL}\Log \left(
\frac{s_0+\frac{\propJKL}{1-\propJKL}s_{\indm}}{s_0+\frac{\propJKL}{1-\propJKL}\widetilde{s}_{\indm}}
\right) \middle|
s_{\indm} \in \model_\indm(\widetilde{s}_{\indm},\sigma) \right\}.
\end{align*}
then the first assumption of \cref{theo:deviation} holds
with $V=\left(\frac{3\sigma}{2 \sqrt{2\propJKL(1-\propJKL)}}\right)^2$ and $b=\frac{2}{\propJKL}$.

We are thus focusing on
\begin{align*}
W_{\indm}(\widetilde{s}_{\indm},\sigma) 
&= \sup_{f \in \Set_{\indm}(\widetilde{s}_{\indm},\sigma)} \Prec(f)
= \sup_{ s_\indm \in\model_\indm(\widetilde{s}_{\indm},\sigma)} \Prec\left(
-\jklsindm  +\jkltildes
\right)\\
&= \sup_{ s_\indm \in\model_\indm(\widetilde{s}_{\indm},\sigma)} \Prec\left(
-\jklsindm\right)   + \Prec\left(\jkltildes
\right)
\end{align*}

Now if  $[t^-,t^+]$ is a bracket containing $s$, then
\begin{align*}
 g^-= \frac{1}{\propJKL} \Log \left(
\frac{s_0+ \frac{\propJKL}{1-\propJKL}t^-}{s_0+\frac{\propJKL}{1-\propJKL}\widetilde{s}_{\indm}} \right) \leq \frac{1}{\propJKL}\Log \left(
\frac{s_0+\frac{\propJKL}{1-\propJKL}s}{s_0+\frac{\propJKL}{1-\propJKL}\widetilde{s}_{\indm}} \right) \leq \frac{1}{\propJKL} \Log \left(
\frac{s_0+\frac{\propJKL}{1-\propJKL}t^+}{s_0+\frac{\propJKL}{1-\propJKL}\widetilde{s}_{\indm}}
\right) = g^+
\end{align*}
and
\begin{align*}
 g^+ - g^- = \frac{1}{\propJKL} \Log \left(
\frac{s_0+\frac{\propJKL}{1-\propJKL}t^+}{s_0+\frac{\propJKL}{1-\propJKL}\widetilde{s}_{\indm}} \right) -   \frac{1}{\propJKL}\Log \left(
\frac{s_0+\frac{\propJKL}{1-\propJKL}t^-}{s_0+\frac{\propJKL}{1-\propJKL}\widetilde{s}_{\indm}} \right) =    \frac{1}{\propJKL} \Log \left(
\frac{s_0+\frac{\propJKL}{1-\propJKL}t^+}{s_0+\frac{\propJKL}{1-\propJKL}t^-} \right)
\end{align*}
So that
\begin{align*}
\Ptens(|g^+-g^-|^k) \leq \frac{k!}{2} \delta^2 b^{k-2}
\end{align*}
as soon as $\frac{3\dtens(t^-,t^+)}{2\sqrt{2 \propJKL(1-\propJKL)}}
\leq \delta$. This implies that, for any $\delta>0$, 
 one can construct a set of brackets satisfying the second assumption
 of \cref{theo:deviation} from a set of brackets of $\dtens$ width smaller than
$\frac{2\sqrt{2 \propJKL(1-\propJKL)}}{3} \delta$ covering
 $\model_{\indm}(\widetilde{s}_{\indm},\sigma)$. That is
\begin{align*}
H(\delta)\leq H_{[\cdot],\dtens}
\left(
\frac{2\sqrt{2 \propJKL(1-\propJKL)}}{3} \delta
, \model_{\indm}(\widetilde{s}_{\indm},\sigma)
\right).
\end{align*}

\Cref{theo:deviation} can not be used directly with the set
$\Set_{\indm}(\widetilde{s}_{\indm},\sigma)$ as it is not necessarily
countable.
However, Assumption \Sepindm\  implies the existence of a countable family
$\model'_\indm$ such that 
\begin{align*}
\Set_{\indm}'(\widetilde{s}_{\indm},\sigma)&=\left\{ 
-\jklsindm +\jkltildes=
\frac{1}{\propJKL}\Log \left(
\frac{s_0+\frac{\propJKL}{1-\propJKL}s_\indm}{s_0+\frac{\propJKL}{1-\propJKL}\widetilde{s}_{\indm}}
\right) \middle| s_{\indm} \in \model'_\indm,
\dtwotens(\widetilde{s}_{\indm},s_{\indm})\leq\sigma\right\}
\end{align*}
is countable, and thus for which the conclusion of \Cref{theo:deviation} holds,  while $\sup_{\Set_{\indm}'(\widetilde{s}_{\indm},\sigma)} \Prec(f)
=\sup_{\Set_{\indm}(\widetilde{s}_{\indm},\sigma)} \Prec(f) $ with
probability $1$. We deduce thus that
for every measurable set $A$ with $\Prob\{A\}>0$,
\begin{align*}
&&&\E^A\left[
W_m(\widetilde{s}_{\indm},\sigma)
\right]\leq 
E + \frac{(1+6\epsilon)3 \sigma }{2\sqrt{\propJKL(1-\propJKL)} \sqrt{n}}
\sqrt{\Log\left(\frac{1}{\Prob\{A\}}\right)}
+ \frac{4}{\propJKL n} \Log\left(\frac{1}{\Prob\{A\}}\right)\\
\text{where}&&
E &= \frac{\kappalemma}{\epsilon} \frac{1}{\sqrt{n}} \int_0^{\epsilon\frac{3 \sigma }{2\sqrt{2\propJKL(1-\propJKL)}}}
\sqrt{%
  H_{[\cdot],\dtens}\left( \frac{2\sqrt{2\propJKL(1-\propJKL)}}{3}
\delta,\model_\indm(\widetilde{s}_{\indm},\sigma)\right) \wedge n} \ud \delta \\
&&& \qquad
+ \frac{2(\frac{2}{\propJKL}+\frac{3 \sigma
  }{2\sqrt{2\propJKL(1-\propJKL)}})}{n}
H_{[\cdot],\dtens}\left(\frac{2\sqrt{2\propJKL(1-\propJKL)}}{3}
\frac{3 \sigma }{2\sqrt{2\propJKL(1-\propJKL)}}
,\model_\indm(\widetilde{s}_{\indm},\sigma)\right)\\
 &&& = \frac{3 \kappalemma}{2 \epsilon \sqrt{2\propJKL(1-\propJKL)}}
  \frac{1}{\sqrt{n}} \int_0^{\epsilon\sigma}
 \sqrt{
   H_{[\cdot],\dtens}\left(\delta,\model_\indm(\widetilde{s}_{\indm},\sigma)\right) \wedge n} 
\ud \delta\\
&&&\qquad+ \frac{2(\frac{2}{\propJKL}+\frac{3 \sigma
  }{2\sqrt{2\propJKL(1-\propJKL)}})}{n}
H_{[\cdot],\dtens}\left(\sigma
,\model_\indm(\widetilde{s}_{\indm},\sigma)\right)
\end{align*}

Choosing $\epsilon=1$ leads to
\begin{align*}
\E^A\left[
W_m(\widetilde{s}_{\indm},\sigma)
\right]&\leq 
E + \frac{21 \sigma }{2\sqrt{\propJKL(1-\propJKL)} \sqrt{n}}
\sqrt{\Log\left(\frac{1}{\Prob\{A\}}\right)}
+ \frac{4}{\propJKL n} \Log\left(\frac{1}{\Prob\{A\}}\right)
\end{align*}
where
\begin{align*}
E &= \frac{3\kappalemma}{2\sqrt{2\propJKL(1-\propJKL)}} \frac{1}{\sqrt{n}} \int_0^{\sigma}
\sqrt{%
  H_{[\cdot],\dtens}\left(\delta,\model_\indm(\widetilde{s}_\indm,\sigma)\right) \wedge n} \ud \delta 
+ \frac{2(\frac{2}{\propJKL}+\frac{3 \sigma }{2\sqrt{2\propJKL(1-\propJKL)}})}{n} H_{[\cdot],\dtens}\left(\sigma,\model_\indm(\widetilde{s}_\indm,\sigma)\right)
\end{align*}

By Assumption \Hindm, if we assume $\widetilde{s}_{\indm}\in\model_{\indm}$, $\int_0^{\sigma}
\sqrt{
  H_{[\cdot],\dtens}\left(\delta,\model_\indm(\widetilde{s}_\indm,\sigma)\right)
  \wedge n} \ud \delta\leq \phi_\indm(\sigma)$ , 
as well as $\delta\mapsto
H_{[\cdot],\dtens}\left(\delta,\model_\indm(\widetilde{s}_\indm,\sigma)\right)$
is non-increasing. This implies
\begin{align*}
  H_{[\cdot],\dtens}\left(\sigma,\model_\indm(\widetilde{s}_\indm,\sigma)\right)
& \leq \left( \frac{1}{\sigma} \int_0^{\sigma}
\sqrt{H_{[\cdot],\dtens}\left(\delta,\model_\indm(\widetilde{s}_\indm,\sigma)\right)}
\ud \delta \right)^2
\leq \frac{\phi^2_\indm(\sigma)}{\sigma^2}.
\end{align*}
Inserting these bounds in the previous inequality yields
\begin{align*}
  E & \leq \frac{3\kappalemma}{2\sqrt{2\propJKL(1-\propJKL)}}  \frac{\phi_{\indm}(\sigma)}{\sqrt{n}}
   +
\left(\frac{4}{\propJKL}+\frac{3\sigma}{\sqrt{2\propJKL(1-\propJKL)}}\right) 
\frac{\phi^2_\indm(\sigma)}{n\sigma^2} \\
& \leq \left( \frac{3\kappalemma}{2\sqrt{2\propJKL(1-\propJKL)}}  +
\left(\frac{4}{\propJKL}+\frac{3\sigma}{\sqrt{2\propJKL(1-\propJKL)}}\right) 
\frac{\phi_\indm(\sigma)}{\sqrt{n}\sigma^2}
\right)  \frac{\phi_{\indm}(\sigma)}{\sqrt{n}}.
\end{align*}
As $\delta\mapsto\delta^{-1}\phi_\indm(\delta)$ is also
non-increasing, so is
$\delta\mapsto\delta^{-2}\phi_\indm(\delta)$. 
The definition of $\sigma_{\indm}$ can be rewritten as the equation 
$\frac{\phi_\indm(\sigma_{\indm})}{\sqrt{n}\sigma_{\indm}^2}=1$. The
right-hand side of the previous inequality is thus  an $O(\frac{\phi_\indm(\sigma)}{\sqrt{n}})$ as soos
as $\sigma\geq\sigma_\indm$. Indeed under this assumption,
\begin{align*}
  E \leq \left(\frac{3\kappalemma}{2\sqrt{2\propJKL(1-\propJKL)}}+\frac{4}{\propJKL}+\frac{3\sigma}{\sqrt{2\propJKL(1-\propJKL)}}\right) \frac{\phi_\indm(\sigma)}{\sqrt{n}} 
\end{align*}
and
\begin{align*}
\E^A\left[
W_m(\widetilde{s}_{\indm},\sigma)
\right]&\leq 
\left(\frac{3\kappalemma}{2\sqrt{2\propJKL(1-\propJKL)}}+\frac{4}{\propJKL}+\frac{3\sigma}{\sqrt{2\propJKL(1-\propJKL)}}\right) \frac{\phi_\indm(\sigma)}{\sqrt{n}}
 + \frac{21 \sigma }{2\sqrt{\propJKL(1-\propJKL)}\sqrt{n}}
\sqrt{\Log\left(\frac{1}{\Prob\{A\}}\right)}\\
& \qquad
+ \frac{4}{\propJKL n} \Log\left(\frac{1}{\Prob\{A\}}\right)
\end{align*}
Using now $\sigma\leq\sqrt{2}$, we let
$\constlemmaonebis=\left(\frac{3\kappalemma}{2\sqrt{2\propJKL(1-\propJKL)}}+\frac{4}{\propJKL}+\frac{3}{\sqrt{\propJKL(1-\propJKL)}}\right)\leq\left(\frac{81}{2\sqrt{\propJKL(1-\propJKL)}}+\frac{4}{\propJKL}+\frac{3}{\sqrt{\propJKL(1-\propJKL)}}\right)$
as $\kappalemma\leq 27$,
$\constlemmatwobis=\frac{21}{2\sqrt{\propJKL(1-\propJKL)}}$,
so that $\forall \sigma > \sigma_\indm$,
\begin{align*}
\E^A\left[
\sup_{ s_\indm \in\model_\indm(\widetilde{s}_{\indm},\sigma)} \Prec\left(
-\jklsindm  +\jkltildes
\right)
\right]&\leq 
\constlemmaonebis   \frac{\phi_\indm(\sigma)}{\sqrt{n}}
 + \frac{\constlemmatwobis\sigma}{\sqrt{n}}
\sqrt{\Log\left(\frac{1}{\Prob\{A\}}\right)}
+ \frac{4}{\propJKL n} \Log\left(\frac{1}{\Prob\{A\}}\right).
\end{align*}

Thanks to Assumption \Sepindm, we can use the \emph{pealing} lemma (Lemma
4.23 of 
\cite{massart07:_concen}):
\begin{lemma}
Let $\model$ be a countable set, $\widetilde{s} \in \model$ and $a: \model \to \R^+$ such
that $a(\widetilde{s})=\inf_{s\in\model} a(s)$. Let $Z$ be a random process
indexed by $\model$ and let 
\begin{align*}
B(\sigma) = \left\{ s \in \model \middle| a(s)\leq\sigma \right\},
\end{align*}
assume that for any positive $\sigma$ the non-negative random variable
$\sup_{s\in B(\sigma)} \left( Z(s)-Z(\widetilde{s})\right)$ has finite expectation.
Then, for any function $\psi$ on $\R^+$ such that $\psi(x)/x$ is
non-increasing on $\R^+$ and
\begin{align*}
  \E\left[ 
\sup_{s\in B(\sigma)} \left( Z(s)-Z(\widetilde{s})\right)
\right] \leq \psi(\sigma),\quad\text{for any $\sigma\geq\sigma_\star\geq0$,}
\end{align*}
one has for any positive number $x\geq\sigma_\star$
\begin{align*}
\E \left[ \sup_{s\in\model} \frac{Z(s)-Z(\widetilde{s})}{x^2+a^2(s)} \right]
\leq 4x^{-2} \psi(x).
\end{align*}
\end{lemma}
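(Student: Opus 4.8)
The plan is to run the classical \emph{slicing} (peeling) argument on the ``radius'' $a(s)$. First I would record the elementary observation that, since $a(\widetilde s)=\inf a$, the point $\widetilde s$ lies in every $B(\sigma)$, so each random variable $\sup_{s\in B(\sigma)}\bigl(Z(s)-Z(\widetilde s)\bigr)$ is non-negative (as assumed) and the hypothesis $\E\bigl[\sup_{s\in B(\sigma)}(Z(s)-Z(\widetilde s))\bigr]\le\psi(\sigma)$ is available for every $\sigma\ge\sigma_\star$. Then I would decompose $\model$ into the central ball $B(x)$ and the dyadic annuli $C_j=B(2^{j+1}x)\setminus B(2^{j}x)$ for $j\ge0$; these cover $\model$ because $a(s)<+\infty$ for every $s$, and on $C_j$ one has $a(s)>2^{j}x$.

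Next, on the central ball I use $x^2+a^2(s)\ge x^2$ to get $\sup_{s\in B(x)}\frac{Z(s)-Z(\widetilde s)}{x^2+a^2(s)}\le x^{-2}\sup_{s\in B(x)}(Z(s)-Z(\widetilde s))$, and on each annulus $C_j$ I use $x^2+a^2(s)>(1+4^{j})x^2$ together with $C_j\subseteq B(2^{j+1}x)$ to get $\sup_{s\in C_j}\frac{Z(s)-Z(\widetilde s)}{x^2+a^2(s)}\le \frac{1}{(1+4^{j})x^2}\sup_{s\in B(2^{j+1}x)}(Z(s)-Z(\widetilde s))$. Bounding the supremum over $\model$ by the sum over the pieces, taking expectations, and applying the hypothesis at the radii $x,2x,4x,\dots$ (all $\ge x\ge\sigma_\star$) yields
\[
\E\left[\sup_{s\in\model}\frac{Z(s)-Z(\widetilde s)}{x^2+a^2(s)}\right]\le \frac{\psi(x)}{x^2}+\sum_{j\ge0}\frac{\psi(2^{j+1}x)}{(1+4^{j})x^2}.
\]
Here the countability of $\model$ makes all the suprema measurable and the finite-expectation assumption guarantees that the termwise inequality survives under $\E$.

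Finally I would invoke the hypothesis that $u\mapsto\psi(u)/u$ is non-increasing, which gives $\psi(2^{j+1}x)\le 2^{j+1}\psi(x)$. The $j=0$ term is then at most $\psi(x)/x^2$, and for $j\ge1$ one has $\frac{2^{j+1}}{1+4^{j}}\le \frac{2^{j+1}}{4^{j}}=2^{1-j}$, so the tail contributes at most $2\psi(x)/x^2$; adding the central term $\psi(x)/x^2$ gives the bound $4\psi(x)/x^2$, which is exactly the claim.

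The argument is essentially bookkeeping, so the ``obstacle'' is really just a matter of care: one must keep the denominator $x^2+a^2(s)$ (not $a^2(s)$) precisely so that the central region $B(x)$, where $a^2(s)$ may be arbitrarily small, stays under control; one must use the \emph{disjoint} annuli $C_j$ when splitting the global supremum but the \emph{full} balls $B(2^{j+1}x)$ when applying the monotone envelope $\psi$; and getting the constant down to exactly $4$ (rather than, say, $5$) relies on using $1+4^{j}$ in the annular denominators instead of the cruder $4^{j}$ and on handling the $j=0$ slice separately.
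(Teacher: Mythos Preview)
Your argument is correct and is exactly the standard peeling proof. The paper does not prove this lemma at all: it merely quotes it as ``the \emph{pealing} lemma (Lemma~4.23 of \textcite{massart07:_concen})'' and uses the conclusion. Your dyadic slicing $B(x),\,C_j=B(2^{j+1}x)\setminus B(2^{j}x)$, the bound $x^2+a^2(s)\ge(1+4^{j})x^2$ on $C_j$, the replacement of $C_j$ by the full ball $B(2^{j+1}x)$ before applying $\psi$, and the use of $\psi(2^{j+1}x)\le 2^{j+1}\psi(x)$ are precisely the steps in Massart's proof, and your bookkeeping to land on the constant $4$ is clean.
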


With $\model=\model_\indm$, $\widetilde{s}=\widetilde{s}_\indm \in \model_\indm$ to be
specified with
$a(s)=\dtwotens(\widetilde{s}_\indm,s)$ and $
  Z(s) = -\jkls $. Provided $y_\indm\geq\sigma_\indm$, one
  obtains
\begin{align*}
\E^A \left[
\sup_{s_\indm \in\model_\indm} \Prec\left(
\frac{  -\jklsindm+\jkltildesindm}{y_\indm^2 +
    \dtwotens(\widetilde{s}_\indm,s_{\indm})} \right)
\right]
\leq 4 \constlemmaonebis \frac{\phi_\indm(y_\indm)}{\sqrt{n}y_\indm^2}
 + \frac{4\constlemmatwobis\sigma}{\sqrt{n}y_{\indm}^2}
\sqrt{\Log\left(\frac{1}{\Prob\{A\}}\right)}
+ \frac{16}{\propJKL n y_{\indm}^2} \Log\left(\frac{1}{\Prob\{A\}}\right).
\end{align*}
Now using again the monotonicity of
$\delta\mapsto\delta^{-1}\phi_\indm(\delta)$ and the definition of
$\sigma_\indm$, $\forall y_\indm \geq \sigma_{\indm}$,
\begin{align*}
  \frac{\phi_\indm(y_\indm)}{\sqrt{n} y_\indm} &\leq 
  \frac{\phi_\indm(\sigma_\indm)}{\sqrt{n} \sigma_\indm}
= \sigma_{\indm}
\end{align*}
and therefore
\begin{align*}
\E^A \left[
\sup_{s_\indm \in\model_\indm} \Prec\left(
  \frac{-\jklsindm+\jkltildesindm}{y_\indm^2 +
    \dtwotens(\widetilde{s}_\indm,s_{\indm})} \right)
\right]
\leq \frac{4\constlemmaonebis\sigma_\indm}{y_\indm}
 + \frac{4\constlemmatwobis}{\sqrt{ny_{\indm}^2}}
\sqrt{\Log\left(\frac{1}{\Prob\{A\}}\right)}
+ \frac{16}{\propJKL n y_{\indm}^2} \Log\left(\frac{1}{\Prob\{A\}}\right).
\end{align*}

We can now choose $\widetilde{s}_\indm$ such that for every $s_\indm\in\model_\indm$
\begin{align*}
  \dtwotens(s_0,\widetilde{s}_\indm) \leq (1+\paramepsid) \dtwotens(s_0,s_{\indm})
\end{align*}
so that
\begin{align*}
  \dtwotens(\widetilde{s}_\indm,s_\indm) &= \Ptens\left(\d^2(\widetilde{s}_\indm,s_{\indm})\right)
\leq \Ptens\left( \left(d(\widetilde{s}_\indm,s_0)+d(s_0,s_{\indm})\right)^2\right)
\\
& \leq 2 \Ptens\left( \d^2(\widetilde{s}_\indm,s_0) +
  \d^2(s_0,s_{\indm}) \right) \leq 2(2+\paramepsid) \dtwotens(s_0,s_{\indm}).
\end{align*}
For this choice,  one obtains
\begin{align*}
\E^A \left[
\sup_{s_{\indm}\in\model_\indm} \Prec\left(
  \frac{%
-\jklsindm+\jkltildesindm
}{y_\indm^2 +
    2(2+\paramepsid)\dtwotens(s_0,s_{\indm})} \right)
\right]
&\leq \frac{4\constlemmaonebis\sigma_\indm}{ y_\indm} + 
 + \frac{4\constlemmatwobis}{\sqrt{ny_{\indm}^2}}
\sqrt{\Log\left(\frac{1}{\Prob\{A\}}\right)}\\
& \qquad\qquad
+ \frac{16}{\propJKL n y_{\indm}^2} \Log\left(\frac{1}{\Prob\{A\}}\right)
\end{align*}
which implies
\begin{align*}
\E^A \left[
\Prec\left(
  \frac{%
-\jklhatsindm+\jkltildesindm
}{y_\indm^2 +
    2(2+\paramepsid)\dtwotens(s_0,\widehat{s}_\indm)} \right)
\right]
\leq \frac{4\constlemmaonebis\sigma_\indm}{y_\indm}
 + \frac{4\constlemmatwobis}{\sqrt{ny_{\indm}^2}}
\sqrt{\Log\left(\frac{1}{\Prob\{A\}}\right)}
+ \frac{16}{\propJKL n y_{\indm}^2} \Log\left(\frac{1}{\Prob\{A\}}\right).
\end{align*}

We turn back to the control of $-\Prec(\jkltildesindm)$. Our Berstein
type control yields
\begin{align*}
\E^A \left[
-\Prec(\jkltildesindm)
\right]
\leq
\frac{3}{2 \sqrt{\propJKL(1-\propJKL)}}
\frac{\sqrt{\dtwotens(s_0,\widetilde{s}_\indm)}}{\sqrt{n}}
\sqrt{\Log\left(\frac{1}{\Prob\{A\}}\right)} + \frac{2}{\propJKL n}
\Log\left(\frac{1}{\Prob\{A\}}\right)
\end{align*}
or for any $y_\indm >0 $ and any $\paramB > 0$:
\begin{align*}
\E^A \left[ 
\frac{-\Prec(\jkltildesindm)}{y_\indm^2+\paramB^2\dtwotens(s_0,\widetilde{s}_\indm)}
\right]
&\leq
\frac{1}{y_\indm^2+\paramB^2\dtwotens(s_0,\widetilde{s}_\indm)}
\frac{3}{2 \sqrt{\propJKL(1-\propJKL)}}
\frac{\sqrt{\dtwotens(s_0,\widetilde{s}_\indm)}}{\sqrt{n}}
\sqrt{\Log\left(\frac{1}{\Prob\{A\}}\right)}\\
&\qquad\qquad + \frac{1}{y_\indm^2+\paramB^2\dtwotens(s_0,\widetilde{s}_\indm)}
\frac{2}{n\propJKL}
\Log\left(\frac{1}{\Prob\{A\}}\right)\\
& \leq \frac{3}{4\paramB \sqrt{\propJKL(1-\propJKL)}}
\frac{1}{\sqrt{ny_m^2}}
\sqrt{\Log\left(\frac{1}{\Prob\{A\}}\right)} + \frac{2}{\propJKL ny_m^2}
\Log\left(\frac{1}{\Prob\{A\}}\right).
\end{align*}
We derive thus
\begin{align*}
  \E^A &\left[
\Prec\left(
  \frac{-\jklhatsindm + \jkltildesindm}{y_\indm^2 +
    2(2+\paramepsid)\dtwotens(s_0,\widehat{s}_\indm)} \right)
+
\frac{-\Prec(\jkltildesindm)}{y_\indm^2+\paramB^2\dtwotens(s_0,\widetilde{s}_\indm)}
\right]\\
& \leq 
\frac{4\constlemmaonebis\sigma_\indm}{y_\indm}
+
\left( 4\constlemmatwobis + \frac{3}{4\paramB
    \sqrt{\propJKL(1-\propJKL)}} \right)
\frac{1}{\sqrt{ny_m^2}}
\sqrt{\Log\left(\frac{1}{\Prob\{A\}}\right)} + \frac{18}{\propJKL ny_m^2}
\Log\left(\frac{1}{\Prob\{A\}}\right)
\end{align*}
Let $\paramBd$ such that $\paramBd^2=2(2+\paramepsid)/(1+\paramepsid)$, using 
$\dtwotens(s_0,\widehat{s}_\indm) \geq
\dtwotens(s_0,\widetilde{s}_\indm)/(1+\paramepsid)$,
\begin{align*}
\Prec\left(
  \frac{-\jklhatsindm+\jkltildesindm}{y_\indm^2 +
    2(2+\paramepsid)\dtwotens(s_0,\widehat{s}_\indm)} \right)
+
\frac{-\Prec(\jkltildesindm)}{y_\indm^2+\paramBd^2\dtwotens(s_0,\widetilde{s}_\indm)}
& \geq   \Prec\left(
  \frac{-\jklhatsindm}{y_\indm^2 +
    2(2+\paramepsid)\dtwotens(s_0,\widehat{s}_\indm)} \right)
\end{align*}
and thus
\begin{align*}
  \E^A &\left[
\Prec\left(
  \frac{-\jklhatsindm}{y_\indm^2 +
    \constlemmathree\dtwotens(s_0,\widehat{s}_\indm)} \right)
\right]
& \leq 
 \frac{\constlemmaone \sigma_\indm}{y_\indm}
+ \constlemmatwo
\frac{1}{\sqrt{ny_m^2}}
\sqrt{\Log\left(\frac{1}{\Prob\{A\}}\right)} + \frac{18}{ny_m^2\propJKL}
\Log\left(\frac{1}{\Prob\{A\}}\right).
\end{align*}
where
$\constlemmathree = 2(2+\paramepsid)/(1+\paramepsid)$,
$\constlemmaone=4 \constlemmaonebis$ and
$\constlemmatwo=4\constlemmatwobis+3/(4\sqrt{\propJKL(1-\propJKL)}\paramBd)$.
\end{proof}

\ifthenelse{\boolean{extended}}{
\subsection{Behavior of the constants of \cref{theo:single} and \cref{theo:select}}

We now explain the behavior of the constants $\constpenmin$ and
$\constoratwo$ with respect to $\constoraone$ and $\propJKL$. As shown
in the proof, if we let $\paramepsipen=1-\frac{1}{\constoraone}$ then
 $\constoraone=\frac{1}{1-\paramepsipen}$ and
$\constoratwo=\frac{\constpenmin}{1-\paramepsipen}= \constpenmin
\constoraone$ so that it suffices to study the behavior of
$\constpenmin$.

Now $\constpenmin$ is defined as equal to
$\frac{\ConstHellKL_{\propJKL}\paramepsipen\constcoreepsipen^2}{\constlemmathree}$
with
$\constcoreepsipen$ the root of
$\left( \frac{\constlemmaone+\constlemmatwo}{\constcoreepsipen} + \frac{18}{\constcoreepsipen^2\propJKL}
\right)\constlemmathree
=\ConstHellKL_{\propJKL}\, \paramepsipen$ where we use the constants
appearing in \cref{lemma:core}. This implies
\begin{align*}
  \constpenmin&=\frac{\ConstHellKL_{\propJKL}\paramepsipen\constcoreepsipen^{2}}{\constlemmathree}
= \constcoreepsipen^{2}\left( \frac{\constlemmaone+\constlemmatwo}{\constcoreepsipen} +\frac{18}{\constcoreepsipen^2\propJKL}
    \right)
= \constcoreepsipen\left(\constlemmaone+\constlemmatwo\right) +\frac{18}{\propJKL}.
\end{align*}

Solving the implied quadratic equation 
$\constcoreepsipen (\constlemmaone+\constlemmatwo) + \frac{18}{\propJKL} =
    \constcoreepsipen^2 \frac{\ConstHellKL_{\propJKL}\paramepsipen}{\constlemmathree}$
yields
\begin{align*}
\constcoreepsipen = \frac{\constlemmathree
  (\constlemmaone+\constlemmatwo)
\left(
\sqrt{1 + \frac{72 \ConstHellKL_{\propJKL}\paramepsipen}{\propJKL\constlemmathree
  (\constlemmaone+\constlemmatwo)^2}} +1
\right)
}
{
2\ConstHellKL_{\propJKL}\paramepsipen
 }
\end{align*}
and thus
\begin{align*}
  \constpenmin &
= \frac{\constlemmathree
  (\constlemmaone+\constlemmatwo)^2
\left(
\sqrt{1 + \frac{72 \ConstHellKL_{\propJKL}\paramepsipen}{\propJKL\constlemmathree
  (\constlemmaone+\constlemmatwo)^2}} +1
\right)
}
{
2\ConstHellKL_{\propJKL}\paramepsipen
 }+ \frac{18}{\propJKL}
\end{align*}
Now
\begin{align*}
  \constlemmaone&=4\constlemmaonebis=
4
\left(
    \frac{3\kappalemma}{2\sqrt{2\propJKL(1-\propJKL)}}+
      \frac{4}{\propJKL} + \frac{3}{\sqrt{\propJKL(1-\propJKL)}}
  \right)
= \frac{1}{\sqrt{\propJKL(1-\propJKL)}} \left( 3\kappalemma\sqrt{2} +
  12 + 16
  \sqrt{\frac{1-\propJKL}{\propJKL}} \right)
\\
\intertext{and using that for any $\paramepsiconstlemmathree>0$, once $\paramepsid$ is small
enough, 
$2 > \paramBd \geq 2(1-\paramepsiconstlemmathree) $}
\constlemmatwo & = 4 \constlemmatwobis +
\frac{3}{4\sqrt{\propJKL(1-\propJKL)}\paramBd}\leq 
\frac{42}{\sqrt{\propJKL(1-\propJKL)}}
+\frac{3}{8\sqrt{\propJKL(1-\propJKL)}(1-\paramepsiconstlemmathree)}
= \frac{1}{\sqrt{\propJKL(1-\propJKL)}} \left( 42 + \frac{3}{8(1-\paramepsiconstlemmathree)}\right)
\end{align*}
so that
\begin{align*}
  \left(\constlemmaone + \constlemmatwo\right)^2 \leq
\frac{1}{\propJKL(1-\propJKL)} \left( 3 \kappalemma \sqrt{2} +
  54+  \frac{3}{8
  (1-\paramepsiconstlemmathree)} +  16 \sqrt{\frac{1-\propJKL}{\propJKL}} \right)^2.
\end{align*}
Now using $4 < \constlemmathree\leq 4(1+\paramepsiconstlemmathree)$
\begin{align*}
\constpenmin & \leq 
\frac{4( 1+ \paramepsiconstlemmathree) 
\left( 3 \kappalemma \sqrt{2} +
  54+  \frac{3}{8
  (1-\paramepsiconstlemmathree)} +  16 \sqrt{\frac{1-\propJKL}{\propJKL}} \right)^2
\left(
\sqrt{1 + \frac{72 \ConstHellKL_{\propJKL}\paramepsipen}{\propJKL\constlemmathree
  (\constlemmaone+\constlemmatwo)^2}} +1
\right)
}
{
2 \propJKL(1-\propJKL) \ConstHellKL_{\propJKL}\paramepsipen
 }+ \frac{18}{\propJKL}\\
& \leq
\frac{1}{\ConstHellKL_{\propJKL}\propJKL(1-\propJKL)\paramepsipen}\\
& \qquad\times
\left(
2( 1+ \paramepsiconstlemmathree)
\left( 3 \kappalemma \sqrt{2} +
  54+  \frac{3}{8
  (1-\paramepsiconstlemmathree)} +  16
\sqrt{\frac{1-\propJKL}{\propJKL}} \right)^2
\left(
\sqrt{1 + \frac{72 \ConstHellKL_{\propJKL}\paramepsipen}{\propJKL\constlemmathree
  (\constlemmaone+\constlemmatwo)^2}} +1
\right)
+
18 \ConstHellKL_{\propJKL}(1-\propJKL)\paramepsipen
 \right)
\end{align*}

This implies that $\constpenmin$ scales when $\propJKL$ is close to $1$
proportionally to
\begin{align*}
\frac{1}{\ConstHellKL_{\propJKL}\propJKL(1-\propJKL)\paramepsipen}=
\frac{\propJKL}{(1-\propJKL)^2\left( \Log \left(1 + \frac{\propJKL}{1-\propJKL}\right) -
  \propJKL\right) \paramepsipen}
\end{align*}
and thus explodes when $\propJKL$ goes to $1$ as well as when
$\paramepsipen$ goes to $0$.

Note that, as it is almost always the case in density estimation,
 these constants are rather large, mostly because of the
crude constant appearing in \cref{theo:deviation}. Indeed let $\sigma_{\indmset}$  
denote the supremum over all models of the collection,
the right hand side of the previous bound on $\constpenmin$ can already
be replaced by
\begin{align*}
&\frac{1}{\ConstHellKL_{\propJKL}\propJKL(1-\propJKL)\paramepsipen}\\
& \quad\times
\left(
2( 1+ \paramepsiconstlemmathree)
\left( 3 \kappalemma \sqrt{2} + 42  + 6 \sqrt{2} \sigma_{\indmset
} +  \frac{3}{8
  (1-\paramepsiconstlemmathree)} +  16
\sqrt{\frac{1-\propJKL}{\propJKL}} \right)^2
\left(
\sqrt{1 + \frac{72 \ConstHellKL_{\propJKL}\paramepsipen}{\propJKL\constlemmathree
  (\constlemmaone+\constlemmatwo)^2}} +1
\right)
+
18 \ConstHellKL_{\propJKL}(1-\propJKL)\paramepsipen
 \right).
\end{align*}
which
is much smaller than the previous quantity as soon as
$\sigma_{\indmset}$ is much smaller than $\sqrt{2}$, which can be
ensured in the models of \ifthenelse{\boolean{extended}}{\cref{sec:emphp-const-cond}}{\cref{sec:an-appl-spat}} provided we limit
their maximum dimension well below $n$, for instance to $n/\Log^2(n)$.}{}


\ifthenelse{\boolean{extended}}{
\section{Proof for \cref{sec:covar-part-cond}
  (\nameref{sec:covar-part-cond})}


\begin{proof}[Proof of \cref{prop:codingpolpart}]
We start by the UDP case, as we stop as soon as $  \frac{2^\dimX}{n} >
2^{-\dimX J} \leq \frac{1}{n}$, $J \leq \frac{\Log n}{\dimX \Log 2}$
and thus there is at most $1 + \frac{\Log n}{\dimX \Log 2}$ different
partitions in the collection, which allows to prove the proposition in
this case.

Proofs for the RDP, RSDP and RSP cases are handled
simultaneously. Indeed all these partition collections are recursive
partition collections and thus correspond to tree structures.
More precisely, any RDP can be represented by a $2^\dimX$-ary tree in
which a node has value $0$ if it has no child or value
$1$ otherwise. Similarly, any RSDP (respectively RSP) can be
represented by a dyadic tree in which a node has  value $0$ if it
has no child or $1$ plus the number of the dimension of the split
(respectively $1$ plus the number of the dimension and the position
of the split). Such a tree can be encoded by an ordered list of the
values of its nodes. The total length of the code is thus given by the
product of the
number of nodes $N(\PartX)$ by their encoding cost (respectively
$\left\lceil \frac{\Log
  2}{\Log 2} \right\rceil$ bits, $\left\lceil\frac{\Log
(1+ \dimX)}{\Log 2}\right\rceil$ bits and $\left\lceil\frac{\Log
(1+ \dimX)}{\Log 2} \right\rceil  + \left\lceil \frac{\Log n}{\Log 2}\right\rceil $).
As this code is decodable, it satisfies the Kraft inequality and thus,
using the definition of $\PartCstB^{\starX}$,
\begin{align*}
  \sum_{\PartX \in \CollPart^{\starX}} 2^{-N(\PartX) \frac{\PartCstB^{\starX}}{\Log 2}}
\leq 1 \Leftrightarrow \sum_{\PartX \in \CollPart^{\starX}} e^{- N(\PartX)
  \PartCstB^{\starX}}
\leq 1.
\end{align*}

It turns out that the number of nodes $N(\PartX)$ can be computed from
the number of hyperrectangles of the partition $\NbPartX$, which is also the
number of leaves in the tree. Indeed, each inner node has exactly
$2^\dimX$ children in the RDP case and only $2$ in the RDSP and RSP
case, while, in all cases, every node but the root has a single
parent. Let
$\dimPart=\dimX+\dimY$ in the RDP case and $d=1$ in the RDSP and RSP case then
$2^\dimPart (N(\PartX)-\NbPartX) = N(\PartX) - 1$ and thus 
\begin{align*}
N(\PartX) = \frac{2^\dimPart
  \NbPartX-1}{2^\dimPart-1} = \frac{2^\dimPart}{2^\dimPart-1} \NbPartX +
\left( 1 - \frac{2^\dimPart}{2^\dimPart-1} \right) = \PartCstc^{\starX} \NbPartX + (1
- \PartCstc^{\starX})
\end{align*}
with $\PartCstc^{\starX}$ as defined in the proposition. Plugging this in the Kraft inequality
 leads to
 \begin{align*}
   \sum_{\PartX \in \CollPart^{\starX}} e^{- \PartCstc^{\starX} \PartCstB^{\starX} \NbPartX + \PartCstB^{\starX} (\PartCstc^{\starX}-1)}
\leq 1 \Leftrightarrow
   \sum_{\PartX \in \CollPart^{\starX}} e^{- \PartCstc^{\starX} \PartCstB^{\starX} \NbPartX} \leq e^ {\PartCstB^{\starX} (1-\PartCstc^{\starX})}.
 \end{align*}
Let now $c\geq \PartCstc^{\starX}$,
\begin{align*}
  \sum_{\PartX \in \CollPart^{\starX}} e^{- c \PartCstB^{\starX} \NbPartX} & \leq \sum_{\PartX
    \in \CollPart^{\starX}} e^{- (c-\PartCstc^{\starX}) \PartCstB^{\starX} \NbPartX} e^{- \PartCstc^{\starX} \PartCstB^{\starX}
    \NbPartX}\\
\intertext{and as $\NbPartX \geq 1$}
& \leq e^{- (c-\PartCstc^{\starX}) \PartCstB^{\starX}  } \sum_{\PartX
    \in \CollPart^{\starX}}  e^{- \PartCstc^{\starX} \PartCstB^{\starX}
    \NbPartX}\\
& \leq e^{- (c-\PartCstc^{\starX}) \PartCstB^{\starX} } e^ {(1-\PartCstc^{\starX})\PartCstB^{\starX} } = e^{\PartCstB^{\starX}}
e^{-c\PartCstB^{\starX}} = \PartCstSigma^{\starX} e^{-c\PartCstC^{\starX}}
\end{align*}
which concludes these three cases.

For the HRP cases, it is sufficient to give the uppermost coordinate
of the hyperrectangles ordered in a uniquely decodable way based on
the following observation: assume we have a current list of hyperrectangles, the complementary of
the union of these hyperrectangles is either empty if the list contains
all the hyperrectangles of the partition or contains a lowermost point
that is the lowermost corner of a unique hyperrectangle. Furthermore, this
hyperrectangle is completely specified by its uppermost corner
coordinates. Starting with an empty list, an HRP partition can thus be
entirely specified by the list of uppermost corner coordinates
obtained through this scheme.

 This leads to a code with $\NbPartX \times \dimX \left\lceil \frac{\Log
  n}{\Log 2} \right\rceil$ bits for each partition that satisfies a Kraft
inequality
\begin{align*}
  \sum_{\PartX \in \CollPart^{HRP}} 2^{-\NbPartX \frac{\PartCstB^{\HRPX}}{\Log 2}}
\leq 1 \Leftrightarrow \sum_{\PartX \in \CollPart^{\HRPX}} e^{- \PartCstc^{\HRPX} \PartCstB^{\HRPX} \NbPartX }
\leq 1
\end{align*}
Now for any $c\geq \PartCstc^{\HRPX}$,
\begin{align*}
\sum_{\PartX \in \CollPart^{\HRPX}} e^{- c \PartCstB^{\HRPX} \NbPartX }
& = \sum_{\PartX \in \CollPart^{\HRPX}} e^{- (c-\PartCstc^{\HRPX}) \PartCstB^{\HRPX} \NbPartX } e^{-\PartCstc^{\HRPX}
  \PartCstB^{\HRPX} \NbPartX}\\
& \leq e^{- (c-\PartCstc^{\HRPX}) \PartCstB^{\HRPX}} \sum_{\PartX \in \CollPart^{\HRPX}} e^{- \PartCstc^{\HRPX}
  \PartCstB^{\HRPX} \NbPartX }\\
& \leq e^{- (c-\PartCstc^{\HRPX}) \PartCstB^{\HRPX}} =
e^{\PartCstB^{\HRPX}} e^{-c\PartCstB^{\HRPX}}
 = \PartCstSigma^{\HRPX} e^{-c\PartCstC^{\HRPX}}.
\end{align*}

It is then only a matter of calculation to check that if $c$ is larger than $1$
in the UDP and RDP cases and larger than $2\Log 2$ in the other cases
then all these sums can be bounded by $1$.

\end{proof}
}{}

\section{Proof for \cref{sec:piec-polyn-cond} (\nameref{sec:piec-polyn-cond})}

\Cref{theo:polypart} is obtained by proving that Assumption ($\mathrm
{H}_{\PartXY,\dimPYmax}$) and ($\text{S}_{\PartXY,\dimPYmax}$) hold for any model
$\model_{\PartXY,\dimPYmax}$ while Assumption (K) holds for any model collection.
\Cref{theo:polypart}  is then a consequence of \cref{theo:select}.

One easily verifies that Assumption ($\text{S}_{\PartXY,\dimPYmax}$)
holds whatever the partition choice. Concerning the first assumption, 
\begin{proposition}
\label{prop:polypartH}
Under the assumptions of \cref{theo:polypart}, there exists a
$\constpolyparttwobis$ such that for any model
$\model_{\PartXY,\dimPYmax}$ Assumption
($\text{H}_{\PartXY,\dimPYmax}$) is satisfied with a function $\phi$
such that
\begin{align*}
 \DIMH_{\PartXY,\dimPYmax}\leq \left(\constpolyparttwo + \Log
  \frac{n^2}{\NbPartXY}\right)
\DimH_{\PartXY,\dimPYmax}
\end{align*}
with $\constpolyparttwo=2 \constpolyparttwobis + 2 \pi$.
\end{proposition}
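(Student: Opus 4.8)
The plan is to reduce the proposition to the bracketing‑entropy estimate $H_{[\cdot],\dtens_{\meas}}(\delta,\model_{\PartXY,\dimPYmax})\le\DimH_{\PartXY,\dimPYmax}(\constpolyparttwobis+\Log\tfrac1\delta)$ for $\delta\in(0,\sqrt2]$, with $\constpolyparttwobis$ depending only on the degree vector $\dimPYmax$, then to exhibit an admissible function $\phi$ in Assumption $(\mathrm H_{\PartXY,\dimPYmax})$ and solve the defining equation of $\sigma_{\PartXY,\dimPYmax}$ by a self‑bounding computation. Assumption $(\mathrm S_{\PartXY,\dimPYmax})$ is immediate (take for $\model'$ the conditional densities with rational polynomial coefficients), so only $(\mathrm H)$ needs work.

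\emph{Reduction to a single region.} Every $s\in\model_{\PartXY,\dimPYmax}$ writes $s(y|x)=\sum_{\LeafXl\in\PartX}s(\cdot|\LeafXl)(y)\Charac{x\in\LeafXl}$ with $s(\cdot|\LeafXl)\in\DDSet_{\PartYLeafXl,\dimPYmax}$, and, the conditional density depending on $x$ only through the cell of $\PartX$ containing it,
\begin{align*}
\dtwotens_{\meas}(s,t)=\sum_{\LeafXl\in\PartX}w_{\LeafXl}\,\d^2_{\meas}(s(\cdot|\LeafXl),t(\cdot|\LeafXl)),\qquad w_{\LeafXl}=\frac1n\sum_{i=1}^n\Prob(X_i\in\LeafXl),
\end{align*}
with $\sum_{\LeafXl}w_{\LeafXl}=1$. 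Hence, choosing for each $\LeafXl$ a bracket $[t^-_{\LeafXl},t^+_{\LeafXl}]$ covering a point of $\DDSet_{\PartYLeafXl,\dimPYmax}$ with $\d_{\meas}(t^-_{\LeafXl},t^+_{\LeafXl})\le\delta$, the functions $t^\pm(y|x)=\sum_{\LeafXl}t^\pm_{\LeafXl}(y)\Charac{x\in\LeafXl}$ form a bracket for $\model_{\PartXY,\dimPYmax}$ of $\dtens_{\meas}$-width at most $\delta\sqrt{\sum_{\LeafXl}w_{\LeafXl}}=\delta$; taking products of minimal bracket coverings over the cells gives
\begin{align*}
H_{[\cdot],\dtens_{\meas}}(\delta,\model_{\PartXY,\dimPYmax})\le\sum_{\LeafXl\in\PartX}H_{[\cdot],\d_{\meas}}(\delta,\DDSet_{\PartYLeafXl,\dimPYmax}).
\end{align*}

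\emph{Entropy of one $\DDSet_{\PartY,\dimPYmax}$.} One must show $H_{[\cdot],\d_{\meas}}(\delta,\DDSet_{\PartY,\dimPYmax})\le\NbPartY\big(\prod_{d=1}^{\dimY}(\dimPYmaxi{d}+1)\big)(\constpolyparttwobis+\Log\tfrac1\delta)$ for $\delta\in(0,\sqrt2]$, with $\constpolyparttwobis$ uniform over the partition $\PartY$. An affine change of variables sending a hyperrectangle $\LeafY_k$ of $\PartY$ onto $[0,1]^{\dimY}$ preserves Hellinger distances, multiplies Lebesgue measure by a constant, and preserves polynomial multidegree, so one is reduced to bracketing the densities on $[0,1]^{\dimY}$ of the form $P^2$ with $P$ of multidegree $\le\dimPYmax$ and $\|P\|_{L^2}\le1$; the square root of such a density is $|P|$, $\d_{\meas}$ is the $L^2$ distance of square roots, and a $\delta$-net of the unit ball of that $\prod_d(\dimPYmaxi{d}+1)$-dimensional polynomial space (cardinality $(3/\delta)^{\prod_d(\dimPYmaxi{d}+1)}$) is converted into brackets $[(|P_j|-\rho)_+,|P_j|+\rho]$ using the Nikolskii-type inequality $\|P\|_\infty\le c_{\dimPYmax}\|P\|_{L^2([0,1]^{\dimY})}$, whence the exponent $\prod_d(\dimPYmaxi{d}+1)$, the term $\Log\tfrac1\delta$, and a partition-free additive constant. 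This is exactly the estimate \textcite{willet07:_multis_poiss_inten_densit_estim} use for piecewise polynomials — the use of \emph{squares} being precisely what makes the square root a polynomial and supplies the uniform $L^\infty$ control needed to pass from nets to brackets — and the detailed computation, producing the explicit constant quoted after \cref{theo:polypart}, is carried out in \cite{cohen11:_condit_densit_estim_penal_app}. Summing over $\LeafXl$ and using $\sum_{\LeafXl}\NbPartYLeafXl=\NbPartXY$ yields $H_{[\cdot],\dtens_{\meas}}(\delta,\model_{\PartXY,\dimPYmax})\le\DimH_{\PartXY,\dimPYmax}(\constpolyparttwobis+\Log\tfrac1\delta)$, since $\DimH_{\PartXY,\dimPYmax}=\NbPartXY\prod_d(\dimPYmaxi{d}+1)$.

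\emph{The complexity term.} Write $D=\DimH_{\PartXY,\dimPYmax}$ and $c=\constpolyparttwobis$. The function $\phi(\sigma)=\sigma\sqrt D\,\sqrt{c+1+\Log\tfrac1{\sigma\wedge1}}$ is non-decreasing with $\sigma\mapsto\phi(\sigma)/\sigma$ non-increasing, and since $\big(\sqrt{c+1+u}-\tfrac1{2\sqrt{c+1+u}}\big)^2=c+u+\tfrac1{4(c+1+u)}\ge c+u$ for all $u\ge0$, the map $\sigma\mapsto\sigma\sqrt{c+1+\Log\tfrac1\sigma}$ has derivative $\ge\sqrt{c+\Log\tfrac1\sigma}$ on $(0,1]$; hence $\int_0^\sigma\sqrt{D(c+(\Log\tfrac1\delta)_+)}\,\ud\delta\le\phi(\sigma)$ for all $\sigma$, so $\phi$ is admissible in $(\mathrm H_{\PartXY,\dimPYmax})$. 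Inserting $\phi$ into $\phi(\sigma)=\sqrt n\,\sigma^2$ and squaring gives, when $\sigma_{\PartXY,\dimPYmax}\le1$, the identity $\DIMH_{\PartXY,\dimPYmax}=n\sigma_{\PartXY,\dimPYmax}^2=D\big(c+1+\tfrac12\Log\tfrac{n}{\DIMH_{\PartXY,\dimPYmax}}\big)$; then $\DIMH_{\PartXY,\dimPYmax}\ge D\ge\NbPartXY$ and, since the five partition collections contain only hyperrectangles of volume $\ge1/n$, so that $\NbPartX\le n$ and each $\NbPartYLeafXl\le n$, we get $\NbPartXY\le n^2$, hence $\tfrac12\Log\tfrac{n}{\DIMH_{\PartXY,\dimPYmax}}\le\tfrac12\Log\tfrac{n}{\NbPartXY}\le\Log\tfrac{n^2}{\NbPartXY}$ and $c+1\le2c+2\pi=\constpolyparttwo$, whence $\DIMH_{\PartXY,\dimPYmax}\le(\constpolyparttwo+\Log\tfrac{n^2}{\NbPartXY})D$; the case $\sigma_{\PartXY,\dimPYmax}>1$ gives $\DIMH_{\PartXY,\dimPYmax}=D(c+1)\le\constpolyparttwo D$ directly. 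The one genuinely delicate step is the single-partition bracketing-entropy bound with a constant independent of the geometry of $\PartY$ — resting on the affine-invariance reduction and the $L^\infty/L^2$ comparison for polynomials on a box; everything else is the bookkeeping of \cref{prop:proporsimple} adapted to the additive structure of $\DimH_{\PartXY,\dimPYmax}$.
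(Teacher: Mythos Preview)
Your approach differs structurally from the paper's, and the difference exposes a genuine gap. The paper proves a \emph{local} bracketing-entropy bound (\cref{prop:dimpol}),
\[
H_{[\cdot],\dtens}\bigl(\delta,\model_{\PartXY,\dimPYmax}(s,\sigma)\bigr)\le \DimH_{\PartXY,\dimPYmax}\Bigl(\constpolyparttwobis+\tfrac12\Log\tfrac{n^2}{\NbPartXY}+\Log\tfrac{\sigma}{\delta}\Bigr),
\]
with the $n$-dependent term already present in the constant, and then applies the \emph{local} case of \cref{prop:proporsimple} together with $(\sqrt a+\sqrt\pi)^2\le 2a+2\pi$. The term $\tfrac12\Log(n^2/\NbPartXY)$ arises through the ratio $\widebar r_{\PartXY,\dimPYmax}$ of \cref{prop:inftyentropy,prop:entropypoly}, via $\sup_{l,k}(\NbPartXY\,|\LeafXYlk|)^{-1/2}$ and the volume bound $|\LeafXYlk|\ge n^{-2}$. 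You instead assert a \emph{global} bound $H_{[\cdot],\dtens}(\delta,\model_{\PartXY,\dimPYmax})\le\DimH_{\PartXY,\dimPYmax}(\constpolyparttwobis+\Log\tfrac1\delta)$ with $\constpolyparttwobis$ partition-free, and recover the $n$-term only afterward from $\tfrac12\Log(n/\DIMH_{\PartXY,\dimPYmax})$ and $\DIMH_{\PartXY,\dimPYmax}\ge\NbPartXY$.

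The gap is in your bound $H_{[\cdot],\d}(\delta,\DDSet_{\PartYLeafXl,\dimPYmax})\le\NbPartYLeafXl\prod_d(\dimPYmaxi{d}+1)(\constpolyparttwobis+\Log\tfrac1\delta)$ with $\constpolyparttwobis$ independent of $\PartYLeafXl$. Your decomposition over $\PartX$ is correct because the weights $w_{\LeafXl}$ sum to $1$, so a product of $\delta$-brackets still has $\dtens$-width $\delta$. But inside a fixed $\LeafXl$ there is no analogous averaging across the sub-rectangles $\LeafYlk$: the squared Hellinger width of a product bracket is the \emph{sum} of the piecewise squared widths, so $\NbPartYLeafXl$ individual $\delta$-brackets combine to width $\delta\sqrt{\NbPartYLeafXl}$, not $\delta$. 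Concretely, your Nikolskii step forces a uniform pointwise slack $\rho=c_{\dimPYmax}\delta'$ on every rescaled sub-rectangle (you do not know a priori in which piece the $L^2$ error concentrates), and integrating $(2\rho)^2$ over all $\NbPartYLeafXl$ unit cubes gives total width $2c_{\dimPYmax}\delta'\sqrt{\NbPartYLeafXl}$; matching this to $\delta$ injects an extra $\tfrac12\Log\NbPartYLeafXl$ into the entropy constant. This is precisely the geometry-dependent contribution the paper tracks through $\widebar r$; the affine-invariance reduction you invoke does not remove it.
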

The proof relies on the combination of \cref{prop:proporsimple}
and 
\begin{proposition}
\label{prop:dimpol}
$\forall \model_{\PartXY,\dimPYmax}, \forall s_{\PartXY,\dimPYmax} \in \model_{\PartXY,\dimPYmax}$,
  \begin{align*}
    H_{[\cdot],\dtens} 
\left(\delta, \model_{\PartXY,\dimPYmax}(s_{\PartXY,\dimPYmax},\sigma)
\right)
& \leq \DimH_{{\PartXY,\dimPYmax}}  
\left( \frac{1}{2}\Log \frac{n^2}{\NbPartXY} +
  \constpolyparttwobis
 + \Log
  \frac{\sigma}{\delta} \right).
\end{align*}
\end{proposition}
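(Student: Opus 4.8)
The plan is to deduce \cref{theo:polypart} from \cref{theo:select}, so we must check its three hypotheses for the collection $\Models$. Assumption $(\text{S}_{\PartXY,\dimPYmax})$ is immediate, since restricting the polynomial coefficients to the rationals produces a countable family whose log-densities converge pointwise. Assumption $(K)$ follows from \cref{prop:codingpolpart}: code a model by first coding $\PartX$ (weight $c(\PartCstA^{\starX}+\PartCstB^{\starX}\NbPartX)$), then, conditionally on $\PartX$, coding each $\PartYLeafXl$ (weight $c(\PartCstA^{\starY}+\PartCstB^{\starY}\NbPartYLeafXl)$), and summing geometrically; this gives $\sum e^{-x_{\PartXY,\dimPYmax}}\le\Sigma<\infty$ with $x_{\PartXY,\dimPYmax}$ bounded by a constant times $\PartCstA^{\starX}+\PartCstB^{\starX}+\PartCstA^{\starY}+\PartCstB^{\starY}$ times the number of hyperrectangles, which is of the order of $\DimH_{\PartXY,\dimPYmax}$. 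Hence the genuine content is Assumption $(\text{H}_{\PartXY,\dimPYmax})$, i.e. \cref{prop:polypartH}, which via the second case of \cref{prop:proporsimple} (with $\ConstH_{\indm}=\DimH_{\PartXY,\dimPYmax}(\tfrac12\Log\tfrac{n^2}{\NbPartXY}+\constpolyparttwobis)$ and $\DimH_{\indm}=\DimH_{\PartXY,\dimPYmax}$) reduces exactly to the local bracketing estimate \cref{prop:dimpol}. So I would spend essentially all the effort on \cref{prop:dimpol}.

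For \cref{prop:dimpol} the first step is a decomposition. Since $\dtwotens_{\meas}(s,t)=\sum_{\LeafXl\in\PartX}w_l\sum_{\LeafYlk\in\PartYLeafXl}\int_{\LeafYlk}\big|\sqrt{s(\cdot\mid\LeafXl)}-\sqrt{t(\cdot\mid\LeafXl)}\big|^2$ with $w_l=\E[\tfrac1n\sum_i\Charac{X_i\in\LeafXl}]$, a candidate density is described cell by cell: on $\LeafXYlk$ it is $P_{\LeafXYlk}^2$ with $P_{\LeafXYlk}$ of degree $\le\dimPYmax$, subject to $\sum_{k}\|P_{\LeafXl\times\LeafYlk}\|_{L^2(\LeafYlk)}^2=1$ on each $X$-cell $\LeafXl$. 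On each $\LeafYlk$ I would fix the tensorized rescaled Legendre basis, so that a polynomial corresponds to a coefficient vector $\theta\in\R^{D}$ with $D=\prod_{d=1}^{\dimY}(\dimPYmaxi{d}+1)$, $\|P\|_{L^2(\LeafYlk)}=|\theta|$, and $\|P\|_\infty\le\Lambda_{\dimPYmax}|\LeafYlk|^{-1/2}\sqrt{D}\,|\theta|$ for an explicit $\Lambda_{\dimPYmax}$; similarly $\|Q\|_\infty\le\Lambda'_{\dimPYmax}|\LeafYlk|^{-1}\|Q\|_{L^1(\LeafYlk)}$ for $Q$ of degree $\le 2\dimPYmax$. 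This is where the choice of \emph{squares} of polynomials pays off: given a net point $\bar P_{lk}$ with $\|P_{lk}-\bar P_{lk}\|_\infty\le\eta_{lk}$, the pair $t^{\pm}_{lk}=\big((|\bar P_{lk}|\pm\eta_{lk})_{+}\big)^2$ is a bracket for $P_{lk}^2$ whose Hellinger width satisfies $\int_{\LeafYlk}(\sqrt{t^{+}_{lk}}-\sqrt{t^{-}_{lk}})^2\le 4\eta_{lk}^2|\LeafYlk|$, with \emph{no} square-root loss (so the bracket width scales like $\eta_{lk}$, not $\sqrt{\eta_{lk}}$). Assembling brackets over all $\LeafXYlk$ and allocating the budget, for instance $\eta_{lk}^2|\LeafYlk|=\delta^2/(4w_l\NbPartX\NbPartYLeafXl)$, makes the total tensorized width at most $\delta$; thus the number of brackets is at most the product over $\LeafXYlk$ of the $\|\cdot\|_\infty$-covering numbers (equivalently, after the basis change, Euclidean covering numbers at scale $\epsilon_l\asymp \delta/(\Lambda_{\dimPYmax}\sqrt{D\,w_l\NbPartX\NbPartYLeafXl})$) of the relevant sets of coefficient vectors.

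The final and hardest step is to make the \emph{localization} reduce the naive $\Log(1/\delta)$ to $\Log(\sigma/\delta)$. Two facts must be combined. First, the normalization $\sum_{k}|\theta_{lk}|^2=1$ forces $|\theta_{lk}|\le 1$ and, on average over $k$, $|\theta_{lk}|\lesssim\NbPartYLeafXl^{-1/2}$; second, $\dtwotens_{\meas}(s_{\PartXY,\dimPYmax},\cdot)\le\sigma^2$ constrains, for each cell, both the radius $\big|\,\|P_{lk}\|_{L^2}-\|\bar P_{lk}\|_{L^2}\,\big|\le\sigma/\sqrt{w_l}$ and the direction to an angular cap of $L^2$-diameter $\lesssim\sigma/\sqrt{w_l}$ around $\bar P_{lk}/\|\bar P_{lk}\|$. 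Consequently the localized coefficient set on $\LeafXYlk$ is covered by $\big(1+c\,\sigma/(\sqrt{w_l}\,\epsilon_l)\big)^{D}\le\big(1+c'\Lambda_{\dimPYmax}\sigma\sqrt{D\,\NbPartX\NbPartYLeafXl}/\delta\big)^{D}$ balls — crucially the $\sqrt{w_l}$ cancels — and using the partition constraints $|\LeafXl|\ge 1/n$, $|\LeafYlk|\ge 1/n$, hence $\NbPartX\le n$, $\NbPartYLeafXl\le n$, $\NbPartXY\le n^2$, together with $\sum_l\NbPartYLeafXl\Log\NbPartYLeafXl\le\NbPartXY\Log\NbPartXY$, the logarithm of the total bracket count is bounded by $\DimH_{\PartXY,\dimPYmax}\big(\tfrac12\Log\tfrac{n^2}{\NbPartXY}+\constpolyparttwobis+\Log\tfrac{\sigma}{\delta}\big)$ for a suitable absolute $\constpolyparttwobis$ built from $\Lambda_{\dimPYmax},\Lambda'_{\dimPYmax}$ and $\Log D$. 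The main obstacle is exactly this last estimate: one must handle the sign ambiguity in $\sqrt{P^2}=|P|$ (so that what is really being covered are the functions $|P|$, not the coefficient vectors $P$, which is precisely why the squared-polynomial parametrization — rather than the piecewise polynomials of \textcite{willet07:_multis_poiss_inten_densit_estim} — is used, since $\|P^2-\bar P^2\|_{L^1}$ is a polynomial quantity one can bracket without a square-root loss), keep the dependence on $\delta$ linear inside the logarithm, and balance the per-cell net scales against the width budget so that the $n$– and $\NbPartXY$–factors assemble into $\tfrac12\Log(n^2/\NbPartXY)$ uniformly in $\sigma\in(0,\sqrt2]$ and $\delta\in(0,\sigma]$; a peeling over the radius/direction splitting is the natural device, and the variations mentioned after \cref{theo:polypart} (the $\UDPX$ case, locally varying degrees, the lower-bounded-density case) are then obtained by inspecting which terms in this count survive.
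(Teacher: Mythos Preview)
Your route is workable but diverges from the paper's, which is considerably shorter. The paper does not build brackets cell by cell with a budget allocation; it exploits that squares of piecewise polynomials have square roots lying (up to the sign issue you raise) in the \emph{linear} space $\widebar{\sqrt{\model_{\PartXY,\dimPYmax}}}$ of signed piecewise polynomials, and reduces the localized Hellinger bracketing entropy to the $\|\cdot\|_{\infty}^{\tens}$ \emph{metric} entropy of a $\|\cdot\|_{2}^{\tens}$-ball of radius $\sigma$ in that linear space (the chain of displays immediately preceding \cref{prop:inftyentropy}). A single volume argument (\cref{prop:inftyentropy}) then yields $H\le\DimH_{\indm}\bigl(\Log(\kappa_\infty\,\widebar{r}_{\indm})+\Log(\sigma/\delta)\bigr)$, and the only model-specific work is bounding the norm-comparison constant $\widebar{r}_{\PartXY,\dimPYmax}$ via the rescaled Legendre basis (\cref{prop:entropypoly}): one finds $\widebar{r}_{\PartXY,\dimPYmax}\le C\sup_{l,k}(\NbPartXY\,|\LeafXYlk|)^{-1/2}\le C\sqrt{n^2/\NbPartXY}$, which is exactly where $\tfrac12\Log(n^2/\NbPartXY)$ comes from. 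In particular, the passage from $\Log(1/\delta)$ to $\Log(\sigma/\delta)$ that you flag as the hardest step is automatic here --- it is just a ball-covering count in a finite-dimensional space --- with no per-cell radii, no radius/direction peeling, and no explicit sign handling.

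Your direct construction can be pushed through to a bound of the right order, but be aware that the accounting you sketch does not land on the stated constant: your per-cell count $(1+c'\Lambda_{\dimPYmax}\sqrt{D\,\NbPartX\NbPartYLeafXl}\,\sigma/\delta)^{D}$, summed over all cells, contributes $D\sum_{l}\NbPartYLeafXl\cdot\tfrac12\Log(\NbPartX\NbPartYLeafXl)$, in which no $n$ appears (the cell volumes $|\LeafYlk|$ cancel between $\eta_{lk}$ and the $L^\infty\!\to\!L^2$ conversion), and this does not collapse to $\DimH_{\PartXY,\dimPYmax}\cdot\tfrac12\Log(n^2/\NbPartXY)$. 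You would obtain a different --- also valid, and in fact sometimes sharper --- constant; the paper's specific $n^2/\NbPartXY$ form arises because $\widebar{r}$ is defined as a \emph{supremum} over cells (whence the worst $|\LeafXYlk|^{-1/2}\le n$), not as a sum of per-cell contributions.
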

 \ifthenelse{\boolean{extended}}{Remark that we also use
 the inequality
\begin{align*}
\left( \sqrt{\frac{1}{2}\Log \frac{n^2}{\sum_{\LeafXl \in \PartX} \NbPartYLeafXl} +
  \constpolyparttwobis} + \sqrt{\pi} \right)^2 \leq 
\Log \frac{n^2}{\NbPartXY} + 2
\constpolyparttwobis + 2\pi.
\end{align*}}{}

By using \cref{prop:codingpolpart} for  both $\PartX$
and $\PartY$, we obtain
the Kraft type assumption:
\begin{proposition}
\label{prop:polypartK}
Under the assumptions of \cref{theo:polypart}, for any collection
$\Models$, there exists a $\constpolypartthree>0$ such that for
\begin{align*}
  x_{\PartXY,\dimPYmax} =  \constpolypartthree \left( \PartCstA^{\starX}
      +  \left( \PartCstB^{\starX}  + \PartCstA^{\starY}
      \right)\NbPartX 
      + \PartCstB^{\starY} \sum_{\LeafXl\in\Part}
      \NbPartYLeafXl \right)
\end{align*}
Assumption (K) is satisfied with
\begin{alignI}
  \sum_{\model_{\PartXY,\dimPYmax}\in\Models}
  e^{-x_{\PartXY,\dimPYmax}} \leq 1.
\end{alignI}
\end{proposition}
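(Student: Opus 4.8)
Here is a plan for proving \cref{prop:polypartK}.

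The plan is a two-level application of the Kraft-type bound \cref{prop:codingpolpart}, exploiting the product structure of the collection $\Models$. A model $\model_{\PartXY,\dimPYmax}$ is specified by a partition $\PartX \in \CollPart^{\starX}$ of $[0,1]^{\dimX}$ and, independently for each region $\LeafXl \in \PartX$, a partition $\PartYLeafXl \in \CollPart^{\starY}$ of $[0,1]^{\dimY}$ (the degree $\dimPYmax$ being held fixed). Accordingly, abbreviating $c = \constpolypartthree$, I would first rewrite the weight additively along this structure, using $\sum_{\LeafXl\in\PartX}\PartCstA^{\starY} = \PartCstA^{\starY}\NbPartX$ and $\sum_{\LeafXl\in\PartX}\PartCstB^{\starY}\NbPartYLeafXl = \PartCstB^{\starY}\sum_{\LeafXl\in\PartX}\NbPartYLeafXl$:
\begin{align*}
  x_{\PartXY,\dimPYmax} = c\bigl(\PartCstA^{\starX} + \PartCstB^{\starX}\NbPartX\bigr) + c\sum_{\LeafXl\in\PartX}\bigl(\PartCstA^{\starY} + \PartCstB^{\starY}\NbPartYLeafXl\bigr),
\end{align*}
so that $e^{-x_{\PartXY,\dimPYmax}} = e^{-c(\PartCstA^{\starX}+\PartCstB^{\starX}\NbPartX)}\prod_{\LeafXl\in\PartX} e^{-c(\PartCstA^{\starY}+\PartCstB^{\starY}\NbPartYLeafXl)}$. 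Summing over $\Models$ then amounts to an outer sum over $\PartX \in \CollPart^{\starX}$ and, for each $\PartX$, a sum over the tuple $(\PartYLeafXl)_{\LeafXl\in\PartX} \in \prod_{\LeafXl\in\PartX}\CollPart^{\starY}$, which factorises over $\LeafXl$.

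For the inner sum, \cref{prop:codingpolpart} applied to the collection $\CollPart^{\starY}$ with constant $c$ gives, provided $c \geq \PartCstc^{\starY}$,
\begin{align*}
  \sum_{\PartYLeafXl\in\CollPart^{\starY}} e^{-c(\PartCstA^{\starY}+\PartCstB^{\starY}\NbPartYLeafXl)} \leq \PartCstSigma^{\starY} e^{-c\max(\PartCstA^{\starY},\PartCstB^{\starY})} \leq 1,
\end{align*}
the last inequality holding once $c \geq 2\Log 2$ by the remark following \cref{prop:codingpolpart}. Hence $\prod_{\LeafXl\in\PartX}\sum_{\PartYLeafXl\in\CollPart^{\starY}} e^{-c(\PartCstA^{\starY}+\PartCstB^{\starY}\NbPartYLeafXl)} \leq 1$ for every $\PartX$. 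For the outer sum, the same proposition applied to $\CollPart^{\starX}$ with constant $c$ yields, under $c \geq \PartCstc^{\starX}$ (and $c\geq 2\Log 2$),
\begin{align*}
  \sum_{\PartX\in\CollPart^{\starX}} e^{-c(\PartCstA^{\starX}+\PartCstB^{\starX}\NbPartX)} \leq \PartCstSigma^{\starX} e^{-c\max(\PartCstA^{\starX},\PartCstB^{\starX})} \leq 1.
\end{align*}
Chaining the two bounds gives $\sum_{\model_{\PartXY,\dimPYmax}\in\Models} e^{-x_{\PartXY,\dimPYmax}} \leq 1$.

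It therefore suffices to choose $\constpolypartthree$ to be any absolute constant with $\constpolypartthree \geq \max(\PartCstc^{\starX},\PartCstc^{\starY},2\Log 2)$; inspecting the table of constants following \cref{prop:codingpolpart}, every $\PartCstc^{\star}$ is at most $2$, so $\constpolypartthree$ may be taken independent of $n$, $\dimX$ and $\dimY$. I expect no real obstacle here: the only point needing care is the bookkeeping in the additive split of $x_{\PartXY,\dimPYmax}$ — in particular that the term $\PartCstA^{\starY}$ must be assigned once per region $\LeafXl$ so that it contributes $\PartCstA^{\starY}\NbPartX$ — and ensuring that the two nested invocations of \cref{prop:codingpolpart} are run with the same constant $c=\constpolypartthree$, which is precisely why $\constpolypartthree$ must dominate both $\PartCstc^{\starX}$ and $\PartCstc^{\starY}$. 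Everything else is a direct substitution of the cited inequality.
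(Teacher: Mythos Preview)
Your proposal is correct and follows essentially the same route as the paper: split the weight additively over $\PartX$ and the $\PartYLeafXl$'s, factorise the sum as an outer sum over $\PartX$ times a product of inner sums over each $\PartYLeafXl$, and bound both layers by \cref{prop:codingpolpart} with a common constant $\constpolypartthree\geq\max(\PartCstc^{\starX},\PartCstc^{\starY},2\Log 2)$. The paper's write-up is slightly terser about why the partial sums are at most $1$, but the argument is identical.
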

\ifthenelse{\boolean{extended}}{The complete proof  is
  postponed after the one \cref{prop:dimpol}.}
{Its complete proof can be found in the 
technical report~\cite{cohen11:_condit_densit_estim_penal_app}.}

\subsection{Proof of \cref{prop:dimpol}}

\ifthenelse{\boolean{extended}}{
For sake of simplicity, we remove from now on the subscript
reference to the common measure $\meas$ from all notations.}

We rely on a link between
$\|\cdot\|_2$ and $\|\cdot\|_\infty$ structures of the square roots of the
models and a relationship between bracketing entropy and metric
entropy for $\|\cdot\|_\infty$  norms.

 Following \textcite{massart07:_concen}, we define the
following tensorial \emph{norm} on functions $u(y|x)$
\begin{align*}
  \|u\|_{2}^{2\tens} = \E \left[ \frac{1}{n} \sum_{i=1}^n
    \|u(\cdot|X_i)\|_2^2 \right]
\quad\text{and}\quad
  \|u\|_{\infty}^{2,\tens} = \E \left[ \frac{1}{n} \sum_{i=1}^n
    \|u(\cdot|X_i)\|_{\infty}^2 \right].
\end{align*}
As the reference measure is the Lebesgue measure on $[0,1]^\dimY$,
$\|u\|_{\infty}^{2\tens} \geq \|u\|_{2}^{2\tens}$.
By definition $\dtens(s,t)=\|\sqrt{s}-\sqrt{t}\|_{2}^{\tens}$ and thus
for any model $\model_{\indm}$ and any function $s_\indm \in \model_{\indm}$
\begin{align*}
  H_{[\cdot],\dtens}(\delta,\model_{\indm}(s_\indm,\sigma))
& = H_{[\cdot],\|.\|_{2}^{\tens}} 
\left(\delta, \left\{ 
u \in \sqrt{\model_\indm} \middle| \|u-\sqrt{s_\indm}\|_2^{\tens} \leq \sigma
 \right\} \right) 
\end{align*}
If
$\sqrt{\model_{\indm}}$ is a subset of a linear space
$\widebar{\sqrt{\model_{\indm}}}$ of dimension
$\DimH_{\indm}$, as in our model,
\begin{align*}
  H_{[\cdot],\dtens}(\delta,\model_{\indm}(s_\indm,\sigma))
&\leq H_{[\cdot],\|.\|_{2}^{\tens}} 
\left(\delta, \left\{ 
u \in \widebar{\sqrt{\model_\indm}} \middle| \|u-\sqrt{s_\indm}\|_2^{\tens} \leq \sigma
 \right\} \right) 
\intertext{so that one can replace, without loss of generality,
  $\sqrt{s_\indm}$ by $0$ and use}
  H_{[\cdot],\dtens}(\delta,\model_{\indm}(s_\indm,\sigma))
&\leq H_{[\cdot],\|.\|_{2}^{\tens}} 
\left(\delta, \left\{ 
u \in \widebar{\sqrt{\model_\indm}} \middle| \|u\|_2^{\tens} \leq \sigma
 \right\} \right).\\
\intertext{Using now $\|\cdot\|_{\infty}^{\tens} \geq
\|\cdot\|_{2}^{\tens}$, one deduces}
  H_{[\cdot],\dtens}(\delta,\model_{\indm}(s_\indm,\sigma))
& \leq  H_{[\cdot],\|.\|_{\infty}^{\tens}} 
\left(\delta, \left\{ 
u \in \widebar{\sqrt{\model_\indm}}\middle| \|u\|_2^{\tens} \leq \sigma
 \right\} \right).
\end{align*}
As for any $u$, $[u-\delta/2, u+ \delta/2]$ is a
$\delta$-bracket for the $\|\cdot\|_{\infty}^{\tens}$ norm, any
covering of $\left\{ 
u \in \widebar{\sqrt{\model_\indm}}\middle| \|u\|_2^{\tens} \leq \sigma
 \right\}$ by $\|\cdot\|_{\infty}^{\tens}$ ball of radius $\delta/2$
 yields a covering by the corresponding brackets.
This implies 
\begin{align*}
  H_{[\cdot],\dtens}(\delta,\model_{\indm}(s_\indm,\sigma))
& \leq  H_{\|.\|_{\infty}^{\tens}} 
\left(\frac{\delta}{2}, \left\{ 
u \in \widebar{\sqrt{\model_\indm}}\middle| \|u\|_2^{\tens} \leq \sigma
 \right\} \right)
\end{align*}
where $H_{d}(\delta,\setmodel)$, the classical entropy, is defined as
the logarithm of the minimum number of ball of radius $\delta$ with
respect to norm $d$ covering the set $\setmodel$.

The following proposition, proved in
  next section, is similar to
a proposition of
\textcite{massart07:_concen}. It provides a bound for this last
entropy term under an
assumption on a link
between $\|\cdot\|_{\infty}^{2\tens}$ and $\|\cdot\|_{2}^{2\tens}$ structures:
\begin{proposition}
\label{prop:inftyentropy}
For any  basis $\{\phi_k\}_{1\leq k \leq \DimH_{\indm}}$ of
$\widebar{\sqrt{\model_{\indm}}}$ such that 
\begin{align*}
\forall \beta \in \R^{\DimH_{\indm}},\quad
 \| \sum_{k=1}^{\DimH_{\indm}} \beta_k
    \phi_k\|_{2}^{2\tens} \geq \|\beta\|_2^2,
\end{align*}
 let
\begin{align*}
  \widebar{r}_{\indm}(\{\phi_k\}) = \sup_{\sum_{k=1}^{\DimH_{\indm}} \beta_k
    \phi_k \neq 0 }
  \frac{1}{\sqrt{\DimH_{\indm}}} \frac{\| \sum_{k=1}^{\DimH_{\indm}} \beta_k
    \phi_k\|_{\infty}^{\tens}}{\|\beta\|_{\infty}}.
\end{align*}
and let $\widebar{r}_{\indm}$ be the infimum over all
suitable bases.

Then $\widebar{r}_{\indm}\geq 1$ and
  \begin{align*}
    H_{\|.\|_{\infty}^{\tens}} 
\left(\frac{\delta}{2}, \left\{ 
u \in \widebar{\sqrt{\model_\indm}} \middle| \|u\|_2^{\tens} \leq \sigma
 \right\} \right)
& \leq \DimH_{\indm} \left( \ConstMultH_{\indm} +  \Log
  \frac{\sigma}{\delta} \right)
  \end{align*}
with $\ConstMultH_{\indm} = \Log \left( \kappa_{\infty}
  \widebar{r}_{\indm}\right) $ and $\kappa_{\infty}\leq 2\sqrt{2\pi
  e}$.
\end{proposition}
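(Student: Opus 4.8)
The plan is to remove the tensorised function-space language by passing to coordinates in the basis $\{\phi_k\}$, which turns the statement into a classical estimate for covering a Euclidean ball of $\R^{\DimH_\indm}$ by sup-norm cubes, and then to run the standard volume-packing argument.

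The inequality $\widebar{r}_\indm\geq 1$ comes essentially for free. For any admissible basis $\{\phi_k\}$, I would test the defining ratio at $\beta=(1,\dots,1)$: using $\|\cdot\|_{\infty}^{\tens}\geq\|\cdot\|_{2}^{\tens}$ together with the ellipticity assumption $\|\sum_k\beta_k\phi_k\|_{2}^{2\tens}\geq\|\beta\|_2^2$, one gets $\|\sum_k\phi_k\|_{\infty}^{\tens}\geq\|\sum_k\phi_k\|_{2}^{\tens}\geq\sqrt{\DimH_\indm}$ while $\|\beta\|_\infty=1$, so $\widebar{r}_\indm(\{\phi_k\})\geq 1$; taking the infimum over admissible bases gives $\widebar{r}_\indm\geq 1$.

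For the entropy bound I would fix an admissible basis $\{\phi_k\}$ with $\widebar{r}_\indm(\{\phi_k\})$ as close to $\widebar{r}_\indm$ as wanted and use the linear isomorphism $\beta\mapsto u(\beta)=\sum_k\beta_k\phi_k$ from $\R^{\DimH_\indm}$ onto $\widebar{\sqrt{\model_\indm}}$. The ellipticity of the basis gives $\{u\in\widebar{\sqrt{\model_\indm}}:\|u\|_{2}^{\tens}\leq\sigma\}\subseteq\{u(\beta):\|\beta\|_2\leq\sigma\}$, and the definition of $\widebar{r}_\indm(\{\phi_k\})$ gives $\|u(\beta)-u(\beta')\|_{\infty}^{\tens}\leq\sqrt{\DimH_\indm}\,\widebar{r}_\indm(\{\phi_k\})\,\|\beta-\beta'\|_\infty$ for all $\beta,\beta'$. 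Consequently the image under $u(\cdot)$ of any $\ell_\infty$-net of the Euclidean ball $B_2(0,\sigma)\subset\R^{\DimH_\indm}$ of radius $\epsilon:=\tfrac{\delta}{2\sqrt{\DimH_\indm}\,\widebar{r}_\indm(\{\phi_k\})}$ is a $\|\cdot\|_{\infty}^{\tens}$-net of $\{u:\|u\|_{2}^{\tens}\leq\sigma\}$ of radius $\delta/2$, so it remains to bound the $\ell_\infty$-covering number $N_{\ell_\infty}(\epsilon,B_2(0,\sigma))$.

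The last step is the classical volumetric estimate. Around a maximal $\epsilon$-separated subset of $B_2(0,\sigma)$ the cubes $\beta^{(j)}+[-\epsilon/2,\epsilon/2]^{\DimH_\indm}$ have disjoint interiors, each is contained in $B_2(0,\epsilon\sqrt{\DimH_\indm}/2)$, and in the range $\delta\leq\sigma$ in which the estimate is applied one has $\epsilon\sqrt{\DimH_\indm}=\tfrac{\delta}{2\widebar{r}_\indm(\{\phi_k\})}\leq\sigma/2$ (since $\widebar{r}_\indm\geq1$), so their union lies in a Euclidean ball of radius at most $\tfrac54\sigma$. Comparing Lebesgue volumes, and bounding the volume $\pi^{\DimH_\indm/2}/\Gamma(\DimH_\indm/2+1)$ of the unit Euclidean ball of $\R^{\DimH_\indm}$ from above by $(2\pi e/\DimH_\indm)^{\DimH_\indm/2}$ via $\Gamma(x+1)\geq (x/e)^x$, the factor $\DimH_\indm^{-\DimH_\indm/2}$ cancels the $\DimH_\indm^{\DimH_\indm/2}$ produced by $\epsilon^{-\DimH_\indm}$, leaving $N_{\ell_\infty}(\epsilon,B_2(0,\sigma))\leq\bigl(\kappa_\infty\,\widebar{r}_\indm(\{\phi_k\})\,\sigma/\delta\bigr)^{\DimH_\indm}$ for an absolute constant $\kappa_\infty$; taking logarithms and then the infimum over admissible bases yields the claim with $\ConstMultH_\indm=\Log(\kappa_\infty\widebar{r}_\indm)$. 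The main obstacle is precisely this constant-chasing: the crude inflation above already gives $\kappa_\infty$ of the order of $\tfrac52\sqrt{2\pi e}$, and landing the sharper value $\kappa_\infty\leq 2\sqrt{2\pi e}$ requires a more careful handling of the inflation of the covering ball and of the high-dimensional volume estimate, while also checking that no spurious $\Log\DimH_\indm$ term survives the cancellation between the $\ell_2$–$\ell_\infty$ comparison factor and the Euclidean ball volume.
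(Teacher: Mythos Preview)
Your approach is exactly the paper's: pass to coordinates via the basis, reduce to an $\ell_\infty$-covering of the Euclidean ball $B_2(0,\sigma)\subset\R^{\DimH_\indm}$, and run a volume comparison using $\text{Vol}(B_2(0,1))\le(2\pi e/\DimH_\indm)^{\DimH_\indm/2}$. The proof that $\widebar r_\indm\ge 1$ is identical.

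The only place you diverge is the covering construction, and this is precisely why you fall short of $\kappa_\infty=2\sqrt{2\pi e}$. You use a maximal $\epsilon$-separated set, whose disjoint cubes have side $\epsilon$; the paper instead uses the lattice $\frac{\delta}{\sqrt{\DimH_\indm}\,\widebar r_\indm}\Z^{\DimH_\indm}$ restricted to a neighbourhood of $B_2(0,\sigma)$. For the same $\ell_\infty$ covering radius $\epsilon=\frac{\delta}{2\sqrt{\DimH_\indm}\,\widebar r_\indm}$, the lattice's Voronoi cubes have side $2\epsilon$ and tile, so their volume is $2^{\DimH_\indm}$ times larger. The union of these cubes sits in $B_2\!\bigl(0,\sigma+\tfrac{\delta}{\widebar r_\indm}\bigr)$, and the paper keeps this inflation as is rather than bounding it by a multiple of $\sigma$: writing $x=\sigma\widebar r_\indm/\delta\ge 1$ and using $1+x\le 2x$ gives
\[
|\Grid|\le\Bigl(1+\tfrac{\sigma\widebar r_\indm}{\delta}\Bigr)^{\DimH_\indm}(2\pi e)^{\DimH_\indm/2}
\le\Bigl(\tfrac{2\sqrt{2\pi e}\,\widebar r_\indm\,\sigma}{\delta}\Bigr)^{\DimH_\indm},
\]
which is exactly $\kappa_\infty=2\sqrt{2\pi e}$. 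Swapping your packing for this grid removes the obstacle you flagged; no further care with the high-dimensional volume is needed, and no $\Log\DimH_\indm$ term appears because the $\DimH_\indm^{\DimH_\indm/2}$ factors cancel just as you observed.
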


In our setting, using a basis of Legendre polynomials, we are able to derive
from \cref{prop:inftyentropy}
\begin{proposition}
\label{prop:entropypoly}
For any model of \cref{sec:piec-polyn-cond},
\begin{align*}
  \widebar{r}_{{\PartXY,\dimPYmax}}
& \leq 
 \prod_{d=1}^{\dimY} 
\left(
\sqrt{ \dimPYmaxi{d} + 1}
\sqrt{2 \dimPYmaxi{d} +1}
\right)
\sup_{\LeafXYlk \in \PartXY}
  \frac{1}
{
\sqrt{\NbPartXY}
\sqrt{|\LeafXYlk|}
}
\end{align*}
so that $\forall s_{\PartXY,\dimPYmax} \in \model_{\PartXY,\dimPYmax}$,
  \begin{align*}
    H_{[\cdot],\dtens} 
\left(\delta, \model_{\PartXY,\dimPYmax}(s_{\PartXY,\dimPYmax},\sigma)
\right)
& \leq \DimH_{{\PartXY,\dimPYmax}}  \left( \ConstMultH_{{\PartXY,\dimPYmax}} + \Log
  \frac{\sigma}{\delta} \right)
  \end{align*}
with $\ConstMultH_{{\PartXY,\dimPYmax}} = \Log \left( \kappa_{\infty}
  \widebar{r}_{{\PartXY,\dimPYmax}}\right) $ and $\kappa_{\infty}\leq 2\sqrt{2\pi
  e}$.
\end{proposition}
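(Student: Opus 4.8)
The plan is to reduce the whole statement to one computation — an upper bound on the constant $\widebar{r}_{\PartXY,\dimPYmax}$ attached to the $\DimH_{{\PartXY,\dimPYmax}}$-dimensional span of piecewise polynomials — and then simply read off the bracketing-entropy bound from \cref{prop:inftyentropy}. The reduction is the one already set up in the paragraphs preceding \cref{prop:dimpol}: every $s\in\model_{\PartXY,\dimPYmax}$ has $\sqrt{s}=(|P_{\LeafXYlk}|)_{\LeafXYlk\in\PartXY}$ with each $P_{\LeafXYlk}$ of degree at most $\dimPYmax$ on $\LeafYlk$, so the square roots lie in $\{\,|Q|\,:\,Q\in\widebar{\sqrt{\model_{\PartXY,\dimPYmax}}}\,\}$; since taking absolute values sends a bracket $[Q^-,Q^+]$ to the bracket $[\,0\vee Q^-\vee(-Q^+)\,,\,|Q^-|\vee|Q^+|\,]$ of no larger $\dtens$-width, it does not increase bracketing numbers, and combining this with $\|\cdot\|_{2}^{\tens}\le\|\cdot\|_{\infty}^{\tens}$ and the trivial $\|\cdot\|_{\infty}^{\tens}$-bracket $[u-\tfrac\delta2,u+\tfrac\delta2]$ gives
\[
H_{[\cdot],\dtens}\!\left(\delta,\model_{\PartXY,\dimPYmax}(s_{\PartXY,\dimPYmax},\sigma)\right)\le H_{\|\cdot\|_{\infty}^{\tens}}\!\left(\frac\delta2,\left\{u\in\widebar{\sqrt{\model_{\PartXY,\dimPYmax}}}:\|u\|_{2}^{\tens}\le\sigma\right\}\right).
\]
\Cref{prop:inftyentropy} then bounds the right-hand side by $\DimH_{{\PartXY,\dimPYmax}}\left(\Log(\kappa_\infty\widebar{r}_{\PartXY,\dimPYmax})+\Log\tfrac\sigma\delta\right)$ with $\kappa_\infty\le 2\sqrt{2\pi e}$, so everything comes down to estimating $\widebar{r}_{\PartXY,\dimPYmax}$. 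Throughout I may assume every cell $\LeafXl$ carries positive average covariate mass $w_l=\Ptens(\Charac{x\in\LeafXl})$, since cells with $w_l=0$ are invisible to $\dtens$; recall $\sum_{\LeafXl\in\PartX}w_l=1$.

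To bound $\widebar{r}_{\PartXY,\dimPYmax}$ it suffices, by the infimum over bases, to exhibit one convenient $\|\cdot\|_{2}^{2\tens}$-orthonormal basis and estimate the ratio $\|\cdot\|_{\infty}^{\tens}/\|\cdot\|_{\infty}$ on coordinate vectors. Writing $\LeafYlk=\prod_{d=1}^{\dimY}[a_d,b_d]$, $t_d(y_d)=\tfrac{2(y_d-a_d)}{b_d-a_d}-1$, and letting $L_j$ be the degree-$j$ Legendre polynomial on $[-1,1]$ (so $\int_{-1}^1 L_j^2=\tfrac2{2j+1}$ and $\|L_j\|_{L^\infty[-1,1]}=1$), I would take, for each cell $\LeafXYlk$ and each multi-index $j=(j_1,\dots,j_{\dimY})$ with $0\le j_d\le\dimPYmaxi{d}$,
\[
\phi_{\LeafXYlk,j}(y|x)=\sqrt{\frac{\prod_{d}(2j_d+1)}{w_l\,|\LeafYlk|}}\;\Charac{(x,y)\in\LeafXYlk}\prod_{d=1}^{\dimY}L_{j_d}\left(t_d(y_d)\right).
\]
Using Legendre orthogonality in $y$ within a cell and disjointness of supports across cells one checks that $\{\phi_{\LeafXYlk,j}\}$ is $\|\cdot\|_{2}^{2\tens}$-orthonormal, so that $\|\sum_k\beta_k\phi_k\|_{2}^{2\tens}=\|\beta\|_2^2$ and the hypothesis of \cref{prop:inftyentropy} holds with equality.

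Now fix $u=\sum_k\beta_k\phi_k$. For $x\in\LeafXl$ the function $u(\cdot|x)$ is constant in $x$ over $\LeafXl$ and equals $\sum_{j}\beta_{\LeafXYlk,j}\phi_{\LeafXYlk,j}(\cdot|x)$ on each $\LeafYlk\in\PartYLeafXl$, so by the triangle inequality, $\|\prod_d L_{j_d}\|_{L^\infty[-1,1]^{\dimY}}=1$ and $\sum_{j'=0}^{D}\sqrt{2j'+1}\le(D+1)\sqrt{2D+1}$,
\[
\|u(\cdot|x)\|_{L^\infty}\le\|\beta\|_\infty\max_{\LeafYlk\in\PartYLeafXl}\frac{1}{\sqrt{w_l|\LeafYlk|}}\sum_{j}\prod_{d}\sqrt{2j_d+1}\le\|\beta\|_\infty\max_{\LeafYlk\in\PartYLeafXl}\frac{\prod_d(\dimPYmaxi{d}+1)\sqrt{2\dimPYmaxi{d}+1}}{\sqrt{w_l|\LeafYlk|}}.
\]
Squaring and averaging, the data-dependent weights cancel: $\|u\|_{\infty}^{2\tens}=\sum_l w_l\,\|u(\cdot|x\in\LeafXl)\|_{L^\infty}^2\le\|\beta\|_\infty^2\prod_d(\dimPYmaxi{d}+1)^2(2\dimPYmaxi{d}+1)\sum_l\max_{\LeafYlk\in\PartYLeafXl}|\LeafYlk|^{-1}$. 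The point is then to write $|\LeafYlk|^{-1}=|\LeafXl|/|\LeafXl\times\LeafYlk|$ and use $\sum_{\LeafXl\in\PartX}|\LeafXl|=|[0,1]^{\dimX}|=1$ to get $\sum_l\max_{\LeafYlk\in\PartYLeafXl}|\LeafYlk|^{-1}\le\sup_{\LeafXYlk\in\PartXY}|\LeafXYlk|^{-1}$, with no factor counting the cells. Dividing by $\DimH_{{\PartXY,\dimPYmax}}\|\beta\|_\infty^2=\NbPartXY\prod_d(\dimPYmaxi{d}+1)\cdot\|\beta\|_\infty^2$ gives exactly
\[
\widebar{r}_{\PartXY,\dimPYmax}^{2}\le\frac{\prod_d(\dimPYmaxi{d}+1)(2\dimPYmaxi{d}+1)}{\NbPartXY}\sup_{\LeafXYlk\in\PartXY}\frac1{|\LeafXYlk|},
\]
i.e.\ the asserted bound on $\widebar{r}_{\PartXY,\dimPYmax}$; the entropy statement follows by feeding this into the two displays above, with $\ConstMultH_{{\PartXY,\dimPYmax}}=\Log(\kappa_\infty\widebar{r}_{\PartXY,\dimPYmax})$.

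The main obstacle is precisely this last telescoping step. The naive estimate of $\sum_l\max_{\LeafYlk\in\PartYLeafXl}|\LeafYlk|^{-1}$ — whether by $\NbPartX$ times a uniform lower bound on $|\LeafYlk|$, or by first passing through $\|\beta\|_2^2\le\DimH_{{\PartXY,\dimPYmax}}\|\beta\|_\infty^2$ — loses the factor $\NbPartXY^{-1}$, and with it the $n^{-2}$-sized quantity $\sup_{\LeafXYlk}|\LeafXYlk|$ that is needed downstream (in \cref{prop:dimpol}) to turn $\Log(\kappa_\infty\widebar{r}_{\PartXY,\dimPYmax})$ into $\tfrac12\Log\tfrac{n^2}{\NbPartXY}+\constpolyparttwobis$; recognizing that $\sum_l|\LeafXl|=1$ is what lets the Lebesgue measure of the full cell $\LeafXl\times\LeafYlk$ appear. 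The only other point requiring care is choosing the Legendre normalization so that the covariate-dependent masses $w_l$ cancel cleanly after averaging over the design, and (a minor bookkeeping point in the reduction) keeping track that replacing squared polynomials by the linear span costs nothing in bracketing numbers.
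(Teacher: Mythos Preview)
Your argument is the paper's own. You use the same rescaled Legendre basis (the paper's $\phi_{\dimPY}^{\LeafXYlk}=G_{\dimPY}^{\LeafXYlk}/\sqrt{\mu_X(\LeafXl)}$ coincides with your $\phi_{\LeafXYlk,j}$ once one unpacks $\mu_X(\LeafXl)\,|\LeafXYlk|=w_l\,|\LeafYlk|$), the same piecewise triangle inequality on $\|u\|_\infty^{2\tens}$, and exactly the same decisive step of writing $|\LeafYlk|^{-1}=|\LeafXl|/|\LeafXYlk|$ and collapsing $\sum_{\LeafXl}|\LeafXl|=1$; the paper carries this out in the proof of \cref{prop:entropypolyext} and then specializes the degree, using the same crude bound $\sum_{j\le D}\sqrt{2j+1}\le(D+1)\sqrt{2D+1}$.

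One remark on the reduction you highlight. You are right that $\sqrt{s}=|P|$ need not be piecewise polynomial --- a point the paper glosses over by asserting ``$\sqrt{\model_\indm}$ is a subset of a linear space \ldots\ as in our model'' --- and your bracket-under-$|\cdot|$ observation is correct. But it does not by itself recover the \emph{localized} inequality you display: $\model(s_\indm,\sigma)$ corresponds to $\{|Q|:\||Q|-|Q_0|\|_2^{\tens}\le\sigma\}$, and since $|Q_0|\notin\widebar{\sqrt{\model_{\PartXY,\dimPYmax}}}$ you cannot translate to the origin; all you know about a representative $Q$ is $\|Q\|_2^{\tens}=\||Q|\|_2^{\tens}\le 1+\sigma$, so the linear ball on the right must have radius $1+\sigma$, not $\sigma$. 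What this (and the paper's) argument actually delivers is therefore the global bound with $\Log(1/\delta)$ in place of $\Log(\sigma/\delta)$, which through the first case of \cref{prop:proporsimple} costs one extra $\Log n$ in $\DIMH$. That is harmless for \cref{theo:polypart}, where a $2\Log n$ already sits in the penalty, but the localized form as stated would need a further argument.
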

\ifthenelse{\boolean{extended}}{}{
A proof, essentially computational, can be found in our technical report~\cite{cohen11:_condit_densit_estim_penal_app}.}
One easily verifies that
\begin{align*}
\sup_{\LeafXYlk \in \PartXY}
  \frac{1}
{
\sqrt{\NbPartXY}
\sqrt{|\LeafXYlk|}
}
\leq
&  \begin{cases} 1 & \text{{
if  all hyperrectangles have same sizes}}\\
\sqrt{\frac{n^2}{\NbPartXY}}
& \text{otherwise}.
\end{cases}
\end{align*}
Remark that when $\starX=\UDPX$,
$\starY=\UDPY$ and $\PartYLeafXl$ is independent of
$\LeafXl$, all the hyperrectangles have same sizes and that the $n^2$ corresponds to the arbitrary limitation imposed on the
minimal size of the segmentations. If we limit this minimal size to
$\frac{1}{\sqrt{n}}$ instead of $\frac{1}{n}$ this factor becomes $n$.

Let 
\begin{align*}
\constpolyparttwobis=\Log \left( \kappa_{\infty}   \prod_{k=1}^{\dimY} \left( \sqrt{\dimPYmaxi{k}+1}\sqrt{2
     \dimPYmaxi{k}+1} \right) \right)
\end{align*}
we have slightly more than \cref{prop:dimpol} as
$\forall s_{\PartXY,\dimPYmax} \in \model_{\PartXY,\dimPYmax}$,
  \begin{align*}
    H_{[\cdot],\dtens} 
\left(\delta, \model_{\PartXY,\dimPYmax}(s_{\PartXY,\dimPYmax},\sigma)
\right)
& \leq \DimH_{{\PartXY,\dimPYmax}}  
\begin{cases}
\left( \constpolyparttwobis
 + \Log
  \frac{\sigma}{\delta} \right) & \text{for the same
  size case}\\
\left( \frac{1}{2}\Log \frac{n^2}{\NbPartXY} +
  \constpolyparttwobis
 + \Log
  \frac{\sigma}{\delta} \right) & \text{otherwise}
\end{cases}
\end{align*}

\subsection{Proofs of \cref{prop:inftyentropy} and
  \cref{prop:entropypoly}}


\begin{proof}[Proof of \cref{prop:inftyentropy}]
Let $(\phi_k)_{1 \leq k \leq \DimH_{\indm}}$ be a basis of
$\widebar{\sqrt{\model_\indm}}$ satisfying
\begin{align*}
  \forall \beta \in \R^{\DimH_{\indm}},\quad
\left\| \sum_{k=1}^{\DimH_{\indm}} \beta_k
    \phi_k\right\|_{2}^{2,\tens} \geq \|\beta\|_2^2.
\end{align*}
Note that for $\beta$ defined by $\forall 1 \leq k \leq \DimH_{\indm}, \beta_k=1$
\begin{align*}
\left\| \sum_{k=1}^{\DimH_{\indm}} \beta_k
    \phi_k\right\|_{\infty}^{2,\tens} \geq  \left\| \sum_{k=1}^{\DimH_{\indm}} \beta_k
    \phi_k\right\|_{2}^{2,\tens} \geq \|\beta\|_2^2 = \DimH_{\indm} =
    \DimH_{\indm} \|\beta\|_ {\infty}^2
\end{align*}
so that $\widebar{r}_{\indm}(\phi)\geq 1$.

Let the grid $\Grid_{\indm}(\delta,\sigma)$:
\begin{align*}
  \left\{ \beta \in \R^{\DimH_{\indm}}\middle|
\, \forall 1 \leq k \leq \DimH_{\indm}, \beta_k \in
\frac{\delta}{\sqrt{\DimH_{\indm}} \widebar{r}_{\indm}(\phi)} \Z 
\  \text{and}\ \min_{\beta', \|\beta'\|_2 \leq
      \sigma} \|\beta-\beta'\|_{\infty} \leq \frac{\delta}{2\sqrt{\DimH_{\indm}} \widebar{r}_{\indm}(\phi)}
\right\}.
\end{align*}
By definition, for any $u' \in \widebar{\sqrt{\model_\indm}}$ such
that $\|u'\|_{2}^{\tens}\leq \sigma$ there is a $\beta'$ such that
 $u'=\sum_{k=1}^{\DimH_{\indm}} \beta'_k
\phi_k$ and $\|\beta'\|_2 \leq \sigma$. By construction, there is a
$\beta \in \Grid_{\indm}(\delta,\sigma)$ such that
\begin{align*}
  \|\beta-\beta'\|_{\infty} \leq \frac{\delta}{2\sqrt{\DimH_{\indm}} \widebar{r}_{\indm}(\phi)}.
\end{align*}
Definition of $\widebar{r}_{\indm}$ implies then that
\begin{align*}
 \left\| \sum_{k=1}^{\DimH_{\indm}} \beta_k
    \phi_k - 
\sum_{k=1}^{\DimH_{\indm}} \beta'_k
    \phi_k
\right\|_{\infty}^{\tens} & \leq \widebar{r}_{\indm}(\phi) \sqrt{\DimH_{\indm}}
\|\beta-\beta'\|_{\infty}\\
& \leq \frac{\delta}{2}.
\end{align*}
The set $\left\{ \sum_{k=1}^{\DimH_{\indm}} \beta_k
    \phi_k \middle| \beta \in \Grid_{\indm}(\delta,\sigma) \right\}$ is
    thus a $\frac{\delta}{2}$ covering of $\left\{ 
u \in \widebar{\sqrt{\model_\indm}} \middle| \|u\|_2^{\tens} \leq \sigma
 \right\}$ for the $\|\cdot\|_{\infty}^{\tens}$ norm. It remains thus only
 to bound the cardinality of $\Grid_{\indm}(\delta,\sigma)$.

Let $\widebar{\Grid_{\indm}(\delta,\sigma)}$ be the union of all hypercubes of width
$\frac{\delta}{\sqrt{\DimH_{\indm}}\widebar{r}_{\indm}(\phi)}$ centered on the
grid $\Grid_{\indm}(\delta,\sigma)$, by construction, for any
$\beta\in\widebar{\Grid_{\indm}(\delta,\sigma)}$ there is a $\beta'$
with $\|\beta'\|_2 \leq \sigma$ such that $\|\beta'-\beta\|_{\infty} \leq 
\frac{\delta}{\sqrt{\DimH_{\indm}}\widebar{r}_{\indm}(\phi)}$.
As $\|\beta'-\beta\|_{2} \leq \sqrt{\DimH_{\indm}}
\|\beta'-\beta\|_{\infty}$, this implies $\|\beta\|_{2} \leq \sigma + \frac{\delta}{\widebar{r}_{\indm}(\phi)}$.
We then deduce
\begin{align*}
  \text{Vol}\left(\widebar{\Grid_{\indm}(\delta,\sigma)}\right)
= \left| \Grid_{\indm}(\delta,\sigma) \right|
\left(
\frac{\delta}{\sqrt{\DimH_{\indm}}\widebar{r}_{\indm}(\phi)}
\right)^{\DimH_{\indm}}
&\leq \text{Vol}\left(
\left\{ \beta \in \R^{\DimH_{\indm}}\middle| \|\beta\|_2 \leq \sigma + \frac{\delta}{\widebar{r}_{\indm}(\phi)}
\right\} 
\right)\\
& \leq
\left( \sigma + \frac{\delta}{\widebar{r}_{\indm}(\phi)}
\right)^{\DimH_{\indm}}
\text{Vol}\left(
\left\{ \beta \in \R^{\DimH_{\indm}}\middle| \|\beta\|_2 \leq 1
\right\} 
\right)
\end{align*}
and thus
\begin{align*}
  \left| \Grid_{\indm}(\delta,\sigma) \right| & \leq \left(1+
  \frac{\sigma \widebar{r}_\indm(\phi)}{\delta}\right)^{\DimH_{\indm}}
\DimH_{\indm}^{\DimH_{\indm}/2}
\text{Vol}\left(
\left\{ \beta \in \R^{\DimH_{\indm}}\middle| \|\beta\|_2 \leq 1
\right\} 
\right)
\intertext{%
and as $\frac{\sigma \widebar{r}_\indm(\phi)}{\delta}\geq 1$ and
$\text{Vol}\left(
\left\{ \beta \in \R^{\DimH_{\indm}}\middle| \|\beta\|_2 \leq 1
\right\} 
\right)
\leq 
\left(\frac{2\pi e}{\DimH_{\indm}}\right)^{\DimH_{\indm}/2}
$}
  \left| \Grid_{\indm}(\delta,\sigma) \right| &
\leq \left(
\frac{2\sqrt{2\pi e} \widebar{r}_{\indm}(\phi) \sigma}{\delta}
\right)^{\DimH_{\indm}}
\end{align*}
which concludes the proof.
\end{proof}

Instead of \cref{prop:entropypoly}, by mimicking a proof of
\textcite{massart07:_concen}, we prove
\ifthenelse{\boolean{extended}}{}{in our technical
  report~\cite{cohen11:_condit_densit_estim_penal_app}} an extended version of it
in which the degree of the conditional densities may depend on the hyperrectangle. More
precisely, we reuse the partition $\PartX \in \CollPart^{\starX}$
and the partitions $\PartYLeafXl \in \CollPart^{\starY}$ for
$\LeafXl \in \PartX$ and define now the model
$\model_{\PartXY,\dimPYmaxPartXY}$ as the set of conditional densities
such that
\begin{align*}
  s(y|x) & = \sum_{\LeafXYlk\in\PartXY}
P^2_{\LeafXYlk}(y)
\Charac{(x,y)\in\LeafXYlk}
\end{align*}
where $P_{\LeafXYlk}$ is a polynomial of degree at most
\begin{alignI}
\dimPYmax(\LeafXYlk)
=\left(\dimPYmaxi{1}(\LeafXYlk),\ldots,\dimPYmaxi{\dimY}(\LeafXYlk)\right)
\end{alignI} 
which depends on the leaf.

\ifthenelse{\boolean{extended}}{
By construction,
\begin{align*}
 \dim(\model_{\PartXY,\dimPYmaxPartXY}) = \sum_{\LeafXl
   \in \PartX} \left( 
\left( \sum_{\LeafYlk \in \PartYLeafXl} 
\prod_{d=1}^{\dimY}
\left(\dimPYmaxi{d}(\LeafXYlk)+1\right)
\right) - 1
\right).
\end{align*}
The corresponding linear space
$\widebar{\sqrt{\model_{\PartXY,\dimPYmaxPartXY}}}$ is 
\begin{align*}
\left\{ 
  \sum_{\LeafXYlk\in\PartXY}
P_{\LeafXYlk}(y)
\Charac{(x,y)\in\LeafXYlk}
\middle| \deg\left( P_{\LeafXYlk}\right) \leq \dimPYmax(\LeafXYlk) 
\right\}.
\end{align*}}{}
Instead of the true dimension, we use a slight upper bound
\begin{align*}
\DimH_{\PartXY,\dimPYmaxPartXY} =   
\sum_{\LeafXl
   \in \PartX}  \sum_{\LeafYlk \in \PartYLeafXl} 
\prod_{d=1}^{\dimY}
\left(\dimPYmaxi{d}(\LeafXYlk)+1\right)
= \sum_{\LeafXYlk
   \in \PartXY} \prod_{d=1}^{\dimY}
\left(\dimPYmaxi{d}(\LeafXYlk)+1\right)
\end{align*}
Note that the space $\model_{\PartXY,\dimPYmax}$
introduced in the main part of the paper 
corresponds to the case 
where the degree $\dimPmax(\LeafXYlk)$ does
not depend on the hyperrectangle $\LeafXYlk$.

\begin{proposition}
\label{prop:entropypolyext}
There exists
\begin{align*}
  \widebar{r}_{{\PartXY,\dimPYmaxPartXY}}
& \leq
\frac{
 \sup_{\LeafXYlk\in\PartXY}
\prod_{d=1}^{\dimY} 
\left(
\sum_{\dimPYi{d} \leq \dimPYmaxi{d}(\LeafXYlk)}
\sqrt{2 \dimPYi{d}+1}
\right)
}{\inf _{\LeafXYlk
   \in \PartXY} 
\prod_{d=1}^{\dimY}
\sqrt{\dimPYmaxi{d}(\LeafXYlk)+1}}
\sup_{\LeafXYlk \in \PartXY}
\frac{1}{\sqrt{
\NbPartX}\sqrt{|\LeafXYlk|}
}
\end{align*}
such that $\forall s_{\PartXY,\dimPYmaxPartXY} \in \model_{\PartXY,\dimPYmaxPartXY}$,
  \begin{align*}
    H_{[\cdot],\dtens} 
\left(\delta, \model_{\PartXY,\dimPYmaxPartXY}(s_{\PartXY,\dimPYmaxPartXY},\sigma)
\right)
& \leq \DimH_{{\PartXY,\dimPYmaxPartXY}}  \left( \ConstMultH_{{\PartXY,\dimPYmaxPartXY}} + \Log
  \frac{\sigma}{\delta} \right)
  \end{align*}
with $\ConstMultH_{{\PartXY,\dimPYmaxPartXY}} = \Log \left( \kappa_{\infty}
  \widebar{r}_{{\PartXY,\dimPYmaxPartXY}}\right) $ and $\kappa_{\infty}\leq 2\sqrt{2\pi
  e}$.
\end{proposition}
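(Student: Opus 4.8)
The plan is to derive \cref{prop:entropypolyext} from \cref{prop:inftyentropy} by exactly the reduction already carried out in the proof of \cref{prop:dimpol}, the only real work being the construction of a convenient basis of the span of the square roots and the estimate of its radius $\widebar{r}$. A square root of a member of $\model_{\PartXY,\dimPYmaxPartXY}$ is precisely a piecewise polynomial $\sum_{\LeafXYlk\in\PartXY}P_{\LeafXYlk}(y)\Charac{(x,y)\in\LeafXYlk}$ with $\deg P_{\LeafXYlk}\le\dimPYmax(\LeafXYlk)$, so $\widebar{\sqrt{\model_{\PartXY,\dimPYmaxPartXY}}}$ is a linear space of dimension $\DimH_{\PartXY,\dimPYmaxPartXY}=\sum_{\LeafXYlk\in\PartXY}\prod_{d=1}^{\dimY}(\dimPYmaxi{d}(\LeafXYlk)+1)$. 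The chain of inequalities in the proof of \cref{prop:dimpol} then yields, for every $s\in\model_{\PartXY,\dimPYmaxPartXY}$,
\begin{align*}
H_{[\cdot],\dtens}\bigl(\delta,\model_{\PartXY,\dimPYmaxPartXY}(s,\sigma)\bigr)\le H_{\|.\|_{\infty}^{\tens}}\Bigl(\tfrac{\delta}{2},\bigl\{u\in\widebar{\sqrt{\model_{\PartXY,\dimPYmaxPartXY}}}\ \big|\ \|u\|_{2}^{\tens}\le\sigma\bigr\}\Bigr),
\end{align*}
and \cref{prop:inftyentropy} bounds the right-hand side by $\DimH_{\PartXY,\dimPYmaxPartXY}\bigl(\ConstMultH_{\PartXY,\dimPYmaxPartXY}+\Log\frac{\sigma}{\delta}\bigr)$ with $\ConstMultH_{\PartXY,\dimPYmaxPartXY}=\Log(\kappa_{\infty}\widebar{r}_{\PartXY,\dimPYmaxPartXY})$ and $\kappa_{\infty}\le 2\sqrt{2\pi e}$, which is exactly the announced form. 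It thus remains to exhibit one admissible basis and to estimate its radius.

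\textbf{The basis.} I would take, on each hyperrectangle $\LeafXYlk=\LeafXl\times\LeafYlk$ and each multidegree $\vec{j}\le\dimPYmax(\LeafXYlk)$ (componentwise), the function $\phi_{\LeafXYlk,\vec{j}}(y|x)=c_{\LeafXl}\,L_{\vec{j}}^{\LeafYlk}(y)\,\Charac{(x,y)\in\LeafXYlk}$, where $L_{\vec{j}}^{\LeafYlk}$ is the tensor product of the univariate Legendre polynomials affinely rescaled to $\LeafYlk$ and normalized so that $\|L_{\vec{j}}^{\LeafYlk}\|_{L^2(\LeafYlk,\ud y)}=1$, and $c_{\LeafXl}>0$ is to be chosen. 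These functions are pairwise orthogonal for the inner product associated with $\|\cdot\|_{2}^{\tens}$: distinct $\LeafXl$ give disjoint supports in $x$, distinct $\LeafYlk$ inside the same $\PartYLeafXl$ give disjoint supports in $y$, and distinct $\vec{j}$ on the same $\LeafYlk$ are $L^2(\ud y)$-orthogonal. Hence $\|\sum_k\beta_k\phi_k\|_{2}^{2\tens}=\sum_k\beta_k^2\,c_{\LeafXl}^2\,\bigl(\tfrac{1}{n}\sum_{i=1}^{n}\Prob\{X_i\in\LeafXl\}\bigr)$, so the choice $c_{\LeafXl}^2=\bigl(\tfrac{1}{n}\sum_{i=1}^{n}\Prob\{X_i\in\LeafXl\}\bigr)^{-1}$ makes the hypothesis $\|\sum_k\beta_k\phi_k\|_{2}^{2\tens}\ge\|\beta\|_2^2$ of \cref{prop:inftyentropy} an equality. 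Regions $\LeafXl$ on which this weight vanishes support only functions that are null for both $\|\cdot\|_{2}^{\tens}$ and $\|\cdot\|_{\infty}^{\tens}$, hence contribute nothing to the entropy and may simply be discarded from the span.

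\textbf{Bounding $\widebar{r}$.} I would combine the exact value $\|L_{\vec{j}}^{\LeafYlk}\|_{\infty}=\sqrt{\prod_{d}(2j_d+1)/|\LeafYlk|}$ with the triangle inequality: for $u=\sum_k\beta_k\phi_k$ and $x\in\LeafXl$,
\begin{align*}
\|u(\cdot|x)\|_{\infty}\le\|\beta\|_\infty\,c_{\LeafXl}\,\max_{\LeafYlk\in\PartYLeafXl}\frac{A_{\LeafXYlk}}{\sqrt{|\LeafYlk|}},\qquad A_{\LeafXYlk}:=\prod_{d=1}^{\dimY}\ \sum_{\dimPYi{d}=0}^{\dimPYmaxi{d}(\LeafXYlk)}\sqrt{2\dimPYi{d}+1}.
\end{align*}
Squaring, averaging over the design, and using the choice of $c_{\LeafXl}$ to cancel the weights $\tfrac{1}{n}\sum_i\Prob\{X_i\in\LeafXl\}$ gives $\|u\|_{\infty}^{2\tens}\le\|\beta\|_\infty^2\sum_{\LeafXl}\max_{\LeafYlk\in\PartYLeafXl}A_{\LeafXYlk}^2/|\LeafYlk|$; writing $|\LeafYlk|=|\LeafXYlk|/|\LeafXl|$ and using $\sum_{\LeafXl}|\LeafXl|=1$ then collapses the sum over $X$-regions into $\|u\|_{\infty}^{2\tens}\le\|\beta\|_\infty^2\sup_{\LeafXYlk}A_{\LeafXYlk}^2/|\LeafXYlk|$. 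Dividing by $\sqrt{\DimH_{\PartXY,\dimPYmaxPartXY}}$ and lower bounding $\DimH_{\PartXY,\dimPYmaxPartXY}\ge\NbPartX\,\inf_{\LeafXYlk}\prod_{d}(\dimPYmaxi{d}(\LeafXYlk)+1)$ produces the stated bound on $\widebar{r}_{\PartXY,\dimPYmaxPartXY}$.

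\textbf{Specialization and main obstacle.} \Cref{prop:entropypoly} follows by taking the degrees constant, using in addition $\sum_{j=0}^{p}\sqrt{2j+1}\le(p+1)\sqrt{2p+1}$ and the sharper lower bound $\DimH\ge\NbPartXY\prod_{d}(\dimPYmaxi{d}+1)$. The one genuinely delicate point is the design-weight bookkeeping in the last paragraph: one must check that the normalizations $c_{\LeafXl}$ cancel exactly when passing from $\|\cdot\|_{\infty}^{\tens}$ of a function to an average over the $X_i$, that the replacement of $|\LeafYlk|$ by $|\LeafXYlk|$ is legitimate (it rests on $|\LeafXl|\le 1$ together with $\sum_{\LeafXl}|\LeafXl|=1$), and that the zero-weight $X$-regions can be removed without inflating the exponent. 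Everything else is the by-now standard combination of the Dudley-type computation of \cref{prop:inftyentropy} with explicit constants for Legendre polynomials.
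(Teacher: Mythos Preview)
Your proposal is correct and follows essentially the same route as the paper: rescaled tensorized Legendre polynomials on each $\LeafXYlk$, normalized by the design weight on $\LeafXl$ so that the basis is $\|\cdot\|_2^{\tens}$-orthonormal, then the triangle inequality and the Legendre sup-norm identity $\|G_{j}\|_\infty=\sqrt{2j+1}$ to control $\|\cdot\|_\infty^{\tens}$, and finally \cref{prop:inftyentropy}. Your normalization $c_{\LeafXl}^{2}=\bigl(\tfrac{1}{n}\sum_i\Prob\{X_i\in\LeafXl\}\bigr)^{-1}$ is exactly the paper's $1/\bigl(\mu_X(\LeafXl)\,|\LeafXl|\bigr)$ written differently, and your bookkeeping with $|\LeafYlk|=|\LeafXYlk|/|\LeafXl|$ and $\sum_{\LeafXl}|\LeafXl|=1$ reproduces the paper's computation. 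One small remark: the paper actually carries $\NbPartXY$ rather than $\NbPartX$ through the lower bound $\DimH_{\PartXY,\dimPYmaxPartXY}\ge\NbPartXY\inf_{\LeafXYlk}\prod_d(\dimPYmaxi{d}(\LeafXYlk)+1)$, which gives a slightly sharper $\widebar{r}$; your version with $\NbPartX$ is of course still valid and matches the statement as written. Your explicit handling of zero-weight $X$-regions is a point the paper leaves implicit.
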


\Cref{prop:entropypoly} is deduced from this proposition with the help
of the simple upper bound
\begin{align*}
  \sum_{\dimPYi{d} \leq \dimPYmaxi{d}(\LeafXYlk)}
\sqrt{2 \dimPYi{d}+1}
& \leq (\dimPYmaxi{d}(\LeafXYlk) + 1 ) \sqrt{2
  \dimPYmaxi{d}(\LeafXYlk) + 1 }.
\end{align*}

As 
\begin{align*}
\frac{
 \sup_{\LeafXYlk\in\PartXY}
\prod_{d=1}^{\dimY} 
\left(
\sum_{\dimPYi{d} \leq \dimPYmaxi{d}(\LeafXYlk)}
\sqrt{2 \dimPYi{d}+1}
\right)
}{\inf _{\LeafXYlk
   \in \PartXY} 
\prod_{d=1}^{\dimY}
\sqrt{\dimPYmaxi{d}(\LeafXYlk)+1}}
\leq \prod_{d=1}^{\dimY} \max \sqrt{2}(\dimPYmaxi{d} + 1 ),
\end{align*}
once a maximal degree is chosen along each axis, the equivalent of constant 
$\constpolyparttwo$ of \ref{theo:polypart} depends only on this
maximal degrees. Assumption
 $\text{H}_{\PartXY,\dimPYmax}$ holds then, with the same constants, simultaneaously for all
 models of both
global choice and local choice strategies. Obtaining the Kraft type
assumption, Assumption (K) is only a matter of taking into account the
augmentation of the number of models within the collection. Replacing respectively 
$\PartCstA^{\starX}$ by  $\PartCstA^{\starX} + \Log
        |\dimPYfamily|$ for global optimization and
      $\PartCstB^{\starY}$ by  $\PartCstB^{\starY} +
        \Log |\dimPYfamily| $ for local optimization, where
        $|\dimPYfamily|$ denotes the size of the family of possible
        degrees, turns out to be sufficient as mentioned earlier.

\ifthenelse{\boolean{extended}}{
\begin{proof}[Proof of \cref{prop:entropypolyext}]
Let $L_\dimP$ be the one dimensional Legendre polynomial of degree
$\dimP$ on $[0,1]$
and $G_{\dimP}= \sqrt{2 \dimP +1} L_{\dimP}$ its rescaled version, we
  recall that, by definition,
  \begin{align*}
\forall \dimP \in \N,\quad    \|G_{\dimP}\|_{\infty}&= \sqrt{2\dimP+1} & \text{and}  &&\forall (\dimP,\dimP') \in
\N^2,\quad \int G_\dimP(t) G_{\dimP'}(t) \ud t &= \delta_{\dimP,\dimP'} 
  \end{align*}
 Let $\dimPY\in\N^{\dimY}$, we define
 $G_{\dimPY}$ as
   the  polynomial
   \begin{align*}
  G_{\dimPYi{1},\ldots,\dimPYi{\dimY}}(y)
  = 
  G_{\dimPYi{1}}(y_1)\times \cdots  \times G_{\dimPYi{\dimY}}(y_{\dimY}),
   \end{align*}
by construction
  \begin{align*}
\forall \dimPY \in \N^{\dimY},\quad    \|G_{\dimPY}\|_{\infty}&= 
\prod_{1 \leq d \leq \dimY} \sqrt{2 \dimPYi{d}+1}
\end{align*}
and
\begin{align*}
\forall (\dimPY,\dimPY') \in
\N^{\dimY\times 2},\quad \int_{y \in [0,1]^{\dimY}} G_{\dimPY}(y) G_{\dimPY'}(y) \ud y &=
\delta_{\dimPY,\dimPY'}.
  \end{align*}

Now for any hyperrectangle $\LeafXYlk$, we define $G_{\dimPY}^{\LeafXYlk}(x,y)=
\frac{1}{\sqrt{|\LeafXYlk|}} G_{\dimPY}(T^{\LeafYlk}(y))
\Charac{(x,y) \in  \LeafXYlk}(x,y)$
where $T^{\LeafYlk}$ is the affine transform that maps
$\LeafYlk$ into
$[0,1]^{\dimY}$ so that
  \begin{align*}
\forall \LeafXYlk \in \PartXY,
 \forall \dimPY \in \N^{\dimY},\quad
\|G^{\LeafXYlk}_\dimP\|_{\infty}&=
\frac{1}{\sqrt{|\LeafXYlk|}} 
  \prod_{1 \leq d \leq \dimY} \sqrt{2 \dimPYi{k}+1}
\end{align*}
and
\begin{align*}
\forall (\LeafXYlk,\LeafXYlkp) \in \left(\PartXY\right)^2,
 &\forall (\dimPY,\dimPY') \in
\N^{\dimY\times 2},\\
&\quad
\int_{x \in [0,1]^{\dimX}}
\int_{y \in [0,1]^{\dimY}}
 G_\dimP^{\LeafXYlk}(x,y)
G_{\dimP'}^{\LeafXYlkp}(x,y) \ud y \ud x =
\delta_{\LeafXYlk,\LeafXYlkp} \delta_{\dimPY,\dimPY'}.
  \end{align*}
Using the piecewise structure, one deduces
\begin{align*}
\E &\left[ \left\| 
 \sum_{\LeafXYlk \in \PartXY}
\sum_{\dimPY\leq \dimPYmax(\LeafXYlk)}
\beta_{\dimPY}^{\LeafXYlk}
G_{\dimPY}^{\LeafXYlk}(X_i,\cdot) \right\|_2^2
\right] \\
\ifthenelse{\boolean{extended}}{
  & = \E \left[ \sum_{\LeafXl \in \PartX}
\frac{\Charac{X_1 \in \LeafXl}}{|\LeafXl|} 
\sum_{\LeafYlk \in \PartYLeafXl}
\int_{(x,y) \in \LeafXYlk}
\left|
\sum_{\dimPY\leq \dimPYmax(\LeafXl,\LeafYlk)}
\beta_{\dimPY}^{\LeafXl,\LeafYlk}G_{\dimPY}^{\LeafXl,\LeafYlk}(x,y)
\right|^2
\ud y  \ud x \right]\\
& =  \E \left[ \sum_{\LeafXl \in \PartX}
\frac{\Charac{X_1 \in \LeafXl}}{|\LeafXl|} 
\sum_{\LeafYlk \in \PartYLeafXl}
\sum_{\dimPY\leq \dimPYmax(\LeafXYlk)}
\left| \beta_{\dimPY}^{\LeafXYlk}\right|^2 
 \right]\\
}{}
& = \sum_{\LeafXl \in \PartX} \frac{\Prob\{X_i \in \LeafXl\}}{|\LeafXl|} \sum_{\LeafYlk \in \PartYLeafXl}
\sum_{\dimPY\leq \dimPYmax(\LeafXYlk)}
\left| \beta_{\dimPY}^{\LeafXYlk}\right|^2.
\end{align*}

The space
$\widebar{\sqrt{\model_{\PartXY,\dimPYmaxPartXY}}}$ is
spanned by
\begin{align*}
  \left\{
 G_{\dimPY}^{\LeafXYlk}
\middle|
\LeafXYlk \in \PartXY, \dimPY \leq \dimPYmax(\LeafXYlk)
\right\}
\end{align*}
but also by the rescaled
$\phi_{\dimP}^{\LeafXYlk}=\frac{1}{\sqrt{\mu_X(\LeafXl)}}
G_\dimP^{\LeafXYlk}$ where $\mu_X(\LeafXl)= \frac{1}{n}
\sum_{i=1}^n
\frac{\Prob\{ X_i \in \LeafXl\}}{|\LeafXl|}$. For these functions, one has
\begin{align*}
\Bigg\|  &\sum_{\LeafXYlk \in \PartXY} 
\sum_{\dimPY\leq \dimPYmax(\LeafXYlk)} \beta_{\dimPY}^{\LeafXYlk}  \phi_{\dimPY}^{\LeafXYlk}
\Bigg\|_2^{2\tens}\\
\ifthenelse{\boolean{extended}}{
& = \E \left[ \frac{1}{n} \sum_{i=1}^n
\left\| 
 \sum_{\LeafXYlk \in \PartXY}
\sum_{\dimPY\leq \dimPYmax(\LeafXYlk)}
\beta_{\dimPY}^{\LeafXYlk}
\phi_{\dimPY}^{\LeafXYlk}(X_i,\cdot) \right\|^2
\right]\\
}{}
& = \frac{1}{n} \sum_{i=1}^n \E \left[ \left\| 
 \sum_{\LeafXYlk \in \PartXY}
\sum_{\dimPY\leq \dimPYmax(\LeafXYlk)}
\frac{\beta_{\dimPY}^{\LeafXYlk}}{\sqrt{\mu_X(\LeafXl)}}
G_{\dimPY}^{\LeafXYlk}(X_i,\cdot) \right\|^2
\right]\\
& = \sum_{\LeafXYlk \in \PartXY}
\sum_{\dimPY\leq \dimPYmax(\LeafXYlk)}
\left|\beta_{\dimPY}^{\LeafXYlk}\right|^2 = \left\|\beta_{\dimPY}^{\LeafXYlk}\right\|_2^2.
\end{align*}

For $\|\cdot\|_\infty$ type norm,
\begin{align*}
\Bigg\|  &\sum_{\LeafXYlk \in \PartXY}
\sum_{\dimPY\leq \dimPYmax(\LeafXYlk)} \beta_{\dimPY}^{\LeafXYlk}  \phi_{\dimPY}^{\LeafXYlk}
\Bigg\|_\infty^{2\tens}\\
\ifthenelse{\boolean{extended}}{
& = \E \left[
\frac{1}{n} \sum_{i=1}^n \left\|
\sum_{\LeafXYlk \in \PartXY}
\sum_{\dimPY\leq \dimPYmax(\LeafXYlk)} \beta_{\dimPY}^{\LeafXYlk}  \phi_{\dimPY}^{\LeafXYlk}(X_i,\cdot)
\right\|_{\infty}^2
\right]\\
&=
\frac{1}{n} \sum_{i=1}^n
\E \left[
\left\|
\sum_{\LeafXYlk \in \PartXY}
\sum_{\dimPY\leq \dimPYmax(\LeafXYlk)} \beta_{\dimPY}^{\LeafXYlk}  \phi_{\dimPY}^{\LeafXYlk}(X_i,\cdot)
\right\|_{\infty}^2
\right]\\
}{}
&= \frac{1}{n} \sum_{i=1}^n
\E \left[
\sum_{\LeafXl \in \PartX}
\Charac{X_i\in\LeafXl}
\sup_{\LeafYlk \in \PartYLeafXl}
\left\|
\sum_{\dimPY\leq \dimPYmax(\LeafXYlk)} \beta_{\dimPY}^{\LeafXYlk}  \phi_{\dimPY}^{\LeafXYlk}(X_i,\cdot)
\right\|_{\infty}^2
\right]\\
&\leq \frac{1}{n} \sum_{i=1}^n \E \left[
\sum_{\LeafXl \in \PartX}
\Charac{X_i\in\LeafXl}
\sup_{x\in\LeafXl}
\sup_{\LeafYlk \in \PartYLeafXl}
\left(
\sum_{\dimPY\leq \dimPYmax(\LeafXYlk)} \left|
  \beta_{\dimPY}^{\LeafXYlk} \right|
  \left\|\phi_{\dimPY}^{\LeafXYlk}(x,\cdot)\right\|_{\infty}
\right)^2\right]\\
\ifthenelse{\boolean{extended}}{
& \leq \frac{1}{n} \sum_{i=1}^n
\E \left[
\sum_{\LeafXl \in \PartX}
\Charac{X_i\in\LeafXl}
\sup_{\LeafYlk \in \PartYLeafXl}
\frac{1}{\mu_X(\LeafXl)|\LeafXYlk|}
\left(
\sum_{\dimPY\leq \dimPYmax(\LeafXYlk)} 
  \left\|G_{\dimPY}\right\|_{\infty}
\right)^2 \left\| \beta_{\dimPY}^{\LeafXYlk}
\right\|_{\infty}^2\right]\\
}{}
& \leq 
\sum_{\LeafXl \in \PartX}
|\LeafXl|
\sup_{\LeafYlk \in \PartYLeafXl}
\frac{1}{|\LeafXYlk|}
\left(
\sum_{\dimPY\leq \dimPYmax(\LeafXYlk)} 
  \left\|G_{\dimPY}\right\|_{\infty}
\right)^2 
\left\| \beta_{\dimPY}^{\LeafXYlk}
\right\|_{\infty}^2.
\end{align*}

Now
\begin{align*}
   \sum_{\dimPY \leq \dimPYmax(\LeafXYlk)}
 \|G_{\dimPY}\|_{\infty}  
& =  \sum_{\dimPY \leq \dimPYmax(\LeafXYlk)}
  \prod_{d=1}^{\dimY} \|G_{\dimPYi{d}}\|_{\infty}
= \prod_{d=1}^{\dimY} 
\left(
\sum_{\dimPYi{d} \leq \dimPYmaxi{d}(\LeafXYlk)}
\|G_{\dimPYi{d}}\|_{\infty}
\right)
\\& 
= \prod_{d=1}^{\dimY} 
\left(
\sum_{\dimP_{d} \leq \dimPYmaxi{d}(\LeafXYlk)}
\sqrt{2 \dimPYi{d}+1}
\right)
\leq \sup_{\LeafXYlkp\in\PartXY}
\prod_{d=1}^{\dimY} 
\left(
\sum_{\dimPYi{d} \leq \dimPYmaxi{d}(\LeafXYlkp)}
\sqrt{2 \dimPYi{d}+1}
\right)
 \end{align*}
while
\begin{align*}
\DimH_{\PartXY,\dimPYmaxPartXY} & \geq   
\sum_{\LeafXl
   \in \PartX}  \sum_{\LeafYlk \in \PartYLeafXl} 
\inf _{\LeafXYlkp
   \in \PartXY} 
\prod_{d=1}^{\dimY}
\left(\dimPYmaxi{d}(\LeafXYlk)+1\right)
 \geq 
\left(
\inf _{\LeafXYlkp
   \in \PartXY} 
\prod_{d=1}^{\dimY}
\left(\dimPYmaxi{d}(\LeafXYlk)+1\right)
\right)
\NbPartXY.
\end{align*}
This implies
\begin{align*}
&\mspace{-30mu}\frac{\Bigg\|\sum_{\LeafXYlk \in \PartXY} 
\sum_{\dimPY\leq \dimPYmax(\LeafXYlk)} \beta_{\dimPY}^{\LeafXYlk}  \phi_{\dimPY}^{\LeafXYlk}
\Bigg\|_\infty^{2\tens}}{\DimH_{\PartXY,\dimPYmaxPartXY} 
\left\| \beta_{\dimPY}^{\LeafXYlk}
\right\|_{\infty}^2}\\
& \leq
\frac{\left(
\sup_{\LeafXYlkp\in\PartXY}
\prod_{d=1}^{\dimY} 
\left(
\sum_{\dimPYi{d} \leq \dimPYmaxi{d}(\LeafXYlkp)}
\sqrt{2 \dimPYi{d}+1}
\right)
\right)^2}{
\inf _{\LeafXYlkp
   \in \PartXY}
\prod_{d=1}^{\dimY}
\left(\dimPYmaxi{d}(\LeafXYlkp)+1\right)}
\sum_{\LeafXl \in \PartX}
|\LeafXl|
\sup_{\LeafYlk \in \PartYLeafXl}
\frac{1}{ \NbPartXY |\LeafXYlk|}\\
\ifthenelse{\boolean{extended}}{
& \leq
\left( 
\frac{\sup_{\LeafXYlkp\in\PartXY}
\prod_{d=1}^{\dimY} 
\left(
\sum_{\dimPYi{d} \leq \dimPYmaxi{d}(\LeafXYlkp)}
\sqrt{2 \dimPYi{d}+1}
\right)}{
\inf _{\LeafXYlkp
   \in \PartXY}
\prod_{d=1}^{\dimY}
\sqrt{\dimPYmaxi{d}(\LeafXYlkp)+1}}
\sum_{\LeafXl \in \PartX}
|\LeafXl|
\sup_{\LeafYlk \in \PartYLeafXl}
\frac{1}{\sqrt{\NbPartXY} \sqrt{|\LeafXYlk|}}
\right)^2\\}{}
& \leq
\left( 
\frac{\sup_{\LeafXYlkp\in\PartXY}
\prod_{d=1}^{\dimY} 
\left(
\sum_{\dimPYi{d} \leq \dimPYmaxi{d}(\LeafXYlkp)}
\sqrt{2 \dimPYi{d}+1}
\right)}{
\inf _{\LeafXYlkp
   \in \PartXY}
\prod_{d=1}^{\dimY}
\sqrt{\dimPYmaxi{d}(\LeafXYlkp)+1}}
\sup_{\LeafXYlk \in \PartXY}
\frac{1}{\sqrt{\NbPartXY} \sqrt{|\LeafXYlk|}}
\right)^2.
\end{align*}
Proposition is then obtained by a simple application of \Cref{prop:inftyentropy}.
\end{proof}
}{The proof of~\cref{prop:entropypolyext} is essentially computational
and thus relegated to our extended technical report.}

\ifthenelse{\boolean{extended}}{
\subsection{Proof of \cref{prop:polypartK}}

\begin{proof}
By construction
\begin{align*}
  \sum_{\model_{\PartXY,\dimPYmax}\in\Models}
  e^{-x_{\PartXY,\dimPYmax}}
& = \sum_{\PartX \in \CollPart^{\starX}} \sum_{\LeafXl \in \PartX}
\sum_{\PartYLeafXl \in \CollPart^{\starY}}
  e^{-x_{\PartXY,\dimPYmax}}\\
& = \sum_{\PartX \in \CollPart^{\starX}} \sum_{\LeafXl \in \PartX}
\sum_{\PartYLeafXl \in \CollPart^{\starY}}
  e^{-\constpolypartthree \left( \PartCstA^{\starX}
      +  \left( \PartCstB^{\starX}  + \PartCstA^{\starY}
      \right)\NbPartX 
      + \PartCstB^{\starY} \sum_{\LeafXl\in\PartX}
      \NbPartYLeafXl \right)}\\
& = \sum_{\PartX \in \CollPart^{\starX}} e^{-\constpolypartthree \left( \PartCstA^{\starX}
      +   \PartCstB^{\starX} 
      |\Part| \right)}
\prod_{\LeafXl \in \PartX} \left( \sum_{\PartYLeafXl \in \CollPart^{\starY}}
  e^{-\constpolypartthree \left( \PartCstA^{\starY}
      + \PartCstB^{\starY}
      \NbPartYLeafXl \right)} \right)
\end{align*}
By \cref{prop:codingpolpart}, one can find $\constpolypartthree \geq
\max(1,\PartCstc^{\starX},\PartCstc^{\starY})$ such that
\begin{align*}
  \sum_{\PartYLeafXl \in \CollPart^{\starY}}
  e^{-\constpolypartthree \left( \PartCstA^{\starY}
      + \PartCstB^{\starY}
      \NbPartYLeafXl \right)} \leq 1
\end{align*}
and
\begin{align*}
\sum_{\PartX \in \CollPart^{\starX}} e^{-\constpolypartthree \left( \PartCstA^{\starX}
      +   \PartCstB^{\starX} 
      \NbPartX \right)} \leq 1.
\end{align*}
Plugging these bounds in the previous equality yields
\begin{align*}
  \sum_{\model_{\PartXY,\dimPYmax}\in\Models}
  e^{-x_{\PartXY,\dimPYmax}}
& \leq \sum_{\PartX \in \CollPart^{\starX}} e^{-\constpolypartthree \left( \PartCstA^{\starX}
      +   \PartCstB^{\starX} 
      \NbPartX \right)}
 \leq 1 .
\end{align*}
Proposition holds with the modified weights for polynomial 
as
\begin{align*}
  \sum_{\dimPYmax \in \dimPYfamily} e^{-\constpolypartthree \Log
    |\dimPYfamily|} = |\dimPYfamily|^{1-\constpolypartthree} \leq 1
\end{align*}
as soon as $\constpolypartthree \geq 1$.
\end{proof}
}

\section{Proofs for \cref{sec:spat-mixt-models} (\nameref{sec:spat-mixt-models})}

As in the piecewise polynomial density case, \Cref{theo:spatgauss} is
obtained by showing that Assumptions ($\text{H}_{\PartX,K,\Set}$),
($\text{S}_{\PartX,K,\Set}$) and ($\text{K}$) hold for any collection.

Again, one easily verifies that Assumption ($\text{S}_{\PartX,K,\Set}$)
holds. For the complexity assumption, 
combining \ref{prop:proporsimple} with a bound on the
bracketing entropy
of the models of type
\begin{align*}
H_{[\cdot],\dsup}(\delta, \model_{\PartX,K,\Set}) & \leq
    \dim(\model_{\PartX,K,\Set}) \left(\constUentropy + \Log \frac{1}{\delta} \right),
  \end{align*}
one obtains
\begin{proposition}
\label{prop:spatgaussH}
There exists a constant $\constUentropy$ depending only on $a$, $\Lm$, $\LM$, $\lambdam$ and
$\lambdaM$ such that for any model
$\model_{\PartX,K,\Set}$ of \cref{theo:spatgauss}
Assumption
$(\text{H}_{\PartX,K,\Set})$ is satisfied with a function $\phi$ such that
\begin{align*}
 \DIMH_{\PartX,K,\Set}  & \leq \left( 2\left( \sqrt{\constUentropy} + \sqrt{\pi} \right)^2 +
1 + \left( \Log  \frac{n}{e\left(
      \sqrt{\constUentropy} + \sqrt{\pi} \right)^2 \dim(\model_{\PartX,K,\Set})} \right)_+\right) \dim(\model_{\PartX,K,\Set}).
\end{align*}
\end{proposition}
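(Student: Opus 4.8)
The plan is to reduce \cref{prop:spatgaussH} to a single global bracketing-entropy estimate and then invoke the first case of \cref{prop:proporsimple}. Since every conditional density in $\model_{\PartX,K,\Set}$ is piecewise constant in the covariate along the partition $\PartX$, for any two such densities $s,t$ one has
\begin{align*}
  \dtwotens_{\meas}(s,t) = \sum_{\LeafXl \in \PartX} \left( \frac{1}{n} \sum_{i=1}^n \Prob\{X_i \in \LeafXl\} \right) \d_{\meas}^2(s(\cdot|\LeafXl),t(\cdot|\LeafXl)) \leq \sup_{\LeafXl \in \PartX} \d_{\meas}^2(s(\cdot|\LeafXl),t(\cdot|\LeafXl)),
\end{align*}
the parenthesised weights summing to $1$; hence $\dtens_{\meas} \leq \dsup$ and it suffices to bound the $\dsup$-bracketing entropy of $\model_{\PartX,K,\Set}$, which moreover does not involve the design. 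The target estimate is
\begin{align*}
  H_{[\cdot],\dsup}(\delta,\model_{\PartX,K,\Set}) \leq \dim(\model_{\PartX,K,\Set}) \left( \constUentropy + \Log \frac{1}{\delta} \right), \qquad \delta \in (0,\sqrt{2}],
\end{align*}
with $\constUentropy$ collecting the constants of \cref{prop:entcoll} and absolute constants. Granting it, \cref{prop:proporsimple} applied with $\DimH_{\indm} = \dim(\model_{\PartX,K,\Set})$ and $\ConstH_{\indm} = \constUentropy\,\dim(\model_{\PartX,K,\Set})$, so that $\constspatgausslogindm = (\sqrt{\constUentropy}+\sqrt{\pi})^2$, yields exactly the announced bound on $\DIMH_{\PartX,K,\Set}$.

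To obtain the entropy estimate I would build brackets factor by factor from the representation $s_{\PartX,K,\theta,\pi}(y|x) = \Gauss_{\theta_{\Space^\perp}}(y_{\Space^\perp}) \sum_{k=1}^{K} \pi_k[\LeafX(x)]\, \Gauss_{\theta_{\Space,k}}(y_{\Space})$. For the common $\Space^\perp$-factor, ranging over $\Set_{\Space^\perp}$, and for the $K$-tuple of $\Space$-components, ranging over $\Set_{\Space}^{K}$, \cref{prop:entcoll} supplies brackets $[g^-,g^+]$ and $[(t_k^-)_k,(t_k^+)_k]$ of controlled $\d_{\meas}$- and $\dmax_{\meas}$-width; for the weight vector on each region $\LeafXl$, ranging over $\Simplex_{K-1}$, one takes a grid of coordinate intervals, whose $\log$-cardinality at $\ell^1$-mesh $\eta$ is $(K-1)\Log(1/\eta) + \tfrac{1}{2}\Log K$ up to an absolute constant. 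Because all factors are non-negative,
\begin{align*}
  t^{-}(y|x) = g^{-}(y_{\Space^\perp}) \sum_{k=1}^{K} \pi_k^{-}[\LeafX(x)]\, t_k^{-}(y_{\Space}), \qquad t^{+}(y|x) = g^{+}(y_{\Space^\perp}) \sum_{k=1}^{K} \pi_k^{+}[\LeafX(x)]\, t_k^{+}(y_{\Space})
\end{align*}
form genuine brackets for $\model_{\PartX,K,\Set}$, their number being the product of the three counts, and using $\DimH_{\Space^\perp} = \dim(\Theta_{\Space^\perp})$, $\DimH_{\Space}^{K} = \dim(\Theta_{\Space}^{K})$ from \cref{prop:entcoll} together with $\NbPartX(K-1) + \dim(\Theta_{\Space}^{K}) + \dim(\Theta_{\Space^\perp}) = \dim(\model_{\PartX,K,\Set})$, the counts add up to the right dimension.

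Controlling the $\dsup$-width of $t^{\pm}$ is the crux. Splitting $\pi_k^{+}t_k^{+} - \pi_k^{-}t_k^{-} = \pi_k^{+}(t_k^{+}-t_k^{-}) + (\pi_k^{+}-\pi_k^{-})t_k^{-}$, one gets, on each region, $\|t^{+}-t^{-}\|_{\meas,1} \lesssim \max_k \|t_k^{+}-t_k^{-}\|_{\meas,1} + \|\pi^{+}-\pi^{-}\|_{1}$, and using that the Gaussian brackets of \cref{prop:entcoll} are \emph{tight} (their ratio is $1 + O(\eta)$, hence so is $t^{+}/t^{-}$, uniformly over the regions), the elementary bound $\d_{\meas}^2 \leq \|\cdot\|_{\meas,1}$ improves to $\dsup(t^{-},t^{+})^2 \lesssim \eta\,\sup_{\LeafXl}\|t^{+}(\cdot|\LeafXl)-t^{-}(\cdot|\LeafXl)\|_{\meas,1} \lesssim \eta^2$ when both the Gaussian resolution and the weight mesh are of order $\delta$. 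This linear (rather than quadratic) scaling of the mesh in $\delta$ is precisely what keeps the coefficient of $\Log(1/\delta)$ equal to $\dim(\model_{\PartX,K,\Set})$ and not a larger multiple; the remaining $K$- and $\NbPartX$-dependent contributions ($\tfrac12\Log K$ per region, and the $\dimSpace$-dependent constants of \cref{prop:entcoll}) are absorbed into $\dim(\model_{\PartX,K,\Set})$ via $\Log K \leq K-1$, leaving a constant $\constUentropy$ depending only on $a,\Lm,\LM,\lambdam,\lambdaM$.

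The main obstacle is therefore this width/count bookkeeping — making the tight-bracket argument of the previous paragraph fully precise, and carrying enough of the internal structure of \cref{prop:entcoll} so that the final coefficient is exactly $\dim(\model_{\PartX,K,\Set})$ rather than a multiple. Everything else is routine: Assumption $(\text{S}_{\PartX,K,\Set})$ holds because Gaussian mixtures with parameters in a fixed bounded set are pointwise limits of a countable subfamily, Assumption (K) follows from \cref{prop:codingpolpart} applied to $\PartX$ together with an elementary enumeration of the admissible pairs $(K,\Set)$, and \cref{theo:spatgauss} is then read off from \cref{theo:select}.
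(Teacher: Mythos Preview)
Your overall strategy matches the paper's: pass from $\dtens$ to the design-free $\dsup$, establish a global entropy bound $H_{[\cdot],\dsup}(\delta,\model_{\PartX,K,\Set}) \leq \dim(\model_{\PartX,K,\Set})(\constUentropy + \Log(1/\delta))$, and read off the result from the first case of \cref{prop:proporsimple}. The paper also factors the entropy into the same three pieces you identify (simplex, $\Set_\Space^K$, $\Set_{\Space^\perp}$), via \cref{lemma:entropy}.

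The gap is in your width control. The Gaussian brackets produced in the proof of \cref{prop:entcoll} are \emph{not} tight in the sense you need: each bracket has the form $t^-=(1+\kappa\deltaVar)^{-\dimSpace}\Gauss_{\tilde\mu,(1+\deltaVar)^{-1}\tilde\Sigma}$ and $t^+=(1+\kappa\deltaVar)^{\dimSpace}\Gauss_{\tilde\mu,(1+\deltaVar)\tilde\Sigma}$, so a direct computation gives
\[
\frac{t^+(y)}{t^-(y)} \;=\; C_{\deltaVar}\,\exp\!\Big(c_{\deltaVar}\,(y-\tilde\mu)'\tilde\Sigma^{-1}(y-\tilde\mu)\Big),\qquad c_{\deltaVar}>0,
\]
which is unbounded in $y$. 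Your $L^1$-to-Hellinger upgrade genuinely requires a pointwise ratio bound, and without it you fall back to $\d^2\leq\|\cdot\|_1$, forcing a mesh of order $\delta^2$ and hence a coefficient $2\dim(\model_{\PartX,K,\Set})$ in front of $\Log(1/\delta)$ instead of $\dim(\model_{\PartX,K,\Set})$. The same issue bites your simplex grid: an $\ell^1$ grid of mesh $\eta$ on $\Simplex_{K-1}$ has Hellinger width of order $\sqrt{\eta}$ near the faces, not $\eta$.

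The paper avoids both problems by staying in Hellinger throughout. The key device is a Hellinger product lemma (\cref{lemma:bracketproduct}): if $[t^-,t^+]$ and, for each $x$, $[u^-(x,\cdot),u^+(x,\cdot)]$ are $\delta$-Hellinger brackets with $\delta\leq\sqrt{2}/3$, then $[t^-u^-,t^+u^+]$ is a $3\delta$-Hellinger bracket --- no tightness hypothesis. Combined with the marginalization lemma (\cref{lem:helprod}), which says Hellinger width does not increase under $\int_k$, one first brackets $(k,y)\mapsto\pi_k\,t_k(y)$ in $(k,y)$-Hellinger and then passes to the mixture $\sum_k \pi_k t_k$. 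For the simplex factor the paper uses the Genovese--Wasserman bound $H_{[\cdot],\d}(\delta/3,\Simplex_{K-1}) \leq (K-1)(\ConstMultH_{\Simplex}+\Log(1/\delta))$ (\cref{lem:entsimp}) rather than an $\ell^1$ grid. The three $\Log(1/\delta)$ coefficients then add up exactly to $\NbPartX(K-1)+\dim(\Theta_\Space^K)+\dim(\Theta_{\Space^\perp})=\dim(\model_{\PartX,K,\Set})$, and $\constUentropy=\max(\ConstMultH_{\Simplex},\ConstMultH_{[\starmuLDA]^\starK},\ConstMultH_{[\starmuLDA]})$.
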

For the Kraft assumption, one can verify that
\begin{proposition}
\label{prop:spatgaussK}
For any collections $\Models$ of \cref{theo:spatgauss}, there is a
$\constspatgausstwo$ such that for the choice
\begin{align*}
  x_{\PartX,K,\Set} =  \constspatgausstwo \left(
     \PartCstA^{\starX}
       +  \PartCstB^{\starX} \NbPartX 
  + (K-1) + \CstSpace
 \right),
\end{align*}
Assumption (K) holds with
\begin{alignI}
\sum_{\model_{\PartX,K,\Set} \in \Models}
   e^{-x_{\PartX,K,\Set}} \leq 1.
 \end{alignI}
\end{proposition}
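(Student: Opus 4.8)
The plan is to follow the proof of \cref{prop:polypartK}. A model $\model_{\PartX,K,\Set}$ in any of the collections considered is pinned down by four independent ingredients: the partition $\PartX \in \CollPart^{\starX}$, the number of components $K \in \{1,\dots,K_M\}$, the subspace $\Space$ (ranging over a family that is either a singleton, the coordinate subspaces spanned by the first $\dimSpace$ axes for $\dimSpace=0,\dots,\dimSp$, or all coordinate subspaces of $\R^{\dimSp}$), and a pair of admissible covariance-structure indices for $\Theta_\Space$ and $\Theta_{\Space^\perp}$, ranging over a fixed subset $\mathcal{C}_0$ of the at most $81\times 81$ possible combinations. The proposed weight
\[
x_{\PartX,K,\Set} = \constspatgausstwo\bigl(\PartCstA^{\starX} + \PartCstB^{\starX}\NbPartX + (K-1) + \CstSpace\bigr)
\]
is additive in the first three ingredients and does not depend on the covariance indices, so the sum factorises,
\[
\sum_{\model_{\PartX,K,\Set} \in \Models} e^{-x_{\PartX,K,\Set}}
= \Bigl( \sum_{\PartX \in \CollPart^{\starX}} e^{-\constspatgausstwo(\PartCstA^{\starX} + \PartCstB^{\starX}\NbPartX)} \Bigr)
\Bigl( \sum_{K=1}^{K_M} e^{-\constspatgausstwo(K-1)} \Bigr)
\Bigl( \sum_{\Space} e^{-\constspatgausstwo\CstSpace} \Bigr)\,
|\mathcal{C}_0|,
\]
and it suffices to bound the four factors, showing the first one can be made arbitrarily small while the other three stay bounded by absolute constants.

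For the partition factor I would invoke \cref{prop:codingpolpart}: as soon as $\constspatgausstwo \geq \PartCstc^{\starX}$ it is at most $\PartCstSigma^{\starX} e^{-\constspatgausstwo \max(\PartCstA^{\starX},\PartCstB^{\starX})}$. Reading off the table of constants one checks, for $n \geq 2$, that $\max(\PartCstA^{\starX},\PartCstB^{\starX}) \geq \log 2$ in all five cases and — this is the only genuine verification — that $\PartCstSigma^{\starX} \leq e^{2\max(\PartCstA^{\starX},\PartCstB^{\starX})}$; it is precisely this estimate that decouples the eventual constant from $n$, $\dimX$ and $\dimSp$. It follows that the partition factor is at most $e^{(2-\constspatgausstwo)\max(\PartCstA^{\starX},\PartCstB^{\starX})} \leq 2^{\,2-\constspatgausstwo}$ once $\constspatgausstwo \geq 2$, hence tends to $0$ as $\constspatgausstwo$ grows, uniformly in every other parameter of the collection.

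The remaining three factors are bounded by absolute constants. The component factor is a geometric series, $\sum_{K=1}^{K_M} e^{-\constspatgausstwo(K-1)} \leq (1 - e^{-\constspatgausstwo})^{-1} \leq 2$ for $\constspatgausstwo \geq \log 2$, valid even when $K_M = +\infty$, and $|\mathcal{C}_0| \leq 81^2$ is an absolute constant. For the subspace factor: it equals $1$ when $\Space$ is fixed; when $\Space$ is spanned by the first $\dimSpace$ coordinates, $\CstSpace = \dimSpace$ and the factor is $\sum_{\dimSpace \geq 0} e^{-\constspatgausstwo\dimSpace} \leq 2$ for $\constspatgausstwo \geq \log 2$; when $\Space$ is free, there are $\binom{\dimSp}{\dimSpace}$ subspaces of dimension $\dimSpace$ with $\CstSpace = (1 + \log 2 + \log(\dimSp/\dimSpace))\dimSpace$, and the crude bound $\binom{\dimSp}{\dimSpace} \leq (e\dimSp/\dimSpace)^{\dimSpace} = e^{\dimSpace(1 + \log(\dimSp/\dimSpace))}$ gives $\binom{\dimSp}{\dimSpace} e^{-\constspatgausstwo\CstSpace} \leq 2^{-\dimSpace}$ for $\constspatgausstwo \geq 1$, so the factor is at most $\sum_{\dimSpace \geq 0} 2^{-\dimSpace} = 2$.

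Assembling, for $\constspatgausstwo \geq 2$ the whole sum is at most $2^{\,2-\constspatgausstwo}\cdot 2\cdot 2\cdot|\mathcal{C}_0| = 2^{\,4-\constspatgausstwo}|\mathcal{C}_0|$, so taking $\constspatgausstwo \geq 4 + \log_2|\mathcal{C}_0|$ — an absolute constant, in particular independent of $n$, $\dimX$, $\dimSp$ and $K_M$ — yields $\sum_{\model_{\PartX,K,\Set} \in \Models} e^{-x_{\PartX,K,\Set}} \leq 1$, which is Assumption (K) with $\Sigma = 1$ and the announced coding term; this same $\constspatgausstwo$ is then fed into \cref{prop:spatgaussH} and \cref{theo:spatgauss}. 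The argument is essentially bookkeeping: the only point requiring care is the uniform domination $\PartCstSigma^{\starX} \leq e^{c_0 \max(\PartCstA^{\starX},\PartCstB^{\starX})}$ of the partition constants, which keeps $\constspatgausstwo$ independent of the sample size, together with the elementary binomial estimate in the free-subspace case.
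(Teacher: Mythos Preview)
Your proof is correct and follows essentially the same route as the paper's: factorise the Kraft sum over the independent ingredients $(\PartX,K,\Space,\text{covariance type})$, control the partition factor via \cref{prop:codingpolpart}, the $K$-factor by a geometric series, the subspace factor by the standard binomial bound $\binom{\dimSp}{\dimSpace}\leq(e\dimSp/\dimSpace)^{\dimSpace}$, and absorb the finitely many covariance types as an absolute multiplicative constant. The paper is terser (it just writes ``choosing $\constspatgausstwo$ slightly larger than $\max(1,\PartCstc^{\star})$ yields the result'') and counts the covariance types as $3^4\times 2^4=1296$ rather than your cruder $81^2$, but the structure of the argument is identical; your explicit check that $\PartCstSigma^{\starX}\leq e^{2\max(\PartCstA^{\starX},\PartCstB^{\starX})}$ is exactly what underlies the paper's remark that the partition bound drops below $1$ for $c\geq 2\log 2$.
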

As for the piecewise
    polynomial case section, the main
difficulty lies in controlling the bracketing entropy of the
models. A proof of \cref{prop:spatgaussK} can be found in our
technical report~\cite{cohen11:_condit_densit_estim_penal_app}.

We focus thus on the proof of \cref{prop:spatgaussH}.
Due to the complex structure of spatial mixture, we did not
manage to bound the bracketing entropy of local model. We derive 
only an upper bound  of the
bracketing entropy      
$H_{[\cdot],\dtens}(\delta,\model_{\PartX,K,\Set})$, but one that is
independent of the distribution law of $(X_i)_{1\leq i \leq n}$: the  bracketing
entropy with a $\sup$ norm Hellinger distance $\dsup=\sqrt{\dtwosup}$,
$H_{[\cdot],\dsup}(\delta,\model_{\PartX,K,\Set})$, 
where $\dtwosup$ is defined by
\[
\dtwosup(s,t)=\sup_{x} \d^2 \left(s(\cdot|x),t(\cdot|x)\right).
\]
Obviously $\dtwosup \geq \dtwotens$ and thus
$H_{[\cdot],\dsup}(\delta,\model_{\PartX,K,\Set}) \geq
H_{[\cdot],\dtens}(\delta,\model_{\PartX,K,\Set})$. This upper bound
is furthermore design independent.

\cref{prop:spatgaussH} is a direct consequence of
\cref{prop:proporsimple}
and 
\begin{proposition}
\label{prop:entropyU}
There exists a constant $\constUentropy$ depending only on $a$, $\Lm$, $\LM$, $\lambdam$ and
$\lambdaM$ such that for any model
$\model_{\PartX,K,\Set}$ of \cref{theo:spatgauss}:
\begin{align*}
H_{[\cdot],\dsup}(\delta, \model_{\PartX,K,\Set}) & \leq
    \dim(\model_{\PartX,K,\Set}) \left(\constUentropy + \Log \frac{1}{\delta} \right).
  \end{align*}
\end{proposition}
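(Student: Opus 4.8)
The plan is as follows. Since $\dtwosup\ge\dtwotens$, it is enough to bound $H_{[\cdot],\dsup}(\delta,\model_{\PartX,K,\Set})$ for every $\delta\in(0,\sqrt2]$, and such a bound is automatically independent of the law of the design. I would build an explicit finite family of $\dsup$--brackets for $\model_{\PartX,K,\Set}$ by combining three ingredients: for the discriminant space $\Space$, a net of $\dmax_\meas$--brackets of the Gaussian $K$--tuples $\Set_{\Space}^{K}$ furnished by \cref{prop:entcoll}; for its orthogonal $\Space^\perp$, a net of brackets of the single Gaussian $\Gauss_{\theta_{\Space^\perp}}$, again from \cref{prop:entcoll} applied with $K=1$; and, for each of the $\NbPartX$ regions $\LeafXl$ of the partition $\PartX$, a bracketing of the simplex $\Simplex_{K-1}$ carrying the weights $\pi[\LeafXl]$, taken with respect to the Hellinger--type metric $(\pi,\pi')\mapsto\|\sqrt\pi-\sqrt{\pi'}\|_2$.

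The assembling step uses two elementary facts: the reverse triangle inequality for $\|\sqrt{\,\cdot\,}\|_{\meas,2}$, which gives $\d^2_\meas(\sum_k f_k,\sum_k g_k)\le\sum_k\d^2_\meas(f_k,g_k)$ for nonnegative integrands, and the product rule $\d^2_\meas(fg,f'g')\le 2\|g'\|_{\meas,1}\,\d^2_\meas(f,f')+2\|f\|_{\meas,1}\,\d^2_\meas(g,g')$. Given Hellinger $\epsilon$--brackets $[\Gauss^-_{\Space,k},\Gauss^+_{\Space,k}]$ of $\Gauss_{\theta_{\Space,k}}$ and an $\epsilon$--bracket $[\Gauss^-_{\Space^\perp},\Gauss^+_{\Space^\perp}]$ of $\Gauss_{\theta_{\Space^\perp}}$, the products $\Gauss^\pm_{\Space,k}\Gauss^\pm_{\Space^\perp}$ form brackets $[\Gauss^-_{\theta_k},\Gauss^+_{\theta_k}]$ around $\Gauss_{\theta_k}$ of Hellinger width $O(\epsilon)$ -- the $L^1$--masses of the Gaussian bracket endpoints being controlled uniformly by the constraints parametrized by $a,\Lm,\LM,\lambdam,\lambdaM$, so the constant stays bounded. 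Then, taking for each region an entrywise bracket $\pi^-[\LeafXl]\le\pi[\LeafXl]\le\pi^+[\LeafXl]$ with $\|\sqrt{\pi^+[\LeafXl]}-\sqrt{\pi^-[\LeafXl]}\|_2\le\rho$ and setting
\[
t^\pm(y|x)=\sum_{k=1}^K\pi^\pm_k[\LeafX(x)]\,\Gauss^\pm_{\theta_k}(y),
\]
I would check that these functions are piecewise constant in $x$ exactly like the elements of $\model_{\PartX,K,\Set}$, that $t^-\le s\le t^+$ for every $s$ whose parameters fall into the chosen cells, and that on each region $\LeafXl$
\[
\int\bigl(\sqrt{t^+(y|x)}-\sqrt{t^-(y|x)}\bigr)^2\,\ud y\le 2\Bigl(\sum_k\pi^+_k[\LeafXl]\Bigr)\epsilon^2+2\sum_k\bigl(\sqrt{\pi^+_k[\LeafXl]}-\sqrt{\pi^-_k[\LeafXl]}\bigr)^2\le 4\epsilon^2+2\rho^2,
\]
using $\int\Gauss^-_{\theta_k}\le\int\Gauss_{\theta_k}=1$ and $\sum_k\pi^-_k[\LeafXl]\le 1$. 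Since the bound is independent of $x$, $\dtwosup(t^-,t^+)\le4\epsilon^2+2\rho^2$, and choosing $\epsilon=\rho=\delta/(2\sqrt2)$ makes this $\le\delta^2$.

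It then remains to count. The number of brackets produced is at most
\[
\exp\!\bigl(H_{[\cdot],\dmax_\meas}(c\epsilon,\Set_{\Space}^{K})\bigr)\,\exp\!\bigl(H_{[\cdot],\dmax_\meas}(c\epsilon,\Set_{\Space^\perp})\bigr)\prod_{\LeafXl\in\PartX}N(\rho,\Simplex_{K-1}),
\]
where $c$ is an absolute constant absorbing the product--rule loss and $N(\rho,\Simplex_{K-1})$ is the number of $\rho$--brackets of the simplex for the Hellinger--type metric. By \cref{prop:entcoll} the first two factors contribute at most $(\dim\Theta_{\Space}^{K}+\dim\Theta_{\Space^\perp})(\constUentropy'+\Log\tfrac1\delta)$ to the logarithm, once the $\Log 9$ and $\Log(1/(c\epsilon))\sim\Log(1/\delta)$ have been absorbed and one uses that the ratios $\ConstH/\DimH$ appearing in \cref{prop:entcoll} are bounded in terms of $a,\Lm,\LM,\lambdam,\lambdaM$. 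For the simplex, the map $\pi\mapsto 2\sqrt\pi$ places $\Simplex_{K-1}$, with this metric, inside a $(K-1)$--dimensional cap of the Euclidean sphere of radius $2$ in $\R^K$, so a volume comparison yields $\Log N(\rho,\Simplex_{K-1})\le(K-1)\Log(1+C/\rho)\le(K-1)(\constUentropy''+\Log\tfrac1\delta)$, with, crucially, no factor $\Log K$. Summing the three contributions and recalling $\dim(\model_{\PartX,K,\Set})=\NbPartX(K-1)+\dim\Theta_{\Space}^{K}+\dim\Theta_{\Space^\perp}$ gives the claim, after enlarging the constant to $\constUentropy=\max(\constUentropy',\constUentropy'')$.

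The hard part, beyond invoking \cref{prop:entcoll}, will be twofold. First, obtaining the sharp Hellinger--type bracketing entropy $(K-1)\Log(1/\delta)$ for $\Simplex_{K-1}$: a naive coordinate grid produces an extra $(K-1)\Log K$ term that would spoil proportionality to the dimension, so the volume/cap argument (or an equivalent sharp estimate) is indispensable. Second, carrying out the mixture--and--product width estimate so that it holds uniformly in the covariate $x$ -- it is exactly this uniformity that upgrades the estimate from $\dtens$ to the design--free $\dsup$ -- while ensuring that every hidden constant, both from the product rule and from \cref{prop:entcoll}, is absorbed into the single $\constUentropy$ that depends only on $a,\Lm,\LM,\lambdam,\lambdaM$.
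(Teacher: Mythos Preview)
Your proposal is correct and follows essentially the same three–part decomposition as the paper: bracketing of the Gaussian $K$–tuple on $\Space$ via \cref{prop:entcoll}, bracketing of the single Gaussian on $\Space^\perp$ (same proposition with $K=1$), and an independent bracketing of $\Simplex_{K-1}$ for each of the $\NbPartX$ regions. The paper packages the assembling step as a lemma (\cref{lemma:entropy}) with explicit constants $\delta/3$ and $\delta/9$, proved through the product and mixture bracket lemmas (\cref{lemma:bracketproduct}, \cref{lem:helprod}); your Cauchy--Schwarz inequality $\d^2(\sum f_k,\sum g_k)\le\sum\d^2(f_k,g_k)$ and product rule are exactly the content of those lemmas, just arranged slightly differently. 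The uniform mass bound you need for $\int\Gauss^+_{\theta_k}$ is the paper's \cref{lem:intbound}.

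The one place to be careful is the simplex bound. The paper simply cites the Genovese--Wasserman lemma (\cref{lem:entsimp}), which gives $H_{[\cdot],\d}(\delta/3,\Simplex_{K-1})\le(K-1)(\ConstMultH_\Simplex+\Log\tfrac1\delta)$ with $\ConstMultH_\Simplex$ independent of $K$. Your sphere--cap volume argument gives this for \emph{metric} entropy, but converting a Hellinger $\rho$--net to coordinatewise brackets $(\sqrt{\pi^0_k}\pm\rho)^2$ produces brackets of Hellinger width $2\sqrt K\,\rho$, which after rescaling injects exactly the $(K-1)\tfrac12\Log K$ term you want to avoid. The Genovese--Wasserman bound escapes this through a combinatorial count (a grid on the simplex whose cardinality carries a $1/(K-1)!$ factor, hence a compensating $-(K-1)\Log K$ after Stirling), which is morally the same volume heuristic but needs to be done at the level of bracketing, not metric, covers. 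So your plan is right, but at that step you should either invoke \cref{lem:entsimp} directly or reproduce its combinatorial argument rather than relying on the pure sphere--cap volume comparison.
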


\ifthenelse{\boolean{extended}}{
\subsection{Model coding}


\begin{proof}[Proof of \cref{prop:spatgaussK}]
This proposition is a simple combination of 
\cref{theo:polypart}, 
crude bounds on the
number of different models indexed by
$[\mumodany\,\Lmodany\,\Dmodany\,\Amodany]^K$ and
$[\mumodany\,\Lmodany\,\Dmodany\,\Amodany]$ and
of classical Kraft type inequalities for order selection and variable selection (see
for instance in the book of \textcite{massart07:_concen}):
\begin{lemma}
\label{lem:coding}
\begin{itemize}
\item For the selection of model order  $K$, let
$x_K = (K-1)$, for $c>0$
  \begin{align*}
    \sum_{K\geq 1} e^{-c x_K} = \frac{1}{1-e^{-c}}
  \end{align*}
\item For the ordered variable selection case,
  $\Space= \Span\{ e_i \}_{i\in 
I}$ with $I=\{1,\ldots,\dimSpace\}$, let $\theta_\Space = \dimSpace$,
 for $c>0$
\begin{align*}
  \sum_{\Space} e^{-c\theta_\Space} = \frac{1}{e^c-1} \leq 1.
\end{align*}
\item For the non ordered variable selection case, $\Space= \Span\{
  e_i \}_{i\in I}$ with $I \subset
  \{1,\ldots,\dimSp\}$, let $\theta_\Space =  \left( 1 + \theta + \Log
    \frac{\dimSp}{\dimSpace} \right) \dimSpace$, for $c\geq 1$,
\begin{align*}
  \sum_{\Space} e^{-c\theta_\Space} = \frac{e^{-(c-1)(1+\theta)}}{1-e^{-\theta}}.
\end{align*}
\end{itemize}
\end{lemma}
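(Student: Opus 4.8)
The statement collects three elementary summation bounds, so the plan is to reduce each sum to a geometric series after choosing the right index. In all three cases the natural index is the dimension $\dimSpace$ of the candidate subspace $\Space$ (respectively the order $K$ of the mixture), and the only point to keep track of is how many competitors of a given dimension there are: for the ordered variable selection there is exactly one subspace spanned by the first $\dimSpace$ coordinates, while for the non-ordered one there are $\binom{\dimSp}{\dimSpace}$ subspaces of dimension $\dimSpace$.

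For the model-order selection I would simply write $\sum_{K\geq 1} e^{-c x_K} = \sum_{K\geq 1} e^{-c(K-1)} = \sum_{j\geq 0} e^{-cj} = (1-e^{-c})^{-1}$, valid for every $c>0$. For the ordered variable selection, since $\Space$ is determined by its dimension $\dimSpace\in\{1,\ldots,\dimSp\}$, the sum is at most $\sum_{\dimSpace\geq 1} e^{-c\dimSpace} = e^{-c}(1-e^{-c})^{-1} = (e^c-1)^{-1}$, and I would note that this last quantity is $\leq 1$ as soon as $e^c\geq 2$, which is the regime in which the lemma is applied. So far there is nothing beyond summing geometric series.

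The only part calling for a (very mild) estimate is the non-ordered variable selection. Here I would group the sum by dimension, $\sum_{\Space} e^{-c\theta_\Space} = \sum_{\dimSpace\geq 1} \binom{\dimSp}{\dimSpace}\, e^{-c(1+\theta+\Log(\dimSp/\dimSpace))\dimSpace}$, insert the classical bound $\binom{\dimSp}{\dimSpace}\leq (e\dimSp/\dimSpace)^{\dimSpace}$, and use the hypothesis $c\geq 1$ together with $\dimSpace\leq\dimSp$ to replace $(\dimSpace/\dimSp)^{c\dimSpace}$ by the larger $(\dimSpace/\dimSp)^{\dimSpace}$; this cancels the binomial factor up to $e^{\dimSpace}$ and leaves the geometric series $\sum_{\dimSpace\geq 1} e^{-\dimSpace((c-1)+c\theta)}$. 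A term-by-term comparison of exponents, again using $c\geq 1$ and $\theta>0$, then shows this is bounded by $e^{-(c-1)(1+\theta)}(1-e^{-\theta})^{-1}$, which is the claimed value. The main obstacle, such as it is, is purely bookkeeping: locating exactly where $c\geq 1$ is needed (precisely to absorb the $\Log(\dimSp/\dimSpace)$ term against the binomial coefficient) and checking the final comparison of exponents; there is no conceptual difficulty, and these three bounds then feed directly into the Kraft-type verification of Assumption (K) for the collections of \cref{theo:spatgauss}.
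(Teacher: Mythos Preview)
Your proof is correct. The paper does not actually prove this lemma: it merely states it as a classical Kraft-type inequality and refers to Massart's book, so there is no argument to compare against beyond noting that yours is the standard one (geometric series plus, for the non-ordered case, the bound $\binom{\dimSp}{\dimSpace}\leq (e\dimSp/\dimSpace)^{\dimSpace}$). You are in fact slightly more careful than the paper's statement: the inequality $(e^c-1)^{-1}\leq 1$ in the second item requires $c\geq\Log 2$ rather than merely $c>0$, and the third item is really an inequality rather than the equality written --- both points you handle correctly.
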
 
 Using that
there is at most $3\times 3\times 3 \times 3$
different type of models $[\mumodany\,\Lmodany\,\Dmodany\,\Amodany]^K$
and $2\times 2\times 2\times 2$ different type of models
$[\mumodany\,\Lmodany\,\Dmodany\,\Amodany]$, and $3^4 \times 2^4 =
1296$, we obtain
\begin{align*}
   \sum_{\model_{K,\PartX,\Set} \in \Models}
   e^{-x_{K,\PartX,\Set}} & = \sum_{K \in \N^*} \sum_{\PartX
     \in \CollPart^{\star}} \sum_{\Space}
   \sum_{[\mumodany\,\Lmodany\,\Dmodany\,\Amodany]^K}
   \sum_{[\mumodany\,\Lmodany\,\Dmodany\,\Amodany]} e^{-\constspatgausstwo \left(
     \PartCstA^{\starX}
       +  \PartCstB^{\starX} \NbPartX 
  + (K-1) + \CstSpace
 \right)}\\
& = \left( \sum_{K \in \N^*} e^{-\constspatgausstwo (K-1)} \right)
\left( \sum_{\PartX
     \in \CollPart^{\star}} e^{-\constspatgausstwo \left( \PartCstA^{\starX}
       +  \PartCstB^{\starX} \NbPartX \right)} \right)\\
& \qquad \times
   \left( \sum_{\Space} e^{-\constspatgausstwo \CstSpace} \right)
   \left( \sup_{K\in\N^*} \sum_{[\mumodany\,\Lmodany\,\Dmodany\,\Amodany]^K}
   \sum_{[\mumodany\,\Lmodany\,\Dmodany\,\Amodany]} \right)\\
& \leq 1296 \frac{1}{1-e^{-\constspatgausstwo}} \PartCstSigma^\star
e^{-\constspatgausstwo \PartCstC^\star} 
\begin{cases}
1  & \text{if $\Space$ is known,}\\
\frac{1}{e^\constspatgausstwo-1} & \text{\parbox{4cm}{if
  $\Space$ is chosen amongst spaces spanned by the first
  coordinates,}}\\
2e^{-(\constspatgausstwo-1)(1+\Log 2)} & \text{if
  $\Space$ is free.}
\end{cases}  
\end{align*}
Choosing $\constspatgausstwo$ slightly larger than
$\max(1,\PartCstc^{\star})$ yields the result. 
\end{proof}
}{}

\subsection{Entropy of spatial mixtures}


\begin{proof}[Proof of \cref{prop:entropyU}]

While we  use classical Hellinger distance to measure the
complexity of the simplex $\Simplex_{K-1}$ and the set
$\Set_{E^\perp}$, we use a $\sup$ norm Hellinger distance on
$\Set_{\Space}^K$ defined by
\begin{align*}
  \dtwomax\left( (s_1,\ldots,s_K),(t_1,\ldots,t_K)\right)
& = \sup_k \d^2 (s_k,t_k).
\end{align*}
We say that $\left[(s_1,\ldots,s_K),(t_1,\ldots,t_K)\right]$ is a
bracket of $\Set_{\Space}^K$ if $\forall 1 \leq k \leq K, s_k \leq t_k$.

Using a similar proof than \textcite{genovese00:_rates_gauss}, we decompose the entropy in
three parts with:
\begin{lemma}
\label{lemma:entropy} For any $\delta\in (0,\sqrt{2}]$,
  \begin{align*}
    H_{[\cdot],\dsup}(\delta,\model_{\PartX,K,\Set}) \leq \| \PartX\|
    H_{[\cdot],\d}(\delta/3, \Simplex_{K-1}) +
    H_{[\cdot],\dmax}(\delta/9, \Set_{\Space}^K) +
H_{[\cdot],\d}(\delta/9, \Set_{\Space^\perp}). 
  \end{align*}
\end{lemma}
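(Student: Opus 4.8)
The plan is to cover $\model_{\PartX,K,\Set}$ by brackets for $\dsup$ assembled from brackets for the three ingredients of the model: the per-region mixing vectors in $\Simplex_{K-1}$, the $K$-tuple of Gaussians on $\Space$, and the single Gaussian on $\Space^\perp$. Fix a minimal family of Hellinger brackets $[\pi^-,\pi^+]$ covering $\Simplex_{K-1}$ with $\d$-width at most $\delta/3$, a minimal family of $\dmax$-brackets $[(g^-_{\Space,k})_k,(g^+_{\Space,k})_k]$ covering $\Set_{\Space}^{K}$ with $\dmax$-width at most $\delta/9$, and a minimal family of Hellinger brackets $[g^-_{\Space^\perp},g^+_{\Space^\perp}]$ covering $\Set_{\Space^\perp}$ with $\d$-width at most $\delta/9$. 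To each choice of one mixing bracket per region $\LeafXl\in\PartX$, one $\Space$-tuple bracket, and one $\Space^\perp$ bracket, associate
\[
t^{\pm}(y|x)=\sum_{\LeafXl\in\PartX}\Charac{x\in\LeafXl}\sum_{k=1}^{K}\pi^{\pm}_{k}[\LeafXl]\,g^{\pm}_{\Space,k}(y_{\Space})\,g^{\pm}_{\Space^\perp}(y_{\Space^\perp}).
\]
The number of such pairs is $N_{[\cdot],\d}(\delta/3,\Simplex_{K-1})^{\NbPartX}\,N_{[\cdot],\dmax}(\delta/9,\Set_{\Space}^{K})\,N_{[\cdot],\d}(\delta/9,\Set_{\Space^\perp})$, whose logarithm is exactly the right-hand side of the lemma; it remains to see that this is a bracketing covering at scale $\delta$.

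That every $s\in\model_{\PartX,K,\Set}$ belongs to one of the $[t^-,t^+]$ is immediate: all factors are nonnegative, so a product of lower endpoints is a lower bound and a product of upper endpoints an upper bound, and summing over $k$ and inserting the piecewise-constant structure in $x$ preserves these inequalities; thus $[t^-,t^+]$ is a genuine bracket containing the density built from the corresponding interior parameters. For the width, since every conditional density here is constant in $x$ on each region of $\PartX$, one has $\dtwosup(t^-,t^+)=\max_{\LeafXl\in\PartX}\d^2\!\left(t^-(\cdot|\LeafXl),t^+(\cdot|\LeafXl)\right)$, so it suffices to bound, for a fixed $\LeafXl$, the Hellinger distance between $\sum_k\pi^-_k[\LeafXl]g^-_{\Space,k}g^-_{\Space^\perp}$ and $\sum_k\pi^+_k[\LeafXl]g^+_{\Space,k}g^+_{\Space^\perp}$. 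I would do this by the triangle inequality for $\d$ (valid for unnormalized nonnegative functions, as it is an $L^2$ distance of square roots) through the two intermediate functions obtained by first replacing $g^-_{\Space^\perp}$ by $g^+_{\Space^\perp}$, then $(g^-_{\Space,k})_k$ by $(g^+_{\Space,k})_k$, then $\pi^-$ by $\pi^+$.

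Each of the three steps uses only elementary facts: the factorisation $\sqrt{ab}-\sqrt{ac}=\sqrt a(\sqrt b-\sqrt c)$, which converts a change in one tensor factor into the $\d$-distance of that factor times the integral of the other; the convexity bound $\d^2(\sum_k\pi_kf_k,\sum_k\pi_kg_k)\le\sum_k\pi_k\d^2(f_k,g_k)$; and the estimate $\d^2(\sum_k\pi^-_kh_k,\sum_k\pi^+_kh_k)\le\sum_k(\sqrt{\pi^-_k}-\sqrt{\pi^+_k})^2\int h_k$ coming from the reverse triangle inequality for the Euclidean norm applied at each $y$. The only book‑keeping point is that the bracket endpoints are not probability densities, so these manipulations pick up the mass of the unchanged factors; but a lower endpoint integrates to at most $1$, an upper endpoint to at most $(1+\delta/9)^2$, and $\sum_k\pi^-_k\le1$, so with $\delta\le\sqrt2$ the three contributions to the $\d$-width are at most $\delta/9$, at most $(1+\delta/9)\,\delta/9$, and at most $(1+\delta/9)^2\,\delta/3$, which sum to at most $\delta$. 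This is the Genovese–Wasserman bracketing decomposition transported to the spatialised setting, and the genuinely delicate part — the main obstacle — is precisely arranging the asymmetric scalings $\delta/3,\delta/9,\delta/9$ so that after accumulating these constant overheads the total $\dsup$‑width still does not exceed $\delta$; the factor $\NbPartX$ in front of the simplex term is the feature specific to the partition construction, since the mixing vectors are chosen independently on the $\NbPartX$ regions and therefore a simplex bracket must be selected once per region, multiplying the count.
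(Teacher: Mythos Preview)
Your proof is correct and follows the same overall scheme as the paper: build the bracket covering of $\model_{\PartX,K,\Set}$ as a product of coverings of $\Simplex_{K-1}^{\NbPartX}$, $\Set_{\Space}^K$ and $\Set_{\Space^\perp}$, count the cardinality, and verify that each assembled bracket has $\dsup$-width at most $\delta$. The only difference is in how the width is controlled. You pass through two intermediate functions via the triangle inequality for $\d$, bounding the three increments by the factorisation $\sqrt{ab}-\sqrt{ac}=\sqrt a(\sqrt b-\sqrt c)$, the mixture convexity bound $\d^2(\sum\pi_kf_k,\sum\pi_kg_k)\le\sum\pi_k\d^2(f_k,g_k)$, and the reverse triangle inequality in $\ell^2$. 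The paper instead packages the estimate into two reusable lemmas: a product of two $\epsilon$-brackets is a $3\epsilon$-bracket (when $\epsilon\le\sqrt2/3$), and marginalising a variable does not increase Hellinger width. It first multiplies the two Gaussian brackets to a $\delta/3$-bracket, then views $\pi_k g_k(y)$ as a function of $(k,y)$ so that the mixture is a $k$-marginal, applies the marginalisation lemma, and finishes with a second application of the product lemma to $\pi\times g$. Both routes rely on the same mass bound for an upper endpoint (your $(1+\epsilon)^2$, obtained by sandwiching a density, is in fact slightly sharper than the paper's $(\epsilon+\sqrt{1+\epsilon^2})^2$) and on the same $\delta/3,\delta/9,\delta/9$ calibration. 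Your argument is more self-contained; the paper's is more modular and makes the Genovese--Wasserman product structure explicit.
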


We bound those bracketing entropies with the help of two
results. We first use a Lemma proved in \textcite{genovese00:_rates_gauss} that implies the existence of a universal constant $\ConstMultH_{\Simplex}$
such that
\begin{align*}
  H_{[\cdot],\d}(\delta/3, \Simplex_{K-1}) & \leq (K-1) \left(
    \ConstMultH_{\Simplex} + \Log \frac{1}{\delta} \right):
\end{align*}
\begin{lemma}
\label{lem:entsimp}
 For any $\delta \in (0,\sqrt{2}]$,
  \begin{align*}
    H_{[\cdot],\d}(\delta/3, \Simplex_{K-1})
 \leq   \DimHSimpKm \left( \ConstMultH_{\Simplex_{K-1}}  + 
 \Log \frac{1}{\delta} \right)
\end{align*}
with
\begin{alignI}
\ConstMultH_{\Simplex_{K-1}}
=  \frac{1}{K-1} 
\Log K + \frac{K}{2(K-1)} \Log ( 2\pi e) + \Log 3\sqrt{2} 
  \end{alignI}

Furthermore, uniformly on $K$: 
\begin{alignI}
\ConstMultH_{\Simplex_{K-1}}
\leq \Log 2 +
\frac{1}{2} \Log (2 \pi e) + \Log 3\sqrt{2} = \ConstMultH_{\Simplex}
\end{alignI}
\end{lemma}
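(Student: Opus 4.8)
The plan is to reduce the bracketing‑entropy estimate for $(\Simplex_{K-1},\d)$ to a Euclidean covering problem for a piece of the unit sphere of $\R^K$, and then to convert that covering into a bracketing by means of an axis‑parallel grid. First I would introduce the square‑root embedding $\varphi\colon\Simplex_{K-1}\to\R^K$, $\varphi(\pi)=(\sqrt{\pi_1},\dots,\sqrt{\pi_K})$. Since $\d^{2}(\pi,\pi')=\sum_{k=1}^{K}(\sqrt{\pi_k}-\sqrt{\pi'_k})^{2}$, the map $\varphi$ is an isometry of $(\Simplex_{K-1},\d)$ onto the spherical cap $\mathbb{S}_{+}=\{u\in\R^K:\ u_k\ge 0,\ \|u\|_{2}=1\}\subset S^{K-1}$; in particular $\d\le\sqrt{2}$ on $\Simplex_{K-1}$, which explains the restriction $\delta\in(0,\sqrt2]$.

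Next I would manufacture brackets from a cubic grid of mesh $h=\delta/(3\sqrt{K})$. For $m\in\N^{K}$ set $Q_m=\prod_{k=1}^{K}[m_kh,(m_k+1)h]$ and attach to it the pair of functions on $\{1,\dots,K\}$ given by $t_m^{-}(k)=(m_kh)^{2}$ and $t_m^{+}(k)=((m_k+1)h)^{2}$. This is a genuine bracket ($t_m^{-}\le t_m^{+}$ pointwise), its width is $\d^{2}(t_m^{-},t_m^{+})=\sum_{k=1}^{K}h^{2}=Kh^{2}=\delta^{2}/9$, i.e.\ exactly $\delta/3$, and a point $\pi\in\Simplex_{K-1}$ lies in $[t_m^{-},t_m^{+}]$ precisely when $\varphi(\pi)\in Q_m$. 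As $\varphi(\pi)\in\mathbb{S}_{+}$ for every $\pi$, the brackets indexed by the cubes $Q_m$ that meet $\mathbb{S}_{+}$ cover $\Simplex_{K-1}$; hence $H_{[\cdot],\d}(\delta/3,\Simplex_{K-1})\le\Log N$, where $N$ is the number of cubes of the grid that intersect $S^{K-1}$.

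It then remains to bound $N$ by a volume argument. Each $Q_m$ has diameter $h\sqrt{K}=\delta/3$, so a cube meeting $S^{K-1}$ is contained in the shell $\{u:\ 1-\delta/3\le\|u\|_{2}\le1+\delta/3\}$; the cubes have disjoint interiors and volume $h^{K}$, so $N h^{K}\le\omega_K\big[(1+\delta/3)^{K}-(1-\delta/3)^{K}\big]$ with $\omega_K=\mathrm{Vol}(B_2^{K})$. The mean value theorem gives $(1+\delta/3)^{K}-(1-\delta/3)^{K}\le\frac{2K\delta}{3}(1+\sqrt2/3)^{K-1}$, while Stirling's inequality $\Gamma(x+1)\ge(x/e)^{x}$ yields $\omega_K\le(2\pi e/K)^{K/2}$, hence $\omega_K K^{K/2}\le(2\pi e)^{K/2}$. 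Substituting $h=\delta/(3\sqrt K)$ and taking logarithms produces an inequality of exactly the announced shape, $H_{[\cdot],\d}(\delta/3,\Simplex_{K-1})\le\Log K+\tfrac{K}{2}\Log(2\pi e)+(K-1)\Log(3\sqrt2)+(K-1)\Log\tfrac1\delta=\DimHSimpKm\big(\ConstMultH_{\Simplex_{K-1}}+\Log\tfrac1\delta\big)$, the numerical bookkeeping being that of the simplex entropy lemma of \textcite{genovese00:_rates_gauss}. The uniform estimate is then elementary: since $K\mapsto\frac{1}{K-1}\Log K$ and $K\mapsto\frac{K}{2(K-1)}$ are decreasing on $\{K\ge 2\}$, the quantity $\ConstMultH_{\Simplex_{K-1}}$ is bounded above by a constant $\ConstMultH_{\Simplex}$ not depending on $K$.

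The delicate point is not the structure of the proof but the constant: the crude shell‑volume/cube count sketched above delivers the right functional form with a slightly larger universal constant, and pinning it down to $3\sqrt2$ requires the sharp forms of both the spherical‑shell volume estimate and the $\Gamma$‑function bound, together with a clean treatment of the covering‑to‑bracketing step, which is exactly where a careless inequality leaks a spurious multiplicative factor. The small‑$K$ values also have to be inspected by hand in order to read off the stated uniform constant $\ConstMultH_{\Simplex}$.
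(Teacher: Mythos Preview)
The paper does not prove this lemma itself; it simply attributes it to \textcite{genovese00:_rates_gauss} and quotes the conclusion. So there is no ``paper's proof'' to compare your argument against, and your square-root embedding $\varphi(\pi)=(\sqrt{\pi_1},\dots,\sqrt{\pi_K})$ together with an axis-parallel grid of mesh $h=\delta/(3\sqrt K)$ is exactly the standard route (and presumably the one in the cited reference). The bracket construction and width computation are correct, and your own caveat about the constant is honest: the shell-volume count you sketch gives $(K-1)\Log(3+\sqrt2)$ plus an extra $\Log 2$, not $(K-1)\Log(3\sqrt2)$, so tightening to the stated constant does require the sharper arithmetic you allude to.

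There is, however, a genuine slip in your treatment of the uniform bound. You argue that $K\mapsto\frac{1}{K-1}\Log K$ and $K\mapsto\frac{K}{2(K-1)}$ are both decreasing on $\{K\ge 2\}$, hence $\ConstMultH_{\Simplex_{K-1}}$ is maximised at $K=2$. That is correct, but evaluating at $K=2$ gives
\[
\ConstMultH_{\Simplex_{1}}=\Log 2+\Log(2\pi e)+\Log 3\sqrt2,
\]
which is \emph{larger} than the stated $\ConstMultH_\Simplex=\Log 2+\tfrac12\Log(2\pi e)+\Log 3\sqrt2$. So your monotonicity argument establishes a uniform constant, but not the one written in the lemma; ``inspecting small $K$ by hand'' will not recover $\ConstMultH_\Simplex$ as stated, because the displayed value of $\ConstMultH_\Simplex$ is actually the $K\to\infty$ limit of $\ConstMultH_{\Simplex_{K-1}}$, not its supremum. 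Either the coefficient $\tfrac12$ in the lemma should read $1$, or a direct (non-grid) bound for small $K$ is being invoked silently. You should flag this rather than claim the displayed $\ConstMultH_\Simplex$ follows from decreasingness.
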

We then rely on \cref{prop:entcoll} to
handle the bracketing
entropy of Gaussian $K$-tuples collection. It implies the existence of
two constants $\ConstMultH_{[\starmuLDA]^\starK}$
and $\ConstMultH_{[\starmuLDA]}$ depending only on $a$, $\Lm$, $\LM$, $\lambdam$ and
$\lambdaM$ such that
\begin{align*}
      H_{[\cdot],\dmax} \left(\delta/9, \Set_{\Space}^K\right) & \leq
      \dim(\Set_{\Space}^K) \left(\ConstMultH_{[\starmuLDA]^\starK} + \Log
      \frac{1}{\delta} \right)\\   
H_{[\cdot],\d}(\delta/9, \Set_{\Space^\perp}) 
&     \leq \dim(\Set_{\Space^\perp}) \left(\ConstMultH_{[\starmuLDA]} + \Log
      \frac{1}{\delta} \right).
\end{align*}
As $\dim(\model_{K,\Part,\Set}) = \NbPartX(K-1)  +
\dim(\Set_{\Space}^K) + \dim(\Set_{\Space^\perp})$, we obtain
\cref{prop:entropyU} with
$\constUentropy=\max(\ConstMultH_{\Simplex},\ConstMultH_{[\starmuLDA]^\starK},\ConstMultH_{[\starmuLDA]})$.
\end{proof}

\subsection{Entropy of Gaussian families}

\ifthenelse{\boolean{extended}}{Instead of \cref{prop:entcoll}, we prove
the slightly stronger
\begin{proposition}\label{prop:entcollext}
Let $\kappag \geq
 \frac{3}{4}$ and
\begin{align*}
  \gammakappa  &=
 \min\left(\frac{3(\kappag-\frac{3}{4})}{2(1+\frac{\kappag}{6})(1+\frac{1}{6})(1+\frac{1}{12})},\frac{(\kappag-\frac{1}{2})}{2(1+\frac{\kappag}{6})(1+\frac{1}{6})}\right)
&\constcosh
  = \sqrt{\kappag^2\cosh(\frac{\kappag}{6}) +
    \frac{1}{2}}
\end{align*}
Then for any $\delta \in (0,\sqrt{2}]$, 
  \begin{align*}
    H_{[\cdot],\dmax}(\delta/9, \Set_{[\mumodany,\Lmodany,\Dmodany,\Amodany]^K_{\Space}})
 \leq  \ConstH_{[\mumodany,\Lmodany,\Dmodany,\Amodany]^K_{\dimSpace}}  + \DimH_{[\mumodany,\Lmodany,\Dmodany,\Amodany]^K_{\dimSpace}}
 \Log \frac{1}{\delta}
\end{align*}
where 
\begin{alignI}
  \DimH_{[\mumodany,\Lmodany,\Dmodany,\Amodany]^K_{\dimSpace}}
= \dim\left(\Theta_{[\mumodany,\Lmodany,\Dmodany,\Amodany]^K_{\dimSpace}}\right) = c_{\mumodany} \DimH_{\mu,\dimSpace} + c_{\Lmodany} \DimH_{L}+
 c_{\Dmodany} \DimH_{D,\dimSpace} + c_{\Amodany} \DimH_{A,\dimSpace}
\end{alignI}
and
\begin{alignI}
\ConstH_{[\mumodany,\Lmodany,\Dmodany,\Amodany]^K_{\dimSpace}} 
=
c_{\mumodany} \ConstH_{\mumod,\dimSpace}  + c_{\Lmodany} \ConstH_{L,\dimSpace} +
 c_{\Dmodany} \ConstH_{D,\dimSpace}  + c_{\Amodany} \ConstH_{A,\dimSpace} 
\end{alignI}
with
\begin{alignI}
\begin{cases}
  c_{\mumodknown}=c_{\Lmodknown}=c_{\Dmodknown}=c_{\Amodknown}=0\\
  c_{\mumodfree}=c_{\Lmodfree}=c_{\Dmodfree}=c_{\Amodfree}=K\\
  c_{\mumodsame}=c_{\Lmodsame}=c_{\Dmodsame}=c_{\Amodsame}=1
\end{cases},
\end{alignI}
\begin{align*}
\begin{cases}
  \DimH_{\mumod,\dimSpace} = \dimSpace \\
\DimH_{\Lmod}=1\\
\DimH_{\Dmod,\dimSpace}=\frac{\dimSpace(\dimSpace-1)}{2}\\
\DimH_{\Amod,\dimSpace}  = \dimSpace - 1
\end{cases}
\quad\text{and}\quad
\begin{cases}
\ConstH_{\mumod,\dimSpace}=   \dimSpace 
\Log\left(
1+\frac{18\constcosh a\dimSpace}{\sqrt{\gammakappa \Lm \lambdam
   \frac{\lambdam}{\lambdaM}}}
\right)
 \\
\ConstH_{\Lmod,\dimSpace}= \Log\left( 1+\frac{39}{2}\constcosh\Log\left(\frac{\LM}{\Lm}\right)\dimSpace\right) \\
 \ConstH_{\Dmod,\dimSpace}= \frac{\dimSpace(\dimSpace-1)}{2}
\left( \frac{2\Log \ConstSzarek}{\dimSpace(\dimSpace-1)} +
   \left( 
\Log\left( 
126\constcosh\frac{\lambdaM}{\lambdam}\dimSpace\right)
\right) \right)\\
 \ConstH_{\Amod,\dimSpace} =(\dimSpace-1) \Log \left( 2+\frac{255}{2}\constcosh\frac{\lambdaM}{\lambdam}\Log\left(\frac{\lambdaM}{\lambdam}\right)\dimSpace\right) 
\end{cases}
\end{align*}
where $\ConstSzarek$ is a universal constant.

Furthermore,
for any $\dimSpace\leq\dimSp$
\begin{align*}
 \ConstH_{\mumod,\dimSpace}& \leq \ConstMultH_{\mumod,\dimSp} \DimH_{\mumod,\dimSpace}  \\
\ConstH_{\Lmod,\dimSpace}& \leq \ConstMultH_{\Lmod,\dimSp} \DimH_{\Lmod,\dimSpace}\\
 \ConstH_{\Dmod,\dimSpace}& \leq  \ConstMultH_{\Dmod,\dimSp} \DimH_{\Dmod,\dimSpace}\\
 \ConstH_{\Amod,\dimSpace} & \leq \ConstMultH_{\Amod,\dimSp} \DimH_{\Amod,\dimSpace}
\end{align*}
with
\begin{align*}
 \ConstMultH_{\mumod,\dimSp}&=  \Log\left(
1+\frac{18\constcosh a\dimSp}{\sqrt{\gammakappa \Lm \lambdam
   \frac{\lambdam}{\lambdaM}}}
\right)\\
\ConstMultH_{\Lmod,\dimSp}&= \Log\left( 1+\frac{39}{2}\constcosh\Log\left(\frac{\LM}{\Lm}\right)\dimSp\right)\\
 \ConstMultH_{\Dmod,\dimSp}&= \left( 2 \Log \ConstSzarek +
   \left( 
\Log\left( 
126\constcosh\frac{\lambdaM}{\lambdam}\dimSp\right)
\right) \right)\\
 \ConstMultH_{\Amod,\dimSp} &= \Log \left( 2+\frac{255}{2}\constcosh\frac{\lambdaM}{\lambdam}\Log\left(\frac{\lambdaM}{\lambdam}\right)\dimSp\right)
\end{align*}
and, uniformly over $K$,
\begin{align*}
  \ConstH_{[\mumodany,\Lmodany,\Dmodany,\Amodany]^K_{\dimSpace}} & \leq
\max_{\mumodany',\Lmodany,\Dmodany',\Amodany',K'}\bigg( 
\ConstMultH_{\mumod,\dimSp} 
\frac{c_{\mumodany'} 
K'}
{c_{\mumodany'} K' + c_{\Lmodany'}
 + c_{\Dmodany'} \frac{K'(K'-1)}{2} + c_{\Amodany'} (K'-1)}\\
&\qquad\qquad\qquad\qquad
+\ConstMultH_{\Lmod,\dimSp}
\frac{c_{\Lmodany'}}
{c_{\mumodany'} K' + c_{\Lmodany'}
 + c_{\Dmodany'} \frac{K'(K'-1)}{2} + c_{\Amodany'} (K'-1)}\\
&\qquad\qquad\qquad\qquad
 +
\ConstMultH_{\Dmod,\dimSp}
\frac{c_{\Dmodany'} \frac{K'(K'-1)}{2}}
{c_{\mumodany'} K' + c_{\Lmodany'}
 + c_{\Dmodany'} \frac{K'(K'-1)}{2} + c_{\Amodany'} (K'-1)}\\
& \qquad\qquad\qquad\qquad +
\ConstMultH_{\Amod,\dimSp}
\frac{c_{\Amodany'} (K'-1)}
{c_{\mumodany'} K' + c_{\Lmodany'}
 + c_{\Dmodany'} \frac{K'(K'-1)}{2} + c_{\Amodany'} (K'-1)}
\bigg) \DimH_{[\mumodany,\Lmodany,\Dmodany,\Amodany]^K_{\dimSpace}}
\\
& \leq
\max(\ConstMultH_{\mu,\dimSp},\ConstMultH_{L,\dimSp},\ConstMultH_{D,\dimSp},\ConstMultH_{A,\dimSp})
\DimH_{[\mumodany,\Lmodany,\Dmodany,\Amodany]^K_{\dimSpace}}
\end{align*}
where the $\max$ is taken over every Gaussian set type and every
number of classes considered.
\end{proposition}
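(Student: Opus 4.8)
The plan is to adapt the construction of \textcite{maugis09:_gauss,maugis10:_errat_ae_gauss}, building brackets for the $K$-tuples directly from $\delta$-nets on the parameter space and controlling the Hellinger distance between two Gaussians through the closed-form identity
\begin{align*}
  \d^2(\Gauss_{\mu_1,\Sigma_1},\Gauss_{\mu_2,\Sigma_2})
  = 2 - 2\,\frac{|\Sigma_1|^{1/4}\,|\Sigma_2|^{1/4}}{\left|\frac{\Sigma_1+\Sigma_2}{2}\right|^{1/2}}
  \exp\left(-\frac{1}{8}(\mu_1-\mu_2)'\left(\frac{\Sigma_1+\Sigma_2}{2}\right)^{-1}(\mu_1-\mu_2)\right).
\end{align*}
First I would reduce to a single Gaussian. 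Because $\dtwomax$ is a supremum over the $K$ components, a bracket covering of the $K$-tuple family is obtained by taking, block by block, a product over $k\in\{1,\dots,K\}$ of one-class brackets when the block is free, a single one-class bracket when the block is common to all classes, and nothing when the block is known --- which is exactly what the coefficients $c_{\mumodany},c_{\Lmodany},c_{\Dmodany},c_{\Amodany}\in\{0,1,K\}$ record. It therefore suffices to bound, for one Gaussian on $\Space$, the bracketing entropies attached to perturbations of the mean $\mu$, of the volume $L$, of the eigenbasis $D$ and of the renormalised eigenvalue matrix $A$ separately, and then to add the four contributions; the factor $1/9$ and the passage between each block metric and $\dmax$ are absorbed into the constants.

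The core of the argument is a quantitative control of the Hellinger distance above under each elementary perturbation, on the domain fixed by $|\mu_k|\le a$, $\Lm\le L_k\le\LM$ and $A_k\in\Diag(\lambdam,\lambdaM,\dimSpace)$: translating $\mu$ by a vector of Euclidean norm $\eta$ moves $\d^2$ by $O\!\left(\eta^2/(\Lm\lambdam\tfrac{\lambdam}{\lambdaM})\right)$, whereas multiplying $L$ by $e^{\pm\eta}$, replacing $D$ by an orthogonal matrix at distance $\eta$, or multiplying the diagonal entries of $A$ by $e^{\pm\eta}$ each move $\d^2$ by $O(\eta^2)$ with constants polynomial in $\lambdaM/\lambdam$ and $\log(\LM/\Lm)$. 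A bracket of $\dmax$-width $\delta$ around $\Gauss_\theta$ is then produced by rescaling Gaussians whose parameters are perturbed inside a net cell, the inflation factor being governed by the free parameter $\kappag\ge\frac34$, which trades off the multiplicative width of the bracket against the fineness of the net: $\gammakappa$ fixes the net mesh, while $\constcosh=\sqrt{\kappag^2\cosh(\kappag/6)+\tfrac12}$ is the constant surfacing in the Hellinger width once the exponential and determinant factors of the identity above are controlled through bounds of the type $\tfrac12(e^{x}+e^{-x})=\cosh x$; the thresholds $\frac34$ and $\frac12$ guarantee that the determinant ratio and the quadratic form entering the sandwich stay well defined. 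Counting the resulting nets gives, for each block, a contribution $(\text{block dimension})\left(\ConstMultH+\Log\tfrac1\delta\right)$: for $\mu$ a Euclidean net of a ball, for $L$ and $A$ logarithmic nets of intervals, and for $D$ a net of the orthogonal group $O(\dimSpace)$ whose cardinality is bounded via Szarek's metric-entropy estimate --- the source of the universal constant $\ConstSzarek$ and of the dimension $\frac{\dimSpace(\dimSpace-1)}{2}$ in $\ConstH_{\Dmod,\dimSpace}$. Summing the four blocks over the classes according to the $c_\bullet$'s yields the stated bound $\ConstH_{[\mumodany,\Lmodany,\Dmodany,\Amodany]^K_{\dimSpace}}+\DimH_{[\mumodany,\Lmodany,\Dmodany,\Amodany]^K_{\dimSpace}}\Log\tfrac1\delta$.

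For the ``furthermore'' inequalities I would argue by bookkeeping: each $\ConstH_{\bullet,\dimSpace}$ is, by inspection, its block dimension $\DimH_{\bullet,\dimSpace}$ times the logarithm of an affine function of $\dimSpace$, so replacing $\dimSpace$ by $\dimSp$ inside the logarithm gives $\ConstH_{\bullet,\dimSpace}\le\ConstMultH_{\bullet,\dimSp}\,\DimH_{\bullet,\dimSpace}$; and writing $\ConstH_{[\mumodany,\Lmodany,\Dmodany,\Amodany]^K_{\dimSpace}}$ as $\DimH_{[\mumodany,\Lmodany,\Dmodany,\Amodany]^K_{\dimSpace}}$ times the convex combination of the four $\ConstMultH_{\bullet,\dimSp}$ whose weights are the ratios of the block dimensions to the total dimension, then bounding that combination by its maximum over all Gaussian-set types and all class numbers appearing in the collection, produces the uniform-in-$K$ estimate. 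The step I expect to be the main obstacle is the covariance part of these Hellinger perturbation bounds: since two covariances with different eigenbases $D_1,D_2$ cannot be simultaneously diagonalised, $\frac{\Sigma_1+\Sigma_2}{2}$ has to be compared to their geometric mean by hand, which requires careful control of the eigenvalues of $\frac{\Sigma_1+\Sigma_2}{2}$ and of the relevant matrix products; extracting the clean explicit constants there --- together with invoking Szarek's entropy bound for $O(\dimSpace)$ --- is where essentially all the work lies.
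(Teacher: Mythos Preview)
Your overall architecture is right --- tensorise over the $K$ components via the $c_\bullet$ coefficients, build nets on $\mu$, $L$, $D$, $A$ separately (Szarek for $D$, logarithmic grids for $L$ and $A$), and count --- and this is exactly what the paper does. But you have misidentified where the real work lies, and this points to a genuine gap in your plan.

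The paper's bracket around a grid point $(\tilde\mu,\tilde\Sigma)$ is the pair
\[
t^\pm(x)=(1+\kappag\deltaVar)^{\pm\dimSpace}\,\Gauss_{\tilde\mu,(1+\deltaVar)^{\pm1}\tilde\Sigma}(x),
\]
and the two verifications are cleanly separated. The \emph{width} $d(t^-,t^+)$ uses the Hellinger identity you wrote down, but here $t^-$ and $t^+$ share the same mean $\tilde\mu$ and have \emph{proportional} covariances $(1+\deltaVar)^{\pm1}\tilde\Sigma$, so everything commutes and the determinant/exponential factors collapse to scalar expressions in $\deltaVar$; this is where $\constcosh$ and the $\cosh$ bound enter, and it is elementary. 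The \emph{containment} $t^-\le\Gauss_{\mu,\Sigma}\le t^+$ for all $(\mu,\Sigma)$ in the net cell is the hard part, and it does \emph{not} use the Hellinger identity at all. It uses instead a bound on the ratio of two Gaussian densities (Lemma~\ref{lemma:gaussratio}), which requires that $(1+\deltaVar)\tilde\Sigma^{-1}-\Sigma^{-1}$ and $\Sigma^{-1}-(1+\deltaVar)^{-1}\tilde\Sigma^{-1}$ be positive definite, with a quantitative lower bound on their smallest eigenvalue. That lower bound (Lemma~\ref{lem:eigenvaluesinverse}) is obtained by writing $\Sigma^{-1}-\tilde\Sigma^{-1}$ as a telescoping sum of the $L$-, $D$- and $A$-perturbations and bounding each quadratic form by hand; the constants $\gammakappa$ and the thresholds on $\kappag$ come out of balancing these terms against the slack $\deltaVar$.

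Your anticipated obstacle --- comparing $\tfrac{\Sigma_1+\Sigma_2}{2}$ to a geometric mean for non-commuting $\Sigma_1,\Sigma_2$ --- never arises in this scheme, because the Hellinger identity is only ever applied to a pair with proportional covariances. If you try instead to bound $d(\Gauss_{\mu,\Sigma},\Gauss_{\tilde\mu,\tilde\Sigma})$ directly via the identity, you will get metric-entropy balls, not brackets, and the passage from a covering to a bracketing is exactly what the $(1+\kappag\deltaVar)^{\pm\dimSpace}$ inflation and the ratio lemma accomplish. So the piece you need to add is the positive-definiteness lemma for $(1+\deltaVar)\tilde\Sigma^{-1}-\Sigma^{-1}$ under small perturbations of $L$, $D$, $A$; once you have it, the rest of your outline goes through as written.
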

\Cref{prop:entcoll} is obtained by setting $\kappa=1$ and using the
crude bounds $1/9\leq\gammakappa\leq 1/4$, $1\leq\constcosh\leq 2$.
}{
\begin{proposition}
\label{prop:entcoll}
For any $\delta \in (0,\sqrt{2}]$, 
  \begin{align*}
    H_{[\cdot],\dmax}(\delta/9, \Set_{[\mumodany,\Lmodany,\Dmodany,\Amodany]^K_{\Space}})
 \leq  \ConstH_{[\mumodany,\Lmodany,\Dmodany,\Amodany]^K_{\dimSpace}}  + \DimH_{[\mumodany,\Lmodany,\Dmodany,\Amodany]^K_{\dimSpace}}
 \Log \frac{1}{\delta}
\end{align*}
where 
\begin{alignI}
  \DimH_{[\mumodany,\Lmodany,\Dmodany,\Amodany]^K_{\dimSpace}}
= \dim\left(\Theta_{[\mumodany,\Lmodany,\Dmodany,\Amodany]^K_{\dimSpace}}\right) = c_{\mumodany} \DimH_{\mu,\dimSpace} + c_{\Lmodany} \DimH_{L}+
 c_{\Dmodany} \DimH_{D,\dimSpace} + c_{\Amodany} \DimH_{A,\dimSpace}
\end{alignI}
and
\begin{alignI}
\ConstH_{[\mumodany,\Lmodany,\Dmodany,\Amodany]^K_{\dimSpace}} 
=
c_{\mumodany} \ConstH_{\mumod,\dimSpace}  + c_{\Lmodany} \ConstH_{L,\dimSpace} +
 c_{\Dmodany} \ConstH_{D,\dimSpace}  + c_{\Amodany} \ConstH_{A,\dimSpace} 
\end{alignI}
with
\begin{alignI}
\begin{cases}
  c_{\mumodknown}=c_{\Lmodknown}=c_{\Dmodknown}=c_{\Amodknown}=0\\
  c_{\mumodfree}=c_{\Lmodfree}=c_{\Dmodfree}=c_{\Amodfree}=K\\
  c_{\mumodsame}=c_{\Lmodsame}=c_{\Dmodsame}=c_{\Amodsame}=1
\end{cases},
\end{alignI}
\begin{align*}
\begin{cases}
  \DimH_{\mumod,\dimSpace} = \dimSpace \\
\DimH_{\Lmod}=1\\
\DimH_{\Dmod,\dimSpace}=\frac{\dimSpace(\dimSpace-1)}{2}\\
\DimH_{\Amod,\dimSpace}  = \dimSpace - 1
\end{cases}
\quad\text{and}\quad
\begin{cases}
\ConstH_{\mumod,\dimSpace}=   \dimSpace \left( \Log\left( 1 + 
108\frac{a}{\sqrt{\Lm \lambdam
   \frac{\lambdam}{\lambdaM}}}\dimSpace
\right)  \right) \\
\ConstH_{\Lmod,\dimSpace}= \Log\left( 1 + 39 \Log\left(\frac{\LM}{\Lm}\right)\dimSpace\right) \\
 \ConstH_{\Dmod,\dimSpace}= \frac{\dimSpace(\dimSpace-1)}{2}
\left( \frac{2 \Log \ConstSzarek}{\dimSpace(\dimSpace-1)} +
   \left( 
\Log\left( 
252 \frac{\lambdaM}{\lambdam}\dimSpace\right)
\right) \right)\\
 \ConstH_{\Amod,\dimSpace} =(\dimSpace-1) \left( \Log \left(
     2 + 255 \frac{\lambdaM}{\lambdam}\Log\left(\frac{\lambdaM}{\lambdam}\right)\dimSpace\right) \right)
\end{cases}
\end{align*}
where $\ConstSzarek$ is a universal constant.
\end{proposition}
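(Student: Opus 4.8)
The plan is to adapt and extend the argument of \textcite{maugis09:_gauss,maugis10:_errat_ae_gauss}, which treats only six special covariance structures, so that a single proof covers all the structures at once. First I would reduce the $K$-tuple problem to single Gaussians. Since $\dmax_{\meas}$ is the supremum over $k$ of the componentwise Hellinger distances, a bracket for an element of $\Set_{[\mumodany\,\Lmodany\,\Dmodany\,\Amodany]^K_\Space}$ is just a $K$-tuple of one-Gaussian brackets, and a product of one-Gaussian covers is a cover of the $K$-tuple family. Consequently $H_{[\cdot],\dmax_{\meas}}(\delta/9,\Set_{[\mumodany\,\Lmodany\,\Dmodany\,\Amodany]^K_\Space})$ is at most the sum, over the $K$ components, of the one-Gaussian bracketing entropies, where a parameter block that is \emph{known} contributes nothing, one that is \emph{free} contributes once per component, and one that is \emph{common} contributes only once overall; this is exactly the bookkeeping carried by the coefficients $c_{\mumodany},c_{\Lmodany},c_{\Dmodany},c_{\Amodany}$. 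It then suffices to bound, at scale proportional to $\delta$, the bracketing entropy of the one-Gaussian family for each of the four parameter blocks $\mu,L,D,A$ separately, and add the contributions with the right weights.

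The technical core is a quantitative upper bound for the Hellinger distance between two Gaussians in terms of parameter perturbations, together with a bracket construction. Writing $\Sigma=LDAD'$ with $L=|\Sigma|^{1/\dimSpace}$, $D\in SO(\dimSpace)$ orthogonal, and $A$ diagonal with $|A|=1$ and diagonal entries in $[\lambdam,\lambdaM]$, I would Taylor-expand $\d^2(\Gauss_\theta,\Gauss_{\theta'})$ about the diagonal and show, on the restricted parameter ranges $|\mu|\le a$, $\Lm\le L\le\LM$, $A\in\Diag(\lambdam,\lambdaM,\dimSpace)$, that it is bounded above by a weighted sum of $|\mu-\mu'|^2$, $|\Log L-\Log L'|^2$, $\|D-D'\|^2$ and $\|A-A'\|^2$, with explicit multiplicative constants depending only on $a,\Lm,\LM,\lambdam,\lambdaM$ and on the free slack parameter $\kappag$ (this is where $\constcosh=\sqrt{\kappag^2\cosh(\kappag/6)+\tfrac12}$ and $\gammakappa$ enter, from controlling the remainder of the expansion). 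To convert a net in this parameter metric into a bracketing net, I would inflate each Gaussian $\Gauss_{\theta'}$ of the net to the pair $t^\pm=(1\pm\eta)^{\dimSpace/2}\Gauss_{\theta'^{\pm}}$, where $\theta'^{\pm}$ has covariance scaled by $(1\pm\eta)$; elementary Gaussian estimates show $[t^-,t^+]$ is a genuine bracket of $\d$-width $O(\eta)$ containing every Gaussian within $O(\eta)$ of $\Gauss_{\theta'}$ in the parameter metric, which is why the output scale is $\delta/9$ rather than $\delta$.

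Finally I would estimate the metric entropy of each of the four parameter blocks. The mean block reduces to covering a Euclidean ball of radius $a$ rescaled by the smallest eigenvalue scale $\sqrt{\Lm\lambdam\,\lambdam/\lambdaM}$, giving a $\dimSpace\Log(1/\delta)$ term; the volume block, parametrised by $\Log L\in[\Log\Lm,\Log\LM]$, gives a single $\Log(1/\delta)$ term with constant of order $\Log(\LM/\Lm)$; the renormalised-eigenvalue block lives on the $(\dimSpace-1)$-dimensional compact manifold $\Diag(\lambdam,\lambdaM,\dimSpace)$, giving $(\dimSpace-1)\Log(1/\delta)$. The delicate block is the eigenvector matrix $D\in SO(\dimSpace)$, of dimension $\dimSpace(\dimSpace-1)/2$: here I would invoke Szarek's metric-entropy bound for the orthogonal group (the source of the universal constant $\ConstSzarek$), and also handle the non-uniqueness of the $LDAD'$ decomposition — permutations of the eigenpairs and signs of the columns of $D$ — by fixing a canonical ordering, which costs only a bounded combinatorial factor absorbed into $\ConstH_{\Dmod,\dimSpace}$. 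Summing the four weighted contributions, tracking constants, and then bounding each ratio $\ConstH/\DimH$ uniformly in $K$ and in $\dimSpace\le\dimSp$ yields the stated refinements; \cref{prop:entcoll} follows by taking $\kappag=1$ and using the crude bounds on $\gammakappa$ and $\constcosh$. The main obstacle is the second step: producing the Hellinger-versus-parameters inequality with fully explicit constants that stay uniform in the dimension $\dimSpace$, and cleanly interfacing the orthogonal-group entropy with the mean and diagonal parts through the non-canonical spectral decomposition.
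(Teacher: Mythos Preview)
Your overall architecture matches the paper's: discretise each parameter block $(\mu,L,D,A)$ on its own grid, use Szarek's metric entropy for $SO(\dimSpace)$, build brackets by inflating the covariance and attaching a multiplicative prefactor, then combine over the $K$ components through the product structure with the coefficients $c_{\mumodany},c_{\Lmodany},c_{\Dmodany},c_{\Amodany}$. The paper's proof differs from yours in two respects worth noting.

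First, the paper does not Taylor-expand $\d^2(\Gauss_\theta,\Gauss_{\theta'})$. It uses the closed-form Gaussian Hellinger distance and an exact Gaussian density-ratio bound (their Lemmas~13 and~14) together with elementary $\cosh$ estimates to verify directly that the candidate pair
\[
t^\pm(x)=(1+\kappag\deltaVar)^{\pm\dimSpace}\,\Gauss_{\tilde\mu,(1+\deltaVar)^{\pm1}\tilde\Sigma}(x)
\]
is a $\delta/9$ Hellinger bracket and that any Gaussian with parameters close to $(\tilde\mu,\tilde L,\tilde D,\tilde A)$ in the grid metric lies between $t^-$ and $t^+$ pointwise (their Lemma~15, with a separate eigenvalue lemma controlling $(1+\deltaVar)\tilde\Sigma^{-1}-\Sigma^{-1}$). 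Notice the prefactor exponent is $\pm\dimSpace$ with a free constant $\kappag$, not $\pm\dimSpace/2$ as you wrote; the constraint $\kappag\ge 3/4$ is exactly what makes the containment $t^-\le\Gauss_{\mu,\Sigma}\le t^+$ go through, and $\kappag=1$ is what recovers the stated numerical constants. Your version effectively sets $\kappag=1/2$, which is below this threshold and would not give containment without further work, so this is a point where your sketch would need adjustment.

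Second, the non-uniqueness of the $LDAD'$ decomposition is a non-issue here: one is building a \emph{covering}, not a parametrisation, so any decomposition of a given $\Sigma$ lands near some grid point and the bracket constructed from that grid point contains $\Gauss_{\mu,\Sigma}$. No canonical ordering or combinatorial factor is needed. The paper simply takes the image of the product grid under the obvious map $f_{K,\mumodany}\otimes f_{K,\Lmodany}\otimes f_{K,\Dmodany}\otimes f_{K,\Amodany}$ and bounds its cardinality, which is the product of the individual grid sizes raised to the powers $c_\star$.
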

}

\ifthenelse{\boolean{extended}}{
\begin{proof}[Proof of
    \cref{prop:entcollext}]}{
\begin{proof}[Proof of
    \cref{prop:entcoll}]}

We consider all models $\Set_{[\mumodany\, \Lmodany\,  \Amodany\,  \Dmodany]^K_\Space}$ at once by
a ``tensorial'' construction of a suitable  $\delta/9$ bracket collection.

We first define a set of grids for the mean $\mu$, the volume $L$, the
eigenvector matrix $D$ and the renormalized eigenvalue matrix $A$ from
which one constructs the bracket collection.
\begin{itemize}
\item For any $\delta_\mumod$, the grid
  $\Grid_{\mumod}(a,\dimSpace,\delta_\mumod)$ of $[-a,a]^{\dimSpace}$:
\begin{align*}
\Grid_{\mumod}(a,\dimSpace,\delta_\mumod) = \left\{ g\delta_\mumod \middle|  g\in \Z^{\dimSpace},
\|g\|_\infty  \leq \frac{a}{\delta_\mumod} \right\}.
\end{align*}
\item For any $\delta_\Lmod$, the grid $\Grid_{\Lmod}(\Lm,\LM,\delta_\Lmod)$ of $[\Lm,\LM]$:
  \begin{align*}
    \Grid_{\Lmod}(\Lm,\LM,\delta_\Lmod) = \left\{ \Lm (1+\delta_\Lmod)^g
\middle| g \in \N, \Lm (1+\delta_\Lmod)^g \leq \LM
\right\}.
  \end{align*}
\item For any $\delta_\Dmod$, the grid $\Grid_{\Dmod}(\dimSpace,\delta_\Dmod)$ of $SO(\dimSpace)$
made of the elements of a $\delta_\Dmod$-net  with
respect to the $\|\cdot\|_2$ operator norm (as described by \textcite{szarek98:_metric_gdans_banac_center_publ}).
\item  For any
$\delta_\Amod$, the grid $\Grid_{\Amod}(\lambdam,\lambdaM,\dimSpace,\delta_\Amod)$ of
$\Diag(\lambdam,\lambdaM(1+\delta_\Amod),\dimSpace)$:
\begin{align*}
  \Grid_{\Amod}(\lambdam,\lambdaM,\dimSpace,\delta_\Amod) = \left\{
A \in \Diag(\lambdam,\lambdaM(1+\delta_\Amod),\dimSpace) \middle| \forall 1 \leq i < \dimSpace, \exists g_i \in \N,
A_i = \lambdam (1+\delta_\Amod)^{g_i} \right\}.
\end{align*}
\end{itemize}

Obviously, for any  $\mu \in [-a,a]$, there is a
$\tilde{\mu}\in\Grid_{\mumod}(a,\dimSpace,\delta_\mumod)$ such that 
\[
  \|\tilde{\mu}-\mu\|^2 \leq \dimSpace \delta_\mumod^2 
\]
while 
\[
\left|\Grid_{\mumod}(a,\dimSpace,\delta_\mumod)\right| \leq \left( 1
  + 2\frac{a}{\delta_\mumod} \right)^{\dimSpace} \leq \max\left(2^{\dimSpace},
  \left( \frac{4a}{\delta_\mumod} \right)^{\dimSpace}\right).
\]
In the same fashion, for any $L$ in $[\Lm,\LM]$, there is a $\tilde{L}\in\Grid_{\Lmod}(\Lm,\LM,\delta_\Lmod)$ such that
\(
  (1+\delta_\Lmod)^{-1} L_{j_L} < L \leq  L_{j_L}
\)
while
\[
\left| \Grid_{\Lmod}(\Lm,\LM,\delta_\Lmod) \right| \leq 1 + \frac{\Log\left(\frac{\LM}{\Lm}\right)}{\Log(1+\delta_\Lmod)}.
\]
If we further assume that $\delta_\Lmod \leq \frac{1}{12}$ then
$\Log(1+\delta_\Lmod) \geq \frac{12}{13} \delta_\Lmod$ and
\[
\left| \Grid_{\Lmod}(\Lm,\LM,\delta_\Lmod) \right| \leq 1 +
\frac{13\Log\left(\frac{\LM}{\Lm}\right)}{12\delta_\Lmod}.
\]

By definition on a $\delta_\Dmod$-net, for any $D \in SO(\dimSpace)$ there
is a $\tilde{D} \in \Grid_{\Dmod}(\dimSpace,\delta_\Dmod)$ such that
\begin{align*}
  \forall x, \|(\tilde{D}-D)x\|_2 \leq \delta_\Dmod \|x\|_2. 
\end{align*}
As proved by \textcite{szarek98:_metric_gdans_banac_center_publ}, it
exists a universal constant $\ConstSzarek$ such that, as soon as
$\delta_\Dmod \leq 1$
\begin{align*}
  \left| \Grid_{\Dmod}(\dimSpace,\delta_\Dmod) \right| \leq \ConstSzarek \left(
    \frac{1}{\delta_\Dmod} \right)^{\frac{\dimSpace(\dimSpace-1)}{2}} 
\end{align*}
where $\frac{\dimSpace(\dimSpace-1)}{2}$ is the intrinsic dimension of $SO(\dimSpace)$.

The structure of the grid $\Grid_{\Amod}(\lambdam,\lambdaM,\dimSpace,\delta_\Amod)$
is more complex. Although, looking at condition on the
$\dimSpace-1$ first diagonal values, 
\begin{align*}
  \left| \Grid_{\Amod}(\lambdam,\lambdaM,\dimSpace,\delta_\Amod) \right|
\leq \left( 2 + \frac{\Log\left(\frac{\lambdaM}{\lambdam}\right)}{\Log(1+\delta_\Amod)}\right)^{\dimSpace-1}
\end{align*}
where $\dimSpace-1$ is the intrinsic dimension of
$\Diag(\lambdam,\lambdaM,\dimSpace)$. If we further assume that $\delta_\Amod \leq
\frac{1}{84}$ then $\Log(1+\delta_\Amod) \geq \frac{84}{85} \delta_\Amod$ and thus
\begin{align*}
    \left| \Grid_{\Amod}(\lambdam,\lambdaM,\dimSpace,\delta_\Amod) \right|
\leq \left( 2 +
  \frac{85\Log\left(\frac{\lambdaM}{\lambdam}\right)}{84\delta_\Amod}\right)^{\dimSpace-1}
.
\end{align*}
A key to the succes of this construction is the following
approximation property of this grid
 \ifthenelse{\boolean{extended}}{proved later}{obtained in our
  technical report~\cite{cohen11:_condit_densit_estim_penal_app} with a
  calculatory proof}:
\begin{lemma}
\label{lem:gridA}
For $A\in\Diag(\lambdam,\lambdaM,\dimSpace)$ there is
$\tilde{A}\in\Grid_{\Amod}(\lambdam,\lambdaM,\dimSpace,\delta_\Amod)$ such that
\begin{align*}
  |\tilde{A}_{i,i}^{-1}-A_{i,i}^{-1}|\leq \delta_\Amod \lambdam^{-1}.
\end{align*}
\end{lemma}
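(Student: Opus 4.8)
The plan is to use the product structure of the grid: a point of $\Grid_{\Amod}(\lambdam,\lambdaM,\dimSpace,\delta_\Amod)$ is obtained by placing each of its first $\dimSpace-1$ diagonal entries on the geometric ladder $\{\lambdam(1+\delta_\Amod)^{g}\}_{g\in\N}$ and letting the last entry be fixed by the constraint $|A|=1$. So, given $A\in\Diag(\lambdam,\lambdaM,\dimSpace)$, I would first, for each $i<\dimSpace$, take the two consecutive ladder points bracketing $A_{i,i}$, say $a_i^-\le A_{i,i}<a_i^+=(1+\delta_\Amod)a_i^-$; since $\lambdam\le A_{i,i}\le\lambdaM$ one has $a_i^-\ge\lambdam$ and $a_i^+\le(1+\delta_\Amod)\lambdaM$, so either is an admissible value for $\tilde A_{i,i}$, and both satisfy $\tilde A_{i,i}/A_{i,i}\in[(1+\delta_\Amod)^{-1},1+\delta_\Amod]$.

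The delicate point---and the main obstacle---is the last coordinate: if one simply rounds the first $\dimSpace-1$ entries to their nearest ladder point, the determined entry $\tilde A_{\dimSpace,\dimSpace}=\big(\prod_{i<\dimSpace}\tilde A_{i,i}\big)^{-1}$ may differ from $A_{\dimSpace,\dimSpace}$ by a factor as large as $(1+\delta_\Amod)^{\dimSpace-1}$, which would both ruin the dimension-free estimate $\delta_\Amod\lambdam^{-1}$ and possibly leave the admissible range. I would fix this by choosing the rounding \emph{directions} so as to balance the product. Writing $\tilde A_{i,i}=a_i^-(1+\delta_\Amod)^{\varepsilon_i}$ with $\varepsilon_i\in\{0,1\}$, the quantity $\Log\big(\prod_{i<\dimSpace}\tilde A_{i,i}\big)-\Log\big(\prod_{i<\dimSpace}A_{i,i}\big)$ equals $\big(\sum_i\varepsilon_i\big)\Log(1+\delta_\Amod)+c$ with $c=\sum_i\Log(a_i^-/A_{i,i})\in[-(\dimSpace-1)\Log(1+\delta_\Amod),0]$; as $\sum_i\varepsilon_i$ runs over $\{0,\dots,\dimSpace-1\}$ this runs from a nonpositive value to a nonnegative one in steps of $\Log(1+\delta_\Amod)$, so some choice makes it lie in $(-\Log(1+\delta_\Amod),0]$, i.e.
\[
\frac{1}{1+\delta_\Amod}<\frac{\prod_{i<\dimSpace}\tilde A_{i,i}}{\prod_{i<\dimSpace}A_{i,i}}\le 1 .
\]
Choosing the interval on the nonpositive side is convenient because, setting $\tilde A_{\dimSpace,\dimSpace}=\big(\prod_{i<\dimSpace}\tilde A_{i,i}\big)^{-1}$, it yields $A_{\dimSpace,\dimSpace}\le\tilde A_{\dimSpace,\dimSpace}<(1+\delta_\Amod)A_{\dimSpace,\dimSpace}$, hence $\lambdam\le\tilde A_{\dimSpace,\dimSpace}<(1+\delta_\Amod)\lambdaM$, so that $\tilde A$ indeed belongs to $\Grid_{\Amod}(\lambdam,\lambdaM,\dimSpace,\delta_\Amod)$.

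With this choice every diagonal entry---the first $\dimSpace-1$ by construction, the last by the balancing---satisfies $\tilde A_{i,i}/A_{i,i}\in[(1+\delta_\Amod)^{-1},1+\delta_\Amod]$, whence $|A_{i,i}\tilde A_{i,i}^{-1}-1|\le\delta_\Amod$, and I would finish with the one-line estimate
\[
\big|\tilde A_{i,i}^{-1}-A_{i,i}^{-1}\big|=A_{i,i}^{-1}\,\big|A_{i,i}\tilde A_{i,i}^{-1}-1\big|\le\delta_\Amod\,A_{i,i}^{-1}\le\delta_\Amod\lambdam^{-1},
\]
using $A_{i,i}\ge\lambdam$. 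All computations here are elementary manipulations of the factor $1+\delta_\Amod$; the only real idea is the product-balancing of the middle paragraph.
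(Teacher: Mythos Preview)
Your proof is correct and follows essentially the same approach as the paper's: both arguments first bracket each of the first $\dimSpace-1$ entries between consecutive ladder points, then choose how many of them to round up so that the forced last entry $\tilde A_{\dimSpace,\dimSpace}=\big(\prod_{i<\dimSpace}\tilde A_{i,i}\big)^{-1}$ stays within a single factor $(1+\delta_\Amod)$ of $A_{\dimSpace,\dimSpace}$, and finish with the same ratio-to-inverse estimate. The paper carries out the balancing concretely (it finds the integer $d$ with $(1+\delta_\Amod)^{-d-1}<A_{\dimSpace,\dimSpace}\prod_i\lambdam(1+\delta_\Amod)^{\tilde g_i}\le(1+\delta_\Amod)^{-d}$ and increments exactly the first $d$ exponents), whereas you phrase it as an intermediate-value/step argument for $\sum_i\varepsilon_i$; these are the same idea in different clothing.
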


Define $\displaystyle c_{\mumodknown}=c_{\Lmodknown}=c_{\Dmodknown}=c_{\Amodknown}=0,
  c_{\mumodfree}=c_{\Lmodfree}=c_{\Dmodfree}=c_{\Amodfree}=K,
  c_{\mumod}=c_{\Lmod}=c_{\Dmod}=c_{\Amod}=1$.
Let $f_{K,\mumodany,\dimSpace}$ be the application
from \begin{alignI}
\left(\R^{\dimSpace}\right)^{c_{\mumodany}}
\end{alignI}
 to 
\begin{alignI}
\R^K
\end{alignI}
defined by
\begin{align*}
  \begin{cases}
  0 \mapsto (\mu_{0,1},\ldots,\mu_{0,K}) & \text{if $\mumodany=\mumodknown$}\\
 (\mu_1,\ldots,\mu_K) \mapsto (\mu_1,\ldots,\mu_K) & \text{if
   $\mumodany=\mumodfree$}\\
 \mu \mapsto (\mu,\ldots,\mu) & \text{if
   $\mumodany=\mumodsame$}
  \end{cases},
\end{align*}
and $f_{K,\Lmodany}$ (respectively 
$f_{K,\Dmodany,\dimSpace}$ and $f_{K,\Amodany,\dimSpace}$)
be the similar
application from $\left(\R^+\right)^{c_{\Lmodany}}$ into
$\left(\R^+\right)^K$ (respectively from $
\left( SO(\dimSpace) \right)^{c_{\Dmodany}}$ into $\left(
  SO(\dimSpace) \right)^K$ and 
from $\left(\Diag(0,+\infty,\dimSpace)\right)^{c_{\Amodany}}$ into $\left(\Diag(0,+\infty,\dimSpace)\right)^K$).

By definition, the image of
\begin{align*}
\left([-a,a]^{\dimSpace}\right)^{c_{\mumodany}}
\times \left([\Lm,\LM]\right)^{c_{\Lmodany}}
\times \left( SO(\dimSpace) \right)^{c_{\Dmodany}}
\times \left(\Diag(\lambdam,\lambdaM,\dimSpace)\right)^{c_{\Amodany}}
\end{align*}
by $\left( f_{K,\mumodany,\dimSpace} \otimes
f_{L_{K,\cdot},\dimSpace}\otimes f_{K,\Dmodany,\dimSpace} \otimes 
f_{K,\Amodany} \right)$ 
is, up to reordering, the set of parameters
of all $K$-tuples of Gaussian densities of type $[\mumodany\,\Lmodany,\Dmodany,\Amodany]^K$.

We construct our $\delta/9$ bracket covering with a grid on those parameters.
For any $K$-tuple of Gaussian parameters 
\begin{alignI}
( (\mu_1,\Sigma_1),\ldots,(\mu_K,\Sigma_K) )
\end{alignI}
and any $\deltaVar$, we associate the $K$-tuple of pairs
\begin{align*}
\bigg(&
\left((1+\kappag\deltaVar)^{-\dimSpace}
  \Gauss_{\mu_1,(1+\deltaVar)^{-1} \Sigma_1},
(1+\kappag\deltaVar)^{\dimSpace} \Gauss_{\mu_1,(1+\deltaVar) \Sigma_1 }
\right),\ldots, \\
&\qquad\qquad\left((1+\kappag\deltaVar)^{-\dimSpace}
  \Gauss_{\mu_K,(1+\deltaVar)^{-1} \Sigma_K},
(1+\kappag\deltaVar)^{\dimSpace} \Gauss_{\mu_K,(1+\deltaVar) \Sigma_K }
\right)
 \bigg).
\end{align*}

We prove \ifthenelse{\boolean{extended}}{}{in our technical
  report~\cite{cohen11:_condit_densit_estim_penal_app}} that, 
\ifthenelse{\boolean{extended}}{
for $\gammakappa$ and $\constcosh$ defined in
\cref{prop:entcollext} and any $\kappag\geq\frac{3}{4}$, }{
for $\gammakappa = 18/49$ and $\constcosh=\sqrt{\cosh(\frac{1}{6})+
    \frac{1}{2}}$,}
  the choice
\begin{align*}
 \delta_\mumod =  \frac{\sqrt{\gammakappa \Lm \lambdam
   \frac{\lambdam}{\lambdaM}}}{9\constcosh}\frac{\delta}{\dimSpace},\ 
 \delta_\Lmod  = \frac{1}{18\constcosh}\frac{\delta}{\dimSpace} \leq
 \frac{1}{12},\ 
\delta_\Dmod = \delta_\Amod  =
\frac{1}{126\constcosh}
\frac{\lambdam}{\lambdaM}  \frac{\delta}{\dimSpace} \leq
\frac{1}{84},\ 
\deltaVar = \frac{1}{9\constcosh}\frac{\delta}{\dimSpace} \leq \frac{1}{8}
\end{align*}
is such that
 the image of
\begin{align*}
\left(
\Grid_{\mumod}(a,\dimSpace,\delta_\mumod)
\right)^{c_{\mumodany}}
\times \left(
\Grid_{\Lmod}(\Lm,\LM,\delta_\Lmod)
\right)^{c_{\Lmodany}}
\times \left(
\Grid_{\Dmod}(\dimSpace,\delta_\Dmod)
 \right)^{c_{\Dmodany}}
\times \left(
\Grid_{\Amod}(\lambdam,\lambdaM,\dimSpace,\delta_\Amod)
\right)^{c_{\Amodany}}
\end{align*}
by $f_{K,\mumodany,\dimSpace} \otimes
f_{L_{K,\cdot},\dimSpace}\otimes f_{K,\Dmodany,\dimSpace} \otimes 
f_{K,\Amodany}$ is a set of parameters corresponding to a set
of pairs that is a
$\delta/9$-bracket covering of 
$\Set_{[\mumodany\,\Lmodany\,\Dmodany\,\Amodany]^K_{\Space}}$ for the
$\dmax$ norm.

\ifthenelse{\boolean{extended}}{
Indeed, as proved later,
\begin{lemma}
\label{lem:gaussbracketnet}
Let $\kappag\geq
\frac{3}{4}$,
\( \displaystyle
  \gammakappa  =
 \min\left(\frac{3(\kappag-\frac{3}{4})}{2(1+\frac{\kappag}{6})(1+\frac{1}{6})(1+\frac{1}{12})},\frac{(\kappag-\frac{1}{2})}{2(1+\frac{\kappag}{6})(1+\frac{1}{6})}\right)
\) and $\constcosh= \sqrt{\kappag^2\cosh(\frac{\kappag}{6}) +
    \frac{1}{2}}$. For any $0<\delta\leq \sqrt{2}$, any $\dimSpace\geq 1$ 
 and any $\deltaVar \leq
\frac{1}{9\constcosh}\frac{\delta}{\dimSpace}$, 

Let 
$(\tilde{\mu}, \tilde{L},\tilde{A},\tilde{D})\in [-a,a]^{\dimSpace} \times [\Lm,\LM] \times 
\Diag(\lambdam,+\infty) \times SO(\dimSpace)$, define
 $\tilde{\Sigma}=\tilde{L} \tilde{D}
\tilde{A} \tilde{D}'$,
\begin{align*}
  t^-(x) = (1+\kappag\deltaVar)^{-\dimSpace} \Gauss_{\tilde{\mu},(1+\deltaVar)^{-1} \tilde{\Sigma}}(x)
\quad\text{and}\quad
  t^+(x) = (1+\kappag\deltaVar)^{\dimSpace} \Gauss_{\tilde{\mu},(1+\deltaVar) \tilde{\Sigma}}(x).
\end{align*}
then $[t^-,t^+]$ is a $\delta/9$ Hellinger bracket.

Furthermore, let $(\mu,L,A,D) \in [-a,a]^{\dimSpace} \times  [\Lm,\LM] \times 
\Diag(\lambdam,\lambdaM) \times SO(\dimSpace)$ and define
$\Sigma=L D A D'$.
If
\begin{align*}
\begin{cases}
\|\mu-\tilde{\mu}\|^2 \leq \dimSpace \gammakappa \Lm \lambdam
  \frac{\lambdam}{\lambdaM} \deltaVar^2\\
(1+\frac{\deltaVar}{2})^{-1} \tilde{L} \leq L \leq \tilde{L}\\
\forall 1\leq i \leq \dimSpace,\quad |A_{i,i}^{-1}-\tilde{A}_{i,i}^{-1}| \leq \frac{1}{14}
\frac{1}{\lambdaM} \deltaVar\\
 \forall x \in \R^{\dimSpace},\quad \|Dx-\tilde{D}x\| \leq
\frac{1}{14} \frac{\lambdam}{\lambdaM}
\deltaVar \|x\|
\end{cases}
\end{align*}
then \(\displaystyle
t^-(x) \leq \Gauss_{\mu,\Sigma}(x) \leq t^+(x).
\)
\end{lemma}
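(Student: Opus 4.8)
The plan is to establish the two assertions separately; the width bound is a direct computation, while the pointwise double inequality carries the real content.

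For the width bound, observe that $t^- = (1+\kappag\deltaVar)^{-\dimSpace}g^-$ and $t^+ = (1+\kappag\deltaVar)^{\dimSpace}g^+$ with $g^- = \Gauss_{\tilde\mu,(1+\deltaVar)^{-1}\tilde\Sigma}$ and $g^+ = \Gauss_{\tilde\mu,(1+\deltaVar)\tilde\Sigma}$ genuine densities, so that $\d^2(t^-,t^+) = (1+\kappag\deltaVar)^{\dimSpace} + (1+\kappag\deltaVar)^{-\dimSpace} - 2\int\sqrt{g^-g^+}$. The affinity $\int\sqrt{g^-g^+}$ of two Gaussians with the common mean $\tilde\mu$ and covariances $(1+\deltaVar)^{\pm1}\tilde\Sigma$ equals $\bigl(\cosh(\log(1+\deltaVar))\bigr)^{-\dimSpace/2}$ by the closed-form Gaussian affinity identity, whence $\d^2(t^-,t^+) = 2\bigl(\cosh(\dimSpace\log(1+\kappag\deltaVar))-1\bigr) + 2\bigl(1-\cosh(\log(1+\deltaVar))^{-\dimSpace/2}\bigr)$. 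I would bound the first summand with $\cosh a-1\le\tfrac{a^2}{2}\cosh a$ and $\log(1+\kappag\deltaVar)\le\kappag\deltaVar$, the second with $1-e^{-b}\le b$ and $\log\cosh t\le\tfrac{t^2}{2}$, and, since $\deltaVar\le\tfrac{1}{9\constcosh\dimSpace}$ and $\kappag\ge\tfrac{3}{4}$ give $\dimSpace\kappag\deltaVar\le\kappag/6$, this yields $\d^2(t^-,t^+)\le\dimSpace^2\deltaVar^2\bigl(\kappag^2\cosh(\kappag/6)+\tfrac{1}{2}\bigr)=(\dimSpace\deltaVar\constcosh)^2\le(\delta/9)^2$.

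For the pointwise bounds, take first the upper one, $\Gauss_{\mu,\Sigma}\le t^+=(1+\kappag\deltaVar)^{\dimSpace}\Gauss_{\tilde\mu,\Sigma_+}$ with $\Sigma_+=(1+\deltaVar)\tilde\Sigma$. Writing out the density ratio, it is equivalent to $\sup_x\bigl[(x-\tilde\mu)'\Sigma_+^{-1}(x-\tilde\mu)-(x-\mu)'\Sigma^{-1}(x-\mu)\bigr]\le 2\dimSpace\log(1+\kappag\deltaVar)-\log\tfrac{|\Sigma_+|}{|\Sigma|}$. The right-hand side is nonnegative, of order $\dimSpace(\kappag-\tfrac{3}{4})\deltaVar$: since $|\Sigma|=L^{\dimSpace}$, $|\Sigma_+|=(1+\deltaVar)^{\dimSpace}\tilde L^{\dimSpace}$ and $L\le\tilde L\le(1+\tfrac{\deltaVar}{2})L$, one has $\tfrac{|\Sigma_+|}{|\Sigma|}\le\bigl((1+\deltaVar)(1+\tfrac{\deltaVar}{2})\bigr)^{\dimSpace}$ while $(1+\kappag\deltaVar)^2\ge(1+\deltaVar)(1+\tfrac{\deltaVar}{2})$ for $\kappag\ge\tfrac{3}{4}$. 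For the left-hand side I would use the factored form $\Sigma^{-1}=L^{-1}DA^{-1}D'$ together with the net hypotheses — the entrywise bound $|A^{-1}_{ii}-\tilde A^{-1}_{ii}|\le\tfrac{1}{14}\lambdaM^{-1}\deltaVar$, the $\tfrac{1}{14}\tfrac{\lambdam}{\lambdaM}\deltaVar$-closeness of $D$ to $\tilde D$ on the unit sphere, and $L\le\tilde L$ — to get the multiplicative Loewner comparison $\tilde\Sigma^{-1}\preceq(1+\tfrac{3}{14}\deltaVar)\Sigma^{-1}$, hence $M:=\Sigma^{-1}-\Sigma_+^{-1}\succeq\tfrac{11\deltaVar}{14(1+\deltaVar)}\Sigma^{-1}\succ 0$ and $M^{-1}\preceq\tfrac{14(1+\deltaVar)}{11\deltaVar}\Sigma$. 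With $m=\mu-\tilde\mu$ and $y=x-\tilde\mu$ the quadratic is $-y'My+2y'\Sigma^{-1}m-m'\Sigma^{-1}m$, so its supremum equals $m'\Sigma^{-1}M^{-1}\Sigma^{-1}m-m'\Sigma^{-1}m\le\tfrac{14(1+\deltaVar)}{11\deltaVar}\,m'\Sigma^{-1}m\le\tfrac{14(1+\deltaVar)}{11\deltaVar}\,\tfrac{\|m\|^2}{\Lm\lambdam}$, using $\lambda_{\min}(\Sigma)=L\min_iA_{ii}\ge\Lm\lambdam$; the mean hypothesis $\|m\|^2\le\dimSpace\gammakappa\Lm\lambdam\tfrac{\lambdam}{\lambdaM}\deltaVar^2$ then bounds it by $\tfrac{14}{11}(1+\deltaVar)\dimSpace\gammakappa\deltaVar$, and the precise $\gammakappa$ — numerator $3(\kappag-\tfrac{3}{4})$ matching the slack, denominator $2(1+\kappag/6)(1+\tfrac{1}{6})(1+\tfrac{1}{12})$ absorbing the second-order remainders from expanding the logs under $\deltaVar\le\tfrac{1}{9\constcosh\dimSpace}$ — is calibrated so this stays below $2\dimSpace\log(1+\kappag\deltaVar)-\log\tfrac{|\Sigma_+|}{|\Sigma|}$. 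The lower inequality $t^-\le\Gauss_{\mu,\Sigma}$ is the mirror image, comparing $\Sigma$ with $\Sigma_-=(1+\deltaVar)^{-1}\tilde\Sigma$: there the factor $(1+\kappag\deltaVar)^{-\dimSpace}$ leaves ample room against the determinant (now $L\ge(1+\tfrac{\deltaVar}{2})^{-1}\tilde L$ is the relevant bound), and the binding constraint comes from the mean-shift term with the milder threshold $\kappag\ge\tfrac{1}{2}$ — which is exactly why $\gammakappa$ is the minimum of one term governed by $\kappag-\tfrac{3}{4}$ and one by $\kappag-\tfrac{1}{2}$. When the lemma is applied, the existence of a grid point $\tilde A^{-1}$ of the required $\tfrac{1}{14}\lambdaM^{-1}\deltaVar$ accuracy is provided by \cref{lem:gridA}.

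The main obstacle is precisely this constant bookkeeping in the pointwise step: the margin between $(1+\kappag\deltaVar)^{\dimSpace}$ and the determinant prefactor is only first order in $\deltaVar$, so the multiplicative comparison of $\Sigma^{-1}$ and $\tilde\Sigma^{-1}$, the lower bound for $M$, and the mean-shift contribution all have to be carried out to first order with explicitly controlled quadratic remainders, and they must all close under one choice of constants valid for every admissible covariance structure — which is what forces the awkward shapes of $\gammakappa$, $\constcosh$ and the factor $\tfrac{1}{14}$. Once the Gaussian affinity identity is available, the width half of the statement is by comparison routine.
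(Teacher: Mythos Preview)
Your proposal is correct and follows essentially the same route as the paper. The width computation is identical in substance: the paper also reduces to $(1+\kappag\deltaVar)^{\dimSpace}+(1+\kappag\deltaVar)^{-\dimSpace}-2$ plus $2\bigl(1-\cosh(\log(1+\deltaVar))^{-\dimSpace/2}\bigr)$ and bounds both pieces by $\cosh$-type inequalities (its \cref{lemma:cosh}) to land on $(\dimSpace\deltaVar\constcosh)^2$. For the pointwise step, where you maximize the quadratic $-y'My+2y'\Sigma^{-1}m-m'\Sigma^{-1}m$ explicitly, the paper instead invokes a Gaussian ratio lemma (\cref{lemma:gaussratio}) giving $\sup_x \Gauss_{\mu_1,\Sigma_1}/\Gauss_{\mu_2,\Sigma_2}\le\sqrt{|\Sigma_2|/|\Sigma_1|}\exp\bigl(\tfrac12 m'(\Sigma_2-\Sigma_1)^{-1}m\bigr)$; your identity $\Sigma^{-1}M^{-1}\Sigma^{-1}-\Sigma^{-1}=(\Sigma_+-\Sigma)^{-1}$ shows these are the same expression. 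Your Loewner comparison $\tilde\Sigma^{-1}\preceq(1+\tfrac{3}{14}\deltaVar)\Sigma^{-1}$ plays the role of the paper's \cref{lem:eigenvaluesinverse}, which packages the same additive perturbation estimates on $L$, $A^{-1}$, $D$ into lower bounds for $(1+\deltaVar)\tilde\Sigma^{-1}-\Sigma^{-1}$ and $\Sigma^{-1}-(1+\deltaVar)^{-1}\tilde\Sigma^{-1}$.

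Two small points. First, you omit the check that $t^-\le t^+$ pointwise; the paper verifies it up front via \cref{lemma:gaussratio}, and it does not follow from the width bound alone. Second, where you say ``$\gammakappa$ is calibrated so this stays below\ldots'', the paper actually carries this out: it reduces each side to a one-variable inequality of the form $\tfrac{2}{3}\gammakappa\deltaVar\le\log\frac{1+\kappag\deltaVar}{\sqrt{(1+\deltaVar)(1+\deltaVar/2)}}$ (upper bracket) and $2\gammakappa(1+\deltaVar)\deltaVar\le\log\frac{1+\kappag\deltaVar}{\sqrt{1+\deltaVar}}$ (lower bracket), then bounds the derivative of the right-hand side from below on $[0,\tfrac16]$ to read off exactly the two fractions appearing in the definition of $\gammakappa$. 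That last step is the only place the specific shape of $\gammakappa$ is actually used, so it is worth writing out rather than asserting.
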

 By
definition of $\dmax$, this implies that our choice of
$\delta_\mumod$, $\delta_\Lmod$, $\delta_\Dmod$,  $\delta_\Amod$ and $\deltaVar$
is such that every $K$-tuple of pairs of the collections is a
$\delta/9$-bracket and they cover the whole set.}{}

The cardinality of this $\delta/9$-bracket covering is bounded by
\begin{align*}
\ifthenelse{\boolean{extended}}{
&\left(
\left(1+\frac{2a}{\frac{\sqrt{\gammakappa \Lm \lambdam
   \frac{\lambdam}{\lambdaM}}}{9\constcosh}\frac{\delta}{\dimSpace}} \right)^{\dimSpace}
 \right)^{c_{\mumodany}}
\times 
\left(
\left(1+ \frac{13\Log\left(\frac{\LM}{\Lm}\right)}{12\frac{1}{18\constcosh}\frac{\delta}{\dimSpace}}\right)
\right)^{c_{\Lmodany}}\\
& \qquad
\times
\left(
\ConstSzarek \left(
    \frac{1}{\frac{1}{126\constcosh} \frac{\lambdam}{\lambdaM}  \frac{\delta}{\dimSpace} 
} \right)^{\frac{\dimSpace(\dimSpace-1)}{2}} 
\right)^{c_{\Dmodany}}\\
& \qquad
\times
\left(
\left( 2 + \left( \frac{85\Log\left(\frac{\lambdaM}{\lambdam}\right)}{84\frac{1}{126\constcosh} \frac{\lambdam}{\lambdaM}  \frac{\delta}{\dimSpace} 
}\right)\right)^{\dimSpace-1}
\right)^{c_{\Amodany}}
\\
& \leq}{
&}
  \left(
\left( 1+
  \frac{18a\constcosh\dimSpace}{\sqrt{\gammakappa \Lm \lambdam
   \frac{\lambdam}{\lambdaM}}\delta} \right)^{\dimSpace}
  \right)^{c_{\mumodany}}
\times 
\left(
\left(1+\frac{39\constcosh\Log\left(\frac{\LM}{\Lm}\right)\dimSpace}{2\delta}\right)
\right)^{c_{\Lmodany}}\\
& \qquad
\times
\left(
\ConstSzarek \left(
    \frac{126\constcosh\frac{\lambdaM}{\lambdam}\dimSpace}{\delta}
 \right)^{\frac{\dimSpace(\dimSpace-1)}{2}} 
\right)^{c_{\Dmodany}}
\times
\left(
\left( 2 +  \left( \frac{255\constcosh\frac{\lambdaM}{\lambdam}\Log\left(\frac{\lambdaM}{\lambdam}\right)\dimSpace}
{2\delta }\right)\right)^{\dimSpace-1}
\right)^{c_{\Amodany}}
\end{align*}
So
\begin{align*}
     H_{[\cdot],\dmax}&(\delta/9, \Set_{[\mumodany,\Lmodany,\Dmodany,\Amodany]^K_{\Space}})\\
 & \leq c_{\mumodany} \dimSpace \left( \Log\left(
1+\frac{18\constcosh a\dimSpace}{\sqrt{\gammakappa \Lm \lambdam
   \frac{\lambdam}{\lambdaM}}}
\right) +
   \Log\frac{1}{\delta} \right)
+ c_{\Lmodany} \left( \Log\left( 1+\frac{39}{2}\constcosh\Log\left(\frac{\LM}{\Lm}\right)\dimSpace\right)+ \Log \frac{1}{\delta} \right)\\
& \quad + c_{\Dmodany} \frac{\dimSpace(\dimSpace-1)}{2}
\left( \frac{2 \Log \ConstSzarek}{\dimSpace(\dimSpace-1)} +
\Log\left( 
126\constcosh\frac{\lambdaM}{\lambdam}\dimSpace\right)
+ \Log \frac{1}{\delta}
 \right)
\\
& \quad 
+c_{\Amodany} (\dimSpace-1) \left( \Log \left( 2+\frac{255}{2}\constcosh\frac{\lambdaM}{\lambdam}\Log\left(\frac{\lambdaM}{\lambdam}\right)\dimSpace\right) + \Log \frac{1}{\delta} \right)
\end{align*}
which concludes the proof\ifthenelse{\boolean{extended}}{.}{ as soon
  as one notices that $1/9\leq\gammakappa\leq1/3$ and $1\leq\constcosh\leq2$.}
\end{proof}

\ifthenelse{\boolean{extended}}{
\subsection{Entropy of spatial mixtures (Lemmas)}


  \begin{proof}[Proof of \cref{lemma:entropy}]
    This a variation around the proof of \textcite{genovese00:_rates_gauss}.

    Let $\{ \left[\pi^-_1,\pi^+_1\right], \ldots,
    \left[\pi^-_{N_{\Simplex_{K-1}}},\pi^+_{N_{\Simplex_{K-1}}}\right] \}$ be a minimal
    covering of $\delta/3$
    Hellinger bracket of the simplex $\Simplex_{K-1}$.
Let
\begin{align*} 
\left\{ 
\left[(t^-_{\Space,1,1},\ldots,t^-_{\Space,K,1}),
(t^+_{\Space,1,1},\ldots,t^+_{\Space,K,1})\right]
,\ldots,
\left[(t^-_{\Space,1,N_{\Space,K}},\ldots,t^-_{\Space,K,N_{\Space,K}}),
(t^+_{\Space,1,N_{\Space,K}},\ldots,t^+_{\Space,K,N_{\Space,K}})\right]
\right\}
\end{align*}
be a minimal covering of $\delta/9$ sup norm Hellinger bracket of
$\Set_{\Space,K}$ and
$\{ \left[t^-_{\Space^\perp,1},t^+_{\Space^\perp,1}\right], \ldots,  \left[t^-_{\Space_\perp,N_{\space_\perp}},t^+_{\Space_\perp,N_{\Space_\perp}}\right]
\}$ be a minimal covering of $\delta/9$ Hellinger bracket of $\Set_{\Space^\perp}$.
By definition,
$\Log N_{\Simplex_{K-1}} =    H_{[\cdot],\d}(\delta/3,
\Simplex_{K-1})$,
$\Log N_{\Space,K} = H_{[\cdot],\dmax}(\delta/9,
\Set_{\Space,K})$ and $\Log N_{\Space^\perp} = H_{[\cdot],\d}(\delta/9,
\Set_{\Space^\perp})$.

By construction,
\begin{align*}
  \Bigg\{ \Bigg[ 
\sum_{\LeafXl\in\PartX}\left( \sum_{k=1}^K \pi^{-}_{i[\LeafXl],k}\,
  t^{-}_{\Space,k,j}\left(y\right) \,
  t^{-}_{\Space^\perp,j_{\Space^\perp}}\left(y\right)\right) \Charac{x\in\LeafXl},
&\sum_{\LeafXl\in\PartX}\left( \sum_{k=1}^K \pi^+_{i[\LeafXl],k}\,
  t^{+}_{\Space,k,j}\left(y\right) \,
  t^{+}_{\Space^\perp,l}\left(y\right) \right) \Charac{x\in\LeafXl}
\Bigg] \Bigg|\\
& 1 \leq i[\LeafXl] \leq N_{\Simplex_{K-1}}, 1 \leq j \leq N_{\Space,K},
1 \leq l \leq N_{\Space^\perp}
\Bigg\}
\end{align*}
is a covering of model $\model_{K,\PartX,\Set}$ of cardinality 
\begin{alignI}
\exp\left( 
| \PartX|
    H_{[\cdot],\d}(\delta/3, \Simplex_{K-1}) +
    H_{[\cdot],\dmax}(\delta/9, \Set_{\Space,K}) +
H_{[\cdot],\d}(\delta/9, \Set_{\Space^\perp}) 
\right).
\end{alignI}

It remains thus only to prove that each bracket is of sup norm Hellinger $\dsup$
width smaller than $\delta$.

Using 
\begin{lemma}
\label{lemma:bracketproduct}
 For any $\delta$ Hellinger brackets $\left[t^-(x),t^+(x)\right]$,
if for any $x$ $\left[u^-(x,y),u^+(x,y)\right]$ is a $\delta$ bracket
 and $\delta\leq \sqrt{2}/3$,
  then
 $\left[t^-(x)\, u^-(x,y),t^+(x)\, u^+(x,y)\right]$ is a
  $3\delta$ Hellinger bracket.
\end{lemma}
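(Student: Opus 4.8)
The plan is to deduce the product bound from the two factor bounds by the standard Hellinger splitting trick. First, the ordering $t^-(x)\,u^-(x,y)\le t^+(x)\,u^+(x,y)$ is immediate from $0\le t^-(x)\le t^+(x)$ and $0\le u^-(x,y)\le u^+(x,y)$, so $[t^-u^-,t^+u^+]$ is indeed a bracket and only its Hellinger width has to be estimated.

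For the width, at a fixed $x$ I would apply pointwise in $y$ the elementary identity
\begin{align*}
  \sqrt{AB}-\sqrt{CD}=\sqrt{B}\,\bigl(\sqrt{A}-\sqrt{C}\bigr)+\sqrt{C}\,\bigl(\sqrt{B}-\sqrt{D}\bigr)
\end{align*}
with $A=t^+(x)$, $C=t^-(x)$, $B=u^+(x,y)$, $D=u^-(x,y)$, then use $(a+b)^2\le 2a^2+2b^2$ and integrate in $y$. This produces an inequality of the shape
\begin{align*}
  \d^2\bigl(t^-(x)u^-(x,\cdot),\,t^+(x)u^+(x,\cdot)\bigr)\le 2\Bigl(\int u^+(x,\cdot)\Bigr)\,\delta^2+2\Bigl(\int t^-(x,\cdot)\Bigr)\,\delta^2,
\end{align*}
using that by hypothesis $\d(t^-,t^+)\le\delta$ and $\d\bigl(u^-(x,\cdot),u^+(x,\cdot)\bigr)\le\delta$. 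It then remains to bound the two masses appearing as coefficients: a lower bracket of a conditional density satisfies $\int t^-(x,\cdot)\le 1$ and $\int u^-(x,\cdot)\le 1$, and combining the latter with the Hellinger width through the triangle inequality for the $L^2$ norm of square roots gives $\int u^+(x,\cdot)\le(1+\delta)^2$. Hence the right-hand side is at most $2\bigl((1+\delta)^2+1\bigr)\delta^2$, and an elementary check shows $2\bigl((1+\delta)^2+1\bigr)\le 9$ as soon as $(1+\delta)^2\le 7/2$, which is comfortably implied by $\delta\le\sqrt2/3$; taking the supremum over $x$ then yields $\dsup\bigl(t^-u^-,t^+u^+\bigr)\le 3\delta$. (In the purely tensorized reading of the statement the same computation works by integrating in both variables at once.)

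The step to watch is the bookkeeping of the bracket masses rather than any hard estimate: the brackets are not probability densities, so the split must be arranged so that exactly one \emph{upper} mass and one \emph{lower} mass enter the final bound — the lower ones being trivially $\le 1$ and the upper ones only $\le(1+\delta)^2$ — which is precisely what keeps the multiplicative constant under control and produces the factor $3$ under the mild restriction $\delta\le\sqrt2/3$.
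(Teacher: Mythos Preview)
Your approach is essentially the paper's: the same algebraic splitting $\sqrt{AB}-\sqrt{CD}=\sqrt{B}(\sqrt{A}-\sqrt{C})+\sqrt{C}(\sqrt{B}-\sqrt{D})$ and the same ``one upper mass, one lower mass'' bookkeeping. Two cosmetic differences: the paper keeps the perfect square and handles the cross term by Cauchy--Schwarz, arriving at $\bigl(\sqrt{\int t^+}\,\delta+\delta\,\sqrt{\int u^-}\bigr)^2$, whereas you use the cruder $(a+b)^2\le 2a^2+2b^2$; and for the upper mass the paper proves $\int t^+\le(\delta+\sqrt{1+\delta^2})^2$ via a quadratic inequality, while your $L^2$ triangle-inequality bound $\int u^+\le(1+\delta)^2$ is both simpler and slightly sharper. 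Either route lands comfortably under $9\delta^2$ when $\delta\le\sqrt{2}/3$.

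There is, however, a genuine muddle in your write-up that you should fix. The displayed inequality ``at fixed $x$'' cannot be right: $(\sqrt{t^+(x)}-\sqrt{t^-(x)})^2$ is not bounded by $\delta^2$ pointwise in $x$, only after integration, and ``$\int t^-(x,\cdot)$'' is meaningless since $t^-$ depends on $x$ alone. Correspondingly, the lemma's conclusion is that $[t^-u^-,t^+u^+]$ is a $3\delta$ bracket for the \emph{integrated} Hellinger distance $d_{x,y}$, not for $\dsup$; ``taking the supremum over $x$'' is neither possible here nor what is claimed. Your parenthetical remark---integrate in both variables at once---is exactly the right reading, and once you do that the computation becomes
\[
d_{x,y}^2(t^-u^-,t^+u^+)\le 2\Bigl(\sup_x\!\int u^+(x,\cdot)\Bigr)\,d^2(t^-,t^+)+2\Bigl(\int t^-\Bigr)\,\sup_x d^2\bigl(u^-(x,\cdot),u^+(x,\cdot)\bigr)\le 2\bigl((1+\delta)^2+1\bigr)\delta^2,
\]
which is the bound you want. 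So drop the fixed-$x$ presentation and the $\dsup$ claim, and the proof is clean.
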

we obtain immediately
\begin{align*}
  \d^2\left(t^{-}_{\Space,k,j_{\Space}}\left(\cdot\right)\,t^{-}_{\Space^\perp,l}\left(\cdot\right),
t^{+}_{\Space,k,j_{\Space}}\left(\cdot\right)\,t^{+}_{\Space^\perp,l}\left(\cdot\right)\right)
\leq 9 (\delta/9)^2 = (\delta/3)^2.
\end{align*}
Let $\left[t^{--}_{k,j,l},t^{++}_{k,j,l}\right]$ denote  the corresponding $\delta/3$
Hellinger bracket.

By definition,
\begin{align*}
  \dtwosup& \left( \sum_{\LeafXl\in\PartX}
\left( \sum_{k=1}^K
  \pi^{-}_{i[\LeafXl],k}\,t^{--}_{k,j,l}\left(y\right)\right)
\Charac{x\in \LeafXl} ,\right.
\left.\sum_{\LeafXl\in\PartX}
\left( \sum_{k=1}^K
  \pi^+_{i[\LeafXl],k}\,t^{++}_{k,j,l}\left(y\right)\right)\Charac{x \in \LeafXl}
\right)\\
 & = \sup_{\LeafXl\in\PartX} \d^2\left(
   \sum_{k=1}^K \pi^{-}_{i[\LeafXl],k}\,t^{--}_{k,j,l} , \sum_{k=1}^K
   \pi^+_{i[\LeafXl],k}\,t^{++}_{k,j,l} \right)\\
& \leq \sup_{i, j,l} \, \d^2\left(
  \sum_{k=1}^K \pi^{-}_{i,k} \, t^{--}_{k,j,l} , \sum_{k=1}^K
  \pi^+_{i,k} \, t^{++}_{k,j,l} \right)
\end{align*}
Seeing $ \pi_{i,k} g_{k,j,l}(y)$ as a function of $k$ and
  $y$, we can use 
\begin{lemma}\label{lem:helprod}
For any brackets $\left[t^-(x),t^+(x)\right]$
  and if for any $x$ $\left[u^-(x,y),u^+(x,y)\right]$ is a  bracket
then
\begin{align*}
  \d^2_y\left(\int_x t^-(x) \, u^-(x,y) \, \ud\meas_{x}(x),\int_x t^+(x)
    \,u^+(x,y) \, \ud\meas_{x}(x)\right)
& \leq \d^2_{x,y}\left( t^-(x) \, u^-(x,y), t^+(x) \, u^+(x,y) \right)
\end{align*}
 \end{lemma}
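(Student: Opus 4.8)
The plan is to reduce the claimed inequality to a statement about Hellinger affinities and then to apply the Cauchy--Schwarz inequality fibrewise in $x$. Recall that for any non-negative integrable functions $s,t$ the squared Hellinger distance satisfies $\d^2(s,t)=\int s+\int t-2\int\sqrt{st}$. Since $[t^-,t^+]$ and each $[u^-(x,\cdot),u^+(x,\cdot)]$ are brackets, the functions $f^-(x,y)=t^-(x)u^-(x,y)$ and $f^+(x,y)=t^+(x)u^+(x,y)$ are non-negative and measurable, so writing $g^\pm(y)=\int_x f^\pm(x,y)\,\ud\meas_x(x)$ one gets, by Tonelli's theorem, $\int_y g^\pm=\int_x\int_y f^\pm$. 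Hence the two ``mass'' terms that appear in $\d^2_y(g^-,g^+)$ coincide with those appearing in $\d^2_{x,y}(f^-,f^+)$, and the desired bound $\d^2_y(g^-,g^+)\le\d^2_{x,y}(f^-,f^+)$ is equivalent to the reversed inequality for the affinities, namely $\int_x\int_y\sqrt{f^-(x,y)f^+(x,y)}\,\ud\meas_y\ud\meas_x\le\int_y\sqrt{g^-(y)g^+(y)}\,\ud\meas_y$.

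To establish this affinity inequality I would fix $y$ and apply the Cauchy--Schwarz inequality in $L^2(\ud\meas_x)$ to the pair of functions $x\mapsto\sqrt{f^-(x,y)}$ and $x\mapsto\sqrt{f^+(x,y)}$: this gives $\int_x\sqrt{f^-(x,y)}\sqrt{f^+(x,y)}\,\ud\meas_x\le\bigl(\int_x f^-(x,y)\,\ud\meas_x\bigr)^{1/2}\bigl(\int_x f^+(x,y)\,\ud\meas_x\bigr)^{1/2}=\sqrt{g^-(y)g^+(y)}$. Integrating this pointwise bound over $y$ with respect to $\ud\meas_y$, and using Tonelli once more to exchange the order of integration on the left-hand side, yields exactly the required inequality, and therefore the lemma.

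The argument is elementary and essentially self-contained, so there is no serious obstacle; the only points that need a line of care are the measurability of $g^\pm$ and the legitimacy of the two applications of Tonelli's theorem, both of which are immediate because all functions involved are non-negative, together with the finiteness of the integrals, which is automatic in the intended application where the bracket functions come from a covering of a family of conditional densities. One may equivalently phrase the key step as the concavity of $(a,b)\mapsto\sqrt{ab}$ combined with Jensen's inequality applied to the probability measure $t^-(x)\,\ud\meas_x/\!\int t^-$ (and its $+$ counterpart), but the Cauchy--Schwarz formulation is the most transparent.
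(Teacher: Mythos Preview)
Your proof is correct and follows essentially the same route as the paper: expand the squared Hellinger distance, note that the two mass terms coincide by Tonelli, and bound the cross (affinity) term via the Cauchy--Schwarz inequality in $L^2(\ud\meas_x)$ for each fixed $y$. The paper writes this out line by line without naming Cauchy--Schwarz, but the key inequality $\int_x\sqrt{f^-}\sqrt{f^+}\,\ud\meas_x\le\sqrt{\int_x f^-}\sqrt{\int_x f^+}$ is exactly your step.
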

to obtain
\begin{align*}
  \dtwosup& \left( \sum_{\LeafXl\in\PartX}
\left( \sum_{k=1}^K
  \pi^{-}_{i[\LeafXl],k}\,t^{--}_{k,j,l}\left(y\right)\right)
\Charac{x\in \LeafXl} ,\right.
\left.\sum_{\LeafXl\in\PartX}
\left( \sum_{k=1}^K
  \pi^+_{i[\LeafXl],k}\,t^{++}_{k,j,l}\left(y\right)\right)\Charac{x \in \LeafXl}
\right)\\
& \leq \sup_{i, j,l } \d^2_{k,y}\left( \pi^{-}_{i,k} \, t^{--}_{k,j,l}(y) , \pi^{+}_{i,k}\,
  t^{++}_{k,j,l}(y) \right)
\intertext{and then using  again \cref{lemma:bracketproduct}}
& \leq 9 (\delta/3)^2 = \delta^2.
\end{align*}
  \end{proof}

\begin{proof}[Proof of \cref{lemma:bracketproduct}]
  \begin{align*}
 \d^2&(t^-(x) \, u^-(x,y),t^+(x) \, u^+(x,y))\\
 &=   \iint \left(
   \sqrt{t^+(x) \, u^+(x,y)}-\sqrt{t^-(x) \, u^-(x,y)} \right)^2 \,
 \ud\meas_x(x) \, \ud\meas_y(y)\\
& =\iint \left( \sqrt{t^+(x)} \left(    \sqrt{u^+(x,y)}-\sqrt{u^-(x,y)}
\right) + \left( \sqrt{t^+(x)} -\sqrt{t^-(x)} \right) \sqrt{u^{-}(x,y)}
\right)^2 \, \ud\meas_x(x) \, \ud\meas_y(y)\\
& = \iint \bigg( t^+(x) \left(    \sqrt{u^+(x,y)}-\sqrt{u^-(x,y)}
\right)^2 + \left( \sqrt{t^+(x)} -\sqrt{t^-(x)} \right)^2 u^{-}(x,y)\\
&\qquad+ 2 \sqrt{t^+(x)}  \left( \sqrt{t^+(x)} -\sqrt{t^-(x)}
\right) \sqrt{u^{-}(x,y)} \left(    \sqrt{u^+(x,y)}-\sqrt{u^-(x,y)}
\right)\bigg) 
\, \ud\meas_x(x) \, \ud\meas_y(y)\\
& = \int t^+(x) \, \d^2(u^-(x,y),u^+(x,y)) \, \ud\meas_x(x) + \d^2(t^-(x),t^+(x))
\sup_x \int
u^{-}(x,y) \, \ud\meas_y(y)\\
&\qquad + 2 \int \sqrt{t^+(x)}  \left( \sqrt{t^+(x)} -\sqrt{t^-(x)}
\right)  \int \sqrt{u^{-}(x,y)} \left(    \sqrt{u^+(x,y)}-\sqrt{u^-(x,y)}
\right) \, \ud\meas_y(y) \, \ud\meas_x(x)\\
& \leq \left( \sqrt{\int t^+(x) \, \ud\meas_x(x)} \, \sup_x d(u^-(x,y),u^+(x,y)) +
  d(t^-(x),t^+(x)) \, \sup_x \sqrt{\int
u^{-}(x,y) \, \ud\meas_y(y)}\right)^2. 
\end{align*}
Using 
 \begin{lemma}\label{lem:intbound} 
For any $\delta$-Hellinger bracket
   $\left[t^-,t^+\right]$, $\int t^- \ud\meas \leq 1$ and
 $ \int t^+ \ud\meas \leq \left(\delta+\sqrt{1+\delta^2}\right)^2.$
 \end{lemma}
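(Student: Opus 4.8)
The plan is to unwind what a $\delta$-Hellinger bracket is and then apply only two elementary facts: monotonicity of $u\mapsto\sqrt u$ and the triangle inequality in $L^2(\meas)$.

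First I would record the normalization. In a bracketing covering of a set of densities one may always discard the brackets that contain no element of the set, so a $\delta$-Hellinger bracket $\left[t^-,t^+\right]$ may be assumed to contain at least one (conditional) density $s$, i.e.\ $0\le t^-\le s\le t^+$ pointwise with $\int s\,\ud\meas=1$ (read for each fixed value of the conditioning variable in our setting), and moreover $\d(t^-,t^+)\le\delta$. The first assertion is then immediate: $\int t^-\,\ud\meas\le\int s\,\ud\meas=1$.

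For the bound on $\int t^+\,\ud\meas$, the key pointwise inequality is that, since $0\le t^-\le s\le t^+$ and the square root is nondecreasing, $0\le\sqrt{t^+}-\sqrt s\le\sqrt{t^+}-\sqrt{t^-}$ everywhere. Squaring and integrating gives $\bigl\|\sqrt{t^+}-\sqrt s\,\bigr\|\le\bigl\|\sqrt{t^+}-\sqrt{t^-}\bigr\|=\d(t^-,t^+)\le\delta$, where $\|\cdot\|$ denotes the $L^2(\meas)$ norm. The triangle inequality then yields
\[
  \sqrt{\textstyle\int t^+\,\ud\meas}=\bigl\|\sqrt{t^+}\bigr\|\le\bigl\|\sqrt s\,\bigr\|+\bigl\|\sqrt{t^+}-\sqrt s\,\bigr\|\le 1+\delta,
\]
and since $1\le\sqrt{1+\delta^2}$ this is at most $\delta+\sqrt{1+\delta^2}$; squaring gives $\int t^+\,\ud\meas\le\bigl(\delta+\sqrt{1+\delta^2}\bigr)^2$, the claimed bound.

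The only real subtlety is bookkeeping: making sure the normalization $\int t^-\le1$ is legitimately available — equivalently, recalling the standard reduction that the brackets of a minimal covering of densities each contain a density. Everything after that is a single application of monotonicity of $\sqrt{\cdot}$ together with the triangle inequality, so there is no genuine analytic obstacle. If instead the convention here builds $\int t^-\le1$ directly into the definition of a $\delta$-Hellinger bracket, the same computation works verbatim with $\sqrt{t^-}$ in place of $\sqrt s$, via $\bigl\|\sqrt{t^+}\bigr\|\le\bigl\|\sqrt{t^-}\bigr\|+\d(t^-,t^+)\le1+\delta\le\delta+\sqrt{1+\delta^2}$.
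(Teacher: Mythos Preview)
Your argument is correct, and in fact it is a bit cleaner and sharper than the paper's. The paper does not use the triangle inequality in $L^2(\meas)$; instead it writes
\[
\int t^+ \,\ud\meas = \int (t^+-t^-)\,\ud\meas + \int t^-\,\ud\meas
\le 2\int \sqrt{t^+}\bigl(\sqrt{t^+}-\sqrt{t^-}\bigr)\,\ud\meas + 1,
\]
applies Cauchy--Schwarz to obtain the self-referential inequality $\int t^+ \le 2\delta\sqrt{\int t^+}+1$, and then solves the resulting quadratic to land exactly on $(\delta+\sqrt{1+\delta^2})^2$. Your route, via $\|\sqrt{t^+}\|\le\|\sqrt s\|+\|\sqrt{t^+}-\sqrt s\|\le 1+\delta$, avoids the quadratic bootstrapping entirely and actually gives the stronger bound $(1+\delta)^2$, which you then relax to the stated one. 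Both proofs use the same normalization device (that $t^-$ sits below a density in the bracket), so your discussion of that point is consistent with the paper's convention.
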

we deduce using $\delta \leq \sqrt{2}/3$
\begin{align*}
 \d^2(t^-(x) \, u^-(x,y),t^+(x) \, u^+(x,y)) &\leq \left( \delta +\sqrt{1+\delta^2} + 1\right)^2 \delta^2
\\
&\leq \left( \sqrt{2}/3 + \sqrt{1+2/9} +1 \right)^2 \delta^2
\\&
\leq 9 \delta^2
  \end{align*}
\end{proof}

 \begin{proof}[Proof of \cref{lem:helprod}]
   \begin{align*}
  \d^2_y&\left(\int_x t^-(x) \, u^-(x,y) \, \ud\meas_x(x),\int_x t^+(x) \,
    u^+(x,y) \,
    \ud\meas_{x}(x)\right)\\
 & =
  \int_y \left( \sqrt{\int_x t^+(x) \, u^+(x,y) \, \ud\meas_x(x)}
-
\sqrt{\int_x t^-(x) \, u^-(x,y) \, \ud\meas_x(x)}
\right)^2
\ud\meas_y(y)\\
& = \int_y \int_x t^+(x) \, u^+(x,y) \, \ud\meas_x(x) \,\ud\meas_y(y)
+ \int_y \int_x t^-(x) \, u^-(x,y) \, \ud\meas_x(x) \, \ud\meas_y(y)\\
&\qquad - 2 \int_y \sqrt{\int_x t^+(x) \,u^+(x,y) \, \ud\meas_x(x)} \,
\sqrt{\int_x t^-(x) \, u^-(x,y) \, \ud\meas_x(x)}
\ud\meas_y(y) \\
& \leq \int_y \int_x t^+(x) \, u^+(x,y) \, \ud\meas_x(x) \, \ud\meas_y(y)
+ \int_y \int_x t^-(x) \, u^-(x,y) \, \ud\meas_x(x) \, \ud\meas_y(y)\\
&\qquad - 2 \int_y \int_x \sqrt{t^+(x) \, u^+(x,y)} \, \sqrt{t^-(x) \,
  u^-(x,y)} \, \ud\meas_x(x) \,
\ud\meas_y(y) \\
& \leq \d^2_{x,y}\left( t^-(x) \, u^-(x,y), t^+(x) \, u^+(x,y) \right)
\end{align*}
\end{proof}

\begin{proof}[Proof of \cref{lem:intbound}]
The first point is straightforward as $t^-$ is upper-bounded by a density.

For the second point,
  \begin{align*}
    \int t^+ \, \ud\meas & =  \int \left( t^+ -t^- \right) \, \ud\meas  +  \int
    t^- \, \ud\meas 
\leq \int \left(\sqrt{t^+} -\sqrt{t^-}\right)\left(\sqrt{t^+}
  +\sqrt{t^-}\right) \, \ud\meas  + 1
\\& 
\leq 2 \int \left(\sqrt{t^+} -\sqrt{t^-}\right) \sqrt{t^+} \, \ud\meas
+ 1
\leq 2 \left( \int \left(\sqrt{t^+} -\sqrt{t^-}\right)^2 \, \ud\meas
\right)^{1/2} \left( \int t^+ d\, \meas
\right)^{1/2} +1\\
\int t^+ \ \ud\meas  & \leq 2 \delta\left( \int t^+ \, \ud\meas
\right)^{1/2} +1
  \end{align*}
Solving the corresponding inequality  yields
\begin{align*}
   \int t^+ \ud\meas
 & \leq \left( \delta + \sqrt{1+\delta^2} \right)^2.
\end{align*}
\end{proof}

\subsection{Entropy of Gaussian families (Lemma)} 

\begin{proof}[Proof of \cref{lem:gridA}]
We first define  $\tilde{g}_i$ as the set of integers such that 
\begin{align*}
  \forall 1 \leq i < \dimSpace, 
  \lambdam(1+\delta_\Amod)^{\tilde{g}_i} \leq A_{i,i} <
  \lambdam(1+\delta_\Amod)^{\tilde{g}_i+1}.
\end{align*}
By construction $\tilde{g}_i\in\N$ and
$\lambdam(1+\delta_\Amod)^{\tilde{g}_i}\leq \lambdaM$. Now
as $A_{\dimSpace,\dimSpace}=\frac{1}{\prod_{i=1}^{\dimSpace-1}
  A_{i,i}}$,
\begin{align*}
\frac{1}{\prod_{i=1}^{\dimSpace-1}
\lambdam(1+\delta_\Amod)^{\tilde{g}_i+1}}
= \frac{(1+\delta_\Amod)^{-(\dimSpace-1)}}{\prod_{i=1}^{\dimSpace-1}
 \lambdam(1+\delta_\Amod)^{\tilde{g}_i}} 
 < A_{\dimSpace,\dimSpace} \leq \frac{1}{\prod_{i=1}^{\dimSpace-1}
 \lambdam(1+\delta_\Amod)^{\tilde{g}_i}}.
\end{align*}
There is thus an integer $d$  between $0$ and $\dimSpace-2$ such that
\begin{align*}
  \frac{(1+\delta_\Amod)^{-d-1}}{\prod_{i=1}^{\dimSpace-1}
 \lambdam(1+\delta_\Amod)^{\tilde{g}_i}} < A_{\dimSpace,\dimSpace} \leq \frac{(1+\delta_\Amod)^{-d}}{\prod_{i=1}^{\dimSpace-1}
 \lambdam(1+\delta_\Amod)^{\tilde{g}_i}}.
\end{align*}
Let $g_i=\tilde{g}_i+1$ if $i\leq d$ and $g_i=\tilde{g}_i$
otherwise, then
\begin{align*}
  \forall 1 \leq i < \dimSpace, 
  \lambdam(1+\delta_\Amod)^{g_i-1} \leq A_{i,i} <
  \lambdam(1+\delta_\Amod)^{g_i+1}
\end{align*}
which implies $\lambdam(1+\delta_\Amod)^{g_i} \leq
(1+\delta_\Amod)\lambdaM$. Now
\begin{align*}
\frac{1}{\prod_{i=1}^{\dimSpace-1}
 \lambdam(1+\delta_\Amod)^{g_i}} = \frac{(1+\delta_\Amod)^{-d}}{\prod_{i=1}^{\dimSpace-1}
 \lambdam(1+\delta_\Amod)^{\tilde{g}_i}}
\end{align*}
and thus
\begin{align*}
  \frac{(1+\delta_\Amod)^{-1}}{\prod_{i=1}^{\dimSpace-1}
 \lambdam(1+\delta_\Amod)^{g_i}} < A_{\dimSpace,\dimSpace} \leq \frac{1}{\prod_{i=1}^{\dimSpace-1}
 \lambdam(1+\delta_\Amod)^{g_i}}
\end{align*}
which implies
\begin{align*}
 \lambdam \leq \frac{1}{\prod_{i=1}^{\dimSpace-1}
 \lambdam(1+\delta_\Amod)^{g_i}} \leq (1+\delta_\Amod) \lambdaM.
\end{align*}
Thus the diagonal matrix $\tilde{A}$ defined by
\begin{align*}
  \forall 1 \leq 1 \leq \dimSpace - 1, \tilde{A}_{i,i}=\lambdam (1+\delta_\Amod)^{g_i}
\end{align*}
and $\tilde{A}_{\dimSpace,\dimSpace}=\frac{1}{\prod_{i=1}^{\dimSpace-1}
  \tilde{A}_{i,i}}$ belongs to
$\Grid_{\Amod}(\lambdam,\lambdaM,\dimSpace,\delta_\Amod)$. Furthermore, we
can write
for any $1 \leq i \leq \dimSpace -1$
\begin{align*}
  &\tilde{A}_{i,i} (1+\delta_\Amod)^{-1} \leq A_{i,i} <
  \tilde{A}_{i,i} (1+\delta_\Amod)\\
\intertext{which implies}
  &\tilde{A}_{i,i}^{-1} (1+\delta_\Amod)^{-1} < A_{i,i}^{-1} <
  \tilde{A}_{i,i}^{-1} (1+\delta_\Amod) \\
\intertext{and thus}
\left| A_{i,i}^{-1}-\tilde{A}_{i,i}^{-1}\right| &\leq \tilde{A}_{i,i}^{-1} \max\left(1
+\delta_\Amod - 1, 1-(1+\delta_\Amod)^{-1}\right) = \tilde{A}_{i,i}^{-1} \max\left(
\delta_\Amod, \frac{\delta_\Amod}{1+\delta_\Amod}\right)\\
&  \leq \lambdam^{-1}
\delta_\Amod.
\end{align*}
Along the same lines,
\begin{align*}
&  (1+\delta_\Amod)^{-1} \tilde{A}_{\dimSpace,\dimSpace} \leq
  A_{\dimSpace,\dimSpace} \leq \tilde{A}_{\dimSpace,\dimSpace} 
\\
\intertext{thus}
&   \tilde{A}_{\dimSpace,\dimSpace}^{-1} \leq
  A_{\dimSpace,\dimSpace}^{-1} \leq (1+\delta_\Amod) \tilde{A}_{\dimSpace,\dimSpace}^{-1} 
\intertext{and}
&\left|  \tilde{A}_{\dimSpace,\dimSpace}^{-1} -
  A_{\dimSpace,\dimSpace}^{-1} \right| \leq
\tilde{A}_{\dimSpace,\dimSpace}^{-1} \delta_\Amod \leq \lambdam^{-1} \delta_\Amod.
\end{align*}
\end{proof}

\begin{proof}[Proof of \cref{lem:gaussbracketnet}]

We first prove that $[t^-,t^+]$ is a $\delta/9$
Hellinger bracket. As $(1+\deltaVar)\tilde{\Sigma}^{-1}-(1+\deltaVar)^{-1} \tilde{\Sigma}^{-1} = \left( (1+\deltaVar) -
    (1+\deltaVar)^{-1}\right) \tilde{\Sigma}^{-1}$ is a positive definite matrix,
  one can apply 
\begin{lemma}
\label{lemma:gaussratio}
Let $\Gauss_{(\mu_1,\Sigma_1)}$ and $\Gauss_{(\mu_2,\Sigma_2)}$ be two
Gaussian densities with full rank covariance matrix in dimension
$\dimSpace$ such that $\Sigma_1^{-1}-\Sigma_2^{-1}$ is a positive
definite matrix, for any $x\in \R^{\dimSpace}$
\begin{align*}
  \frac{\Gauss_{(\mu_1,\Sigma_1)}(x)}{\Gauss_{(\mu_2,\Sigma_2)}(x)}
& \leq \sqrt{\frac{\left|\Sigma_2\right|}{\left|\Sigma_1\right|}} \exp
  \left( \frac{1}{2} \left( \mu_1- \mu_2 \right)'
    \left( \Sigma_2 - \Sigma_1\right)^{-1} \left( \mu_1- \mu_2 \right)
    \right).
\end{align*}
\end{lemma}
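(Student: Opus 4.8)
The plan is to reduce the statement to a completion of the square in the exponent of the ratio. Writing the densities out explicitly, one has
\begin{align*}
  \frac{\Gauss_{(\mu_1,\Sigma_1)}(x)}{\Gauss_{(\mu_2,\Sigma_2)}(x)}
  = \sqrt{\frac{|\Sigma_2|}{|\Sigma_1|}}\, \exp\big(Q(x)\big),
  \qquad
  2Q(x) = (x-\mu_2)'\Sigma_2^{-1}(x-\mu_2) - (x-\mu_1)'\Sigma_1^{-1}(x-\mu_1).
\end{align*}
Setting $A = \Sigma_1^{-1}$ and $B = \Sigma_2^{-1}$, the hypothesis reads $A - B \succ 0$, and expanding the two quadratic forms gives
\begin{align*}
  2Q(x) = -x'(A-B)x + 2(A\mu_1 - B\mu_2)'x - \mu_1'A\mu_1 + \mu_2'B\mu_2,
\end{align*}
a strictly concave quadratic in $x \in \R^{\dimSpace}$. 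The first step is to maximise it over all $x$: the maximiser is $x^\star = (A-B)^{-1}(A\mu_1 - B\mu_2)$, and
\begin{align*}
  2Q(x) \leq 2Q(x^\star) = (A\mu_1 - B\mu_2)'(A-B)^{-1}(A\mu_1 - B\mu_2) - \mu_1'A\mu_1 + \mu_2'B\mu_2 .
\end{align*}

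The remaining task is purely algebraic: to identify this maximum with $(\mu_1-\mu_2)'(\Sigma_2-\Sigma_1)^{-1}(\mu_1-\mu_2)$. I would first observe that $A \succ B \succ 0$ forces $\Sigma_1 \prec \Sigma_2$, so $\Sigma_2 - \Sigma_1$ is positive definite and hence invertible, and that $\Sigma_2 - \Sigma_1 = \Sigma_2(A-B)\Sigma_1$, whence $(\Sigma_2 - \Sigma_1)^{-1} = A(A-B)^{-1}B$. Abbreviating $C = (A-B)^{-1}$ and using $A = C^{-1} + B$ one obtains the resolvent identities $CA - I = CB$ and $CB + I = CA$; these yield $ACA - A = ACB$, $BCB + B = BCA = ACB$, and in particular the symmetry $A(A-B)^{-1}B = B(A-B)^{-1}A$. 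Substituting all of this into the expression for $2Q(x^\star)$ makes the $\mu_i'A\mu_i$ and $\mu_i'B\mu_i$ terms combine with the cross terms and collapses the whole expression to $(\mu_1-\mu_2)'A(A-B)^{-1}B(\mu_1-\mu_2) = (\mu_1-\mu_2)'(\Sigma_2-\Sigma_1)^{-1}(\mu_1-\mu_2)$. Restoring the determinant prefactor then gives the claimed inequality.

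The only delicate point is this last simplification, since it involves several non-commuting symmetric matrices; the key fact that makes it go through is the symmetry $A(A-B)^{-1}B = B(A-B)^{-1}A$, which I would verify at the outset (both sides equal $B + B(A-B)^{-1}B$). Everything else is the standard Gaussian-ratio computation and presents no real difficulty.
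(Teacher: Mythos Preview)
Your argument is correct. The paper does not actually prove this lemma: it is quoted from \textcite{maugis09:_gauss} and used as a black box, so there is no ``paper's own proof'' to compare against. Your route---write the ratio as $\sqrt{|\Sigma_2|/|\Sigma_1|}\exp(Q(x))$, observe that $2Q$ is a strictly concave quadratic because $A-B\succ 0$, maximise it, and then identify the maximum with $(\mu_1-\mu_2)'(\Sigma_2-\Sigma_1)^{-1}(\mu_1-\mu_2)$ via the resolvent identities $ACA-A=ACB$ and $BCB+B=ACB$---is the natural one and is almost certainly what the cited reference does as well. The algebraic verification you sketch goes through exactly as you say; the symmetry $A(A-B)^{-1}B=B(A-B)^{-1}A$ is indeed the only subtle point, and your check that both sides equal $B+B(A-B)^{-1}B$ is clean.
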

proved by \textcite{maugis09:_gauss}. This yields using eventually $\kappag\geq\frac{1}{2}$
  \begin{align*}
    \frac{t^-(x)}{t^+(x)} & =
    \frac{(1+\kappag \deltaVar)^{-\dimSpace}}{(1+\kappag \deltaVar)^{\dimSpace}}
    \frac{\Gauss_{\mu,(1+\deltaVar)^{-1}
        \tilde{\Sigma}}(x)}{\Gauss_{\mu,(1+\deltaVar) \tilde{\Sigma}}(x)}
\leq \frac{1}{(1+\kappag\deltaVar)^{2 \dimSpace}} \sqrt{%
  \frac{(1+\deltaVar)^{\dimSpace}}{(1+\deltaVar)^{-\dimSpace}}}
\leq \frac{(1+\deltaVar)^{\dimSpace}}{(1+\kappag\deltaVar)^{2\dimSpace}}
\\& 
\leq \left(\frac{1+\deltaVar}{(1+\kappag\deltaVar)^2}\right)^{\dimSpace}
\leq \left(\frac{1+\deltaVar}{1+ 2 \kappag \deltaVar + \kappag^2
    \deltaVar^2}\right)^{\dimSpace}
\leq 1
  \end{align*}

Concerning the Hellinger width,
  \begin{align*}
 d^2(t^-,t^+) & = \int t^-(x) \, \ud x  + \int t^+(x) \, \ud x - 2 \int
 \sqrt{t^-(x)} \sqrt{t^+(x)} \, \ud x\\
& = (1+\kappag\deltaVar)^{-\dimSpace}+(1+\kappag\deltaVar)^{\dimSpace}\\
&\qquad\qquad- 2
(1+\kappag\deltaVar)^{-\dimSpace/2} (1+\kappag\deltaVar)^{\dimSpace/2} \int
\sqrt{\Gauss_{\mu,(1+\deltaVar)^{-1} \tilde{\Sigma}}(x)}
\sqrt{\Gauss_{\mu,(1+\deltaVar) \tilde{\Sigma}}(x)} \, \ud x\\
& = (1+\kappag\deltaVar)^{-\dimSpace}+(1+\kappag\deltaVar)^{\dimSpace}-
\left(2-d^2\left(\Gauss_{\mu,(1+\deltaVar)^{-1}
      \tilde{\Sigma}}(x),\Gauss_{\mu,(1+\deltaVar)^{-1}
      \tilde{\Sigma}}(x)\right)\right).
\end{align*}
Using
\begin{lemma}
\label{lemma:gausshel}
Let $\Gauss_{(\mu_1,\Sigma_1)}$ and $\Gauss_{(\mu_2,\Sigma_2)}$ be two
Gaussian densities with full rank covariance matrix in dimension $\dimSpace$,
\begin{align*}
\mspace{-10mu}d^2\left(\Gauss_{(\mu_1,\Sigma_1)},\Gauss_{(\mu_2,\Sigma_2)}
\right) = 2 \left( 1 - 2^{\dimSpace/2} \left|\Sigma_1
    \Sigma_2\right|^{-1/4} \left| \Sigma_1^{-1} + \Sigma_2^{-1}
  \right|^{-1/2}\exp \left( - \frac{1}{4} \left( \mu_1- \mu_2 \right)'
    \left( \Sigma_1 + \Sigma_2\right)^{-1} \left( \mu_1- \mu_2 \right)
    \right)
\right).
\end{align*}
\end{lemma}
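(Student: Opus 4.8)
The plan is to carry out the classical closed-form computation of the Bhattacharyya (affinity) coefficient between two Gaussians. Since both $\Gauss_{(\mu_1,\Sigma_1)}$ and $\Gauss_{(\mu_2,\Sigma_2)}$ are densities integrating to $1$, I would first reduce the claim to the evaluation of a single integral via
\[
  d^2\left(\Gauss_{(\mu_1,\Sigma_1)},\Gauss_{(\mu_2,\Sigma_2)}\right)
  = 2 - 2 \int_{\R^{\dimSpace}} \sqrt{\Gauss_{(\mu_1,\Sigma_1)}(x)\,\Gauss_{(\mu_2,\Sigma_2)}(x)}\,\ud x,
\]
so that everything amounts to checking that the last integral equals $2^{\dimSpace/2}\left|\Sigma_1\Sigma_2\right|^{-1/4}\left|\Sigma_1^{-1}+\Sigma_2^{-1}\right|^{-1/2}\exp\left(-\tfrac14(\mu_1-\mu_2)'(\Sigma_1+\Sigma_2)^{-1}(\mu_1-\mu_2)\right)$.

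Writing out the two densities, the integrand is
\[
  (2\pi)^{-\dimSpace/2}\left|\Sigma_1\Sigma_2\right|^{-1/4}
  \exp\left(-\tfrac14\Big[(x-\mu_1)'\Sigma_1^{-1}(x-\mu_1)+(x-\mu_2)'\Sigma_2^{-1}(x-\mu_2)\Big]\right),
\]
and the next step is to complete the square in $x$ inside the bracket. Setting $M=\Sigma_1^{-1}+\Sigma_2^{-1}$ and $m=M^{-1}(\Sigma_1^{-1}\mu_1+\Sigma_2^{-1}\mu_2)$, the bracket equals $(x-m)'M(x-m)+c$ with $c=\mu_1'\Sigma_1^{-1}\mu_1+\mu_2'\Sigma_2^{-1}\mu_2-m'Mm$, and the resulting Gaussian integral is $\int \exp\left(-\tfrac14(x-m)'M(x-m)\right)\,\ud x=(2\pi)^{\dimSpace/2}2^{\dimSpace/2}|M|^{-1/2}$. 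Multiplying the prefactors together already produces the factors $\left|\Sigma_1\Sigma_2\right|^{-1/4}$, $\left|\Sigma_1^{-1}+\Sigma_2^{-1}\right|^{-1/2}$ and the correct power of $2$, so the only thing left to establish is the identification $c=(\mu_1-\mu_2)'(\Sigma_1+\Sigma_2)^{-1}(\mu_1-\mu_2)$.

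That last identity is the one mildly delicate point, and I would derive it from the matrix identity $(\Sigma_1^{-1}+\Sigma_2^{-1})^{-1}=\Sigma_1(\Sigma_1+\Sigma_2)^{-1}\Sigma_2=\Sigma_2(\Sigma_1+\Sigma_2)^{-1}\Sigma_1$, valid whenever $\Sigma_1$, $\Sigma_2$ and $\Sigma_1+\Sigma_2$ are invertible (for instance by factoring $\Sigma_1^{-1}+\Sigma_2^{-1}=\Sigma_2^{-1}(\Sigma_1+\Sigma_2)\Sigma_1^{-1}$ and inverting). Substituting this expression for $M^{-1}$ into $c=\mu_1'\Sigma_1^{-1}\mu_1+\mu_2'\Sigma_2^{-1}\mu_2-(\Sigma_1^{-1}\mu_1+\Sigma_2^{-1}\mu_2)'M^{-1}(\Sigma_1^{-1}\mu_1+\Sigma_2^{-1}\mu_2)$ and expanding, the diagonal and cross terms recombine into $(\mu_1-\mu_2)'(\Sigma_1+\Sigma_2)^{-1}(\mu_1-\mu_2)$; the scalar specialization $c=(\mu_1-\mu_2)^2/(\sigma_1^2+\sigma_2^2)$ serves as a sanity check. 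Plugging the computed value of the integral back into the first display then yields the announced formula verbatim. The main obstacle, such as it is, is purely bookkeeping in this symmetric matrix manipulation; there is no analytic subtlety, since both factors under the square root are genuine Gaussian densities and the integral converges.
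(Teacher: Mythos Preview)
Your proposal is correct: the reduction to the affinity integral, the completion of the square with $M=\Sigma_1^{-1}+\Sigma_2^{-1}$, the evaluation of the resulting Gaussian integral, and the identification of the constant $c$ via the identity $(\Sigma_1^{-1}+\Sigma_2^{-1})^{-1}=\Sigma_1(\Sigma_1+\Sigma_2)^{-1}\Sigma_2$ all go through exactly as you describe and yield the announced formula. The paper does not actually give its own proof of this lemma; it simply cites \textcite{maugis09:_gauss}, where the same direct computation is carried out, so your approach coincides with the one the paper relies on.
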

also proved in \cite{maugis09:_gauss}, we derive
\begin{align*}
 d^2(t^-,t^+) & = \int t^-(x) \, \ud x  + \int t^+(x) \, \ud x - 2 \int
 \sqrt{t^-(x)} \sqrt{t^+(x)} \, \ud x\\
&= (1+\kappag\deltaVar)^{-\dimSpace}+(1+\kappag\deltaVar)^{\dimSpace}-2\,
2^{\dimSpace/2} \left( (1+\deltaVar) + (1+\deltaVar)^{-1}
\right)^{-\dimSpace/2}\\
& = 2 - 2\,
2^{\dimSpace/2} \left( (1+\deltaVar) + (1+\deltaVar)^{-1}
\right)^{-\dimSpace/2} + (1+\kappag\deltaVar)^{-\dimSpace}+(1+\kappag\deltaVar)^{\dimSpace}-2
\end{align*}
Combining
\begin{lemma}
\label{lem:bounddelta}
For any $0<\delta\leq \sqrt{2}$ and any $\dimSpace\geq 1$, let $\kappag\geq
\frac{3}{4}$ and $\constcosh=\sqrt{\kappag^2 \cosh(\frac{\kappag}{6}) +
   \frac{1}{2}}$, if $\deltaVar \leq
\frac{1}{9\constcosh}\frac{\delta}{\dimSpace}$, then 
\begin{align*}
 \deltaVar \leq \frac{1}{\dimSpace} \frac{1}{6} \leq \frac{1}{6}.
\end{align*}
\end{lemma}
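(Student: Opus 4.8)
The plan is to reduce the statement to a single uniform lower bound on the constant $\constcosh$, after which only two elementary inequalities remain. The key observation is that the hypothesis $\deltaVar \le \frac{1}{9\constcosh}\frac{\delta}{\dimSpace}$, together with $\delta \le \sqrt2$ and $\dimSpace \ge 1$, already gives $\deltaVar \le \frac{\sqrt2}{9\constcosh}\cdot\frac{1}{\dimSpace}$, so it suffices to show $\frac{\sqrt2}{9\constcosh} \le \frac16$, i.e. $\constcosh \ge \frac{2\sqrt2}{3}$.

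To get this, I would bound $\constcosh$ from below uniformly over $\kappag \ge \frac34$. Since $\cosh t \ge 1$ for every real $t$ and $\kappag^2 \ge \frac{9}{16}$, one has $\constcosh^2 = \kappag^2\cosh(\kappag/6) + \frac12 \ge \frac{9}{16} + \frac12 = \frac{17}{16}$, hence $\constcosh \ge \frac{\sqrt{17}}{4}$. It then remains to compare $\frac{\sqrt{17}}{4}$ with $\frac{2\sqrt2}{3}$: squaring both sides reduces this to $9\cdot 17 = 153 \ge 128 = 16\cdot 8$, which is true, so $\constcosh \ge \frac{2\sqrt2}{3}$ as needed.

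Combining these, $\deltaVar \le \frac{1}{9\constcosh}\frac{\delta}{\dimSpace} \le \frac{\sqrt2}{9}\cdot\frac{3}{2\sqrt2}\cdot\frac{1}{\dimSpace} = \frac{1}{6\dimSpace} \le \frac16$, which is exactly the claimed bound. There is no genuine difficulty here; the only step requiring a bit of care is the numeric comparison $\sqrt{17}/4 \ge 2\sqrt2/3$, which is settled by squaring, and the observation that $\cosh \ge 1$ is what makes the lower bound on $\constcosh$ independent of the precise value of $\kappag$.
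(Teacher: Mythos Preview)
Your proof is correct and follows essentially the same approach as the paper: bound $\constcosh$ from below using $\cosh\ge 1$ and $\kappag\ge\tfrac34$ to get $\constcosh\ge\sqrt{17}/4$, then check numerically that $\tfrac{\sqrt2}{9\constcosh}\le\tfrac16$. The paper's proof is the one-line version of exactly this computation.
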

and
\begin{lemma}
\label{lemma:cosh}
For any $d \in \N$, for any $\deltaVar>0$,
\begin{align*}
  2-2\, 2^{d/2}\, \left( (1+\deltaVar)+(1+\deltaVar)^{-1}\right)^{-d/2}
\leq \frac{d^2 \deltaVar^2}{2}.
\end{align*}
Furthermore, if $d \deltaVar \leq c$, then
\begin{align*}
 (1+\kappag \deltaVar)^d+(1+\kappag \deltaVar)^{-d}-2 \leq \kappag^2 \cosh(\kappag c) d^2 \deltaVar^2.
\end{align*}
\end{lemma}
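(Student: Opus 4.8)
The plan is to reduce both bounds to elementary estimates on the hyperbolic cosine together with a Bernoulli-type inequality; neither part should require more than a few lines.

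For the first inequality, I would first put the quantity $2^{d/2}\bigl((1+\deltaVar)+(1+\deltaVar)^{-1}\bigr)^{-d/2}$ into a convenient form. Setting $v=\frac{\deltaVar^2}{2(1+\deltaVar)}\ge 0$, a direct computation gives $\frac{(1+\deltaVar)+(1+\deltaVar)^{-1}}{2}=1+v$, so the left-hand side equals exactly $2\bigl(1-(1+v)^{-d/2}\bigr)$. The key step is the inequality $(1+v)^{-a}\ge 1-av$ for all $a\ge 0$ and $v\ge 0$, which holds because $v\mapsto(1+v)^{-a}-1+av$ vanishes at $0$ and has nonnegative derivative on $[0,\infty)$. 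Applying it with $a=d/2$ yields $2\bigl(1-(1+v)^{-d/2}\bigr)\le d\,v=\frac{d\,\deltaVar^2}{2(1+\deltaVar)}\le\frac{d^2\deltaVar^2}{2}$, using $d\ge 1$ and $1+\deltaVar\ge 1$; the case $d=0$ is trivial since then the left-hand side is $0$.

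For the second inequality, I would write $(1+\kappag\deltaVar)^{\pm d}=e^{\pm d\ln(1+\kappag\deltaVar)}$, so that the left-hand side is precisely $2\bigl(\cosh(d\ln(1+\kappag\deltaVar))-1\bigr)$. The main ingredient is the elementary bound $\cosh y-1\le\frac{y^2}{2}\cosh y$, valid for every real $y$; it follows by comparing the Taylor series term by term, since $\frac{2}{(2k+2)!}\le\frac{1}{(2k)!}$ because $(2k+2)(2k+1)\ge 2$. This gives the bound $\bigl(d\ln(1+\kappag\deltaVar)\bigr)^2\cosh\bigl(d\ln(1+\kappag\deltaVar)\bigr)$ for the left-hand side. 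I then control the squared factor via $\ln(1+\kappag\deltaVar)\le\kappag\deltaVar$, hence $\bigl(d\ln(1+\kappag\deltaVar)\bigr)^2\le d^2\kappag^2\deltaVar^2$, and control the $\cosh$ factor by noting $d\ln(1+\kappag\deltaVar)\le d\kappag\deltaVar=\kappag(d\deltaVar)\le\kappag c$ together with the monotonicity of $\cosh$ on $[0,\infty)$, so $\cosh\bigl(d\ln(1+\kappag\deltaVar)\bigr)\le\cosh(\kappag c)$. Multiplying the two bounds gives $\kappag^2\cosh(\kappag c)\,d^2\deltaVar^2$, as claimed.

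There is no serious obstacle here: the only points that warrant care are the direction of the Bernoulli-type inequality $(1+v)^{-a}\ge 1-av$ and the degenerate value $d=0$, and the one computation I would double-check is the term-by-term comparison establishing $\cosh y-1\le\frac{y^2}{2}\cosh y$.
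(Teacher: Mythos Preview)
Your proof is correct. For the second inequality your argument and the paper's are essentially identical: both rewrite the left-hand side as $2\bigl(\cosh(d\ln(1+\kappag\deltaVar))-1\bigr)$, bound $\cosh y-1$ by $\frac{y^2}{2}$ times the maximum of $\cosh$ on the relevant interval (the paper phrases this as a Taylor remainder with $g''=\cosh$, you phrase it as a term-by-term series comparison), and finish with $\ln(1+\kappag\deltaVar)\le\kappag\deltaVar$.

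For the first inequality you take a genuinely different and somewhat cleaner route. The paper writes $2^{d/2}\bigl((1+\deltaVar)+(1+\deltaVar)^{-1}\bigr)^{-d/2}=\cosh(\ln(1+\deltaVar))^{-d/2}$, studies $f(x)=1-\cosh(x)^{-d/2}$, bounds $f''\le d/2$, and then uses $\ln(1+\deltaVar)\le\deltaVar$. You instead observe directly that $\frac{(1+\deltaVar)+(1+\deltaVar)^{-1}}{2}=1+v$ with $v=\frac{\deltaVar^2}{2(1+\deltaVar)}$, and apply the Bernoulli-type bound $(1+v)^{-d/2}\ge 1-\frac{d}{2}v$. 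This avoids the detour through $\cosh$ and the second-derivative computation; it also makes transparent that the factor $d^2$ in the statement is wasteful (your intermediate bound $\frac{d\,\deltaVar^2}{2(1+\deltaVar)}$ is already linear in $d$), which the paper's argument obtains as well but only after bounding $d\le d^2$.
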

with $c=\frac{1}{6}$
yields
\begin{align*}
 d^2(t^-,t^+) & \leq \left(  \kappag^2 \cosh(\frac{\kappag}{6}) +
   \frac{1}{2} \right) \dimSpace^2 \deltaVar^2
\leq \left(\frac{\delta}{9}\right)^2
  \end{align*}
as  $\deltaVar \leq
\frac{1}{9\constcosh}\frac{\delta}{\dimSpace}$

We now focus on the proof of \(\displaystyle
t^-(x) \leq \Gauss_{\mu,\Sigma}(x) \leq t^+(x)
\). As
\begin{lemma}
\label{lem:eigenvaluesinverse}
Under Assumptions of \cref{lem:gaussbracketnet},
$(1+\deltaVar)\tilde{\Sigma}^{-1}-\Sigma^1$ and
$\Sigma^{-1}-(1+\deltaVar)\tilde{\Sigma}^{-1}$ are positive definite and satisfies
\begin{align*}
\forall x \in \R^{\dimSpace}, x' \left(
  (1+\deltaVar)\tilde{\Sigma}^{-1}-\Sigma^{-1} \right) x&\geq \frac{1}{4}
\tilde{L}^{-1} \frac{1}{\lambdaM} \deltaVar \|x\|^2 \\
\forall x \in \R^{\dimSpace}, x' \left(
 \Sigma^{-1}-(1+\deltaVar)^{-1} \tilde{\Sigma}^{-1}\right) x&\geq \frac{3}{4(1+\deltaVar)}
\tilde{L}^{-1} \frac{1}{\lambdaM} \deltaVar \|x\|^2
 \end{align*}
\end{lemma}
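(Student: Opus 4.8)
The plan is to deduce both semidefinite inequalities from a single operator–norm perturbation bound between $\Sigma^{-1}$ and $\tilde\Sigma^{-1}$, and then to create the required positive margins from the multiplicative factors $(1+\deltaVar)^{\pm1}$ together with the fact that $\tilde L^{-1}\lambdaM^{-1}$ is, up to the negligible factor $1+\delta_\Amod$, the smallest eigenvalue of $\tilde\Sigma^{-1}$. Write $\Sigma^{-1}=L^{-1}DA^{-1}D'$ and $\tilde\Sigma^{-1}=\tilde L^{-1}\tilde D\tilde A^{-1}\tilde D'$ and set $N=DA^{-1}D'$, $M=\tilde D\tilde A^{-1}\tilde D'$; these are symmetric positive definite with operator norm at most $\lambdam^{-1}$ (since $A_{i,i},\tilde A_{i,i}\ge\lambdam$) and, because $\tilde A$ is taken from the grid of \cref{lem:gridA} whose diagonal entries lie in $[\lambdam,(1+\delta_\Amod)\lambdaM]$, one has $M\succeq\big((1+\delta_\Amod)\lambdaM\big)^{-1}\,I$, i.e.\ $\tilde\Sigma^{-1}\succeq\big((1+\delta_\Amod)\lambdaM\big)^{-1}\tilde L^{-1}\,I$.

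First I would prove the perturbation estimate $\|N-M\|_{2}\le\tfrac{3}{14}\,\lambdaM^{-1}\deltaVar$ for the operator norm $\|\cdot\|_{2}$. This follows from the telescoping identity
\[
N-M = DA^{-1}(D-\tilde D)' + D(A^{-1}-\tilde A^{-1})\tilde D' + (D-\tilde D)\tilde A^{-1}\tilde D',
\]
since each summand is bounded in operator norm by the product of a factor at most $\lambdam^{-1}$ with one of the hypotheses $\|D-\tilde D\|_{2}\le\tfrac1{14}\tfrac{\lambdam}{\lambdaM}\deltaVar$ or $\|A^{-1}-\tilde A^{-1}\|_{2}=\max_i|A_{i,i}^{-1}-\tilde A_{i,i}^{-1}|\le\tfrac1{14\lambdaM}\deltaVar$, each contribution being $\tfrac1{14}\lambdaM^{-1}\deltaVar$. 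I would also record that $(1+\tfrac{\deltaVar}2)^{-1}\tilde L\le L\le\tilde L$ gives $\tilde L^{-1}\le L^{-1}\le(1+\tfrac{\deltaVar}2)\tilde L^{-1}$.

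Then both conclusions come by separating off the scalar part of $\tilde\Sigma^{-1}$. For the first,
\[
(1+\deltaVar)\tilde\Sigma^{-1}-\Sigma^{-1} = \big((1+\deltaVar)\tilde L^{-1}-L^{-1}\big)M + L^{-1}(M-N),
\]
where $(1+\deltaVar)\tilde L^{-1}-L^{-1}\ge\tfrac{\deltaVar}2\tilde L^{-1}>0$ (using $L^{-1}\le(1+\tfrac{\deltaVar}2)\tilde L^{-1}$), so the first term is $\succeq\tfrac{\deltaVar}2\tilde L^{-1}\lambda_{\min}(M)\,I$ while $L^{-1}(M-N)\succeq-(1+\tfrac{\deltaVar}2)\tilde L^{-1}\tfrac3{14}\lambdaM^{-1}\deltaVar\,I$. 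For the second,
\[
\Sigma^{-1}-(1+\deltaVar)^{-1}\tilde\Sigma^{-1} = L^{-1}(N-M) + \big(L^{-1}-(1+\deltaVar)^{-1}\tilde L^{-1}\big)M ,
\]
where now $L^{-1}-(1+\deltaVar)^{-1}\tilde L^{-1}\ge\tfrac{\deltaVar}{1+\deltaVar}\tilde L^{-1}>0$ (using only $L^{-1}\ge\tilde L^{-1}$), so the second term is $\succeq\tfrac{\deltaVar}{1+\deltaVar}\tilde L^{-1}\lambda_{\min}(M)\,I$ and the first is $\succeq-(1+\tfrac{\deltaVar}2)\tilde L^{-1}\tfrac3{14}\lambdaM^{-1}\deltaVar\,I$. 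Substituting $\lambda_{\min}(M)\ge\big((1+\delta_\Amod)\lambdaM\big)^{-1}$ reduces both statements to the scalar inequalities $\tfrac{1}{2(1+\delta_\Amod)}-(1+\tfrac{\deltaVar}2)\tfrac3{14}\ge\tfrac14$ and $\tfrac{1}{(1+\deltaVar)(1+\delta_\Amod)}-(1+\tfrac{\deltaVar}2)\tfrac3{14}\ge\tfrac3{4(1+\deltaVar)}$.

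The main obstacle is precisely this last, razor‑thin numerical verification: the constants $\tfrac14$ and $\tfrac3{4(1+\deltaVar)}$ have been calibrated so that the two inequalities hold exactly because $\delta_\Amod=\tfrac1{126\constcosh}\tfrac{\lambdam}{\lambdaM}\tfrac{\delta}{\dimSpace}$ is minuscule and $\deltaVar\le\tfrac1{9\constcosh}\tfrac{\delta}{\dimSpace}$ is small. It is cleanest to treat $\dimSpace=1$ separately (there $A=\tilde A=D=\tilde D=I$, hence $N=M$, the perturbation term vanishes, and both inequalities reduce to $\lambdaM\ge\tfrac34$, which holds since $|A|=1$ forces $\lambdaM\ge1$), and for $\dimSpace\ge2$ to use $\deltaVar\le\tfrac1{12}$, or the sharper $\deltaVar\le\tfrac{\sqrt2}{18\constcosh}$ coming from $\delta\le\sqrt2$; keeping the $(1+\delta_\Amod)$ factor honestly in the bookkeeping is what makes the estimates so tight. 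Everything else — the telescoping bound and the positivity of the two scalar margins — is routine.
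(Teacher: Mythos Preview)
Your approach is correct but the decomposition differs from the paper's in a way that makes your second inequality needlessly tight. The paper proves a slightly more general statement (\cref{lem:eigenvaluesinversespec}) via a coordinate-wise telescoping ordered so that, for the second estimate, the $A$- and $D$-perturbation terms carry the small factor $(1+\deltaVar)^{-1}\tilde L^{-1}$ rather than your $L^{-1}\le(1+\tfrac{\deltaVar}2)\tilde L^{-1}$, and the positive margin comes from $A_{i,i}^{-1}\ge\lambdaM^{-1}$ (using $A$, not $\tilde A$). This yields the second bound as $\tfrac{11}{14}\cdot\tfrac{\tilde L^{-1}}{1+\deltaVar}\lambdaM^{-1}\deltaVar$ directly, with no $\dimSpace$-split and no $(1+\delta_\Amod)$ bookkeeping, working uniformly for $\deltaVar\le\tfrac16$. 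Your single operator-norm bound $\|N-M\|_2\le\tfrac{3}{14}\lambdaM^{-1}\deltaVar$ is tidier, but the resulting scalar check for the second estimate actually \emph{fails} at $\deltaVar=\tfrac16$ and only survives because $\deltaVar\le\tfrac1{12}$ when $\dimSpace\ge2$; had you written instead
\[
\Sigma^{-1}-(1+\deltaVar)^{-1}\tilde\Sigma^{-1}=\bigl(L^{-1}-(1+\deltaVar)^{-1}\tilde L^{-1}\bigr)N+(1+\deltaVar)^{-1}\tilde L^{-1}(N-M),
\]
you would recover the paper's comfortable constants. One small gap: you justify $\lambda_{\min}(M)\ge\bigl((1+\delta_\Amod)\lambdaM\bigr)^{-1}$ by appealing to the grid construction, but the hypotheses of \cref{lem:gaussbracketnet} only place $\tilde A\in\Diag(\lambdam,+\infty)$; the bound you need, $\tilde A_{i,i}^{-1}\ge\lambdaM^{-1}(1-\tfrac{\deltaVar}{14})$, follows directly from $|A_{i,i}^{-1}-\tilde A_{i,i}^{-1}|\le\tfrac{1}{14\lambdaM}\deltaVar$ together with $A_{i,i}\le\lambdaM$.
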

we can apply
\cref{lemma:gaussratio} on Gaussian density ratio to both 
\begin{align*}
  \frac{\Gauss_{\mu,\Sigma}(x)}{(1+\kappag \deltaVar)^{\dimSpace}
    \Gauss_{\tilde{\mu},(1+\deltaVar) \tilde{\Sigma}}(x)} 
    \quad\text{and}\quad
 \frac{(1+\kappag\deltaVar)^{-\dimSpace}
      \Gauss_{\tilde{\mu},(1+\deltaVar)^{-1}
        \tilde{\Sigma}}(x)}{\Gauss_{\mu,\Sigma}(x)}
 \end{align*}
to prove that they are smaller than $1$.

For the first one, using
\begin{align*}
  \frac{\Gauss_{\mu,\Sigma}(x)}{(1+\kappag \deltaVar)^{\dimSpace}
    \Gauss_{\tilde{\mu},(1+\deltaVar) \tilde{\Sigma}}(x)}
& \leq (1+\kappag \deltaVar)^{-\dimSpace} \left( \sqrt{\frac{|(1+\deltaVar)
      \tilde{\Sigma}|}{|\Sigma|}} \exp \left( 
 \frac{1}{2} (\mu-\tilde{\mu})' \left( (1+\deltaVar) \tilde{\Sigma} - \Sigma \right)^{-1} (\mu-\tilde{\mu})
 \right)
 \right)\\
& \leq \frac{(1+\deltaVar)^{\dimSpace/2}}{(1+\kappag \deltaVar)^{\dimSpace}} \left( \sqrt{\frac{|
      \tilde{\Sigma}|}{|\Sigma|}} \exp \left( 
 \frac{1}{2} (\mu-\tilde{\mu})' \left( (1+\deltaVar) \tilde{\Sigma} - \Sigma \right)^{-1} (\mu-\tilde{\mu})
 \right)
 \right).
\end{align*}
Now 
\begin{align*}
\left( (1+\deltaVar) \tilde{\Sigma} - \Sigma \right)^{-1}&= \left( (1+\deltaVar) \tilde{\Sigma} \left(
\Sigma^{-1}-(1+\deltaVar)^{-1} \tilde{\Sigma}^{-1} \right) \Sigma
\right)^{-1}\\
& = (1+\deltaVar)^{-1} \Sigma^{-1} \left(
\Sigma^{-1}-(1+\deltaVar)^{-1} \tilde{\Sigma}^{-1} \right)^{-1} \tilde{\Sigma}^{-1}
\end{align*}
and thus
\begin{align*}
(\mu-\tilde{\mu})' \left( (1+\deltaVar) \tilde{\Sigma} - \Sigma \right)^{-1}
(\mu-\tilde{\mu})
&\leq (1+\deltaVar)^{-1} \Lm^{-1} \lambdam^{-1} 
\frac{4(1+\deltaVar)}{3}
\tilde{L} \lambdaM \deltaVar^{-1}
\tilde{L}^{-1}
\lambdam^{-1} \|\mu-\tilde{\mu}\|^2 
\\& \leq  \frac{4(1+\deltaVar)}{3} 
(1+\deltaVar)^{-1} \deltaVar^{-1}\Lm^{-1} \lambdam^{-1} \frac{\lambdaM}{\lambdam}
 \dimSpace \gammakappa \Lm \lambdam
 \frac{\lambdam}{\lambdaM}\deltaVar^2
\\& 
\leq \frac{4}{3} \gammakappa \dimSpace \deltaVar
\end{align*}
Now as by construction,
\begin{align*}
\frac{|\tilde{\Sigma}|}{|\Sigma|} &\leq (1+\frac{1}{2}\deltaVar)^{\dimSpace}, 
\end{align*}
one obtains
\begin{align*}
  \frac{\Gauss_{\mu,\Sigma}}{(1+\kappag \delta)^{\dimSpace}
    \Gauss_{\tilde{\mu},(1+\delta) \tilde{\Sigma}}}
&
\leq \frac{(1+\deltaVar)^{\dimSpace/2}}{(1+\kappag \deltaVar)^{\dimSpace}} 
(1+\frac{1}{2}\deltaVar)^{\dimSpace/2} 
\exp \left(
\frac{1}{2} 
\frac{4}{3} \gammakappa \dimSpace \deltaVar
\right)\\
&\leq \Bigg( \frac{\sqrt{1+\deltaVar}\sqrt{1+\frac{1}{2}\deltaVar}}{1+\kappag\deltaVar}
  \exp \left( 
\frac{2}{3}
\gammakappa\deltaVar
\right) \Bigg)^{\dimSpace}.
\end{align*}
It is thus sufficient to prove that
\begin{align*}
  \frac{\sqrt{1+\deltaVar}\sqrt{1+\frac{1}{2}\deltaVar}}{1+\kappag\deltaVar}
  \exp \left( 
\frac{2}{3}
\gammakappa \deltaVar
\right) \leq 1
\end{align*}
or equivalently
\begin{align*}
  \frac{2}{3}
 \gamma_k  \deltaVar \leq \Log \left(
  \frac{1+\kappag\deltaVar}{\sqrt{1+\deltaVar}\sqrt{1+\frac{1}{2} \deltaVar}} \right).
\end{align*}
Now let 
\begin{align*}
f_1(\deltaVar) & = \Log \left(
  \frac{1+\kappag\deltaVar}{\sqrt{1+\deltaVar}\sqrt{1+\frac{1}{2} \deltaVar}}
\right)
= \Log ( 1+ \kappag \deltaVar) - \frac{1}{2}
  \Log(1+\deltaVar) - \frac{1}{2} \Log(1+\frac{1}{2} \deltaVar)\\
f_1'(\deltaVar) &= \frac{\kappag}{1+\kappag\deltaVar} -
\frac{\frac{1}{2}}{1+\deltaVar} -
\frac{\frac{1}{4}}{1+\frac{1}{2}\deltaVar}
= \frac{\frac{3}{4} ( \kappag - \frac{2}{3}) \deltaVar + \kappag -
  \frac{3}{4}}
{(1+\kappag\deltaVar)(1+\deltaVar)(1+\frac{1}{2} \deltaVar)}
 \intertext{and thus provided $\kappag>\frac{3}{4}$, as $\deltaVar
   \leq \frac{1}{6}$}
 f_1'(\deltaVar) &>
 \frac{\kappag-\frac{3}{4}}{(1+\frac{\kappag}{6})(1+\frac{1}{6})(1+\frac{1}{12})}\\
\intertext{Finally, as $f_1(0)=0$, one deduces}
f_1(\deltaVar) &>
\frac{\kappag-\frac{3}{4}}{(1+\frac{\kappag}{6})(1+\frac{1}{6})(1+\frac{1}{12})}
\deltaVar \geq
\frac{2}{3}
 \gamma_k  \deltaVar
\end{align*}
which implies thus   
\begin{align*}
\frac{\Gauss_{\mu,\Sigma}(x)}{(1+\kappag \deltaVar)^{\dimSpace}
    \Gauss_{\tilde{\mu},(1+\deltaVar) \tilde{\Sigma}}(x)} \leq 1
\end{align*} 
or $\Gauss_{\mu,\Sigma}(x) \leq t^+(x)$.

The second case is handled in the same way.
\begin{align*}
   &\frac{(1+\kappag\deltaVar)^{-\dimSpace}
      \Gauss_{\tilde{\mu},(1+\deltaVar)^{-1}
        \tilde{\Sigma}}(x)}{\Gauss_{\mu,\Sigma}(x)} \\
&\qquad \leq (1+\kappag\deltaVar)^{-\dimSpace}
\left( \sqrt{\frac{|
      \Sigma|}{|(1+\deltaVar)^{-1}\tilde{\Sigma}|}} 
\exp \left( 
 \frac{1}{2} (\mu-\tilde{\mu})' \left( \Sigma - (1+\deltaVar)^{-1} \tilde{\Sigma} \right)^{-1} (\mu-\tilde{\mu})
 \right)
 \right)\\
&\qquad\leq \frac{(1+\deltaVar)^{\dimSpace/2}}{(1+\kappag\deltaVar)^{\dimSpace}}
\exp \left( 
 \frac{1}{2} (\mu-\tilde{\mu})' \left( \Sigma - (1+\deltaVar)^{-1} \tilde{\Sigma} \right)^{-1} (\mu-\tilde{\mu})
 \right)
\end{align*}
Now as
\begin{align*}
  \left( \Sigma - (1+\deltaVar)^{-1} \tilde{\Sigma} \right)^{-1} &= \left(
    \Sigma \left( (1+\deltaVar) \tilde{\Sigma}^{-1} - \Sigma^{-1} \right) (1+\deltaVar)^{-1}
    \tilde{\Sigma} \right)^{-1}
\\& 
= (1+\deltaVar) \tilde{\Sigma}^{-1} \left( (1+\deltaVar) \tilde{\Sigma}^{-1} -
  \Sigma^{-1} \right)^{-1} \Sigma^{-1}
\end{align*}
and thus
\begin{align*}
(\mu-\tilde{\mu})'     \left( \Sigma - (1+\deltaVar)^{-1} \tilde{\Sigma}
\right)^{-1} (\mu-\tilde{\mu})
 &\leq  (1 + \deltaVar) \tilde{L}^{-1} \lambdam^{-1} 
4
\tilde{L} \lambdaM \deltaVar^{-1}
\Lm^{-1} \lambdam^{-1}
\|\mu-\tilde{\mu}\|^2
\\& 
\leq (1 + \deltaVar) \Lm^{-1} \lambdam^{-1} 
4 \frac{\lambdaM}{\lambdam} \deltaVar^{-1}
 \dimSpace \gammakappa \Lm \lambdam
 \frac{\lambdam}{\lambdaM}\deltaVar^2
\\& 
\leq   4 \dimSpace \gammakappa  (1 + \deltaVar) \deltaVar
\end{align*}
one deduces
\begin{align*}
\frac{(1+\kappag\deltaVar)^{-\dimSpace}
      \Gauss_{\tilde{\mu},(1+\deltaVar)^{-1}
        \tilde{\Sigma}}(x)}{\Gauss_{\mu,\Sigma}(x)}
 &\leq \frac{(1+\deltaVar)^{\dimSpace/2}}{(1+\kappag\deltaVar)^{\dimSpace}}
 \exp \left( 
  \frac{1}{2} 
 4 \dimSpace \gammakappa  (1 + \deltaVar) \deltaVar
  \right)\\
& \leq \left( 
\frac{\sqrt{1+\deltaVar}}{1+\kappag\deltaVar}
 \exp \left( 
2 \gammakappa  (1 + \deltaVar)
\deltaVar\right)
\right)^{\dimSpace}.
\end{align*}
All we need to prove is thus
\begin{align*}
  \frac{\sqrt{1+\deltaVar}}{1+\kappag\deltaVar}
 \exp \left( 
2 \gammakappa  (1 + \deltaVar)
\deltaVar\right) \leq 1
\end{align*}
or equivalently
\begin{align*}
2 \gammakappa  (1 + \deltaVar)
\deltaVar \leq \Log \left(
  \frac{1+\kappag\deltaVar}{\sqrt{1+\deltaVar}} \right).
\end{align*}
Let 
\begin{align*}
  f_2(\deltaVar) & = \Log \left(
  \frac{1+\kappag\deltaVar}{\sqrt{1+\deltaVar}} \right)
= \Log(1+\kappag\deltaVar)-\frac{1}{2} \Log(1+\deltaVar)\\
f_2'(\deltaVar) & = \frac{\kappag}{1+\kappag\deltaVar}-\frac{%
  \frac{1}{2}}{1+\deltaVar}
 = \frac{\frac{\kappag}{2} \deltaVar +
  \kappag-\frac{1}{2}}{(1+\kappag\deltaVar)(1+\deltaVar)}\\
\intertext{and thus provided $\kappag>\frac{3}{4}$, as $\deltaVar
   \leq \frac{1}{6}$}
f_2'(\deltaVar) & > \frac{\kappag-\frac{1}{2}}{(1+\frac{\kappag}{6})(1+\frac{1}{6})}
\intertext{Finally, as $f_2(0)=0$, one deduces}
f_2(\deltaVar) &>
\frac{\kappag-\frac{1}{2}}{(1+\frac{\kappag}{6})(1+\frac{1}{6})}
\deltaVar \geq 2 \gammakappa  (1 + \frac{1}{6})
\deltaVar \geq 2 \gammakappa  (1 + \deltaVar)
\deltaVar  
\end{align*}
which implies
\begin{align*}
   \frac{(1+\kappag\deltaVar)^{-\dimSpace}
      \Gauss_{\tilde{\mu},(1+\deltaVar)^{-1}
        \tilde{\Sigma}}(x)}{\Gauss_{\mu,\Sigma}(x)}  \leq 1  
\end{align*}
or equivalently $t^-(x)\leq \Gauss_{\mu,\Sigma}(x)$.
\end{proof}

\begin{proof}[Proof of \cref{lem:bounddelta}]
A straightforward computation yields
\begin{align*}
  \deltaVar & \leq \frac{1}{9\constcosh}\frac{\delta}{\dimSpace} 
\leq \frac{1}{\dimSpace} \frac{\sqrt{2}}{9\sqrt{\left(\frac{3}{4}\right)^2 +
    \frac{1}{2}}}  \leq \frac{1}{\dimSpace} \frac{1}{6} \leq \frac{1}{6}.
\end{align*}
\end{proof}

\begin{proof}[Proof of \cref{lemma:cosh}]
  \begin{align*}
    2-2\, 2^{d/2}\, \left( (1+\deltaVar)+(1+\deltaVar)^{-1}\right)^{-d/2} & =
    2\left( 1 - \left(
        \frac{e^{\Log(1+\deltaVar)}+e^{-\Log(1+\deltaVar)}}{2}\right)^{-d/2}
      \right)\\
& = 2 \left( 1 - \left( \cosh\left(\Log (1+\deltaVar)\right)\right)^{-d/2}
  \right)\\
&   = 2 f\left(\Log(1+\deltaVar)\right)\\
  \end{align*}
where $f(x)=1-\cosh(x)^{-d/2}$. Studying this function yields
\begin{align*}
  f'(x) & = \frac{d}{2} \sinh(x) \cosh(x)^{-d/2-1}\\
  f''(x) & = \frac{d}{2} \cosh(x)^{-d/2} - \frac{d}{2} \left(
    \frac{d}{2}+1 \right) \sinh(x)^2 \cosh(x)^{-d/2-2}\\
& = \frac{d}{2} \left( 1-\left( \frac{d}{2} +1 \right) \left(
    \frac{\sinh(x)}{\cosh(x)} \right)^2 \right) \cosh(x)^{-d/2}\\
\intertext{and, as $\cosh(x)\geq 1$,}
f''(x) & \leq \frac{d}{2}.
\intertext{%
Now as $f(0)=0$ and $f'(0)=0$, this implies for any $x\geq0$}
  f(x) & \leq \frac{d}{2} \frac{x^2}{2} \leq \frac{d^2}{2} \frac{x^2}{2}.
\end{align*}
We deduce thus that
\begin{align*}
    2-2\, 2^{d/2}\, \left( (1+\deltaVar)+(1+\deltaVar)^{-1}\right)^{-d/2}
   & \leq \frac{1}{2} d^2 \left(\Log(1+\deltaVar)\right)^2 \\
\intertext {and using $\Log(1+\deltaVar)\leq\deltaVar$}
  2-2\, 2^{d/2}\, \left( (1+\deltaVar)+(1+\deltaVar)^{-1}\right)^{-d/2} & \leq \frac{1}{2} d^2 \deltaVar^2.
\end{align*}

Now,
\begin{align*}
  \left(1+\kappag\deltaVar\right)^d +  \left(1+\kappag\deltaVar\right)^{-d} -2
& = 2\left( \cosh\left(d \Log(1+\kappag\deltaVar)\right)-1 \right)
= 2 g\left(d \Log(1+\kappag\deltaVar)\right)
\end{align*}
with $g(x)= \cosh(x)-1$. Studying this function yields
\begin{align*}
  g'(x)&=\sinh(x)\quad\text{and}\quad
  g''(x)  = \cosh(x)
\intertext{and thus, as $g(0)=0$ and $g'(0)=0$, for any $0\leq x \leq c$}
g(x) & \leq \cosh(c) \frac{x^2}{2}.
\end{align*}
As $\Log(1+\kappag\deltaVar) \leq \kappag\deltaVar$, $d\deltaVar\leq c$ implies
$d\Log(1+\kappag\deltaVar) \leq \kappag c$, we obtain thus
\begin{align*}
  \left(1+\kappag\deltaVar\right)^d +  \left(1+\kappag\deltaVar\right)^{-d} -2
&\leq \cosh(\kappag c) d^2   \left(\Log(1+\kappag\deltaVar)\right)^2
 \leq \kappag^2 \cosh(\kappag c) d^2 \deltaVar^2.
\end{align*}
\end{proof}

\begin{proof}[Proof of \cref{lem:eigenvaluesinverse}]
We deduce this result from a slightly more general:
\begin{lemma}
\label{lem:eigenvaluesinversespec}
Let $\deltaVar>0$.

Let $(L,A,D) \in [\Lm,\LM] \times 
\Diag(\lambdam,\lambdaM) \times SO(\dimSpace)$ and
$(\tilde{L},\tilde{A},\tilde{D})\in [\Lm,\LM] \times 
\Diag(\lambdam,+\infty) \times SO(\dimSpace)$ , define
$\Sigma=L D A D'$ and  $\tilde{\Sigma}=\tilde{L} \tilde{D}
\tilde{A} \tilde{D'}$.

If
\begin{align*}
\begin{cases}
(1+\delta_\Lmod)^{-1} \tilde{L} \leq L \leq \tilde{L}\\
\forall 1\leq i \leq \dimSpace,\quad
|A_{i,i}^{-1}-\tilde{A}_{i,i}^{-1}| \leq 
\delta_\Amod \lambdam^{-1}
\\
 \forall x \in \R^{\dimSpace},\quad \|Dx-\tilde{D}x\| \leq
\delta_\Dmod \|x\|
\end{cases}
\end{align*}
then $(1+\deltaVar)\tilde{\Sigma}^{-1}-\Sigma^{-1}$ and
$\Sigma^{-1}-(1+\deltaVar)^{-1}\tilde{\Sigma}^{-1}$  satisfies
\begin{align*}
\forall x \in \R^{\dimSpace}, x' \left(
  (1+\deltaVar)\tilde{\Sigma}^{-1}-\Sigma^{-1} \right) x&\geq 
\tilde{L}^{-1} \left( (\deltaVar-\delta_\Lmod) \lambdaM^{-1} -
  (1+\deltaVar)\lambdam^{-1} \left( 
2 \delta_\Dmod  + \delta_\Amod \right) \right)
\|x\|^2 \\
\forall x \in \R^{\dimSpace}, x' \left(
 \Sigma^{-1}-(1+\deltaVar)^{-1} \tilde{\Sigma}^{-1}\right) x&\geq 
\frac{\tilde{L}^{-1}}{1+\deltaVar} \left(
  \deltaVar \lambdaM^{-1} - \lambdam^{-1} \left( 
2 \delta_\Dmod  + \delta_\Amod \right) \right)
\|x\|^2
 \end{align*}
\end{lemma}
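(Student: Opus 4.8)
The plan is to diagonalise both inverse covariances and to compare them through a four–term telescoping identity, swapping out the volume, the renormalised eigenvalue matrix, and the eigenvector basis one at a time, so that the large near–cancellations between $(1+\deltaVar)\tilde\Sigma^{-1}$ and $\Sigma^{-1}$ are never broken apart by a crude bound. Writing $\Sigma^{-1}=L^{-1}DA^{-1}D'$ and $\tilde\Sigma^{-1}=\tilde L^{-1}\tilde D\tilde A^{-1}\tilde D'$ and inserting the intermediates $(1+\deltaVar)\tilde L^{-1}\tilde D A^{-1}\tilde D'$, $L^{-1}\tilde D A^{-1}\tilde D'$ and $L^{-1}\tilde D A^{-1}D'$, one obtains
\begin{align*}
(1+\deltaVar)\tilde\Sigma^{-1}-\Sigma^{-1}
&=(1+\deltaVar)\tilde L^{-1}\tilde D(\tilde A^{-1}-A^{-1})\tilde D'
+\bigl[(1+\deltaVar)\tilde L^{-1}-L^{-1}\bigr]\tilde D A^{-1}\tilde D'\\
&\qquad+L^{-1}\tilde D A^{-1}(\tilde D-D)'
+L^{-1}(\tilde D-D)A^{-1}D',
\end{align*}
and the proof reduces to lower–bounding the quadratic form $x'(\cdot)x$ of each of the four summands against $\|x\|^2$.

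For the dominant (second) summand I would use $L^{-1}\le(1+\delta_\Lmod)\tilde L^{-1}$ to bound the scalar factor $(1+\deltaVar)\tilde L^{-1}-L^{-1}\ge(\deltaVar-\delta_\Lmod)\tilde L^{-1}$, which is nonnegative in the regime of interest (where $\deltaVar\ge\delta_\Lmod$); since $\tilde D'\in SO(\dimSpace)$ and the diagonal matrix $A^{-1}$ has all entries $\ge\lambdaM^{-1}$, one has $x'\tilde D A^{-1}\tilde D'x\ge\lambdaM^{-1}\|x\|^2$, so this summand contributes at least $(\deltaVar-\delta_\Lmod)\tilde L^{-1}\lambdaM^{-1}\|x\|^2$. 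The first summand is handled by noting that $\tilde A^{-1}-A^{-1}$ is diagonal with entries of modulus at most $\delta_\Amod\lambdam^{-1}$ (the hypothesis on the diagonals), hence bounded below by $-\delta_\Amod\lambdam^{-1}$ as a symmetric matrix, which gives a contribution $\ge-(1+\deltaVar)\tilde L^{-1}\delta_\Amod\lambdam^{-1}\|x\|^2$. The two eigenvector summands are each of the form $L^{-1}\langle A^{-1/2}u,A^{-1/2}v\rangle$ with one of $u,v$ equal to $D'x$ or $\tilde D'x$ and the other equal to $(\tilde D-D)'x$; by Cauchy–Schwarz, $A^{-1}\preceq\lambdam^{-1}I$, $\|D'x\|=\|\tilde D'x\|=\|x\|$, and the operator–norm bound $\|(\tilde D-D)'\|=\|\tilde D-D\|\le\delta_\Dmod$ supplied by the hypothesis $\|(D-\tilde D)x\|\le\delta_\Dmod\|x\|$, each has modulus at most $L^{-1}\lambdam^{-1}\delta_\Dmod\|x\|^2$; after replacing $L^{-1}$ by the larger $(1+\deltaVar)\tilde L^{-1}$, each contributes $\ge-(1+\deltaVar)\tilde L^{-1}\delta_\Dmod\lambdam^{-1}\|x\|^2$. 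Summing the four estimates yields the first claimed inequality exactly.

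The second inequality is proved by the mirror decomposition of $\Sigma^{-1}-(1+\deltaVar)^{-1}\tilde\Sigma^{-1}$, inserting $(1+\deltaVar)^{-1}\tilde L^{-1}DA^{-1}D'$, $(1+\deltaVar)^{-1}\tilde L^{-1}\tilde D A^{-1}\tilde D'$ and $(1+\deltaVar)^{-1}\tilde L^{-1}DA^{-1}\tilde D'$, which gives
\begin{align*}
\Sigma^{-1}-(1+\deltaVar)^{-1}\tilde\Sigma^{-1}
&=\bigl[L^{-1}-(1+\deltaVar)^{-1}\tilde L^{-1}\bigr]DA^{-1}D'
+(1+\deltaVar)^{-1}\tilde L^{-1}\tilde D(A^{-1}-\tilde A^{-1})\tilde D'\\
&\qquad+(1+\deltaVar)^{-1}\tilde L^{-1}DA^{-1}(D-\tilde D)'
+(1+\deltaVar)^{-1}\tilde L^{-1}(D-\tilde D)A^{-1}\tilde D'.
\end{align*}
Here the volume gain is cleaner, since $L^{-1}\ge\tilde L^{-1}$ alone yields $L^{-1}-(1+\deltaVar)^{-1}\tilde L^{-1}\ge\frac{\deltaVar}{1+\deltaVar}\tilde L^{-1}\ge0$, so the first summand contributes $\ge\frac{\deltaVar}{1+\deltaVar}\tilde L^{-1}\lambdaM^{-1}\|x\|^2$; the remaining three are estimated exactly as above, now carrying the prefactor $(1+\deltaVar)^{-1}\tilde L^{-1}$, and the total is $\frac{\tilde L^{-1}}{1+\deltaVar}\bigl(\deltaVar\lambdaM^{-1}-\lambdam^{-1}(2\delta_\Dmod+\delta_\Amod)\bigr)\|x\|^2$.

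The step I expect to require the most care is the bookkeeping in the telescoping: one must keep $\tilde A^{-1}$ (respectively $A^{-1}$) attached to the volume–swap scalar rather than bounding $(1+\deltaVar)\tilde A^{-1}$ and $A^{-1}$ separately — doing the latter would leave an uncontrolled $O(\lambdam^{-1})$ negative term that the statement does not allow — and one must remember that the hypothesis only controls the operator norm of $D-\tilde D$, which is invariant under transposition. The single point at which the statement is mildly informal is the sign of the volume term, which needs $\deltaVar\ge\delta_\Lmod$; this is automatic under the quantitative choices of \cref{lem:gaussbracketnet}, the only place the lemma is applied.
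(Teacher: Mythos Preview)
Your proof is correct and follows essentially the same telescoping strategy as the paper: the paper also inserts intermediates swapping volume, eigenvalue matrix, and eigenvector basis one at a time and bounds each term via the same eigenvalue estimates $\lambdaM^{-1}\le A_{i,i}^{-1}\le\lambdam^{-1}$ together with Cauchy--Schwarz. Two cosmetic differences are worth noting: the paper works directly with the scalar sums $\sum_i \tilde A_{i,i}^{-1}|\tilde D_i'x|^2$ and handles the $D$--swap in a single step via $|\tilde D_i'x|^2-|D_i'x|^2=(|\tilde D_i'x|-|D_i'x|)(|\tilde D_i'x|+|D_i'x|)$ (yielding the same $2\delta_\Dmod$ factor as your two matrix terms), and, because the paper swaps $L$ \emph{last} rather than second, the eigenvector and eigenvalue error terms in the first inequality carry the coefficient $(1+\deltaVar)\tilde L^{-1}$ directly, so the extra step $L^{-1}\le(1+\deltaVar)\tilde L^{-1}$ (and hence the side condition $\deltaVar\ge\delta_\Lmod$) never arises.
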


Indeed \cref{lem:bounddelta} ensures that $\deltaVar\leq\frac{1}{6}$.
Hence, if we let $\delta_\Lmod=\frac{1}{2} \deltaVar$ and
  $\delta_\Amod=\delta_\Dmod=\frac{1}{14}
  \frac{\lambdam}{\lambdaM} \deltaVar $, bounds of the previous
  Lemma become $\forall x \in \R^{\dimSpace},$
\begin{align*}
 x' \left(
  (1+\deltaVar)\tilde{\Sigma}^{-1}-\Sigma^{-1} \right) x&\geq 
\tilde{L}^{-1} \left( (\deltaVar-\delta_\Lmod) \lambdaM^{-1} -
  (1+\deltaVar)\lambdam^{-1} \left( 
2 \delta_\Dmod  + \delta_\Amod \right) \right)
\|x\|^2 \\
& \geq \tilde{L}^{-1} \left( \left( \deltaVar - \frac{1}{2} \deltaVar\right)  \lambdaM^{-1} -
  (1+\deltaVar)\lambdam^{-1} 3   \frac{1}{14}
  \frac{\lambdam}{\lambdaM} \deltaVar \right)
\|x\|^2\\
& \geq \frac{1}{4}
\tilde{L}^{-1} \frac{1}{\lambdaM} \deltaVar \|x\|^2 \\
\intertext{while $\forall x \in \R^{\dimSpace},$}
x' \left(
 \Sigma^{-1}-(1+\deltaVar)^{-1} \tilde{\Sigma}^{-1}\right) x&\geq 
\frac{\tilde{L}^{-1}}{1+\deltaVar} \left(
  \deltaVar \lambdaM^{-1} - \lambdam^{-1} \left( 
2 \delta_\Dmod  + 1\delta_\Amod \right) \right)
\|x\|^2\\
& \geq
\frac{\tilde{L}^{-1}}{1+\deltaVar} \left(
  \deltaVar \lambdaM^{-1} - \lambdam^{-1} 3 \frac{1}{14}
  \frac{\lambdam}{\lambdaM} \deltaVar\right)
\|x\|^2\\
&\geq \frac{3}{4(1+\deltaVar)}
\tilde{L}^{-1} \frac{1}{\lambdaM} \deltaVar \|x\|^2.
 \end{align*}
\end{proof}

\begin{proof}[Proof of \cref{lem:eigenvaluesinversespec}]
By definition,
\begin{align*}
  x' \left( (1+\deltaVar)\tilde{\Sigma}^{-1}-\Sigma^{-1} \right) x & =
  (1+\deltaVar) \tilde{L}^{-1} \sum_{i=1}^{\dimSpace} \tilde{A}_{i,i}^{-1}
  |\tilde{D}_{i}'x|^2
- L^{-1} \sum_{i=1}^{\dimSpace} A_{i,i}^{-1}
  |D_{i}'x|^2\\
& = (1+\deltaVar) \tilde{L}^{-1} \sum_{i=1}^{\dimSpace} \tilde{A}_{i,i}^{-1}
  |\tilde{D}_{i}'x|^2
- (1+\deltaVar) \tilde{L}^{-1} \sum_{i=1}^{\dimSpace} \tilde{A}_{i,i}^{-1}
  |D_{i}'x|^2
 \\
& \quad + (1+\deltaVar) \tilde{L}^{-1} \sum_{i=1}^{\dimSpace} \tilde{A}_{i,i}^{-1}
  |D_{i}'x|^2 - (1+\deltaVar) \tilde{L}^{-1} \sum_{i=1}^{\dimSpace} A_{i,i}^{-1}
  |D_{i}'x|^2\\
& \quad + (1+\deltaVar) \tilde{L}^{-1} \sum_{i=1}^{\dimSpace} A_{i,i}^{-1}
  |D_{i}'x|^2 - L^{-1} \sum_{i=1}^{\dimSpace} A_{i,i}^{-1}
  |D_{i}'x|^2
\end{align*}
Similarly,
\begin{align*}
  x' \left( \Sigma^{-1}-(1+\deltaVar)^{-1}\tilde{\Sigma}^{-1} \right) x & =
L^{-1} \sum_{i=1}^{\dimSpace} A_{i,i}^{-1}
  |D_{i}'x|^2 -   (1+\deltaVar)^{-1} \tilde{L}^{-1} \sum_{i=1}^{\dimSpace} \tilde{A}_{i,i}^{-1}
  |\tilde{D}_{i}'x|^2\\
& = L^{-1} \sum_{i=1}^{\dimSpace} A_{i,i}^{-1}
  |D_{i}'x|^2 -   (1+\deltaVar)^{-1} \tilde{L}^{-1} \sum_{i=1}^{\dimSpace} A_{i,i}^{-1}
  |D_{i}'x|^2 \\
& \quad + (1+\deltaVar)^{-1} \tilde{L}^{-1} \sum_{i=1}^{\dimSpace} A_{i,i}^{-1}
  |D_{i}'x|^2  - (1+\deltaVar)^{-1} \tilde{L}^{-1} \sum_{i=1}^{\dimSpace} \tilde{A}_{i,i}^{-1}
  |D_{i}'x|^2 \\
& \quad + (1+\deltaVar)^{-1} \tilde{L}^{-1} \sum_{i=1}^{\dimSpace} \tilde{A}_{i,i}^{-1}
  |D_{i}'x|^2 - (1+\deltaVar)^{-1} \tilde{L}^{-1} \sum_{i=1}^{\dimSpace} \tilde{A}_{i,i}^{-1}
  |D_{j_D,i}'x|^2
\end{align*}

Now
\begin{align*}
 \left|  \sum_{i=1}^{\dimSpace} \tilde{A}_{i,i}^{-1}
  |\tilde{D}_{i}'x|^2
-  \sum_{i=1}^{\dimSpace} \tilde{A}_{i,i}^{-1}
  |D_{i}'x|^2 \right| & \leq  \sum_{i=1}^{\dimSpace} \tilde{A}_{i,i}^{-1}
 \left| |\tilde{D}_{i}'x|^2 - |D_{i}'x|^2 \right|\\
& \leq  \lambdam^{-1} \sum_{i=1}^{\dimSpace}
\left| |\tilde{D}_{i}'x|^2 - |D_{i}'x|^2 \right|   \\
& \leq  \lambdam^{-1} \sum_{i=1}^{\dimSpace}
\left| |\tilde{D}_{i}'x| - |D_{i}'x| \right| \left| |\tilde{D}_{i}'x| + |D_{i}'x|
\right|\\
& \leq   \lambdam^{-1} \left( \sum_{i=1}^{\dimSpace}
\left| (\tilde{D}_{i}-D_{i})'x\right|^2 \right)^{1/2} 
\left( \sum_{i=1}^{\dimSpace}
\left| (\tilde{D}_{i}+D_{i})'x\right|^2 \right)^{1/2} \\
& \leq \lambdam^{-1} \delta_\Dmod \|x\| 2 \|x\| = \lambdam^{-1}
2 \delta_\Dmod \|x\|^2.
\end{align*}
Furthermore,
\begin{align*}
\left|  \sum_{i=1}^{\dimSpace} \tilde{A}_{i,i}^{-1}
  |D_{i}'x|^2 - \sum_{i=1}^{\dimSpace} A_{i,i}^{-1}
  |D_{i}'x|^2 \right| & \leq \sum_{i=1}^{\dimSpace} \left|
\tilde{A}_{i,i}^{-1} - A_{i,i}^{-1} \right|
  |D_{i}'x|^2 \\
& \leq \delta_\Amod \lambdam^{-1}\sum_{i=1}^{\dimSpace} |D_{i}'x|^2 = 
\delta_\Amod \lambdam^{-1} \|x\|^2.
\end{align*}

We then notice that
\begin{align*}
(1+\deltaVar) \tilde{L}^{-1} \sum_{i=1}^{\dimSpace} A_{i,i}^{-1}
  |D_{i}'x|^2 - L^{-1} \sum_{i=1}^{\dimSpace} A_{i,i}^{-1}
  |D_{i}'x|^2 & = \left( (1+\deltaVar) \tilde{L}^{-1} -L^{-1} \right) \sum_{i=1}^{\dimSpace} A_{i,i}^{-1}
  |D_{i}'x|^2\\
& \geq (\deltaVar-\delta_\Lmod) \tilde{L}^{-1} \lambdaM^{-1} \|x\|^2
\end{align*}
while
\begin{align*}
  L^{-1} \sum_{i=1}^{\dimSpace} A_{i,i}^{-1}
  |D_{i}'x|^2 -   (1+\deltaVar)^{-1} \tilde{L}^{-1} \sum_{i=1}^{\dimSpace} A_{i,i}^{-1}
  |D_{i}'x|^2 & = \left( L^{-1} - (1+\deltaVar)^{-1} \tilde{L}^{-1} \right)
  \sum_{i=1}^{\dimSpace} A_{i,i}^{-1}
  |D_{i}'x|^2\\
& \geq \left( 1-(1+\deltaVar)^{-1} \right) \tilde{L}^{-1}
\lambdaM^{-1} \|x\|^2\\
& \geq \frac{\deltaVar}{1+\deltaVar} \lambdaM^{-1} \|x\|^2
\end{align*}

We deduce thus that
\begin{align*}
  x' \left( (1+\deltaVar)\tilde{\Sigma}^{-1}-\Sigma^{-1} \right) x & \geq
  (\deltaVar-\delta_\Lmod) \tilde{L}^{-1} \lambdaM^{-1} \|x\|^2 - (1+\deltaVar)
\tilde{L}^{-1} \lambdam^{-1}
  \left( 
2 \delta_\Dmod  + 2 \delta_\Amod \right) \|x\|^2\\
& \geq  \tilde{L}^{-1} \left( (\deltaVar-\delta_\Lmod) \lambdaM^{-1} -
  (1+\deltaVar)\lambdam^{-1} \left( 
2 \delta_\Dmod  + \delta_\Amod \right) \right) \|x\|^2 
\intertext{and}
  x' \left( \Sigma^{-1}-(1+\deltaVar)^{-1}\tilde{\Sigma}^{-1} \right) x & \geq
\frac{\deltaVar}{1+\deltaVar} \lambdaM^{-1} \|x\|^2
- (1+\deltaVar)^{-1} \tilde{L}^{-1} \lambdam^{-1} \left( 
2 \delta_\Dmod  + \delta_\Amod \right) \|x\|^2\\
& \geq
\frac{\tilde{L}^{-1}}{1+\deltaVar} \left(
  \deltaVar \lambdaM^{-1} - \lambdam^{-1} \left( 
2 \delta_\Dmod  + \delta_\Amod \right) \right) \|x\|^2
\end{align*}
\end{proof}
}{}


\bibliographystyle{plainnat}
\bibliography{NotesBiblio}

\end{document}